\documentclass[letterpaper, 11pt,  reqno]{amsart}

\usepackage{amsmath,amssymb,amscd,amsthm,amsxtra, esint}

\usepackage[implicit=true]{hyperref}

\headheight=8pt
\topmargin=0pt
\textheight=624pt
\textwidth=432pt
\oddsidemargin=18pt
\evensidemargin=18pt

\allowdisplaybreaks[2]

\sloppy

\hfuzz  = 0.5cm 


\usepackage{color}

\definecolor{gr}{rgb}   {0.,   0.69,   0.23 }
\definecolor{bl}{rgb}   {0.,   0.5,   1. }
\definecolor{mg}{rgb}   {0.85,  0.,    0.85}
\definecolor{yl}{rgb}   {0.8,  0.7,   0.}
\definecolor{or}{rgb}  {0.7,0.2,0.2}

\setlength{\pdfpagewidth}{8.50in}
\setlength{\pdfpageheight}{11.00in}

\newtheorem{theorem}{Theorem} [section]
\newtheorem{maintheorem}{Theorem}
\newtheorem{lemma}[theorem]{Lemma}
\newtheorem{proposition}[theorem]{Proposition}
\newtheorem{remark}[theorem]{Remark}

\DeclareMathOperator*{\intt}{\int}

\DeclareMathOperator*{\supp}{supp}

\DeclareMathOperator{\Id}{Id}

\newcommand{\I}{\hspace{0.5mm}\text{I}\hspace{0.5mm}}
\newcommand{\II}{\text{I \hspace{-2.8mm} I} }
\newcommand{\III}{\text{I \hspace{-2.9mm} I \hspace{-2.9mm} I}}

\newcommand{\IV}{\text{I \hspace{-2.9mm} V}}

\newcommand{\noi}{\noindent}
\newcommand{\Z}{\mathbb{Z}}
\newcommand{\R}{\mathbb{R}}
\newcommand{\C}{\mathbb{C}}
\newcommand{\T}{\mathbb{T}}

\let\Re=\undefined\DeclareMathOperator*{\Re}{Re}
\let\Im=\undefined\DeclareMathOperator*{\Im}{Im}

\let\P= \undefined
\newcommand{\P}{\mathbf{P}}

\newcommand{\E}{\mathbb{E}}

\newcommand{\FL}{\mathcal{F}L}

\newcommand{\EE}{\mathcal{E}}
\newcommand{\FF}{\mathcal{F}}
\newcommand{\In}{\mathcal{I}}

\newcommand{\F}{\mathcal{F}}

\newcommand{\al}{\alpha}
\newcommand{\be}{\beta}
\newcommand{\dl}{\delta}

\newcommand{\eps}{\varepsilon}
\newcommand{\kk}{\kappa}
\newcommand{\g}{\gamma}
\newcommand{\G}{\Gamma}
\newcommand{\ld}{\lambda}

\newcommand{\s}{\sigma}
\newcommand{\Si}{\Sigma}
\newcommand{\ft}{\widehat}

\newcommand{\wt}{\widetilde}
\newcommand{\cj}{\overline}
\newcommand{\dx}{\partial_x}
\newcommand{\dt}{\partial_t}
\newcommand{\dd}{\partial}

\newcommand{\ta}{\theta}

\renewcommand{\l}{\ell}
\renewcommand{\o}{\omega}
\renewcommand{\O}{\Omega}

\newcommand{\les}{\lesssim}
\newcommand{\ges}{\gtrsim}

\newcommand{\jb}[1]
{\langle #1 \rangle}

\newcommand{\ind}{\mathbf 1}

\renewcommand{\S}{\mathcal{S}}

\newcommand{\N}{\mathbb{N}}

\newcommand{\NN}{\mathcal{N}}

\newcommand{\fN}{\mathfrak{N}}

\newcommand{\GG}{\mathcal{G}}

\newcommand{\J}{\mathcal{J}}
\newcommand{\A}{\mathcal{A}}

\renewcommand{\H}{\mathcal{H}}

\newtheorem*{ackno}{Acknowledgements}

\numberwithin{equation}{section}
\numberwithin{theorem}{section}

\newcommand{\z}{\textsf{z}}

\begin{document}
\baselineskip = 14pt
\title[4NLS with white noise initial data]{Solving the 4NLS with white noise initial data} 
\author[T.~Oh, N.~Tzvetkov, and  Y.~Wang]{Tadahiro Oh, Nikolay Tzvetkov, and Yuzhao Wang}
\address{
Tadahiro Oh, School of Mathematics\\
The University of Edinburgh\\
and The Maxwell Institute for the Mathematical Sciences\\
James Clerk Maxwell Building\\
The King's Buildings\\
Peter Guthrie Tait Road\\
Edinburgh\\ 
EH9 3FD\\
 United Kingdom}

\email{hiro.oh@ed.ac.uk}

\address{
Nikolay Tzvetkov\\
Universit\'e de Cergy-Pontoise\\
 2, av.~Adolphe Chauvin\\
  95302 Cergy-Pontoise Cedex \\
  France}

\email{nikolay.tzvetkov@u-cergy.fr}


\address{
Yuzhao Wang, School of Mathematics\\
The University of Edinburgh\\
and The Maxwell Institute for the Mathematical Sciences\\
James Clerk Maxwell Building\\
The King's Buildings\\
Peter Guthrie Tait Road\\
Edinburgh\\ 
EH9 3FD\\
 United Kingdom\\
 and
 School of Mathematics\\
Watson Building\\
University of Birmingham\\
Edgbaston\\
Birmingham\\
B15 2TT\\
United Kingdom
 }

\email{y.wang.14@bham.ac.uk}

%
%
\subjclass[2010]{35Q55, 60H30}
\keywords{fourth order nonlinear Schr\"odinger equation;  biharmonic nonlinear Schr\"odinger equation;   white noise; invariant measure; random-resonant\,/\,nonlinear decomposition; random Fourier restriction norm space}

\begin{abstract}
We construct global-in-time singular dynamics for
 the (renormalized) cubic  fourth order nonlinear Schr\"odinger equation 
 on the circle, having the white noise 
 measure as an invariant measure.
For this purpose, we introduce
the ``random-resonant\,/\,nonlinear decomposition'', 
which allows us to single out the singular component of the solution.
Unlike the classical McKean, Bourgain, Da Prato-Debussche type argument, 
this singular component is nonlinear, 
consisting of arbitrarily high powers of the random initial data.
We also employ a random gauge transform,  leading to random Fourier restriction norm spaces.
For this problem, a contraction argument does not work and 
we instead establish convergence of smooth approximating solutions
by studying the partially iterated Duhamel formulation
under the random gauge transform.
We reduce the crucial nonlinear estimates
to boundedness properties
of certain random multilinear functionals
of the white noise.

\end{abstract}
\maketitle

\tableofcontents

\section{Introduction}\label{SEC:1}
\subsection{White noise on the circle and Hamiltonian partial differential equations}

A white noise on the circle $\T = \R/(2\pi \Z)$ is defined as 
the following infinite-dimensional  random variable:\footnote 
{By convention, we endow $\T$ with the normalized Lebesgue measure $(2\pi)^{-1}dx$.}
\begin{equation}\label{grand_publique}
u^\o(x)=\sum_{n\in\Z} g_{n}(\o) e^{inx}, 
\end{equation}

\noi
where  $\{ g_n \}_{n \in \Z}$ is a family of independent standard complex-valued Gaussian random variables.
On the other hand, using the representation of the $L^2(\T)$-norm in terms of the Fourier coefficients, one may formally define the white noise measure induced by \eqref{grand_publique}  as
\begin{equation*}
\text{``} Z^{-1} e^{-\frac 12 \| u\|_{L^2(\T)}^2} du\text{''}. 
 \end{equation*}

 \noi
 There are many important Hamiltonian PDEs such as the Korteweg-de Vries equation
 (KdV) and  the nonlinear Schr\"odinger equations (NLS),
 under which 
 the $L^2$-norm of a solution is conserved. 
 Therefore, for this type of equations, thanks to the general globalization argument introduced 
 by Bourgain in \cite{BO94, BO96}, 
 if one can solve the equation {\it locally in time} with data distributed according to \eqref{grand_publique}, then 
 one can almost surely extend the solutions for {\it all times} and the white noise would be an {\it invariant measure} of the resulting flow.

 It is easy to check that the white noise measure induced by  \eqref{grand_publique} is  supported 
 in the space of distributions  
 $H^s(\T)\setminus H^{-\frac 12}(\T)$, $s < -\frac{1}{2}$.  
 It is this low regularity which makes it very difficult to solve locally in time a Hamiltonian PDE with 
 the white noise initial data defined in~\eqref{grand_publique}. It is remarkable that this severe  difficulty was overcome in the context of the KdV equation; see \cite{QV,OH4, OH5, OHRIMS, OH6}.
 An important property of the KdV equation heavily exploited 
 in these works  is   the {\it absence of resonant interactions}
  when restricted to solutions with a fixed zero Fourier mode (which is a conserved quantity for the KdV equation). 
As we shall see below, in the case of NLS-type equations,
one may remove a part of the resonant interactions by a gauge transform. 
Even after such a transformation, however,  there are remaining resonant interactions.  
The main goal of this work is to show how, by exploiting an intricate mixture of probabilistic and deterministic analysis, one may deal with such  resonant interactions in the context of the  
cubic fourth order  nonlinear Schr\"odinger equation on the circle with 
the white noise initial data   \eqref{grand_publique}. 
In our construction,  the main random part of the solutions will be a nonlinear object (in fact, of infinite degree),
which is 
in sharp contrast with the simple random linear evolution appearing in the previous random data well-posedness results such as~\cite{BO96, BT1}. 
 This difference between our main result and \cite{BO96, BT1} is similar in spirit with the difference between ``scattering" and ``modified scattering" appearing in the analysis of dispersive PDEs posed on the Euclidean space.  
 See Remarks~\ref{REM:scattering} 
 and~\ref{mod_scat_det} 
 below.

We succeeded to make our method work only for an NLS equation with a sufficiently strong dispersion. The generalization of our result to the  more standard  (in particular because of its  integrability) 
 NLS with the second order dispersion remains as a  challenging open problem.  

\subsection{The cubic fourth order nonlinear Schr\"odinger equation and a soft formulation of the main result}

In this work,  we consider  the 
cubic fourth order  nonlinear Schr\"odinger equation (4NLS) 
 on the circle $\T$:
\begin{align}
\begin{cases}
i \dt u =   \dx^4 u +  |u|^{2}u \\
u|_{t = 0} = u_0,
\end{cases}
\qquad (x, t) \in \T\times \R,
\label{NLS1}
\end{align}
\noi
where  $u$ is complex-valued. 
The equation \eqref{NLS1} is also called  the biharmonic NLS
and it was studied for instance in \cite{IK, Turitsyn} in the context of stability of solitons in magnetic materials.
The $L^2$-norm is formally conserved by the dynamics of \eqref{NLS1} and therefore, as discussed in the previous subsection,
 one may hope to 
 construct global dynamics of \eqref{NLS1} with data given by  \eqref{grand_publique}. 
 This is a delicate problem for many reasons, 
 the most basic one being that it is not clear how to interpret the nonlinearity for such low-regularity solutions.

Let us now briefly go over the deterministic well-posedness theory of \eqref{NLS1}. A simple fixed point argument via the Fourier restriction norm method 
introduced by Bourgain~\cite{BO93} yields local well-posedness of \eqref{NLS1} in $H^s(\T)$, $s\geq 0$. The main ingredient is the following $L^4$-Strichartz estimate:
\begin{align}
\| u \|_{L^4(\T\times \R)} \les \| u \|_{X^{0, \frac{5}{16}}}, 
\label{L4a}
\end{align}

\noi
where  $X^{s,b}$ denotes the  Fourier restriction norm space adapted
to  \eqref{NLS1}.
See \cite{OTz1} for the proof of \eqref{L4a}.
 Thanks to the $L^2$-conservation law,  this local result immediately implies  global well-posedness of \eqref{NLS1} in $H^s(\T)$, $s\geq 0$.
The equation \eqref{NLS1} is known to be ill-posed
in negative Sobolev spaces
in the sense of non-existence of solutions~\cite{GO, OW}.
See also \cite{OW0, CP}
for ill-posedness by norm inflation.
We point out that the ill-posedness results in~\cite{OW0, CP} 
also apply to the renormalized equation \eqref{NLS2} below.

Taking into account that we have a well-defined flow of \eqref{NLS1} for smooth initial data,
 one may formulate the problem of solving \eqref{NLS1} with the white noise initial data 
\eqref{grand_publique}  
as that of studying the limiting behavior of smooth solutions to \eqref{NLS1} 
with initial data given by  suitable regularizations of  \eqref{grand_publique}.  
We do not know the answer to this question in  full generality 
but 
we can answer it in a satisfactory manner  
for the natural regularizations by mollification.

Let $\{u_{0,m}^\o\}_{m=1}^\infty$ be a sequence of random 
smooth  functions defined as the regularization of   
$u^\o$ in~\eqref{grand_publique} by mollification, i.e.
\begin{align}
u_{0,m}^\o = u^\o* \rho_m=\sum_{n\in\Z}\ft{\rho_m}(n) g_{n}(\o) e^{inx} ,
\label{smooth1}
\end{align}

\noi
where $\ft{\rho_m}(n)=\theta(n/m)$ with 
 a bump function $\ta $ on $\R$ which equals one near the origin.\footnote
 {We also allow $\ta$ to be a sharp cutoff function $\ind_{[-1, 1]}(n)$, in which case
the resulting $ u_{0,m}^\o$ corresponds to the frequency truncated version of the white noise
onto the frequencies $\{|n|\leq m\}$.} 
Denote by $u_m$ the smooth solution to \eqref{NLS1} with  smooth initial data 
$u_m|_{t = 0} = u_{0,m}^\o$ constructed in \cite{OTz1}. 
If we could solve the equation \eqref{NLS1} with data given by  \eqref{grand_publique},
 then the sequence $\{u_m\}_{m = 1}^\infty$ would converge to the solution in an appropriate sense. 
The ill-posedness result in~\cite{GO, OW}, however,  implies that there is no hope to make 
$\{u_m\}_{m = 1}^\infty$ converge in any Sobolev space of negative regularity. 
It turns out that a ``renormalization" of $u_m$ is convergent.  
Here is a precise statement.

\begin{maintheorem}\label{THM:soft_version}
The sequence
$
\Big\{\exp\big(2  i t  \|u_m(t)\|^2_{L^2}\big) \,u_m(t)\Big\}_{m = 1}^\infty
$
converges almost surely in\footnote {Here, we endow
$C(\R; H^s(\T))$ with the compact-open topology in time.} $C(\R; H^s(\T))$, $s<-\frac 12$.
If we denote the limit by $u$,   then we have
$$
u=\sum_{n\in\Z} g_{n}(t,\o) e^{inx}\,,
$$
where for every $t\in\R$,  $\{ g_n(t,\o) \}_{n \in \Z}$ is a family of independent standard complex-valued Gaussian random variables.
Furthermore, the limit $u$ does not depend on the choice of the bump function $\theta$. 

\end{maintheorem}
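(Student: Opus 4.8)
The plan is to reduce to the renormalized equation by gauge transforms, to isolate the singular part of its smooth regularized solutions via the random-resonant\,/\,nonlinear decomposition, and then to prove convergence of the remainder by analyzing a partially iterated Duhamel formula rather than by a fixed point. \emph{Step 1 (reduction to the renormalized equation).} Since the $L^2$-norm is conserved along \eqref{NLS1}, one has $\|u_m(t)\|_{L^2}^2 = \|u_{0,m}^\o\|_{L^2}^2$ --- a quantity tending to $+\infty$ almost surely, which is why $\{u_m\}$ itself cannot converge --- so $v_m := \exp\!\big(2it\|u_m(t)\|_{L^2}^2\big) u_m$ solves the renormalized equation
\[
i\dt v = \dx^4 v + \big(|v|^2 - 2\|v\|_{L^2}^2\big) v , \qquad v|_{t=0} = u_{0,m}^\o .
\]
On the Fourier side the $n$-th component of the nonlinearity is $-|v_n|^2 v_n + \NN(v)_n$, where $\NN(v)_n = \sum_{n = n_1 - n_2 + n_3,\ n_1, n_3 \neq n_2} v_{n_1}\cj{v_{n_2}} v_{n_3}$ collects the non-resonant interactions. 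The key input is the factorization
\[
n_1^4 - n_2^4 + n_3^4 - n^4 = -(n_1 - n_2)(n_3 - n_2)\big(3(n_1+n_3)^2 + (n_1-n_2)^2 + (n_3-n_2)^2\big)
\]
(valid when $n = n_1 - n_2 + n_3$), whose modulus is $\gtrsim |n_1-n_2|\,|n_3-n_2|\,\max(|n_1|,|n_2|,|n_3|,|n|)^2$ on the index set of $\NN$; such strong lower bounds on the modulation are a feature of the fourth-order dispersion and underlie the whole argument. It now suffices to prove that $\{v_m\}_{m\ge 1}$ converges almost surely in $C(\R;H^s(\T))$, $s < -\tfrac12$, to a limit independent of $\ta$.

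\emph{Step 2 (the random-resonant\,/\,nonlinear decomposition).} The only term in the nonlinearity as rough as $v$ is the resonant one, $-|v_n|^2 v_n$. I would strip it off by defining the random-resonant profile $\Psi^{(m)}$ as the solution of the decoupled system $i\dt\Psi_n = n^4\Psi_n - |\Psi_n|^2\Psi_n$, $\Psi_n|_{t=0} = \ft{\rho_m}(n) g_n$, namely
\[
\Psi^{(m)}_n(t) = e^{-i(n^4 - |\ft{\rho_m}(n) g_n|^2)\, t}\, \ft{\rho_m}(n)\, g_n .
\]
Since $|\Psi^{(m)}_n|$ is constant in $t$, substituting $v_m = \Psi^{(m)} + w_m$ produces an equation for $w_m$ with $w_m|_{t=0} = 0$ in which the resonant contribution $-\big(|v_{m,n}|^2 v_{m,n} - |\Psi^{(m)}_n|^2 \Psi^{(m)}_n\big)$ carries at least one factor of $w_m$; its diagonal part $-2|\ft{\rho_m}(n) g_n|^2 w_{m,n}$ I would remove by a further random gauge transform $w_{m,n} = e^{-2it|\ft{\rho_m}(n) g_n|^2}\, \mathfrak{w}_{m,n}$. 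The linear flow for $\mathfrak{w}_m$ then carries the random dispersion relation $n \mapsto n^4 - 2|\ft{\rho_m}(n) g_n|^2$, which is why the natural function spaces become random Fourier restriction norm spaces $X^{s,b}_\o$. Two features of $\Psi^{(m)}$ deliver the distributional part of the theorem. First, for each fixed $t$ the map $g \mapsto e^{it|g|^2} g$ preserves the standard complex Gaussian law and acts coordinatewise, so $\{\Psi^{(m)}_n(t)\}_n$ is distributed as the mollified white noise; moreover, the relevant random series being almost surely finite for $s < -\tfrac12$, a dominated-convergence argument gives $\Psi^{(m)} \to \Psi$ almost surely in $C(\R;H^s(\T))$, with $\Psi_n(t) = e^{-i(n^4 - |g_n|^2)t} g_n$ manifestly independent of $\ta$ --- this is the candidate limit. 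Second, the singular part is a \emph{nonlinear} object: $\Psi^{(m)}_n$ depends on $g_n$ through $e^{it|g_n|^2}g_n$, hence through arbitrarily high powers of the random data, in contrast with the linear-in-data profiles of the usual random-data theory.

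\emph{Step 3 (convergence of the remainder).} It remains to show that $\mathfrak{w}_m$ (equivalently $w_m$) converges almost surely in $C(\R;H^s(\T))$. A direct contraction fails --- the data lies below $H^{-1/2}$ and the nonlinearity is not well behaved enough in the relevant random spaces --- so I would instead work with the Duhamel identity for $\mathfrak{w}_m$, substitute $v_m = \Psi^{(m)} + w_m$ into the nonlinearity, and iterate the identity a bounded number of times. This produces finitely many explicit expressions multilinear in $\Psi^{(m)}$ (Duhamel integrals of $\NN(\Psi^{(m)})$, of terms with an explicit $\Psi^{(m)}$-coefficient applied to lower-order iterates, and so on) together with a higher-order remainder. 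Each explicit term is estimated in $X^{s,b}_\o$ by exploiting the smoothing coming from the random-shifted modulation $n_1^4 - n_2^4 + n_3^4 - n^4 - \big(|g_{n_1}|^2 - |g_{n_2}|^2 + |g_{n_3}|^2 - |g_n|^2\big)$, whose deterministic part is controlled from below as in Step 1, and then, by duality, reduced to the boundedness of a random multilinear functional of the white noise of the form $\sum_{n_1 - n_2 + n_3 + \cdots = n} c_m(\vec n)\, g_{n_1}\cj{g_{n_2}} g_{n_3}\cdots$; such functionals are controlled by Wiener-chaos (hypercontractivity) estimates together with lattice-point counting adapted to the biharmonic dispersion, and the same bounds applied to $c_m - c_{m'}$ give the Cauchy property in $m$. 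The higher-order remainder is absorbed using an a priori bound for $\mathfrak{w}_m$ in $X^{s,b}_\o$ obtained by a bootstrap on short (random) time intervals, then iterated to all of $\R$ via the conserved $L^2$-norm. Combining these, $v_m \to u := \Psi + w$ almost surely in $C(\R;H^s(\T))$; the same Cauchy estimates applied to the difference of the approximations built from two distinct bump functions show that $u$ is independent of $\ta$. Finally, since the frequency-truncated renormalized equation is a Hamiltonian system conserving $\sum_{|n| \le N}|v_n|^2$, the mollified white noise is invariant under its flow; passing to the limit $N \to \infty$ with the convergence just established (Bourgain's invariance\,/\,globalization argument) shows that $u(t)$ has the white noise law for every $t$, which is exactly the stated representation $u = \sum_n g_n(t,\o) e^{inx}$.

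\emph{Main obstacle.} The crux is Step 3: organizing the iterated Duhamel expansion so that \emph{every} resulting term either gains enough regularity from a large non-resonant modulation or reduces to a random multilinear functional whose boundedness \emph{and} $m$-convergence can be proved, while keeping the random-dispersion $X^{s,b}_\o$ framework internally consistent --- in particular, checking that the $O(\log m)$-size random shifts $|g_n|^2$ do not overwhelm the deterministic modulation outside a negligible set of low frequencies. The probabilistic estimates for these random multilinear functionals of the white noise --- with the lattice-point counting tailored to the fourth-order dispersion, uniform in the regularization and stable under taking differences in $m$ --- are the technical heart of the argument.
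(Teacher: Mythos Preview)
Your proposal is correct and follows essentially the same approach as the paper: the gauge reduction to the renormalized equation, the random-resonant\,/\,nonlinear decomposition with profile $\Psi^{(m)}$ (the paper's $z^\o$), the further random gauge transform leading to random $X^{s,b}_\o$-spaces, the partially iterated Duhamel formulation to tame the resonant part, the reduction of the nonlinear estimates to boundedness of random multilinear functionals of the white noise, and Bourgain's invariant measure argument for globalization and invariance all match the paper's strategy. One small clarification: in Step~3 you write that the remainder bound is ``iterated to all of $\R$ via the conserved $L^2$-norm,'' but since the limiting data has infinite $L^2$-norm this only gives global existence of each individual $v_m$; the uniform-in-$m$ global control needed for convergence on $[-T,T]$ comes from Bourgain's invariant measure argument itself (which you do invoke), not from $L^2$-conservation.
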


Theorem~\ref{THM:soft_version} is a satisfactory qualitative statement.
It,  however,  does not explain in which sense the obtained limit $u$ satisfies a limit equation and it does not give any description of the obtained limit. 
This will be the purpose of the next two subsections.

\begin{remark} \rm 

It is worthwhile to note  that in a similar discussion for the KdV equation, 
one can show convergence of the sequence of regularized solutions 
for {\it any} regularization of the white noise initial data. This is because  local  well-posedness analysis in \cite{KT,OH5} is purely deterministic. 
Furthermore,  renormalization is not necessary for the KdV equation. 
It would be of interest  to investigate whether
 the result of Theorem \ref{THM:soft_version}
 holds for a more general class of regularizations of the white noise than 
 those given by mollification \eqref{smooth1}.

\end{remark}

%
%


\subsection{Renormalized equation}
We now derive the equation satisfied by the limiting distribution  derived in Theorem~\ref{THM:soft_version}.  Given a  global solution $u \in C(\R;  L^2(\T))$ to \eqref{NLS1}, 
we define the following invertible gauge transform:
\begin{equation}
u(t) \longmapsto
\mathcal{G}(u)(t) : = e^{2 i t \fint  |u(x, t)|^2 dx} u(t),
\label{gauge0}
\end{equation}

\noi
where $ \fint  f(x) dx := \frac{1}{2\pi} \int_\T f(x) dx$ 
denotes integration with respect to the normalized Lebesgue measure $(2\pi)^{-1} dx$ on $\T$.
A direct computation  with the mass conservation shows that the gauged function,  which we still denote by $u$,  solves the following renormalized 4NLS:
\begin{equation}\label{NLS2} 
\textstyle		i \dt u  =  \dx^4 u + \Big( |u|^2 -2  \fint |u|^2 dx\Big) u . 
\end{equation}

\noi
Note that the gauge transform $\GG$ is invertible. In particular,  
we can freely convert solutions to \eqref{NLS1} into solutions to \eqref{NLS2} and vice versa as long as they are in $C(\R; L^2(\T))$.
Clearly, the definition \eqref{gauge0}  does not make sense outside $L^2(\T)$ (in space)
and hence the original 4NLS \eqref{NLS1} and the renormalized 4NLS \eqref{NLS2}
are no longer equivalent outside $L^2(\T)$.
As it turns out, 
 the renormalized equation \eqref{NLS2} is the one satisfied 
 by the limiting distribution~$u$ appearing in the
statement of Theorem~\ref{THM:soft_version}.

Just like the original 4NLS \eqref{NLS1},  the $L^4$-Strichartz estimate \eqref{L4a} along with the mass conservation  yields global well-posedness of  the renormalized   4NLS \eqref{NLS2} in $L^2(\T)$.
The important point is that  the renormalization  removes a certain singular component from the cubic nonlinearity;
see \eqref{nonlin1} and \eqref{nonlin2} below.
This allows us to study well-posedness
of the renormalized 4NLS \eqref{NLS2} in negative Sobolev spaces.
In recent papers \cite{Kwak, OW}, 
the renormalized 4NLS \eqref{NLS2} was shown to be locally well-posed in $H^s(\T)$
for $s \geq - \frac 13$ and globally well-posed for $s > -\frac 13$.
Note that the white noise  in \eqref{grand_publique}
lies almost surely in  $H^s(\T)\setminus H^{-\frac 12}(\T)$, $s < -\frac{1}{2}$, 
which is beyond the scope of the known deterministic well-posedness results in~\cite{Kwak, OW}.
For this reason,  the main part of our analysis is devoted to the 
{\it probabilistic} construction of  local-in-time and global-in-time solutions to \eqref{NLS2} with the white noise as initial data.

Note that the renormalization of the nonlinearity in \eqref{NLS2} is canonical in the Euclidean quantum field theory (see, for example, \cite{Simon}).\footnote{To be precise, it is an equivalent formulation 
to the Wick renormalization in handling rough Gaussian initial data.}
This formulation 
first appeared  in the work of Bourgain \cite{BO96}
for studying  the invariant Gibbs measure for the defocusing cubic NLS on $\T^2$.  
See \cite{CO, OS, GO, OTh} for more discussion in the context of the (usual) nonlinear Schr\"odinger equations.
See also Remark \ref{REM:Wick} below.

\subsection{Statements of the well-posedness results}

In the following, we
consider the Cauchy problem for the renormalized 4NLS
\eqref{NLS2} with Gaussian random data in a more general form than  \eqref{grand_publique}. 
For this purpose,  we introduce a family of mean-zero Gaussian measures on 
periodic distributions on $\T$.
Given $\al \in \R$,  consider the  Gaussian measure $\mu_\al$
with formal density: 
\begin{align}
 d \mu_\al   = Z_\al ^{-1} e^{-\frac 12 \| u\|_{H^\al }^2} du  = Z_\al ^{-1} \prod_{n \in \Z} e^{-\frac 12 \jb{n}^{2\al } |\ft u_n|^2}   d\ft u_n .
\label{gauss0}
 \end{align}

\noi
We can indeed view  $\mu_\al $ as  the induced probability measure
under the map $\Xi_\al$ given by
\begin{align}
\Xi_\al: \o \in \O \longmapsto
\Xi_\al(\o)(x): =  \sum_{n \in \Z} \frac{g_n(\o)}{\jb{n}^\al}e^{inx} \in \mathcal D'(\T), 
\label{gauss1}
 \end{align}

\noi
where  $\jb{\,\cdot\,} = (1+|\cdot|^2)^\frac{1}{2}$ and  $\{ g_n \}_{n \in \Z}$ is a sequence of independent standard\footnote 
{By convention, we set  $\text{Var}(g_n) = 1$.} 
complex-valued Gaussian random variables on a probability space $(\O, \mathcal{F}, P)$.
 An easy computation shows that  $\Xi_\al$ in \eqref{gauss1} lies  in $H^{s}(\T)$ for  
\begin{equation}\label{gauss0a}
s < \al -\frac 12
\end{equation}
but not in $H^{\al-\frac 12}(\T)$ almost surely.  In particular, $\mu_\al$ is a Gaussian measure on $H^s(\T)$ and   the triplet $(H^\al, H^s, \mu_\al)$ forms an abstract Wiener space,  provided that $(\al, s)$ satisfies~\eqref{gauss0a}. 
For more details, see \cite{GROSS, Kuo2}.
When $\al = 0$,  
the random Fourier series~\eqref{gauss1} reduces
to that in~\eqref{grand_publique}
and hence 
 the Gaussian measure $\mu_0$ in \eqref{gauss0} corresponds to the white noise
measure. 

Our first step is to construct local-in-time dynamics for 
the renormalized 4NLS \eqref{NLS2} almost surely  with respect to the random initial data of the form:
\begin{align}
 u_0^\o(x)  = \sum_{n \in \Z} \frac{g_n(\o)}{\jb{n}^\al}e^{inx}
\label{IV1}
 \end{align}

\noi
with $\al  \ge 0$. 
For this purpose, we first introduce the following nonlinear operator $Z$ (of infinite degree)
by setting
\begin{align}
\label{z}
Z (f) (t):= \sum_{n\in \Z} 
e^{i(nx - n^4 t)}
\sum_{k = 0}^\infty \frac{(it)^k}{k!}  |\ft f(n)|^{2k}\ft f(n), 
\end{align}

\noi
a priori  defined 
 for smooth functions  $f = \sum_{n \in \Z} \ft f(n) e^{inx}$ on $\T$.
The following theorem addresses almost sure local  well-posedness of 
the renormalized 4NLS \eqref{NLS2} for $\al \geq 0$.

\begin{maintheorem}[Almost sure local well-posedness]
\label{THM:LWP}
Let $\al \geq 0$.   
Then, the renormalized  cubic 4NLS \eqref{NLS2} on $\T$ is  locally well-posed almost surely with respect to the Gaussian measure $\mu_\al$.
More precisely,  there exist $C, c > 0$ such that for each sufficiently small $\dl >0$,  there exists a set $\Omega_\dl \subset \O$ with the following properties:

\smallskip

\begin{itemize}
\item[\textup{(i)}]

$P(\Omega_\dl^c) = \mu_\al \circ \Xi_\al (\Omega_\dl^c) < C e^{-\frac{1}{\dl^c}}$,
where $\mu_\al$ and $\Xi_\al$ are as in \eqref{gauss0} and \eqref{gauss1}.

\smallskip

\item[\textup{(ii)}]
 For each $\o \in \Omega_\dl$, there exists a \textup{(}unique\textup{)} solution $u$ to  \eqref{NLS2} 
with $u|_{t = 0} = u_0^\o$ given by the random Fourier series~\eqref{IV1}
in the class:
\begin{align}
z^\o + C([-\dl, \dl];L^2(\T))
\subset C([-\dl, \dl];H^{s}(\T)), 
\label{class1}
\end{align}

\noi
where $z^\o =  Z(u_0^\o)$ is as in \eqref{z}
and 
\textup{(i)} $s = 0 $ if $\al > \frac 12$
and 
\textup{(ii)} $s = \al - \frac{1}{2}-\eps$
for any  $\eps > 0$,  if $\al \leq \frac 12$.

\end{itemize}

\end{maintheorem}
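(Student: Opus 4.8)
The plan is to construct the solution through the random-resonant\,/\,nonlinear decomposition $u = z^\o + v$, where $z^\o := Z(u_0^\o)$ (with $Z$ as in \eqref{z}) is the singular part and $v$ will lie in $C([-\dl,\dl];L^2(\T))$, then to apply a random gauge transform to $v$ and --- since a contraction map is not available --- to prove convergence of the smooth approximations by studying the resulting partially iterated Duhamel formulation. In the interaction representation $u(t) = \sum_n c_n(t) e^{i(nx-n^4 t)}$, a short convexity argument (using that $x \mapsto x^4+(\sigma-x)^4$ is strictly convex for each fixed $\sigma$) shows that the only resonances $n^4 - n_1^4 + n_2^4 - n_3^4 = 0$ with $n = n_1 - n_2 + n_3$ are the trivial ones $\{n_1, n_3\} = \{n, n_2\}$, and the renormalization in \eqref{NLS2} then leaves precisely the diagonal resonant term, so that $i\dot c_n = -|c_n|^2 c_n + \NN_{\mathrm{nr}}(u)_n$, with $\NN_{\mathrm{nr}}$ the non-resonant trilinear part. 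The operator $Z$ is exactly the solution operator of $i \dot c_n = -|c_n|^2 c_n$ (which preserves each $|c_n|$) composed with the linear flow; hence $z^\o(0) = u_0^\o$, the $n$-th Fourier coefficient of $z^\o(t)$ has modulus $|g_n|\jb{n}^{-\al}$ for every $t$, and $z^\o \in C(\R; H^s(\T))$ a.s.\ with $s$ as in \eqref{gauss0a} and no better. With $u = z^\o + v$ the pure-$z^\o$ diagonal term is cancelled by the construction of $Z$, so $v$ solves an equation with $v(0) = 0$ whose nonlinearity consists of (a) the non-resonant trilinear interactions among $z^\o$ and $v$; (b) the diagonal interactions linear in $v$, namely $-2|g_n|^2\jb{n}^{-2\al}\, c_n^v$ and $-(c_n^z)^2\,\overline{c_n^v}$; and (c) interactions of degree $\ge 2$ in $v$.

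The first interaction in (b) is the random Fourier multiplier $\theta_n := |g_n|^2 \jb{n}^{-2\al}$, unbounded in $n$ when $\al$ is small and hence not a perturbation; the plan is to remove it by the random gauge $c_n^w := e^{-2it\theta_n} c_n^v$, which also fixes the functional framework --- a random Fourier restriction norm space $X^{s,b}_\theta$ adapted to the random modulation $n^4 + 2\theta_n$, for which one checks that the $L^4$-Strichartz estimate \eqref{L4a} and its bilinear refinements persist (possibly with an $\eps$-loss). In the $w$-variable the first term of (b) disappears, while the second becomes $-e^{-2it\theta_n}\beta_n\,\overline{c_n^w}$ with $\beta_n := (g_n \jb{n}^{-\al})^2$, and the decisive structural fact is $|\beta_n| = \theta_n$: the modulus of this remaining bad coefficient equals the frequency of its own time-oscillation. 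Consequently the $2\times 2$ frequency-diagonal system governing the part of the $w$-equation that is linear in $w$ has, in a suitable further gauge, the nilpotent generator $\begin{pmatrix} -\theta_n & -\beta_n \\ \bar\beta_n & \theta_n \end{pmatrix}$ (trace and determinant both vanishing, precisely because $|\beta_n|=\theta_n$), so its flow is affine in time and bounded by $1 + |t|\theta_n$. The interactions (a) and (c) are, in principle, perturbative with respect to the $X^{s,b}_\theta$-Strichartz estimates coming from \eqref{L4a} --- except that the non-resonant terms built out of several copies of $z^\o$ carry, through the infinitely many powers $|g_n|^{2k}$ already present in $Z$ (see \eqref{z}), arbitrarily high powers of the $g_n$'s; these are not Wiener-chaos elements of finite order and must instead be controlled as genuinely random multilinear functionals of $(g_n)_{n\in\Z}$ with bounded phase factors $e^{it\theta_{n_j}}$, typically by conditioning on the moduli $(|g_n|)$ and exploiting the independent uniform phases $g_n/|g_n|$.

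Since no fixed point is available, the plan is to work with the truncated smooth solutions $u_m = z_m^\o + v_m$ and their gauged versions $w_m := e^{-2it\theta^{(m)}} v_m$, $\theta^{(m)}_n = \theta_n \ind_{\{|n|\le m\}}$, and to solve \emph{exactly} the frequency-diagonal linear-in-$w_m$ part of the equation --- whose flow, by the nilpotency above, is the explicit affine propagator bounded by $1 + |t|\theta_n$ --- Duhamel-expanding only the remaining genuinely non-resonant trilinear (and higher-degree) nonlinearity; this is the partially iterated Duhamel formulation under the random gauge. Applied to the difference $w_m - w_{m'}$, it reduces the convergence problem to two ingredients: an a priori bound $\sup_n \theta_n \jb{n}^{-\eps} < \infty$ a.s., used to absorb the amplification $1 + |t|\theta_n$ against a small amount of extra smoothing; and bounds for finitely many random multilinear functionals of the white noise --- sums over non-resonant frequency configurations of products of the $g_{n_j}\jb{n_j}^{-\al}$ and their conjugates, weighted by the modulation denominators supplied by $X^{s,b}_\theta$ and by bounded phases assembled from the $\theta_n$'s.

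Finally, each such functional is to be bounded, off an exceptional set, by combining Gaussian hypercontractivity on its polynomial component with direct oscillatory and concentration estimates for the $e^{it\theta_{n_j}}$ phases (for instance, Taylor-expanding $e^{it\theta_n}$ to finitely many orders with a controlled remainder and applying hypercontractivity order by order); this yields, on a set $\O_\dl$, an estimate of the schematic shape $\|w_m - w_{m'}\|_{C([-\dl,\dl];L^2)} \les \dl^{\kk}\big(1 + \|w_m\|_{X^{s,b}_\theta} + \|w_{m'}\|_{X^{s,b}_\theta}\big)^{K} + o_{m,m'}(1)$ together with an a priori bound for $\|w_m\|_{X^{s,b}_\theta}$. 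The relevant random constants having sub-exponential tails, imposing $\dl^{\kk} M_\o \le \tfrac12$ defines $\O_\dl$ with $P(\O_\dl^c) \le C e^{-1/\dl^c}$, and $P(\O_\dl^c) = \mu_\al \circ \Xi_\al(\O_\dl^c)$ since $\mu_\al = (\Xi_\al)_* P$; on $\O_\dl$ the sequence $\{w_m\}$ is Cauchy in $C([-\dl,\dl];L^2(\T))$. Undoing the gauge yields $v = \lim_m v_m \in C([-\dl,\dl];L^2(\T))$, and $u := z^\o + v$ then lies in the class \eqref{class1} (with $s = 0$ when $\al > \tfrac12$, where $z^\o$ itself belongs to $L^2$); uniqueness in that class follows by running the same iterated estimates on the difference of two solutions. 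I expect the heart of the argument to be the last two stages together: isolating, through the partially iterated Duhamel, the precise random multilinear functionals that govern the dynamics, and then proving their boundedness and $m$-uniform convergence despite the non-polynomial $\theta_n$-phases and the infinite degree of $z^\o$ --- that is, organizing the arbitrarily-high-power contributions so that the full series converges.
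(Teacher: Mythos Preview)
Your high-level plan matches the paper's: the decomposition $u = z^\o + v$, a random gauge, random $X^{s,b}$-spaces, and (at $\al = 0$) convergence of smooth approximations in place of contraction. Two side remarks. For $0 < \al \le \tfrac12$ the paper actually runs a direct contraction for $v$ in $X^{0,\frac12+,\dl}$ (Section~\ref{SEC:LWP1}); the resonant cross-terms $\NN_2(z^\o,v,v)$ and $\NN_2(z^\o,z^\o,v)$ are controlled there using $\sup_n \jb{n}^{-\al}|g_n| < \infty$ a.s., which holds exactly when $\al > 0$, so only $\al = 0$ needs the heavier machinery. Also, the gauge that produces your nilpotent matrix is $d_n = e^{-it\theta_n}c_n^v$ (factor $1$, not $2$); this is precisely the paper's $\J^\o$ in \eqref{Ga}, under which $z^\o$ becomes the random linear solution $S(t)u_0^\o$.

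The substantive divergence --- and the gap --- is what ``partially iterated Duhamel'' means at $\al = 0$. The paper does \emph{not} put the linear-in-$v$ resonant part into a propagator. Instead it uses the cancellation $\Re\bigl(i\,\FF(\NN_2^\o(w))(n)\,\cj{\ft w(n)}\bigr) = 0$ (see \eqref{En}, \eqref{cancel1}, Remark~\ref{mod_scat_det}) to rewrite the \emph{entire} resonant coefficient $|\ft w(n,t)|^2 - |g_n|^2$ as a time-integral $\EE_n(w,w,w,w)$ of purely non-resonant quartic expressions, thereby converting the cubic resonant term $\In_2(w)$ into a quintic $\wt\In_2(w)$ with non-resonant inner structure. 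This single substitution neutralizes the linear, quadratic, and cubic (in $v$) resonant interactions at once, after which Propositions~\ref{PROP:I1} and~\ref{PROP:I2} close. Your nilpotent flow handles only the linear-in-$v$ piece and leaves the quadratic-in-$v$ resonant terms --- of shape $g_n|d_n|^2$ and $\cj{g_n}\,d_n^2$ after gauging --- as ``perturbations'' in your category (c). At $\al = 0$ these are not perturbative in $X^{0,b}$: $|g_n|$ is a.s.\ unbounded in $n$, the estimate of Case~(b) in Subsection~\ref{SUBSEC:LWP1a} fails precisely because it needs $\sup_n\jb{n}^{-\al}|g_n|<\infty$, and absorbing $|g_n|\les \jb n^\eps$ into one copy of $v$ would require $v\in X^{\eps,b}$, which does not close (each pass through the quadratic term loses $\eps$, and the nilpotent correction $-itA_n$ costs a further $\jb n^{2\eps}$). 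The paper's substitution avoids this entirely by turning every resonant interaction into a non-resonant one, where the modulation gain $|\Phi(\bar n)|\ges n_{\max}^2$ is always available. So while the nilpotency is a correct structural observation, it does not replace the cancellation \eqref{cancel1}; the missing idea is how to handle the resonant terms quadratic in $v$.
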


In the next subsections, we discuss an outline of the proof
of Theorem \ref{THM:LWP}.

\begin{remark}\label{REM:uniq0}
\rm 
When $\al > \frac 12$, the random initial data $u_0^\o$ in \eqref{IV1}
belongs almost surely to $L^2(\T)$ and hence the deterministic uniqueness statements apply.
In particular, when  $\al > \frac 23$, 
one can easily modify  the argument in \cite{GuoKO}
to conclude that 
the solution to \eqref{NLS2} is almost surely unconditionally unique, 
namely, uniqueness holds in the entire $C([-\dl, \dl]; H^{\frac 16}(\T))$.
For $\frac 12 < \al \leq \frac 23$, 
the solution is almost surely conditionally unique.  Namely, 
uniqueness holds in an auxiliary function space (the $X^{0, b}$-space for some $b > \frac 12$ in this case) contained in 
$C([-\dl, \dl]; L^2(\T))$.
As for the uniqueness statements
for $0 \leq \al \leq \frac 12$,
  see Remark \ref{REM:unique1} for $0 < \al \leq \frac 12$
and Remark~\ref{REM:uniq2} for $\al = 0$.

\end{remark}

Theorem~\ref{THM:LWP} with $\alpha =0$ shows that 
the renormalized 4NLS 
\eqref{NLS2} is almost surely locally well-posed 
with the white noise in \eqref{grand_publique} as initial data. 
In constructing almost sure global-in-time dynamics, 
we adapt Bourgain's invariant measure argument
 \cite{BO94, BO96} to our setting.
 More precisely, 
we use invariance of the white noise measure under the finite-dimensional approximation of the 4NLS flow 
to obtain a uniform control on the solutions, 
and then apply  a PDE approximation argument
to extend the local solutions to \eqref{NLS2} obtained from Theorem~\ref{THM:LWP} to global ones. 
As a byproduct, we also obtain  invariance of the white noise under the resulting global flow of 
the renormalized 4NLS \eqref{NLS2}.

\begin{maintheorem}[Almost sure global well-posedness and invariance of the white noise]
\label{THM:GWP}

Let $\al = 0$.
Then, the renormalized  4NLS \eqref{NLS2} on $\T$ is globally well-posed almost surely 
with the random initial data $u_0^\o$ given by \eqref{IV1}.
 More precisely,  for almost every $\o\in \Omega$, 
 there exists a unique solution $u$ to \eqref{NLS2} with $u|_{t = 0} = u_0^\o$, satisfying
\begin{align*}
u \in z^\o   + C(\R; L^{ 2}(\T)) \subset C(\R; H^{-\frac12 -\eps}(\T))
\end{align*}

\noi
for any $\eps > 0$, 
where $z^\o = Z(u_0^\o)$.
Furthermore, the white noise measure $\mu_0$  is invariant under the flow.
\end{maintheorem}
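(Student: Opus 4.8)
The plan is to adapt Bourgain's invariant measure argument \cite{BO94, BO96} to the random-resonant\,/\,nonlinear decomposition $u = z^\o + v$, $z^\o = Z(u_0^\o)$, underlying Theorem~\ref{THM:LWP}. For $N \in \N$, I would first let $\Pi_N$ be the sharp frequency projection onto $\{|n| \le N\}$ and consider the frequency-truncated renormalized 4NLS
\begin{equation*}
\textstyle i \dt u_N = \dx^4 u_N + \Pi_N \big( |u_N|^2 - 2 \fint |u_N|^2\, dx \big) u_N , \qquad u_N|_{t=0} = \Pi_N u_0^\o ,
\end{equation*}
which is a Hamiltonian ODE on $\Pi_N L^2(\T)$ conserving the truncated mass $\sum_{|n| \le N} |\ft{u_N}(n)|^2$. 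By Liouville's theorem together with this conservation law, its flow $\Phi_N(t)$ leaves invariant the finite-dimensional Gaussian measure $\mu_{0,N}$, the law of $\Pi_N u_0^\o$; since $\Phi_N(t)$ acts as the identity on the modes $\{|n| > N\}$ and the white noise $\mu_0$ in \eqref{gauss0} factors as $\mu_0 = \mu_{0,N} \otimes \mu_{0,N}^\perp$, it follows that $\mu_0$ itself is invariant under every $\Phi_N(t)$. I would also record that $Z$ in \eqref{z} is precisely the flow of the \emph{resonant} renormalized 4NLS $i \dt \ft w(n) = n^4 \ft w(n) - |\ft w(n)|^2 \ft w(n)$; in particular, $Z$ only rotates each Fourier mode by a phase, preserves $|\ft f(n)|$, satisfies the group property $Z(f)(t+s) = Z\big(Z(f)(t)\big)(s)$, and preserves $\mu_0$ — consistent with the Gaussianity of the limit in Theorem~\ref{THM:soft_version}.

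The proof of Theorem~\ref{THM:LWP} then applies verbatim to the truncated equation with all constants uniform in $N$ (the underlying multilinear estimates do not see $\Pi_N$), producing for each small $\dl > 0$ a set $\Omega_\dl^{(N)}$ with $P\big((\Omega_\dl^{(N)})^c\big) < C e^{-1/\dl^c}$ uniformly in $N$, on which $u_N$ exists on $[-\dl,\dl]$ in the class $Z(\Pi_N u_0^\o) + B_R\big(C([-\dl,\dl]; L^2(\T))\big)$ with radius $R$ independent of $N$. Fix $T \ge 1$ and $\eps > 0$, choose $\dl = \dl(T,\eps)$ so small that $(4T/\dl)\, C e^{-1/\dl^c} < \eps$, and set $\Sigma_{T,\eps}^{(N)} := \bigcap_{|j| \le T/\dl} \Phi_N(j\dl)^{-1}\big(\Omega_\dl^{(N)}\big)$. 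By invariance of $\mu_0$ (equivalently, of the law of $u_0^\o$) under each $\Phi_N(j\dl)$ we get $P\big((\Sigma_{T,\eps}^{(N)})^c\big) < \eps$, and on $\Sigma_{T,\eps}^{(N)}$ one iterates the local construction over the $O(T/\dl)$ subintervals of length $\dl$, restarting at each $t = j\dl$ from the (again white-noise-distributed) datum $u_N(j\dl)$, and — modulo the discrepancy discussed below — obtains a bound on $\| u_N - Z(\Pi_N u_0^\o) \|_{C([-T,T]; L^2(\T))}$ uniform in $N$.

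Invoking next the convergence mechanism from the proof of Theorem~\ref{THM:LWP} — the partially iterated Duhamel formulation under the random gauge transform, in place of a contraction argument — on each of the $O(T/\dl)$ subintervals upgrades this uniform bound to the statement that $\{u_N\}$ is Cauchy in $C([-T,T]; H^{-\frac12-\eps}(\T))$; its limit solves \eqref{NLS2} in the Duhamel sense and lies in $z^\o + C([-T,T]; L^2(\T))$. Running this over $T \in \N$ and $\eps \downarrow 0$ and applying Borel--Cantelli to the exceptional sets produces a full-measure set on which the solution exists globally in $z^\o + C(\R; L^2(\T)) \subset C(\R; H^{-\frac12-\eps}(\T))$, with uniqueness on compact time intervals inherited from Theorem~\ref{THM:LWP}; denote the resulting global flow by $\Phi(t)$. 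Finally, for $F \in C_b(H^s(\T))$ with $s < -\frac12$ and any $t \in \R$, dominated convergence (using $\Phi_N(t) u_0 \to \Phi(t) u_0$ $\mu_0$-a.s.\ and that the exceptional sets have vanishing measure) together with the $N$-level invariance yields
\begin{equation*}
\int F\big(\Phi(t) u_0\big)\, d\mu_0(u_0) = \lim_{N \to \infty} \int F\big(\Phi_N(t) u_0\big)\, d\mu_0(u_0) = \int F(u_0)\, d\mu_0(u_0) ,
\end{equation*}
i.e.\ invariance of the white noise $\mu_0$ under the flow $\Phi(t)$ of \eqref{NLS2}.

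\textbf{Main obstacle.} The technical heart is making the above iteration yield a \emph{genuinely uniform-in-$N$} bound. The invariant measure controls $u_N(j\dl)$ only in $H^s$, $s<-\frac12$, whereas one needs to control the good variable $v_N = u_N - Z(\Pi_N u_0^\o)$ in $L^2(\T)$; and at a restart time $t = j\dl$ the local theory naturally provides control of $u_N - Z(u_N(j\dl))(\,\cdot\,)$ rather than of $v_N$, so these must be reconciled. Since the map $f \mapsto f - Z(f)(t)$ carries no $L^2$-smoothing — $\widehat{Z(f)(t)}(n)$ has the same modulus as $\ft f(n)$ — and the naive frequency-by-frequency estimate for the difference $Z(u_N(j\dl))(\,\cdot\,) - Z(\Pi_N u_0^\o)(j\dl + \,\cdot\,)$ degenerates as $N \to \infty$, one must exploit the probabilistic structure of $\{g_n\}$ together with the precise form of the remainder coming from the partially iterated Duhamel formulation under the random gauge transform, so that the errors accumulated over the $O(T/\dl)$ restarts stay bounded uniformly in $N$. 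This, layered on top of the already substantial local theory and its uniformity in the truncation, is where the real difficulty lies.
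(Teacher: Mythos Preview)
Your proposal follows essentially the same route as the paper: truncated renormalized 4NLS with invariant finite-dimensional white noise (extended to all modes via the factorization $\mu_0 = \mu_{0,N}\otimes\mu_{0,N}^\perp$; the paper lets high modes flow linearly rather than sit still, but this is equivalent), uniform-in-$N$ local theory from Theorem~\ref{THM:LWP}, Bourgain's iteration over $O(T/\dl)$ subintervals using invariance of $\mu_0$ under $\Phi_N$, convergence of the truncated solutions via the Cauchy estimates from Section~\ref{SEC:LWP2}, Borel--Cantelli for globalization, and invariance of $\mu_0$ in the limit by dominated convergence.

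The ``main obstacle'' you isolate is a real technical point, but it is somewhat overstated and, in the paper's gauged formulation, dissolves more cleanly than your $Z$-based discussion suggests: working with the \emph{fixed} gauge $\J_N^\o$ and $v^N = w^N - S(t)u_{0,N}^\o$, the re-centering mismatch at $t=j\dl$ reduces (mode by mode) to the quantity $|\ft{w^N}(n,j\dl)|^2 - |g_n^N|^2 = \EE_n^N(w^N,\dots,w^N)(j\dl)$, which is exactly what is controlled in~\eqref{eD24} of the proof of Proposition~\ref{PROP:I2}. So the same estimate that drives the local smoothing of $\wt\In_{2,N}^\o$ also tames the growth of $v^N$ across restarts; the paper packages this into Lemma~\ref{LEM:AlmostGWP} and defers the (by-now standard) bookkeeping to \cite{BO94,BO96,BT2,R0,R}.
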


\begin{remark}
\rm

When $\al > \frac 16$, 
the deterministic global well-posedness \cite{OW} of 
the renormalized 4NLS \eqref{NLS2} in  $H^s(\T)$, $s > -\frac 13$,  implies
almost sure global well-posedness of \eqref{NLS2}
with the random initial data $u_0^\o$ in \eqref{IV1}
since the random initial data $u_0^\o$ almost surely belongs to $H^s(\T)$ for some $s >  - \frac 13$.

The proof of Theorem \ref{THM:GWP}
heavily depends on (formal) invariance of the white noise measure
and hence is not applicable
for the case $\al \in (0, \frac{1}{6}]$.
In \cite{CO}, Colliander and the first author 
adapted Bourgain's high-low decomposition method \cite{BO98}
to prove almost sure global well-posedness
of the renormalized NLS (with the second order dispersion)
with the random initial data of the form \eqref{IV1} below $L^2(\T)$
(without relying on any invariant measure).
The same approach is expected to  yield
almost sure global well-posedness of the renormalized 4NLS~\eqref{NLS2}
for some range  of $\al  \in (0, \frac{1}{6}]$.
 We do not pursue this analysis here.

\end{remark}

\begin{remark}
\rm
The solution $u$ constructed in Theorems \ref{THM:LWP} and \ref{THM:GWP}
 has a structure: 
 \[\text{$u =$ random {\it nonlinear} term $+$ smoother term}.\]
 
 \noi
 See \eqref{expxx}.
 This  is quite different from the standard probabilistic well-posedness results
 as in  \cite{BO96,BT1}, 
 where a solution $u$ has the structure: 
 \begin{align}
 \text{$u =$ random linear term $+$ smoother term}.
 \label{DD}
 \end{align}

\noi
In the field of stochastic PDEs, 
a well-posedness argument based on 
the decomposition~\eqref{DD}
is usually referred to 
as the Da Prato-Debussche trick.
When the decomposition~\eqref{DD} is not sufficient, 
one may try to write
a solution as the sum of  {\it finitely many} stochastic terms
plus a smoother remainder.
See for example \cite{GIP, Hairer}.

In the context of nonlinear dispersive PDEs, 
there are recent works
\cite{BOP3, OPT}, 
where  a solution theory was built,  based
on the decomposition 
of a solution as the sum of   finitely many stochastic terms
plus a smoother remainder.
A remarkable new feature of the decomposition
used in Theorems~\ref{THM:LWP} and~\ref{THM:GWP}
is that the series expansion \eqref{z} 
 for $ Z(u_0^\o)$ 
consists
not only  of the free solution (i.e.~$k=0$ in~\eqref{z}) but also of 
{\it infinitely many} higher order corrections 
terms $k \geq 1$.
As a consequence, 
$z^\o = Z(u_0^\o)$ depends on arbitrarily high powers of Gaussian random variables
and hence it does {\it not} belong to Wiener chaoses $\H_{\leq k}$,
defined in \eqref{Wiener}, of any finite order.
See also Remark \ref{REM:infinite}.

\end{remark}

\begin{remark}\label{REM:scattering}
\rm

A decomposition such as \eqref{DD}
is not only useful in establishing well-posedness of a given equation, but also provides a finer regularity description of a solution thus obtained.
For example, 
The decomposition \eqref{DD} states that 
 in the high frequency regime (i.e.~at small spatial scales on the physical side), 
 the dynamics is essentially governed by that of the random linear solution.
See also Remark \ref{REM:infinite}\,(ii).
In  \cite[Page~62]{BOPCMI}, Bourgain made an ``analogy'' of the decomposition \eqref{DD}
to  {\it scattering}
  (i.e.~a nonlinear solution behaving like a linear solution asymptotically as $t \to \pm \infty$) 
by saying 
``This property [namely the decomposition~\eqref{DD}] reminds of ``scattering" occurring in certain dispersive models"
in the sense that 
in both
the decomposition \eqref{DD}
and scattering, the dominant part of dynamics is given by the linear dynamics.

In our solution theory, we have the decomposition
\[\text{$u = z^\o +$ smoother term,}\] 

\noi
where $z^\o = Z(u_0^\o)$.
Namely, the dominant part is {\it nonlinear} (with an explicit structure).
In this context, one may wish to say that the results of Theorems~\ref{THM:LWP} and~\ref{THM:GWP}
  remind of {\it modified scattering} occurring in certain dispersive models \cite{Ozawa, HN, HPTV},
where the asymptotic dominant dynamics  is given not by a linear dynamics
but by a certain nonlinear dynamics.
See Remark~\ref{mod_scat_det} below for more details on this analogy.  

\end{remark}

\begin{remark}\label{REM:Wick}\rm
Instead of the renormalized 4NLS \eqref{NLS2}, 
one may  work with  the 
Wick renormalization
to study the same problem.
Disadvantage
for this approach is that  there is no equation for the limiting dynamics.
The limit $u$ of smooth approximating solutions
would formally ``satisfy''
 \begin{equation}
	i \dt u  =  \dx^4 u +  |u|^2 u - \infty \cdot  u . 
\label{NLS2a}
\end{equation}

\noi
This is in sharp contrast with the case of the renormalized 4NLS \eqref{NLS2}, 
where the renormalized nonlinearity has a well defined meaning
as a cubic operator, defined a priori on smooth functions.
 See \eqref{nonlin1} and \eqref{nonlin2}.
 Lastly, we point out that if the Gaussian measure $\mu_\al$ in~\eqref{gauss0}
 were invariant, then one could show that the
 renormalized 4NLS \eqref{NLS2}
 is equivalent to the Wick ordered 4NLS \eqref{NLS2a}
 in a suitable limiting sense, 
 provided that 
$\al > \frac 14$.
See Section~3 in~\cite{OS}.
Unfortunately, such invariance is true only for $\al = 0$.
\end{remark}

\subsection{Outline of the well-posedness argument}
When $\al > \frac 12$,  it follows from \eqref{gauss0a} that our random  initial data $u_0^\o$ defined 
in \eqref{IV1} belongs to $L^2(\T)$ almost surely.
Hence,  the aforementioned deterministic global well-posedness of \eqref{NLS2} in $L^2(\T)$
implies Theorem \ref{THM:LWP} in this case.
Therefore,  we focus on the case $0\leq \al \leq \frac 12$ in the following.

 When $0\leq \al \leq \frac 12$, 
the random   initial data $u_0^\o$ in \eqref{IV1} lies  strictly in negative Sobolev spaces almost surely.
In view of the failure of the local uniform continuity of the solution map  in these spaces (see \cite{CO, OTz1}),  it is non-trivial to construct solutions to \eqref{NLS2} in negative Sobolev spaces
since a straightforward contraction argument fails in this regime.
For $\al > \frac 16$, the random initial data $u_0^\o$ in \eqref{IV1} 
almost surely belongs to $H^s(\T)$ for some $s > -\frac 13$ and hence
the global well-posedness in \cite{OW}  based on a more robust energy method is  applicable
to conclude Theorem \ref{THM:LWP}.
In the following,  however, 
we present a uniform approach 
to construct local-in-time solutions in a probabilistic manner for $0 \leq \al \leq \frac 12$
by making use of randomness of the initial data $u_0^\o$ in \eqref{IV1}.

By writing \eqref{NLS2} in the Duhamel formulation, we have
\begin{align}
u(t) = S(t) u_0^\o - i \int_0^t S(t - t') \NN(u)(t') dt',
\label{NLS3}
\end{align}

\noi
where $S(t) = e^{-it\dx^4}$ denotes the linear propagator
and 
\begin{align}
\NN (u) = \bigg(|u|^2 -2 \fint |u|^2 dx\bigg) u.
\label{nonlin0}
\end{align}
Next, we make an important decomposition of the nonlinearity $\NN(u)$
into resonant and non-resonant parts. Namely,  define trilinear operators 
$\mathcal{N}_1$ and $\mathcal{N}_2$ by setting
\begin{align}
\label{nonlin1}
& \mathcal{N}_1(u_1, u_2, u_3) (x, t) 
: = \sum_{n \in \Z}
\sum_{\G(n)}
 \ft{u}_1(n_1, t)\cj{\ft{u}_2(n_2, t)}\ft{u}_3(n_3, t) e^{i(n_1-n_2+n_3)x}, \\
\label{nonlin2}
& \mathcal{N}_2(u_1, u_2, u_3) (x, t) : = - \sum_n \ft{u}_1(n, t)\cj{\ft{u}_2(n, t)}\ft{u}_3(n, t) e^{inx}, 
\end{align}

\noi
where $\G(n)$ denotes the hyperplane:
\begin{align}
\G(n) : = \big\{(n_1, n_2, n_3) \in \Z^3:\,  n = n_1 - n_2 + n_3 \text{ and }  n_1, n_3 \ne n\big\}.
\label{Gam0}
\end{align}

\noi
When all the arguments coincide, 
we simply  write $\NN_k(u) =\NN_k(u, u, u)$, $k = 1, 2$.
The term $\NN_1(u)$ denotes
the non-resonant part of the renormalized nonlinearity $\NN(u)$, 
while $\NN_2(u)$ denotes the resonant part.
Then, the renormalized  nonlinearity $\mathcal{N}(u) $ 
can be written as 
\begin{align*}
 \mathcal{N}(u) 
& = \mathcal{N}_1(u) + \mathcal{N}_2(u). 
\end{align*}

Let us first go over  the basic idea of the probabilistic local well-posedness, as developed for instance in  \cite{BO96, BT1, Th1, CO, NS}.  See also \cite{McKean}.
This argument is based on the following first order expansion:
\begin{align}
 u = z_1^\o + v,
 \label{exp1}
\end{align}

\noi
where $z_1^\o$ denotes the random linear solution defined by 
\begin{align}
z_1^\o(t) := S(t) u_0^\o.
\label{lin1}
\end{align}

\noi
By rewriting  \eqref{NLS3} as a  fixed point problem
for the residual term~$v: = u - z_1^\o$, we obtain
the following perturbed renormalized  4NLS:
\begin{align}
v(t) =  - i \int_0^t S(t - t') \NN(v + z_1^\o)(t') dt'.
\label{NLS4}
\end{align}

\noi
Then, the main aim is to   solve this fixed point problem
for $v$ in  $L^2(\T)$,\footnote
{Strictly speaking, we need to consider 
the fixed point problem \eqref{NLS4} in some appropriate function space $X_\dl \subset C([-\dl, \dl]; L^2(\T))$.
For simplicity, however, we only discuss the spatial regularity
and suppress its time dependence.
A similar comment applies in the following.
In particular, in discussing spatial regularity of a space-time distribution, 
we may suppress its time dependence.}
where the unperturbed equation \eqref{NLS2}
is deterministically well-posed by a simple contraction argument.
In particular, it is crucial to 
make use of 
probabilistic tools 
(for example, see Subsection~\ref{SUBSEC:prob})
and 
show 
 that the perturbation
$\NN(v + z_1^\o)-\NN(v )$
is {\it smoother} than the random linear solution $z_1^\o$
and lies in $L^2(\T)$ for each $t$.
When $\al > \frac 16$, this can be indeed achieved 
and we can show that for each small $\dl > 0$, there exists $\O_\dl \subset \O$
with $P(\O_\dl^c) < Ce^{-\frac{1}{\dl^c}}$
such that 
for each $\o \in \O_\dl$, there exists
a solution $u = z_1^\o+ v$ to the renormalized  4NLS \eqref{NLS2}  in the class:
\begin{align*}
z_1^\o + C([-\dl, \dl]; L^2(\T))
\subset  C([-\dl, \dl]; H^s(\T)),
\end{align*}

\noi
for $s < \al - \frac 12$.
The most singular contribution 
on the right-hand side of \eqref{NLS4} is given by 
\begin{align}
 z_3^\o(t) := - i \int_0^t S(t - t') \NN_2( z_1^\o)(t') dt'
  =  i t \sum_{n\in \Z} \frac{|g_n|^2 g_n}{\jb{n}^{3\al}}e^{i(nx - n^4 t)}
\label{lin2}
\end{align}

\noi
where $\NN_2$ is as in  \eqref{nonlin2},   denoting the resonant interaction.
This resonant cubic\footnote
{Namely, $z_3^\o$ in \eqref{lin2}
is trilinear in the random initial data.}
term is responsible
for the restriction $\al > \frac 16$.
It is easy to see that 
$z_3^\o(t)$ lies in   $H^s(\T) \setminus H^{3\al - \frac 12}(\T)$ 
almost surely 
for
\begin{align*}
s < 3\al - \frac 12.
\end{align*}

\noi
In particular, 
when $\alpha>\frac 16$, 
the $L^2$-deterministic well-posedness theory 
 (via a contraction argument) becomes available for solving 
 the perturbed equation \eqref{NLS4}. 
As mentioned above, 
the case  $\al > \frac 16$
is also covered by 
the deterministic well-posedness in~\cite{Kwak, OW} (based on a more robust energy method)
and thus our main goal in the following is to treat lower values of $\al$.

\begin{remark}\rm
This argument is basically the Da\,Prato-Debussche trick in the context of stochastic PDEs 
\cite{DP1, DP2}, where the random linear solution is replaced
by the  solution to a linear stochastic PDE. See \cite{HairerICM} for a concise discussion on 
the Da Prato-Debussche trick. 
It is worthwhile to point out that 
the paper \cite{McKean, BO96} by McKean and Bourgain 
 precede  \cite{DP1, DP2}.
\end{remark}


According to the  discussion above, 
the basic probabilistic argument based on the first order expansion \eqref{exp1}
does not work 
for our problem when $\al \leq \frac 16$ because  the second order term $z_3^\o$ does not belong to  $L^2(\T)$
almost surely if $\al \leq \frac 16$.
See also Case (b) in Subsection~4.2 of \cite{CO}.
This shows that we can not solve the fixed point problem \eqref{NLS4}
in $L^2(\T)$ when $\al \leq \frac 16$.

A natural next step would be to 
consider the following second order expansion:
\[u = z_1^\o + z_3^\o +  v\]

\noi
for a solution $u$ to \eqref{NLS2} 
and study the equation satisfied by the residual term $v := u - z_1^\o - z_3^\o $:
\begin{equation*}
	\begin{cases}
i \dt v  =  \dx^4 v +
\big[\NN(v + z_1^\o + z_3^\o) - \NN_2(z_1^\o)\big]\\
v|_{t= 0} = 0.
	\end{cases}
\end{equation*}

\noi
Namely, we consider the  following fixed point problem:
\begin{align}
v(t) = 
 - i \int_0^t S(t - t') \big[\NN(v + z_1^\o + z_3^\o) - \NN_2(z_1^\o)\big] (t')dt'.
\label{NLS6}
\end{align}

\noi
Note that the worst contribution $z_3^\o$ in the first step
coming from the resonant interaction $\NN_2(z_1^\o)$
is now eliminated.
We can then perform case-by-case nonlinear analysis
on $\NN_k(u_1, u_2, u_3)$, $k = 1, 2$, in the spirit of \cite{BO96, CO}, 
where each $u_j$ can be $z_1^\o$, $z_3^\o$, or the smoother unknown function $v$ 
except for the case $u_1 = u_2 =u_3 = z_1^\o$ with  $k = 2$. 
This allows us to 
show that 
the fixed point problem \eqref{NLS6} for the residual term $v$ is almost surely locally well-posed in $L^2(\T)$,
provided that $\al > \frac 1{10}$. 
Recalling  that $z_1^\o, z_3^\o \in C(\R; H^s(\T))$
for  $s $ satisfying \eqref{gauss0a}, we obtain 
a solution $u = z_1+z_3 + v$ to the renormalized 4NLS~\eqref{NLS2} 
  in the class:
\begin{align*}
z_1^\o + z_3^\o + C([-\dl, \dl]; L^2(\T))
\subset  C([-\dl, \dl]; H^s(\T))
\end{align*}

\noi
almost surely, 
for $s < \al - \frac 12$.

In this second step, the restriction $\al > \frac 1 {10}$ 
comes from the  following resonant quintic
term in \eqref{NLS6}: 
\begin{align}
 z_5^\o(t) : \! & = - i \sum_{\substack{j_1, j_2, j_3 \in 2\N - 1\\j_1 + j_2 + j_3 = 5}}
 \int_0^t S(t - t') \NN_2( z_{j_1}^\o, z_{j_2}^\o, z_{j_3}^\o)(t') dt'  \notag \\
&     =  - \frac{ t^2}{2} \sum_{n\in \Z} \frac{|g_n|^4 g_n}{\jb{n}^{5\al}}e^{i(nx - n^4 t)}.
\label{lin3}
\end{align}

\noi
Given $t \in \R$,  it is easy to see that
$z_5^\o(t)$ lies in   $H^s(\T) \setminus H^{5\al - \frac 12}(\T)$ 
almost surely 
for 
\begin{align*}
s < 5\al - \frac 12.
\end{align*}

\noi
In particular, $z_5^\o(t)$ does not lie in $L^2(\T)$
almost surely if $\al \leq \frac 1{10}$.

One can repeat this process in an obvious manner.
Namely,  consider the following third order expansion:
\[ u = z_1^\o + z_3^\o + z_5^\o +  v\]

\noi
for a solution $u$ to \eqref{NLS3} 
and study 
 the fixed point problem for $v = u - z_1^\o - z_3^\o - z_5^\o$.
From the discussion above, 
we see that the limitation comes from the resonant septic term, 
yielding the restriction of $\al > \frac 1 {14}$.

In general, in the  $k$th step,  
we could write a solution $u$ to \eqref{NLS3} as 
\begin{align}
u =  v + \sum_{j = 1} ^kz_{2j-1}^\o
\label{expk}
\end{align}

\noi
and consider the fixed point problem for $v = u - \sum_{j = 1} ^kz_{2j-1}^\o$.
Here, $z_{2j-1}$
denotes 
 the following resonant $(2j-1)$-linear term (in the random initial data):
\begin{align}
 z_{2j-1}^\o(t) : = - i \sum_{\substack{j_1, j_2, j_3 \in 2\N - 1\\j_1 + j_2 + j_3 = 2j-1}}
 \int_0^t S(t - t') \NN_2( z_{j_1}^\o, z_{j_2}^\o, z_{j_3}^\o)(t') dt'.
 \label{lin4}
\end{align}

\noi
Proceeding as before, it is easy to see that 
the limitation
in this $k$th step  comes from 
$z_{2k+1}^\o$
yielding the restriction of 
\begin{align}
\al > \frac 1 {2(2k+1)}
\label{al_k}
\end{align}

\noi
which is needed to guarantee that $z_{2k+1}^\o(t)$ belongs almost surely to $L^2(\T)$.

The restriction \eqref{al_k} shows that, 
in order to treat the $\al = 0$ case, we {\it at least} need an infinite iteration 
of this procedure.
Furthermore, 
the argument based on the $k$th order expansion \eqref{expk}
leads to the following equation 
for the 
 the residual term
$v = u - \sum_{j = 1} ^kz_{2j-1}^\o$:
\begin{equation*}
\begin{cases}
\displaystyle
 i \dt v =  \dx^4 v+  \NN\bigg(v +\sum_{j= 1}^{k} z_{2j-1}^\o\bigg) 
- 
 \sum_{\substack{j_1 + j_2 + j_3 \in \{3, 5, \dots, 2k-1\} 
\\j_1, j_2, j_3 \in \{1, 3, \dots, 2k-3\}}} 
\NN_2( z_{j_1}^\o,  z_{j_2}^\o, z_{j_3}^\o)\\
v|_{t = 0} = 0.
 \end{cases}
\end{equation*}

\noi
In particular, we need to carry out
 the following case-by-case nonlinear analysis on 
\begin{align*}
& \NN_\l  (u_1, u_2,  u_3), \quad  \l = 1, 2, 
\end{align*}

\noi
where each $u_i$,  $i = 1, 2, 3$, can be 
either the smoother unknown function
$v$ or $ z_j^\o$ for some $j \in \{1, 3, \dots, 2k-1\}$
 such that 
it is not of the form $\NN_2 (z_{j_1}, z_{j_2}, z_{j_3}) $ with 
$j_1 + j_2 + j_3  
\in  \{3, 5, \dots, 2k-1\}$.
In general, it could be a cumbersome task to 
carry out this case-by-case analysis
due to 
the increasing number of combinations.
In the next subsection, we will describe an approach
to overcome this issue.

\begin{remark}\rm
In \cite{BOP3}, the first author with B\'enyi and Pocovnicu
studied the cubic NLS on $\R^3$ with random initial data
based on a higher order expansion (of order $k$), analogous to~\eqref{expk}.
In order to avoid a combinatorial nightmare
in relevant case-by-case analysis
for high values of $k$,
the authors introduced a modified expansion of order $k$, which simplified the relevant analysis 
in a significant manner.
We point out that the analysis in \cite{BOP3} is significantly simpler than
that in the current paper, 
since (i) the random data considered in \cite{BOP3} 
are of positive regularities and (ii) the refinement of the bilinear Strichartz estimates
\cite{BO98, OzawaT} are available on the Euclidean space.
We also mention a recent work \cite{OPT}
on the probabilistic local well-posedness of the 
three-dimensional cubic nonlinear wave equation in negative Sobolev spaces, 
where the main analysis is based on the second order expansion.

\end{remark}

\subsection{The $\alpha > 0$ case}
\label{SUBSEC:a>0}

In this subsection, we describe an outline
of the proof of Theorem~\ref{THM:LWP}
for the $\al > 0$ case.
In the next subsection, we discuss additional ingredients required
to treat the $\al = 0 $ case.

In view of the restriction \eqref{al_k}, 
we need to iterate  indefinitely the procedure described above
 in order to treat  arbitrary $\al > 0$.
For this purpose, we define $z^\o$ by 
\begin{align}
z^\o = \sum_{j = 1} ^\infty z_{2j-1}^\o.
\label{expx}
\end{align}

\noi
Then, 
 from \eqref{lin1}, \eqref{lin2}, \eqref{lin3}, and \eqref{lin4},
 we see that $z^\o$ defined in \eqref{expx} is nothing but a power series expansion
 of a  solution to the following {\it resonant} 4NLS:
\begin{equation}
\begin{cases}
i \dt z^\o  =  \dx^4 z^\o 
+ \NN_2(z^\o)\\
z^\o|_{t= 0} = u_0^\o,
\end{cases}
\label{ZNLS1} 
\end{equation}

\noi
where $u_0^\o$ is the random initial data defined in \eqref{IV1}.
By letting $\z(t) = S(-t) z^\o(t)$, we see that 
$\ft \z_n(t) = \ft \z(n, t)$ satisfies the following ODE:
\begin{equation}
\begin{cases}
i \dt \ft \z_n  =   -|\ft \z_n|^2\ft \z_n\\
\ft \z_n|_{t= 0} = \frac{g_n}{\jb{n}^\al},
\end{cases}
\label{ZNLS2} 
\end{equation}

\noi
for each $n \in \Z$.
By the explicit formula of solutions to \eqref{ZNLS2}, 
we have 
\begin{align}
\ft \z_n(t) = e^{it |\ft \z_n(0)|^2} \ft \z_n(0).
\label{Zlin0}
\end{align}

\noi
Hence, we can express $z^\o$ as 
\begin{align}
z^\o(t) & = \sum_{n\in \Z} e^{i(nx - n^4 t)} e^{it \frac{|g_n|^2}{\jb{n}^{2\al}}}  \frac{g_n}{\jb{n}^{\al}}.
\label{Zlin1}
\end{align}

\noi
By expanding in a power series, we obtain
\begin{align}
z^\o(t) &  = \sum_{n\in \Z} 
e^{i(nx - n^4 t)}
\sum_{k = 0}^\infty \frac{(it)^k}{k!}  \frac{|g_n|^{2k}g_n}{\jb{n}^{(2k+1)\al}}\,.
\label{Zlin1a}
\end{align}

\noi
By comparing \eqref{z} and \eqref{Zlin1a} with \eqref{IV1}, we obtain
\[z^\o = Z(u_0^\o).\]

\noi
Note that,
unlike the random linear solution $z_1^\o$ in \eqref{lin1} and 
 other lower order terms $z^\o_{2j - 1}$ in~\eqref{lin4}, 
the random resonant solution 
$z^\o$ depends on arbitrarily high powers of Gaussian random variables
and hence it does {\it not} belong to Wiener chaoses of any finite order.
Nonetheless, 
the formula \eqref{Zlin1} shows that 
$z^\o$ has a particular simple structure, 
 allowing us to study its  regularity properties; 
see  Lemmas~\ref{LEM:Z} and ~\ref{LEM:Z1} below.
In carrying out analysis on the random resonant solution $z^\o$ 
involving the $X^{s, b}$-spaces, 
we instead need to make use of the series expansion  \eqref{Zlin1a}
and apply  Lemma~\ref{LEM:Z2} below for each $k$.

\begin{lemma}\label{LEM:Z}
Given $\al \in \R$, 
let $z^\o$ be as in \eqref{Zlin1}.
Then, $z^\o$ belongs
to $C(\R; H^s(\T))$
almost surely, provided that $s < \al - \frac 12$.
\end{lemma}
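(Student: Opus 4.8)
The plan is to estimate the $H^s(\T)$-norm of $z^\o(t)$ using the explicit formula \eqref{Zlin1} and the observation that the gauge factor $e^{it|g_n|^2/\jb{n}^{2\al}}$ has modulus one, so it does not affect the size of individual Fourier coefficients. Indeed, from \eqref{Zlin1} we read off
\begin{align*}
\|z^\o(t)\|_{H^s}^2 = \sum_{n\in\Z} \jb{n}^{2s} \frac{|g_n|^2}{\jb{n}^{2\al}} = \sum_{n\in\Z} \jb{n}^{2s-2\al} |g_n|^2,
\end{align*}
which is \emph{independent of $t$}. Thus the spatial regularity of $z^\o(t)$ is exactly that of the initial data $u_0^\o$ in \eqref{IV1}, whose membership in $H^s(\T)$ for $s < \al - \frac 12$ (and failure at the endpoint) is the standard computation recalled around \eqref{gauss0a}. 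Concretely, $\E\big[\|z^\o(t)\|_{H^s}^2\big] = \sum_n \jb{n}^{2s-2\al} < \infty$ precisely when $2s - 2\al < -1$, i.e.\ $s < \al - \frac 12$; since the summands are nonnegative, Tonelli gives $\|z^\o(t)\|_{H^s} < \infty$ almost surely for each fixed $t$, and in fact the bound is uniform in $t$.

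The remaining point is continuity in time, i.e.\ that $z^\o \in C(\R; H^s(\T))$ rather than merely $z^\o(t) \in H^s(\T)$ for each $t$. First I would fix $\o$ in the full-measure set on which $\sum_n \jb{n}^{2s-2\al}|g_n(\o)|^2 < \infty$. For such $\o$, each Fourier coefficient $t \mapsto e^{i(nx - n^4 t)} e^{it|g_n|^2/\jb{n}^{2\al}} g_n/\jb{n}^\al$ is a continuous (indeed smooth) function of $t$ with values in $\C$, and the partial sums $\sum_{|n| \le N}$ define continuous $H^s(\T)$-valued functions. To pass to the limit I would show that these partial sums converge in $C([-T,T]; H^s(\T))$ for every $T$: the tail $\sum_{|n|>N} \jb{n}^{2s} |\ft z^\o_n(t)|^2 = \sum_{|n|>N} \jb{n}^{2s-2\al}|g_n(\o)|^2$ is bounded by a quantity independent of $t$ that tends to $0$ as $N \to \infty$ by dominated convergence against the summable majorant. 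Hence the convergence is uniform in $t$ on all of $\R$, and a uniform limit of continuous $H^s(\T)$-valued functions is continuous.

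I do not expect any genuine obstacle here; the lemma is essentially a bookkeeping statement that unwinds \eqref{Zlin1}. The only mild subtlety worth spelling out is the interchange of the expectation and the infinite sum when verifying almost sure finiteness, which is justified by Tonelli's theorem since all terms are nonnegative, and the fact that the same summable majorant $\sum_n \jb{n}^{2s-2\al}|g_n(\o)|^2$ controls $\|z^\o(t)\|_{H^s}^2$ simultaneously for \emph{all} $t \in \R$ (because $|e^{it|g_n|^2/\jb{n}^{2\al}}| = 1$), which is what upgrades the pointwise-in-$t$ statement to the locally uniform, hence continuous, statement. If one prefers to avoid even this, one can note that for fixed $\o$ in the good set the map $t \mapsto z^\o(t)$ is actually the solution of the ODE system \eqref{ZNLS2} in $H^s(\T)$, which depends continuously (smoothly) on $t$ by the explicit solution formula \eqref{Zlin0} together with the same tail estimate.
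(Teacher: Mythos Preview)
Your proof is correct and follows the same overall strategy as the paper: exploit that $|\ft{z^\o}(n,t)| = |g_n|/\jb{n}^\al$ is independent of $t$, produce a time-uniform $\ell^2$ majorant for the Fourier coefficients, and deduce continuity by dominated convergence (equivalently, uniform convergence of partial sums). The one genuine difference is in how the majorant is obtained. The paper invokes Lemma~\ref{LEM:prob} to get the pointwise bound $|g_n(\o)| \le C(\o)\jb{n}^\eps$ and then uses the deterministic $\ell^2$ sequence $C(\o)\jb{n}^{s-\al+\eps}$ as dominator, at the cost of an extra $\eps$ of room. You instead use the random sequence $\jb{n}^{s-\al}|g_n|$ itself and establish its $\ell^2$ summability by the second-moment computation $\E\big[\sum_n \jb{n}^{2s-2\al}|g_n|^2\big] = \sum_n \jb{n}^{2s-2\al} < \infty$ together with Tonelli. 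Your route is slightly more elementary in that it does not rely on Lemma~\ref{LEM:prob}, and it gives the sharp threshold directly without needing the auxiliary $\eps$; the paper's route has the minor advantage that the same pointwise Gaussian bound is reused elsewhere in the argument, so it costs nothing extra there.
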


\begin{proof} 
Fix $\eps > 0$ sufficiently small such that 
\begin{align}
s + \eps < \al - \tfrac12.
\label{zintro1}
\end{align}

\noi
Lemma \ref{LEM:prob} below states that 
we have
\begin{align}
\sup_{n \in \Z} |g_n(\o)| \leq C(\o) \jb{n}^\eps
\label{zintro2}
\end{align}

\noi
for some almost surely finite constant $C(\o)> 0$.

For fixed $t \in \R$, let $\{t_j\}_{j = 1}^\infty$ be a sequence
converging to $t$.
Then, for each $n \in \Z$, 
it follows from \eqref{Zlin0} that 
$\ft{z^\o} (n, t_j)$ converges
to $\ft{z^\o} (n, t)$ almost surely  as $j \to \infty$.
Furthermore, from~\eqref{Zlin0} and~\eqref{zintro2}, we have
\[ 
\sup_{j \in \N} \jb{n}^s |\ft{z^\o} (n, t_j)|
+  \jb{n}^s  |\ft{z^\o} (n, t)| \leq 2 C(\o) \jb{n}^{s - \al + \eps},  
\]

\noi
where the right-hand side belongs to $\l^2(\Z)$ in view of \eqref{zintro1}.
Hence, the claim follows from the dominated convergence theorem.
\end{proof}

Now,  express  a solution $u$ to \eqref{NLS2} 
in the following {\it random-resonant\,/\,nonlinear decomposition}:
\begin{align}
u =   z^\o + v.
\label{expxx}
\end{align}

\noi
Then, the residual term  $v = u - z^\o$
satisfies
\begin{equation}
\begin{cases}
i \dt v  =  \dx^4 v +
\big[\NN(v + z^\o) - \NN_2(z^\o)\big]\\
v|_{t= 0} = 0.
\end{cases}
\label{ZNLS3} 
\end{equation}

\noi
By writing \eqref{ZNLS3} in the Duhamel formulation,   
we consider the  following fixed point problem:
\begin{align}
v(t) =  \G^\o v(t): =
 - i \int_0^t S(t - t') \big[\NN(v + z^\o ) - \NN_2(z^\o)\big](t')dt'.
\label{ZNLS4}
\end{align}

\noi
In this formulation, 
we successfully reduced the number of combinations;
we only need to study 
 $\NN_k(u_1, u_2, u_3)$, $k = 1, 2$, 
where each $u_j$ can be either the random resonant solution $z^\o$
 or the smoother unknown function $v$,  
except for the case $u_1 = u_2 =u_3 = z^\o$ with  $k = 2$. 
In Section \ref{SEC:LWP1}, 
 we perform the case-by-case nonlinear analysis
and 
show that the fixed point problem \eqref{ZNLS4} is almost surely locally well-posed in $L^2(\T)$
via the standard Fourier restriction norm method,
provided that $\al > 0$.

Lastly, Lemma \ref{LEM:Z} 
allows us to conclude that 
the solution $u = z^\o + v$ to the renormalized 4NLS \eqref{NLS2} lies  in the class:
\begin{align*}
z^\o + C([-\dl, \dl]; L^2(\T))
\subset  C([-\dl, \dl]; H^s(\T))
\end{align*}

\noi
almost surely.

\begin{remark}\label{REM:unique1}\rm
The probabilistic local well-posedness argument 
in \cite{BO96, BT1, Th1, CO} yields uniqueness of solutions
in a ball of radius $O(1)$ in a suitable (local-in-time) function  space  (such as the Strichartz spaces
or the $X^{s, b}$-spaces) centered at the random linear solution.
When $\al > 0$, the proof of Theorem \ref{THM:LWP}  yields uniqueness of solutions 
in the ball of radius $1$ in $X^{0, \frac 12+, \dl}$ centered at 
the random resonant solution $z^\o$.
\end{remark}

\begin{remark}\label{REM:infinite}\rm
(i) When $\al > 0$, 
the terms $z_{2j-1}^\o$ appearing in \eqref{expk} get smoother 
as $j$ increases
and hence only a finite number of expansion is needed.
Nonetheless, the random-resonant\,/\,nonlinear decomposition  \eqref{expxx} allows us to avoid
 a number of combinations in the relevant case-by-case analysis 
when $k \gg1 $.
When $\al = 0$, 
the terms $z_{2j-1}^\o$  in \eqref{expx} do {\it not} get smoother
and hence the infinite order expansion in \eqref{expx}  is necessary
in this case.

\smallskip

\noi
(ii) 
Let $\al > 0$.
In this case, the random-resonant\,/\,nonlinear decomposition 
\eqref{expxx}
with~\eqref{expx} allows us to write the solution $u$ as
\begin{align}
 u = z_1^\o + z_3^\o + \dots + z_{2k+1}^\o + v
 \label{expy}
\end{align}

\noi
for some $v \in C([-\dl, \dl]; L^2(\T))$, where $k$ is the smallest non-negative integer
such that~\eqref{al_k} holds.
The expansion \eqref{expy} provides a finer regularity 
description\footnote
{This regularity description can also be understood as the ``local'' (in space) description 
of the solution since the singular components of the solution become dominant 
 in small scales.}
 of the solution $u$  than the random-linear\,/\,nonlinear decomposition \eqref{exp1}.
As mentioned above, the terms in~\eqref{expx} do not get smoother
when $\al = 0$.
In this case, the solution $u$ can be written as
\[ u= z^\o + v\]

\noi
for some $v \in C([-\dl, \dl]; L^2(\T))$.
Namely, the dominant part of the dynamics in small scales
is indeed given by the random resonant solution $z^\o$ defined in \eqref{Zlin1}.

\end{remark}

\subsection{The $\al = 0$ case}
Next, let us discuss the $\al = 0$ case.
Namely, we consider the white noise initial data  \eqref{grand_publique}.
Unfortunately, the argument described above breaks down in this case.
As we see in Section \ref{SEC:LWP1}, 
 the worst interaction  comes from 
 the following {\it resonant nonlinear} terms
on the right-hand side of \eqref{ZNLS3}:
\begin{align*}
 \NN_2(v, z^\o,z^\o)+ 
\NN_2(z^\o,z^\o, v)
=  - 2 \F^{-1} \big[ |g_n|^2 \ft v(n)\big]
 \end{align*}

\noi
and
\begin{align*}
 \NN_2(z^\o, v, z^\o)
  = - \F^{-1} \Big[ e^{-2 i n^4 t} e^{2 i t |g_n|^2 } g_n^2  \, \cj{\ft v(n)}\Big].
\end{align*}

\noi
In order to weaken the effect of these terms, we introduce 
the following {\it random} gauge transform:
\begin{align}
\label{Ga}
\J^\o (u) (x, t) = \sum_{n\in \Z}  e^{inx-it|g_n(\o)|^2} \ft{u}(n, t) .
\end{align}

\noi
When $\alpha =0$, 
the solution $z^\o$ to the resonant 4NLS \eqref{ZNLS1} reads as 
\begin{align}
z^\o(x, t) = \sum_{n\in\Z} e^{i(nx -n^4t)}e^{it|g_n|^2} \ft{u_0^\o} (n).
\label{Zlin2}
\end{align}

\noi
The random gauge transform $\J^\o$ in \eqref{Ga} 
allows us to  filter out the random phase oscillations  appearing in~\eqref{Zlin2}.
This gauge transform is clearly invertible and leaves the $H^s$-norm invariant. 
If $u$ is a solution to the renormalized 4NLS \eqref{NLS2}, 
then
the gauged function  $w: = \J^\o(u) $ satisfies the following random equation:
\begin{equation}
\label{NLS2-3} 
\begin{cases}
i \dt w = \partial_x^4 w + \NN_1^\o(w) + \NN_{2}^\o(w) \\
w|_{t= 0} = u_0^\o.
\end{cases} 
\end{equation}

\noi
Here,  the first nonlinearity $\NN_1^\o(w)$ is defined by
\begin{equation}\label{NN1O}
\NN_1^\o(w) (x, t) : = \sum_{n\in \Z}  e^{inx} \sum_{\G(n)} 
e^{it\Psi^\o(\bar n)} \ft w (n_1, t) \cj{\ft w (n_2, t)} \ft w( n_3, t),
\end{equation}

\noi
where $\G(n)$ is as in \eqref{Gam0}
and $\Psi^\o(\bar n)$ denotes the random phase function:
\begin{align}
\label{Psi}
\Psi^\o (\bar n) : =  \Psi^\o(n_1,n_2,n_3, n) & = 
|g_{n_1}(\o)|^2 - |g_{n_2}(\o)|^2 + |g_{n_3}(\o)|^2 - |g_{n}(\o)|^2.
\end{align}

\noi
The second nonlinearity $\NN_{2}^\o(w) $ is defined by 
\begin{equation}\label{NN2O}
\NN_{2}^\o(w) (x, t) : = - \sum_{n\in \Z}  e^{inx}  \big[|\ft w (n, t) |^2 - |g_n(\o)|^2\big] \ft w (n, t)\,.
\end{equation}
As we can see, \eqref{NN1O} and \eqref{NN2O} are random versions of \eqref{nonlin1} and \eqref{nonlin2}. The main advantage of working with this gauged version of 
the renormalized 4NLS \eqref{NLS2} lies in the weaker resonant nonlinearity $\big[|\ft w(n)|^2 - |g_n(\o)|^2\big] \ft w (n)$,  which would be eliminated 
if  $\ft w (n)  = g_n$.  This observation turns out to be crucial in our later analysis.

The Duhamel formulation for the gauged solution $w$ is given by
\begin{align}
\label{NLS3-1}
w(t) = S(t)u_0^\o - i \int_0^t S(t-t') \big[\NN_1^\o(w) + \NN_{2}^\o(w)\big](t') dt'.
\end{align}

\noi
Now by setting $z_1^\o = S(t)u_0^\o$, 
we see that the residual term
\[
v = w - z_1^\o, 
\]

\noi
satisfies the following Duhamel formulation:
\begin{align}
\label{Duhamel}
v(t) = - i \int_0^t S(t-t') \big[\NN_1^\o(v+z_1^\o) + \NN_{2}^\o(v+z_1^\o)\big](t') dt'.
\end{align}

\noi
A naive approach would be to try to solve
the  fixed point problem  \eqref{Duhamel} by a contraction argument (namely, 
by the Picard iteration scheme) for  $v$ in $L^2(\T)$,
exploiting randomness.
It turns out, however, that this naive approach via a contraction argument does {\it not} work for our problem.
 In the following, 
by partially iterating the Duhamel formulation, 
we prove convergence  in $L^2(\T)$ of 
approximating smooth solutions
and  construct a solution to~\eqref{Duhamel} and hence to~\eqref{NLS2-3}. 
See Section \ref{SEC:LWP2} for more details. 
We establish 
 the crucial nonlinear estimates (Propositions~\ref{PROP:I1} and~\ref{PROP:I2})
by reducing them 
to boundedness properties
of certain random multilinear functionals
of the white noise, 
whose tail estimates are proved in Appendix \ref{SEC:A}.

\begin{remark}\rm
 As it will become clear from the analysis below, 
there is  room to extend our analysis to 
the fractional NLS with dispersion weaker than the fourth order dispersion.  
However, this would not introduce any new qualitative phenomenon as compared to the case of the fourth order dispersion and hence we only consider the fourth order NLS in this paper.
We also point out that the case of the standard NLS
(with the second order dispersion) 
is out of reach at this point.
See the introduction in \cite{FOW} for a discussion on the criticality of this problem
(in the context of the stochastic NLS with additive space-time white noise forcing).

\end{remark}

\begin{remark}\rm

(i)
In the deterministic setting, 
Takaoka-Tsutsumi \cite{TakT} implicitly used a gauge transform 
analogous to \eqref{Zlin2} 
in the low regularity study of the modified KdV equation
to weak the resonant interaction.
This  led them to work in the modified $X^{s, b}$-spaces.
See also \cite{NTT}.
In our case, the gauge transform $\J^\o$ is random 
and hence it leads to 
 the random $X^{s, b}$-spaces.
See Subsection \ref{SUBSEC:A.1}.
We also point out
the work \cite{OTTz}
on  the use of a gauge transform in the probabilistic context.

\smallskip

\noi
(ii)
In order to construct the dynamics for the $\al =  0$ case, 
we partially iterate the Duhamel formulation (of the gauged equation)
and establish convergence property of smooth approximating solutions.
See Section \ref{SEC:LWP2}.
This strategy is close in spirit to the work 
\cite{OH6, R}.
In the context of stochastic PDEs, 
such iteration of a Duhamel formulation appears
in the dispersive setting \cite{OH5, GKO}
and in the parabolic setting 
\cite{Hairer, CC, MW1}.
We also mention 
\cite{BO97, BB2, BB3, Bring} on the probabilistic construction
of solutions 
by establishing convergence  of smooth  solutions.
In particular, the recent approach by Bourgain-Bulut~\cite{BB2, BB3} relying
on the invariance of the truncated Gibbs measures even in the construction
of local solutions
works well for a power-type nonlinearity with positive regularity
but is not suitable to our problem at hand.
See \cite{BOP4} for a survey on this method.

\end{remark}

\subsection{Organization of the paper}

In Section~\ref{SEC:notations}, we introduce the basic notations and list some basic deterministic and probabilistic lemmas. 
In Section~\ref{SEC:LWP1},  
we present the proof of Theorem \ref{THM:LWP} for $\al > 0$.
The remaining part of the paper is devoted to handle the $\al = 0$ case.
In Section~\ref{SEC:LWP2},  we prove  Theorem \ref{THM:LWP},
by assuming two key nonlinear estimates (Propositions~\ref{PROP:I1} and~\ref{PROP:I2}).
 In Section~\ref{SEC:GWP}, 
we prove Theorem~\ref{THM:GWP} and then Theorem~\ref{THM:soft_version}.
 We present the proofs of 
Propositions~\ref{PROP:I1} and~\ref{PROP:I2}
in Sections~\ref{SEC:Non1}  and~\ref{SEC:Non2}.
Appendix~\ref{SEC:A}
contains
the proofs of some probabilistic lemmas.

\section{Notations and preliminaries}
\label{SEC:notations}

As  in the usual low regularity analysis of dispersive PDEs,
 an important ingredient will be the Fourier restriction norm method introduced in \cite{BO93}.
Given $s, b \in \R$, define  $X^{s, b}(\mathbb{T} \times \mathbb{R})$
as a completion of the test functions under the following norm:
\begin{equation} \label{Xsb}
\| u\|_{X^{s, b}(\mathbb{T} \times \mathbb{R})} = \|\jb{n}^s \jb{\tau +n^4}^b \ft{u}(n, \tau)\|_{\l^2_n L^2_\tau } , 
\end{equation}

\noi
\noi 
where $\jb{\, \cdot\, } = (1 + |\cdot|^2)^{\frac{1}{2}}$. Recall that $X^{s, b}$ embeds into 
$C(\R;  H^s(\T))$ for $b  >\frac{1}{2}$. 
Given a time interval $I = [a, b]$,  
we define the local-in-time version 
$X^{s, b}_I = X^{s, b}([a, b])$
 by setting
\begin{equation} \label{Xsb2}
 \|u\|_{X^{s, b }_I} = \inf \big\{ \|v \|_{X^{s, b}(\T \times \mathbb{R})}: {v|_{I} = u}\big\}.
\end{equation}

\noi
Note that $X^{s, b}_I$ is a Banach space.
When $I = [-\dl, \dl]$, 
we simply set $X^{s, b, \dl } = X^{s, b}_I$.
The local-in-time versions of other function spaces are defined analogously.

For simplicity, we often drop $2\pi$ in dealing with the Fourier transforms.  If a function $f$ is random, we may use the superscript $f^\o$ to show the dependence on $\o \in \O$.

Let $\eta \in C^\infty_c(\mathbb{R})$ be a smooth non-negative cutoff function supported on $[-2, 2]$ with $\eta \equiv 1$ on $[-1, 1]$ and set
\begin{align}
\eta_{_\dl}(t) =\eta(\dl^{-1}t)
\label{eta1}
\end{align}

\noi
for $\dl > 0$.
We also denote by   $\chi = \chi_{[-1, 1]}$   the characteristic function of the interval $[-1, 1]$
 and let $\chi_{_\dl}(t) =\chi(\dl^{-1}t) = \chi_{[-\dl, \dl]}(t)$.

Let $\Z_{\geq 0} := \Z \, \cap\,  [0, \infty)$.
Given a dyadic number $N \in 2^{\Z_{\geq 0}}$, 
 let $\P_N$
 be the (non-homogeneous) Littlewood-Paley projector
 onto the (spatial) frequencies $\{n \in \Z: |n|\sim N\}$
 such that 
 \[ f = \sum_{\substack{N\geq 1\\
 \text{dyadic}}}^\infty \P_N f.\]

\noi
Given a non-negative integer $N \in \Z_{\geq 0}$, 
we also define the Dirichlet projector $\pi_N$ 
onto the frequencies $\{|n|\leq N\}$ by setting
\begin{align}
\pi_N f(x)=
\sum_{ |n| \leq N}  \ft f(n)    e^{in x}.
\label{pi1}
\end{align}

\noi
Moreover, we set
\begin{align}
\pi_N^\perp  = \Id - \pi_N. 
\label{pi2}
\end{align}

\noi
By convention, we also set $\pi_{-1}^\perp = \Id$.

We use $c,$ $ C$ to denote various constants, usually depending only on $\al$ and $s$. If a constant depends on other quantities, we will make it explicit. 
For two quantities $A$ and $B$,  we use $A\lesssim B$ to denote an estimate of the form $A\leq CB$, where $C$ is a universal constant,  independent of particular realization of $A$ or $B$. 
Similarly, we use $A\sim B$ to denote $A\lesssim B$ and $B\lesssim A$ . 
The notation $A\ll B$ means $A \leq cB$ for some sufficiently small constant $c$. 
We also use the notation $a+$ (and $a-$) to denote $a + \eps$ (and $a - \eps$, respectively)
for arbitrarily small  $\eps >0$ (this notation is often used 
when there is an implicit constant  which diverges in the limit $\eps\to 0$).

\subsection{Deterministic tools}
Define the phase function $\Phi(\bar n)$ by 
\begin{align}
 	\Phi(\bar n) = \Phi(n_1, n_2, n_3, n) = n_1^4 - n_2^4 + n_3^4 - n^4.
\label{phi1}
\end{align}

\noi
Then, the phase function $\Phi(\bar n)$ admits the following factorization. See \cite{OTz1} for the proof.
\begin{lemma}\label{LEM:phase}
Let $n = n_1 - n_2 + n_3$.
Then, we have
\begin{align*}
\Phi(\bar n) =  (n_1 - n_2)(n_1-n) 
\big( n_1^2 +n_2^2 +n_3^2 +n^2 + 2(n_1 +n_3)^2\big).
\end{align*}
\end{lemma}

Recall that by restricting the $X^{s, b}$-spaces onto a small time interval $[-\dl, \dl]$,  
we can gain a small power of $\dl$ (at a slight loss in the modulation). 
\begin{lemma}
\label{LEM:timedecay} 
Let $s \in\R$ and $b < \frac{1}{2}$. Then, there exists $C = C(b) > 0$ such that  
\begin{align*}
\|\eta_{_\dl} (t) \cdot u\|_{X^{s, b}}
+ 
 \|\chi_{_\dl} (t) \cdot u\|_{X^{s, b}} &  \leq C \dl^{\frac{1}{2}-b-} \|u\|_{X^{s, \frac{1}{2}-}}.
\end{align*}
\end{lemma}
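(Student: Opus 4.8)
\textbf{Proof proposal for Lemma~\ref{LEM:timedecay}.}
The plan is to reduce the estimate to a one-dimensional statement about the time variable only. Since the $X^{s,b}$-norm weights the spatial frequency $n$ by $\jb{n}^s$ and multiplication by $\eta_{_\dl}(t)$ (or $\chi_{_\dl}(t)$) acts only in $t$, after conjugating by the linear propagator $S(t) = e^{-it\dx^4}$ the operator becomes a Fourier multiplier in $n$ composed with multiplication by $\eta_{_\dl}$ in $t$. Concretely, writing $v(t) = S(-t)u(t)$, one has $\|u\|_{X^{s,b}} = \|\jb{n}^s v\|_{H^b_t \l^2_n}$, and $\eta_{_\dl}(t)\cdot u$ corresponds under this conjugation to $\eta_{_\dl}(t) v(t)$; so it suffices to prove, for each fixed frequency $n$ (hence after discarding the harmless $\jb{n}^s$ weight and summing in $\l^2_n$), the scalar inequality
\begin{align}
\|\eta_{_\dl} f\|_{H^b_t} \leq C \dl^{\frac12 - b -}\, \|f\|_{H^{\frac12-}_t}
\label{scalar1}
\end{align}
for $b < \frac12$, with the same statement for $\chi_{_\dl}$ in place of $\eta_{_\dl}$; the full estimate then follows by applying \eqref{scalar1} with $f = \jb{n}^s v(n,\cdot)$ and squaring and summing over $n$.

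First I would treat the case $0 \le b < \frac12$. Fix $b' = \frac12 - $ with $b < b'$. The standard interpolation/duality argument for $H^b$-multiplication by a compactly supported bump gives $\|\eta_{_\dl} f\|_{H^b_t} \lesssim \|\eta_{_\dl}\|_{\M^{b}} \|f\|_{H^{b}_t}$ where $\M^b$ is the relevant multiplier norm; more efficiently, one uses the elementary fact that for $0\le b \le 1$,
\begin{align}
\|\eta_{_\dl} f\|_{H^b_t} \lesssim \|\eta_{_\dl}\|_{L^\infty}\|f\|_{H^b_t} + \|\eta_{_\dl}\|_{\dot W^{b,\infty}} \|f\|_{L^2_t},
\label{leibniz1}
\end{align}
together with the scaling identities $\|\eta_{_\dl}\|_{L^\infty} = \|\eta\|_{L^\infty}$ and $\|\eta_{_\dl}\|_{\dot W^{b,\infty}} \sim \dl^{-b}\|\eta\|_{\dot W^{b,\infty}}$. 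Then I interpolate: $\|f\|_{H^b_t}$ on a function supported-in-time data is controlled, after reinserting the $\dl$-localization of $f$ itself (note $f$ here already carries the time cutoff implicitly from $u \in X^{s,b,\dl}$), via Hölder in the Fourier variable $\tau$ against $\jb{\tau}^{b-b'}$, which is in $L^{\frac{2b'}{b'-b}}$ since $b<b'<\frac12$, producing the gain $\dl^{b'-b} = \dl^{\frac12 - b -}$. Combining \eqref{leibniz1} with this interpolation and choosing $b'$ appropriately close to $\frac12$ yields \eqref{scalar1}. The case $b < 0$ is easier and follows from the $b = 0$ case by the trivial embedding $H^0_t \hookrightarrow H^b_t$, absorbing the extra powers into the constant; alternatively it follows directly since decreasing $b$ only helps the left side while $\dl^{\frac12 - b -}$ grows, so one may always reduce to $b \in [0, \frac12)$.

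The $\chi_{_\dl}$ version is slightly more delicate because $\chi = \ind_{[-1,1]}$ is not smooth, so $\chi_{_\dl} \notin \dot W^{b,\infty}$ for $b > 0$; however, $\chi \in H^{b}_t$ precisely for $b < \frac12$, which is exactly our range, and $\|\chi_{_\dl}\|_{H^b_t} \sim \dl^{\frac12 - b}$. One then runs the fractional Leibniz (Kato–Ponce type) inequality $\|\chi_{_\dl} f\|_{H^b_t} \lesssim \|\chi_{_\dl}\|_{H^b_t}\|f\|_{L^\infty_t} + \|\chi_{_\dl}\|_{L^2_t}\|f\|_{\dot W^{b+,\infty}_t} + \dots$, but since we cannot afford $L^\infty_t$ control of $f$ at regularity $\frac12-$, the cleaner route is to observe that $f \in H^{\frac12-}_t$ with compact support satisfies $f \in L^\infty_t \cap C^{0-}_t$ by Sobolev embedding, and to split the product estimate on the Fourier side into a low–high paraproduct (where $\chi_{_\dl}$ is at low frequency $\lesssim \dl^{-1}$) plus a high–low piece; in each piece one sums the geometric series in the dyadic scales using that $\frac12 - b > 0$, obtaining the net gain $\dl^{\frac12 - b - }$. \textbf{I expect this $\chi_{_\dl}$ endpoint bookkeeping — keeping track of the exact power of $\dl$ and the $\eps$-losses when $\chi_{_\dl}$ sits right at the $H^{1/2}$ threshold — to be the main obstacle}; the $\eta_{_\dl}$ case is genuinely routine. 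Since \eqref{LEM:timedecay} is a standard tool cited in the literature (cf. the references to \cite{BO93}), I would in practice state it with a pointer and give only the short scalar reduction above.
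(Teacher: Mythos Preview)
Your reduction to the scalar estimate \eqref{scalar1} is correct and standard. The gap is in how you extract the power of $\dl$. After your Leibniz-type bound
\[
\|\eta_{_\dl} f\|_{H^b_t} \lesssim \|\eta_{_\dl}\|_{L^\infty}\|f\|_{H^b_t} + \|\eta_{_\dl}\|_{\dot W^{b,\infty}}\|f\|_{L^2_t}
\lesssim \|f\|_{H^{b'}_t} + \dl^{-b}\|f\|_{H^{b'}_t},
\]
there is \emph{no} positive power of $\dl$ anywhere. You then try to recover $\dl^{b'-b}$ by saying ``$f$ already carries the time cutoff implicitly from $u\in X^{s,b,\dl}$'', but the lemma is stated for arbitrary $u\in X^{s,\frac12-}$ with the \emph{global} norm on the right-hand side; $f$ has no a~priori time localization. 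The H\"older-in-$\tau$ step you describe would bound $\|f\|_{H^b}$ by $\|\jb{\tau}^{b-b'}\|_{L^r}\|f\|_{H^{b'}}$, which produces no factor of $\dl$ at all. A second error: in the $\chi_{_\dl}$ discussion you invoke ``$f\in H^{\frac12-}_t \Rightarrow f\in L^\infty_t$'', which is false (the embedding requires $H^{\frac12+}$).

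The paper's route avoids all of this by working directly on the Fourier side, where the gain comes entirely from the cutoff. Since $\widehat{\eta_{_\dl}}(\tau)=\dl\,\widehat\eta(\dl\tau)$, one has $\|\widehat{\eta_{_\dl}}\|_{L^q_\tau}\sim \dl^{1-\frac1q}$ for $q\ge 1$ (and the same for $\chi_{_\dl}$ provided $q>1$, since $|\widehat\chi(\tau)|\lesssim\jb{\tau}^{-1}$). After the weight transfer $\jb{\tau}^b\lesssim\jb{\tau-\tau'}^b+\jb{\tau'}^b$, Young's inequality with a suitable $q>1$ converts $\|\widehat{\eta_{_\dl}}\|_{L^q}$ (or $\|\jb{\cdot}^b\widehat{\eta_{_\dl}}\|_{L^q}$) into the factor $\dl^{1-\frac1q}$, and the choice $\tfrac1q\downarrow 1-b'$ yields exactly $\dl^{b'-b-}=\dl^{\frac12-b-}$. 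Equivalently, one can interpolate between the two endpoints: at $b=0$, H\"older in $t$ plus Sobolev gives $\|\eta_{_\dl} f\|_{L^2_t}\le\|\eta_{_\dl}\|_{L^{1/b'}_t}\|f\|_{L^{2/(1-2b')}_t}\lesssim \dl^{b'}\|f\|_{H^{b'}_t}$; at $b=b'$, multiplication by $\eta_{_\dl}$ (and by $\chi_{_\dl}$, using that sharp time truncation is bounded on $H^{b'}$ for $b'<\tfrac12$) is uniformly bounded with no gain. Either way, the power of $\dl$ is produced by the cutoff alone, not by any assumed support of $f$.
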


The proof of Lemma \ref{LEM:timedecay} is based on the following scaling property:
$\ft \eta_{_\dl}(\tau) = \dl \ft{\eta}(\dl\tau)$,
yielding
\begin{equation}
 \|\ft \eta_{_\dl}\|_{L^q_\tau} \sim \dl^\frac{q-1}{q} \|\ft{\eta}\|_{L^q_\tau} 
\lesssim \dl^\frac{q-1}{q}\, ,
\label{decay}
\end{equation}

\noi
for $q \geq 1$. 
See  \cite{CO} for details. 

Next, we collect the basic linear estimates (see  \cite{GTV}). 
\begin{lemma}\label{LEM:linear}
Let $s\in \R$.

\noi
\textup{(i)} Given $b \in \R$, 
 there exists $C = C(b)>0$ such that 
\begin{align*}
\|S(t) u_0\|_{X^{s,b,\dl}} \le C  \|u_0\|_{H^s}
\end{align*}

\noi
for any $0 < \dl \leq 1$.

\smallskip

\noi
\textup{(ii)} 
Given $b  >\frac 12$,  there exists $C = C(b)>0$ such that 
\begin{align*}
\bigg\| \int_0^t S(t-t') F(x,t') dt' \bigg\|_{X^{s,b,\dl}} \les  \|F\|_{X^{s,b-1,\dl}}
\end{align*}
 
\noi
for any $\dl > 0$.

\end{lemma}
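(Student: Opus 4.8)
The statement to prove is Lemma~\ref{LEM:linear}, the standard pair of linear estimates for the $X^{s,b}$ spaces adapted to the biharmonic propagator $S(t) = e^{-it\partial_x^4}$: the homogeneous estimate $\|S(t)u_0\|_{X^{s,b,\delta}} \lesssim \|u_0\|_{H^s}$ for any $b\in\R$, and the inhomogeneous (Duhamel) estimate $\|\int_0^t S(t-t')F\,dt'\|_{X^{s,b,\delta}} \lesssim \|F\|_{X^{s,b-1,\delta}}$ for $b>\tfrac12$. These are classical Bourgain-space estimates, and the proof is a routine adaptation of the Keel--Tao/Ginibre--Tsutsumi--Velo arguments to the fourth-order symbol; I would simply give the standard proof, since the excerpt itself points to \cite{GTV}.

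Here is the plan. For part (i), the key observation is that the linear solution has space-time Fourier transform $\widehat{S(t)u_0}(n,\tau) = \widehat{u_0}(n)\,\delta(\tau + n^4)$, so its $X^{s,b}$ norm is infinite; the role of the time cutoff is essential. Concretely, I would first note $\eta_\delta(t) S(t) u_0$ has space-time Fourier transform $\widehat{u_0}(n)\,\widehat{\eta_\delta}(\tau+n^4)$, so that
\[
\|\eta_\delta(t)S(t)u_0\|_{X^{s,b}}^2 = \sum_n \jb{n}^{2s}|\widehat{u_0}(n)|^2 \int_\R \jb{\tau+n^4}^{2b}|\widehat{\eta_\delta}(\tau+n^4)|^2\,d\tau = \|\eta_\delta\|_{H^b}^2\,\|u_0\|_{H^s}^2,
\]
after the change of variables $\tau \mapsto \tau + n^4$ in each summand, which decouples the $n$-sum from the $\tau$-integral. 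Since $\eta_\delta = \eta(\delta^{-1}\cdot)$ with $\eta$ fixed Schwartz and $0<\delta\le 1$, one has $\|\eta_\delta\|_{H^b} \le C(b,\eta)$ (in fact bounded uniformly, or even going to $0$ for $b<\tfrac12$ as in Lemma~\ref{LEM:timedecay}, but uniform boundedness suffices here). Then I would choose a fixed cutoff $\eta$ equal to $1$ on $[-1,1]$ and supported in $[-2,2]$ so that $\eta_\delta S(t)u_0$ is an extension of $S(t)u_0|_{[-\delta,\delta]}$, and invoke the definition \eqref{Xsb2} of the local norm to conclude $\|S(t)u_0\|_{X^{s,b,\delta}} \le \|\eta_\delta S(t)u_0\|_{X^{s,b}} \le C\|u_0\|_{H^s}$.

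For part (ii), the standard device is to write the Duhamel integral, after a harmless time localization, in a form where the $\tau$-variable is handled by a one-dimensional kernel estimate. Fix $b>\tfrac12$ and $b-1 < -\tfrac12$. One uses the identity, valid for the Duhamel operator against the linear flow, that $\int_0^t S(t-t')F(t')\,dt' = S(t)\int_0^t S(-t')F(t')\,dt'$, and then reduces via the Fourier transform in $t$ to estimating, for each fixed $n$, the operator $g(\tau) \mapsto \int_\R \frac{e^{it\tau'} - e^{it(-n^4)}}{i(\tau' + n^4)} \widehat{F}(n,\tau')\,d\tau'$ localized by $\eta_\delta(t)$; splitting the kernel $\frac{e^{it\tau'}-e^{-itn^4}}{i(\tau'+n^4)}$ into the region $|\tau'+n^4|\le 1$ (where one Taylor-expands $e^{it(\tau'+n^4)}-1$ and absorbs the $t$ into the Schwartz cutoff) and $|\tau'+n^4|\ge 1$ (where one splits into two terms and uses that multiplication by $\eta_\delta(t)$, i.e. convolution by $\widehat{\eta_\delta}$, is bounded on the weighted spaces), one obtains the pointwise-in-$n$ bound
\[
\Big\|\jb{\tau+n^4}^b\,\widehat{\Big(\eta_\delta(t)\!\int_0^t\! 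S(t-t')F\,dt'\Big)}(n,\tau)\Big\|_{L^2_\tau} \lesssim \big\|\jb{\tau+n^4}^{b-1}\widehat{F}(n,\tau)\big\|_{L^2_\tau}
\]
for each $n$, uniformly in $\delta$; squaring and summing against $\jb{n}^{2s}$ gives the claim, and passing to the local-in-time norms \eqref{Xsb2} is immediate since any extension $\widetilde F$ of $F$ produces an extension of the Duhamel integral. I would present this either by citing \cite{GTV} for the one-dimensional kernel lemma or by reproducing its short proof. The only genuinely delicate point — and the one I expect to need the most care — is the treatment of the borderline weight $b-1<-\tfrac12$ near the resonance set $\{\tau + n^4 = 0\}$, i.e.\ showing that the singular kernel $1/(\tau'+n^4)$, cut off to $|\tau'+n^4|\ge1$ and then smeared by the time cutoff, genuinely maps $L^2(\jb{\cdot}^{2(b-1)})$ into $L^2(\jb{\cdot}^{2b})$; everything else is bookkeeping. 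Since the fourth-order symbol $n^4$ enters only through the translation $\tau \mapsto \tau + n^4$, which commutes with all the $\tau$-integrals, the argument is dispersion-agnostic and identical in form to the Schr\"odinger case treated in \cite{GTV}.
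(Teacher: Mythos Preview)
The paper does not prove this lemma; it simply states it and cites \cite{GTV}. Your outline is precisely the standard argument that reference contains, so in spirit you are doing exactly what the paper points to, and part (ii) is correctly sketched.

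There is, however, a small error in part (i): the claim that $\|\eta_\delta\|_{H^b} \le C(b,\eta)$ uniformly in $0<\delta\le 1$ is false for $b>\tfrac12$. From $\widehat{\eta_\delta}(\tau)=\delta\,\widehat\eta(\delta\tau)$ one computes
\[
\|\eta_\delta\|_{H^b}^2 = \delta\int_\R \jb{\sigma/\delta}^{2b}|\widehat\eta(\sigma)|^2\,d\sigma \sim \delta^{1-2b}
\]
for $b>\tfrac12$, which blows up as $\delta\to 0$. The fix is immediate: since $\delta\le 1$, the \emph{unscaled} cutoff $\eta$ (equal to $1$ on $[-1,1]$) already restricts to $1$ on $[-\delta,\delta]$, so $\eta(t)S(t)u_0$ is a valid extension and $\|S(t)u_0\|_{X^{s,b,\delta}}\le \|\eta\,S(t)u_0\|_{X^{s,b}} = \|\eta\|_{H^b}\|u_0\|_{H^s}$, with $\|\eta\|_{H^b}$ a fixed constant depending only on $b$. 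With this correction the argument is complete.
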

The following periodic $L^4$-Strichartz estimate from  \cite{OTz1} also plays an important role:
\begin{equation}\label{L4} 
\|u\|_{L^4_{x, t}} \lesssim \|u\|_{X^{0, \frac{5}{16}}}. 
\end{equation}
Interpolating \eqref{L4} with $\|u\|_{L^2_{x, t}} = \|u\|_{X^{0, 0}}$, we have 
\begin{equation}\label{L3}
 \|u\|_{L^{3+}_{x, t}} \lesssim \|u\|_{X^{0, \frac{5}{24}+}}
 \qquad  \text{and} 
 \qquad  \|u\|_{L^{2+}_{x, t}} \lesssim \|u\|_{X^{0, 0+}}. 
\end{equation}

We also recall the following lemma on convolutions.
See \cite{GTV} for a proof.
\begin{lemma}\label{LEM:GTV}
Let $\al > \be \geq 0$ with $\al + \be > 1$.
Then, there exists $C>0$ such that 
\[ \int_\R \frac{1}{\jb{x - y}^\al \jb{y}^\be} dy 
\leq \frac{C}{\jb{x}^\g}
\]

\noi
for any $x \in \R$, 
where $\g$ is given by 
\[\g = \begin{cases}
\al + \be - 1, & \text{if } \al < 1, \\
 \be -\eps, & \text{if } \al = 1, \\
\be, & \text{if } \al >1
\end{cases}
\]

\noi
for any small $\eps > 0$.	
\end{lemma}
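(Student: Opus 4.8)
\textbf{Proof proposal for Lemma~\ref{LEM:GTV}.}
The plan is to reduce the two-parameter family of weighted convolution bounds to a small number of elementary one-variable integral estimates, split according to the relative sizes of the running variable $y$ and the shift $x$. First I would assume without loss of generality that $x \geq 0$ (the integrand and the claimed bound are both even in $x$ after the substitution $y \mapsto -y$) and, if needed, that $|x|$ is large, since for $|x| \lesssim 1$ the claim is just the statement that $\int_\R \jb{x-y}^{-\al}\jb{y}^{-\be}\,dy$ is finite, which follows from $\al + \be > 1$ together with $\al, \be \geq 0$ (each factor is locally integrable and the tail at $|y| \to \infty$ decays like $|y|^{-\al-\be}$, integrable since $\al+\be>1$).

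For the main regime $x \gg 1$, I would partition $\R$ into three pieces: (a) $|y| \leq \tfrac{x}{2}$, where $\jb{x-y} \sim \jb{x}$, so the contribution is $\lesssim \jb{x}^{-\al}\int_{|y|\le x/2}\jb{y}^{-\be}\,dy$; (b) $|y - x| \leq \tfrac{x}{2}$, where symmetrically $\jb{y} \sim \jb{x}$, giving $\lesssim \jb{x}^{-\be}\int_{|y-x|\le x/2}\jb{x-y}^{-\al}\,dy$; and (c) the remaining region, where both $|y| \gtrsim x$ and $|x-y| \gtrsim x$ — actually where $|y| \gtrsim x$ and one can simply bound $\jb{x-y}^{-\al}\jb{y}^{-\be} \lesssim \jb{y}^{-\al-\be}$ and integrate over $|y| \gtrsim x$ to get $\lesssim \jb{x}^{1-\al-\be}$, which is $\le \jb{x}^{-\g}$ in every case. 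In pieces (a) and (b) the inner integral $\int_{|w|\le x/2}\jb{w}^{-\rho}\,dw$ behaves like a constant if $\rho > 1$, like $\log\jb{x} \lesssim \jb{x}^{\eps}$ if $\rho = 1$, and like $\jb{x}^{1-\rho}$ if $0 \le \rho < 1$. Applying this with $\rho = \be$ in (a) and $\rho = \al$ in (b), and recording that by hypothesis $\al > \be$, $\al > \tfrac12$, $\al+\be>1$, one reads off: (a) contributes $\jb{x}^{-\al}$ times $\jb{x}^{\max(0,1-\be)}$ — but one must be slightly careful, since if $\be < 1$ this is $\jb{x}^{1-\al-\be} \le \jb{x}^{-\g}$ in all three cases, if $\be = 1$ it is $\jb{x}^{-\al+\eps}$ and $\al \ge \g$ with room to spare, and if $\be > 1$ it is $\jb{x}^{-\al} \le \jb{x}^{-\g}$; (b) contributes $\jb{x}^{-\be}$ times $\jb{x}^{\max(0,1-\al)}$, and since $\al > \tfrac12$ one checks $\jb{x}^{-\be}\cdot\jb{x}^{1-\al}$ (when $\al<1$) equals $\jb{x}^{1-\al-\be} \le \jb{x}^{-\g}$, while when $\al=1$ one gets $\jb{x}^{-\be+\eps}$ matching $\g = \be-\eps$, and when $\al>1$ one gets exactly $\jb{x}^{-\be}$ matching $\g=\be$.

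Collecting the three contributions, the dominant one is always $\jb{x}^{-\g}$ with $\g$ exactly as in the three-way case distinction of the statement, which completes the argument; one should double-check the borderline $\al = 1$ case separately since that is where the logarithmic divergence in (b) forces the $\eps$-loss and where the stated $\g = \be - \eps$ originates. The main obstacle — really the only subtlety — is the bookkeeping in the case $\al = 1$ and in comparing which of the several powers of $\jb{x}$ produced by (a), (b), (c) actually dominates across the ranges $0 \le \be < \al < 1$, $\al = 1$, and $\al > 1$; the rest is the standard ``near $0$ / near $x$ / far field'' splitting. I note this is a classical estimate and the reference \cite{GTV} is cited, so in the paper itself one could also simply invoke it; the above is how I would reconstruct the proof from scratch.
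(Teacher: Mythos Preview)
Your argument is correct and is exactly the standard region-splitting proof one finds in the literature. The paper itself does not prove this lemma; it simply records the statement and refers the reader to \cite{GTV} for a proof, so there is no in-paper argument to compare against. Your reconstruction is the expected one, and your closing remark that one could just invoke the reference matches precisely what the authors do.
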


Lastly, we state two lemmas related to boundedness
properties of products in Sobolev spaces. 

\begin{lemma}\label{LEM:algebra}
Let $\eps > 0$.  Then, there exists $C =C(\eps) > 0$ such that 
\begin{align*}
\|fg\|_{H^{\frac12 -\eps}(\R)} 
\le C \|f\|_{H^{\frac12 +\eps}(\R)}
 \|g\|_{H^{\frac12 -\frac{\eps}2}(\R)}.
\end{align*}

\end{lemma}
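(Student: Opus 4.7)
The plan is to invoke the standard one-dimensional Sobolev multiplication framework, leveraging the critical embedding $H^{1/2+\eps}(\R)\hookrightarrow L^\infty(\R)$ available for $\eps>0$. I would work with a Littlewood--Paley / paraproduct decomposition
\begin{align*}
fg \;=\; \sum_{N\geq 1}(P_{\ll N}f)(P_N g) \;+\; \sum_{M\geq 1}(P_{\ll M}g)(P_M f) \;+\; \sum_{N\sim M}(P_N f)(P_M g)
\;=:\; \pi_1+\pi_2+R,
\end{align*}
and bound each of $\pi_1,\pi_2,R$ in $H^{1/2-\eps}(\R)$.

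For the low--high piece $\pi_1$, uniform control $\|P_{\ll N}f\|_{L^\infty}\lesssim \|f\|_{L^\infty}\lesssim \|f\|_{H^{1/2+\eps}}$ is available by Sobolev embedding, and since the Fourier support of each block is of scale $N$, almost-orthogonality gives
\[
\|\pi_1\|_{H^{1/2-\eps}}\lesssim \|f\|_{H^{1/2+\eps}}\|g\|_{H^{1/2-\eps}}\leq \|f\|_{H^{1/2+\eps}}\|g\|_{H^{1/2-\eps/2}}.
\]
For the high--low piece $\pi_2$, the function $g$ is not in $L^\infty$, so I would instead use Bernstein in 1D, $\|P_K g\|_{L^\infty}\lesssim K^{1/2}\|P_K g\|_{L^2}$, together with Cauchy--Schwarz in the dyadic parameter $K$ to obtain the mild logarithmic-type bound $\|P_{\ll M}g\|_{L^\infty}\lesssim M^{\eps/2}\|g\|_{H^{1/2-\eps/2}}$. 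The factor $M^{\eps/2}$ is then absorbed by the gap between the regularities $1/2+\eps$ (of $f$) and $1/2-\eps$ (of the target space), leaving a summable bound.

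For the high--high resonant piece $R$, the frequency support of $(P_N f)(P_M g)$ with $N\sim M$ is of scale $\lesssim N$, so I would argue by duality against $h\in H^{-(1/2-\eps)}$ and pair only with $P_{\lesssim N}h$. Applying H\"older in the form $L^4\cdot L^4\cdot L^2$, together with the 1D Bernstein estimates $\|P_N f\|_{L^4}\lesssim N^{-1/4-\eps}\|P_N f\|_{H^{1/2+\eps}}$ and $\|P_M g\|_{L^4}\lesssim N^{-1/4+\eps/2}\|P_M g\|_{H^{1/2-\eps/2}}$, and the elementary bound $\|P_{\lesssim N}h\|_{L^2}\lesssim N^{1/2-\eps}\|h\|_{H^{-(1/2-\eps)}}$, produces a net weight $N^{-3\eps/2}$ per dyadic shell, after which Cauchy--Schwarz in $N$ closes the estimate.

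The principal obstacle is the high--low term $\pi_2$: neither factor lies in $L^\infty$ at the regularities in the statement, and one cannot merely redistribute derivatives by the standard fractional Leibniz rule. The fact that we are given $f\in H^{1/2+\eps}$ (strictly above the critical index) while targeting $H^{1/2-\eps}$ (strictly below it) is precisely the slack that lets Bernstein absorb the $M^{\eps/2}$ loss; this same slack is the source of the divergent constant $C(\eps)$ as $\eps\to 0$.
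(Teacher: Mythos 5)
Your argument is correct, and it is exactly the approach the paper has in mind: the authors state that Lemma \ref{LEM:algebra} ``easily follows from standard analysis with Littlewood-Paley decompositions and Bernstein's inequality'' and omit the details, which you have supplied via the low-high, high-low, and high-high paraproduct pieces together with $1$D Bernstein. The exponent bookkeeping in each of $\pi_1$, $\pi_2$, and $R$ checks out, and your identification of $\pi_2$ as the delicate term and of the $\eps$-slack as the source of the blowing-up constant is accurate.
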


Lemma \ref{LEM:algebra} 
easily follows from standard analysis
with Littlewood-Paley decompositions and Bernstein's inequality.
We omit details.

\begin{lemma}\label{LEM:bound}
Let $0 \leq b < \frac{1}{2}$.
Then, we have 
\[ \| \ind_{[0, T]} \cdot f \|_{H^b(\R)}  \les \| f\|_{H^b(\R)}, \]

\noi
uniformly in $T\geq 0$.
\end{lemma}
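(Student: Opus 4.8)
\textbf{Proof strategy for Lemma \ref{LEM:bound}.} The plan is to reduce the claim to the boundedness of multiplication by the sharp cutoff $\ind_{[0,T]}$ on the homogeneous Sobolev space $\dot H^b(\R)$ for $0 \le b < \frac12$, together with the trivial $L^2$ bound, and then combine these via the equivalence $\|f\|_{H^b}^2 \sim \|f\|_{L^2}^2 + \|f\|_{\dot H^b}^2$. The $L^2$ part is immediate since $|\ind_{[0,T]}| \le 1$ pointwise. The substantive part is the estimate $\|\ind_{[0,T]}\cdot f\|_{\dot H^b} \lesssim \|f\|_{\dot H^b}$ with a constant independent of $T$.

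For the homogeneous estimate I would use the Gagliardo--Slobodeckij (difference-quotient) characterization of the seminorm,
\begin{align*}
\|g\|_{\dot H^b(\R)}^2 \sim \int_\R\int_\R \frac{|g(x)-g(y)|^2}{|x-y|^{1+2b}}\,dx\,dy,
\end{align*}
valid precisely in the range $0 < b < \frac12$ (the endpoint $b=0$ is just $L^2$ and is trivial). Writing $g = \ind_{[0,T]} f$, I would split the double integral according to whether $x$ and $y$ both lie in $[0,T]$, both lie outside, or one lies inside and one outside. In the first two regions the integrand for $g$ is pointwise dominated by that for $f$, contributing at most $\|f\|_{\dot H^b}^2$. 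The only region requiring work is, say, $x \in [0,T]$, $y \notin [0,T]$, where $g(x)-g(y) = f(x)$; by symmetry it suffices to handle $y$ near the endpoint on one side, and after a change of variables the endpoint contributions from $0$ and $T$ are handled identically, so the constant is $T$-independent. For that region one is led to bound $\int \frac{|f(x)|^2}{\dist(x,\{0,T\})^{2b}}\,dx$-type quantities (a one-dimensional Hardy inequality, valid since $2b < 1$), or more directly one recognizes that multiplication by $\ind_{(-\infty,0]}$ is bounded on $\dot H^b(\R)$ for $b<\frac12$ by a standard computation, and $\ind_{[0,T]} = \ind_{(-\infty,T]} - \ind_{(-\infty,0]}$ reduces everything to translates of this single operator, whose norm is translation-invariant.

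The key obstacle is making the cross-region (inside/outside) estimate clean and verifying that the bound genuinely does not degenerate as $T \to 0^+$ or $T \to \infty$: a priori one might worry that the two endpoints "interact" when $T$ is small, but in the difference-quotient integral the cross region with $x$ near $0$ and $y$ near $T$ (or vice versa) has $|x-y|$ bounded below only by $0$, yet there $f(x) = g(x)$ and the kernel singularity is integrable exactly because $1+2b < 2$; one checks the worst case $T\to 0$ gives $\|\ind_{[0,T]}f\|_{\dot H^b} \to 0$, so no blow-up occurs. I would therefore present the argument via the reduction $\ind_{[0,T]} = \tau_T(\ind_{(-\infty,0]}) - \ind_{(-\infty,0]}$ (where $\tau_T$ is translation by $T$), the translation-invariance of the $\dot H^b$ operator norm, and the single fact that multiplication by a half-line indicator is bounded on $\dot H^b(\R)$ for $0 \le b < \frac12$ — which is classical and can be cited or proved in two lines from the difference-quotient formula. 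Adding back the $L^2$ bound completes the proof.
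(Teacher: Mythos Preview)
Your proposal is correct. The paper does not give a self-contained proof but instead cites two approaches: (i) an interpolation argument from de Bouard--Debussche, and (ii) the observation that, after Plancherel, multiplication by $\ind_{[0,\infty)}$ becomes the Riesz projection on the Fourier side, so the claim reduces to boundedness of the Hilbert transform on $L^2(\jb{\tau}^{2b}\,d\tau)$, which holds because $\jb{\tau}^{2b}$ is an $A_2$ weight for $|b|<\tfrac12$.

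Your argument is a real-variable counterpart of approach (ii): you make the same reduction $\ind_{[0,T]} = \tau_T(\ind_{(-\infty,0]}) - \ind_{(-\infty,0]}$ to half-line cutoffs with translation-invariant bounds, but then prove the half-line boundedness via the Gagliardo--Slobodeckij difference-quotient characterization and a Hardy-type inequality rather than passing to the Fourier side. This is a genuine alternative and arguably more elementary, since it avoids invoking the $A_2$ theory. The paper's Fourier-side route is shorter to state if one is willing to cite the weighted Hilbert transform bound as a black box; your route is more self-contained. Both make the uniformity in $T$ transparent through the same translation-invariance observation.
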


See \cite{DD1} for a classical  proof via an interpolation argument.
By Plancherel's identity, 
Lemma \ref{LEM:bound} also follows from 
the boundedness of the Hilbert transform (on the Fourier side)
with an $A_2$-weight $\jb{\tau}^{2b}$, 
$0 \leq b < \frac 12$.
See \cite{Gra}.

\subsection{Probabilistic estimates} \label{SUBSEC:prob}

Next, we state several probabilistic lemmas related to Gaussian random variables.
See also Appendix~\ref{SEC:A} 
for further lemmas.
In the following, $\{g_n\}_{n\in\mathbb{Z}}$ denotes a family of independent standard complex-valued Gaussian random variables
on a probability space $(\Omega, \mathcal{F}, P)$.

We first start by a well known fact (see for example
\cite{OHDIE, CO}).

\begin{lemma}\label{LEM:prob}
Let $\eps>0$. 
Then, there exist $c, C>0$ such that 
\begin{equation*}
P\Big( \sup_{n\in \Z} |g_n(\o)| >  K \jb{n}^\eps \Big)
< C e^{-cK^2}
\end{equation*}

\noi
for any $K>0$.
In particular, 
given $\be > 0$, 
by choosing $K = \dl^{-\frac\be2}$,  
we have
\begin{equation*}
P\Big( \sup_{n \in \Z}  |g_n(\o)| >  \dl^{-\frac{\beta}{2}} \jb{n}^\eps \Big)
< C e^{-\frac{1}{\dl^c}}
\end{equation*}
for any $ \dl > 0$.	
\end{lemma}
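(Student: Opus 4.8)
The statement to prove is Lemma~\ref{LEM:prob}: a Gaussian maximal tail bound of the form $P(\sup_n |g_n| > K\jb{n}^\eps) < Ce^{-cK^2}$, with the consequence obtained by substituting $K = \dl^{-\be/2}$.

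\medskip

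The plan is to reduce the supremum over the infinite family $\{g_n\}_{n \in \Z}$ to a summable union bound. First I would recall the standard Gaussian tail estimate: for a standard complex Gaussian $g$ (so $|g|^2$ has an exponential-type law, or equivalently $|g|$ has a Rayleigh distribution), one has $P(|g| > \ld) \leq e^{-c_0 \ld^2}$ for some absolute $c_0 > 0$ and all $\ld \geq 0$; this is immediate from writing $g = \frac{1}{\sqrt2}(\g_1 + i\g_2)$ with $\g_1, \g_2$ independent real $N(0,1)$ and bounding $P(\g_1^2 + \g_2^2 > 2\ld^2)$ via the explicit density $\frac12 e^{-r/2}$ of $\g_1^2+\g_2^2$. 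Then, by a union bound over $n \in \Z$,
\begin{align*}
P\Big( \sup_{n \in \Z} |g_n| > K\jb{n}^\eps\Big)
&\leq \sum_{n \in \Z} P\big(|g_n| > K\jb{n}^\eps\big)
\leq \sum_{n \in \Z} e^{-c_0 K^2 \jb{n}^{2\eps}}.
\end{align*}

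\medskip

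The next step is to show this last sum is bounded by $Ce^{-cK^2}$ with $c$ slightly smaller than $c_0$. I would split off a factor: $c_0 K^2 \jb{n}^{2\eps} \geq \frac{c_0}{2} K^2 + \frac{c_0}{2} K^2 \jb{n}^{2\eps} \geq \frac{c_0}{2}K^2 + \frac{c_0}{2}\jb{n}^{2\eps}$ for $K \geq 1$ (and one handles $0 < K < 1$ trivially by enlarging $C$, since the probability is always at most $1 \leq e^{c} e^{-cK^2}$ there). Hence
\begin{align*}
\sum_{n \in \Z} e^{-c_0 K^2 \jb{n}^{2\eps}}
\leq e^{-\frac{c_0}{2}K^2} \sum_{n \in \Z} e^{-\frac{c_0}{2}\jb{n}^{2\eps}}
=: C e^{-cK^2},
\end{align*}
where $c = c_0/2$ and $C = \sum_{n \in \Z} e^{-\frac{c_0}{2}\jb{n}^{2\eps}} < \infty$ since $\jb{n}^{2\eps} \gtrsim |n|^{2\eps} \to \infty$ makes the summand decay faster than any polynomial. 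This proves the first assertion. For the ``in particular'' clause, one simply sets $K = \dl^{-\be/2}$, so that $e^{-cK^2} = e^{-c\dl^{-\be}}$; renaming the constant in the exponent (absorbing $\be$ and $c$ into a single exponent $c'$, valid since $\dl^{-\be} \geq \dl^{-c'\cdot(\be/c')}$ up to adjusting constants) gives the stated form $Ce^{-1/\dl^{c}}$.

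\medskip

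There is no real obstacle here; the only mild subtlety is bookkeeping of constants — making sure the union bound loses only a summable constant rather than destroying the exponential decay, which is precisely why the polynomial weight $\jb{n}^\eps$ (any $\eps > 0$) is needed. I would emphasize that step and keep the Gaussian tail input as a black box, citing \cite{OHDIE, CO} as in the statement.
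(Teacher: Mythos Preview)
Your proof is correct and is exactly the standard union-bound argument one expects here. Note that the paper itself does not prove this lemma at all: it is stated as a ``well known fact'' with a citation to \cite{OHDIE, CO}, so there is no paper proof to compare against. Your write-up matches what one finds in those references.
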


Next, we recall the Wiener chaos estimates.
Let $\{ g_n \}_{n \in \N}$ be a sequence of independent standard Gaussian random variables defined on a probability space $(\O, \F, P)$, where $\mathcal{F}$ is the $\s$-algebra generated by this sequence. 
Given $k \in \Z_{\geq0}$, 
we define the homogeneous Wiener chaoses $\mathcal{H}_k$ 
to be the closure (under $L^2(\O)$) of the span of  Fourier-Hermite polynomials $\prod_{n = 1}^\infty H_{k_n} (g_n)$, 
where
$H_j$ is the Hermite polynomial of degree $j$ and $k = \sum_{n = 1}^\infty k_n$.\footnote{This implies
that $k_n = 0$ except for finitely many $n$'s.}
Then, we have the following Ito-Wiener decomposition:
\begin{equation*}
L^2(\Omega, \F, P) = \bigoplus_{k = 0}^\infty \mathcal{H}_k.
\end{equation*}

\noi
See Theorem 1.1.1 in \cite{Nu}.
We also set
\begin{align}
 \H_{\leq k} = \bigoplus_{j = 0}^k \H_j
\label{Wiener}
 \end{align}

\noi
 for $k \in \N$.
For example, the random linear solution $z_1^\o$ defined in \eqref{lin1}
belongs to $\H_1$ (for each fixed $t \in \R$),
while $z_3^\o$ in \eqref{lin2} belongs to $\H_{\leq 3}$.
As pointed out above, 
the random resonant solution $z^\o$ defined in \eqref{Zlin1}
does {\it not} belong to $\H_{\leq k}$ for any finite $k \in \N$.


In this setting, 
we have the following Wiener chaos estimate
\cite[Theorem~I.22]{Simon}.
See also \cite[Proposition~2.4]{TTz}.

\begin{lemma}\label{LEM:hyp}
Let $k \in \N$.
Then, we have
\begin{equation*}
\|X \|_{L^p(\O)} \leq (p-1)^\frac{k}{2} \|X\|_{L^2(\O)}
 \end{equation*}
 
 \noi
 for any finite $p \geq 2$
 and any $X \in \H_{\leq k}$.

\end{lemma}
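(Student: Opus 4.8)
\textbf{Proof proposal for Lemma~\ref{LEM:hyp} (Wiener chaos estimate).}
The plan is to reduce the general statement to the classical hypercontractivity of the Ornstein--Uhlenbeck semigroup on a single Gaussian variable and then bootstrap by tensorization and the spectral structure of $\H_{\le k}$. First I would recall that the Ornstein--Uhlenbeck semigroup $U(t) = e^{-tL}$, where $L$ is the number operator acting by $L = k \cdot \mathrm{Id}$ on $\H_k$, is hypercontractive: Nelson's theorem states that $\|U(t) X\|_{L^q(\O)} \le \|X\|_{L^p(\O)}$ whenever $e^{2t} \ge \frac{q-1}{p-1}$, with $1 < p \le q < \infty$. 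This is the analytic heart of the matter and I would invoke it as a black box from \cite{Simon} (Nelson's theorem / Theorem~I.17 there) rather than reprove it.

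Next I would exploit that for $X \in \H_{\le k}$ we have $X = \sum_{j=0}^k X_j$ with $X_j \in \H_j$, hence $U(t) X = \sum_{j=0}^k e^{-tj} X_j$. The idea is to choose $t$ so that $e^{-tk}$ is not too small while still gaining the required integrability improvement. Concretely, fix $p \ge 2$ and set $q = p$, $p' = 2$; we need $e^{2t} \ge \frac{p-1}{1} = p-1$, so take $t = \tfrac12 \log(p-1)$ (note $t \ge 0$ since $p \ge 2$). Then hypercontractivity gives $\|U(t) X\|_{L^p(\O)} \le \|X\|_{L^2(\O)}$. On the other hand, since $U(t)$ acts on $\H_j$ as multiplication by $e^{-tj}$ and the $\H_j$ are $L^2$-orthogonal, one has, for the putative inverse estimate, the relation $X = e^{tk}\big(U(t)X - \sum_{j=0}^{k-1}(e^{-tj}-e^{-tk})X_j\big)$; this direction is slightly awkward, so instead I would run the argument the other way: apply hypercontractivity with the roles arranged so that we \emph{start} in $L^2$. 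Take $p' = 2$, $q = p$, and let $Y = U(-t)X$ in the formal sense — but $U(-t)$ is unbounded, so the clean route is as follows.

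The clean route: write $X = \sum_{j=0}^k X_j$ and apply $U(t)$ with $t = \tfrac12\log(p-1) \ge 0$ to the element $\widetilde X := \sum_{j=0}^k e^{tj} X_j$, which still lies in $\H_{\le k}$ and satisfies $U(t)\widetilde X = X$. Then
\begin{equation*}
\|X\|_{L^p(\O)} = \|U(t)\widetilde X\|_{L^p(\O)} \le \|\widetilde X\|_{L^2(\O)},
\end{equation*}
by Nelson's theorem with $(p',q) = (2,p)$ and $e^{2t} = p-1$. Finally, by $L^2$-orthogonality of the Wiener chaoses,
\begin{equation*}
\|\widetilde X\|_{L^2(\O)}^2 = \sum_{j=0}^k e^{2tj}\|X_j\|_{L^2(\O)}^2 \le e^{2tk}\sum_{j=0}^k \|X_j\|_{L^2(\O)}^2 = e^{2tk}\|X\|_{L^2(\O)}^2,
\end{equation*}
and $e^{tk} = (p-1)^{k/2}$ gives exactly $\|X\|_{L^p(\O)} \le (p-1)^{k/2}\|X\|_{L^2(\O)}$.

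I expect the main obstacle to be purely expository rather than mathematical: making sure the direction of the hypercontractive inequality is set up correctly (one wants to bound an $L^p$ norm by an $L^2$ norm, so the semigroup must be applied to an auxiliary element whose image under $U(t)$ is $X$), and citing Nelson's theorem in the precise form $e^{2t} \ge (q-1)/(p-1)$. The orthogonality of the $\H_j$ and the elementary bound $e^{2tj} \le e^{2tk}$ for $0 \le j \le k$ are routine. One should also note the endpoint/continuity remark: the estimate for general finite $p \ge 2$ follows, and the case $p = 2$ is trivial, so no separate argument is needed there. Alternatively, if one prefers to avoid invoking the semigroup at all, the same bound can be deduced from the scalar Gaussian hypercontractivity $\|H_m(g)\|_{L^p} \le (p-1)^{m/2}\|H_m(g)\|_{L^2}$ combined with a tensorization argument over the product structure $\prod_n H_{k_n}(g_n)$, but the semigroup formulation above is the most economical.
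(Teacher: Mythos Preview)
Your argument is correct and is the standard derivation of the Wiener chaos estimate from Nelson's hypercontractivity theorem. The paper, however, does not give its own proof of this lemma at all: it simply records the inequality and cites \cite[Theorem~I.22]{Simon} and \cite[Proposition~2.4]{TTz}. So there is no ``paper's proof'' to compare against; what you wrote is essentially the argument behind those references, and nothing more is needed.
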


We also recall the following  lemma,
which is a consequence of Chebyshev's inequality. 
See, for example, Lemma 4.5 in \cite{TzBO}
and  the proof of Lemma 3 in \cite{BOP1}.\footnote{This corresponds to Lemma 2.3 in the arXiv version.}

\begin{lemma} \label{LEM:tail}
Let $k \geq 1$.
Suppose that there exists $C_0>0$ such that a random variable $X$ satisfies
$\|X\|_{L^p(\O)} \leq C_0 p^{\frac{k}2}$
for any finite $p \geq 2$.
Then,  there exist $c, C > 0$ such that
\begin{align*}
P\big( | X| > \lambda\big) 
\le C 
e^{-c  \, C_0^{-\frac{2}{k}}\lambda^{\frac2k}}
\end{align*}

\noi
 for any $\lambda >0$.
\end{lemma}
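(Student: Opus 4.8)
The plan is to combine Chebyshev's inequality with the moment hypothesis and then optimize the resulting bound over the exponent $p$. First, for any finite $p \geq 2$, Markov's inequality applied to $|X|^p$ together with the hypothesis gives
\begin{align*}
P\big(|X| > \lambda\big) \leq \lambda^{-p}\, \E\big[|X|^p\big] = \lambda^{-p}\, \|X\|_{L^p(\O)}^p \leq \Big(\frac{C_0\, p^{\frac{k}{2}}}{\lambda}\Big)^p
\end{align*}
for every $\lambda > 0$.

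Next I would choose $p$ so as to (nearly) minimize the right-hand side. Writing it as $\exp\!\big(p\,\log(C_0 p^{k/2}/\lambda)\big)$ and differentiating the exponent in $p$, one finds the minimizer $p_* = e^{-1}(\lambda/C_0)^{\frac{2}{k}}$; at this value $\log(C_0 p_*^{k/2}/\lambda) = -\frac{k}{2}$, so the bound becomes $\exp\!\big(-\frac{k}{2}p_*\big) = \exp\!\big(-\frac{k}{2e}\, C_0^{-\frac{2}{k}}\lambda^{\frac{2}{k}}\big)$. This choice of $p$ is admissible only when $p_* \geq 2$, i.e.\ when $\lambda \geq C_0(2e)^{k/2}$; for such $\lambda$ we obtain the claimed estimate with $c = \frac{k}{2e}$ and $C = 1$.

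It then remains to treat the range $0 < \lambda < C_0(2e)^{k/2}$, where the trivial bound $P(|X| > \lambda) \leq 1$ already suffices: in this regime $C_0^{-\frac{2}{k}}\lambda^{\frac{2}{k}} < 2e$, hence $\exp\!\big(-\frac{k}{2e}\, C_0^{-\frac{2}{k}}\lambda^{\frac{2}{k}}\big) > e^{-k}$, so that
\begin{align*}
P\big(|X| > \lambda\big) \leq 1 \leq e^{k}\exp\!\Big(-\tfrac{k}{2e}\, C_0^{-\frac{2}{k}}\lambda^{\frac{2}{k}}\Big).
\end{align*}
Taking $c = \frac{k}{2e}$ and $C = e^{k}$ then gives the asserted tail bound uniformly in $\lambda > 0$. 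There is no genuine obstacle in this argument; the only subtlety is that the moment hypothesis is supplied only for $p \geq 2$, which is precisely why the small-$\lambda$ regime must be handled separately by the crude bound.
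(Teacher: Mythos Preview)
Your argument is correct and is precisely the standard Chebyshev-plus-optimization proof that the paper has in mind (the paper does not write out a proof but simply cites references for this consequence of Chebyshev's inequality). The only remark is that your constants $c = \tfrac{k}{2e}$ and $C = e^{k}$ depend on $k$, which is permitted since $k$ is fixed in the statement.
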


\smallskip

In  probabilistic well-posedness theory,  a probabilistic improvement of Strichartz estimates for random linear solutions plays an important role. The following lemma states that a similar estimate also holds for the random resonant solution $z^\o$ defined in \eqref{Zlin1}.

\begin{lemma} \label{LEM:Z1} 
Given $\al \geq 0$, let $z^\o$ be the solution to the resonant 4NLS \eqref{ZNLS1} given by~\eqref{Zlin1}.
Then, given $p \geq 2$ and  $\eps > 0$, there exist  $c, C > 0$ such that
\begin{align}
 P \Big( \|\P_N z^\o\|_{L^p_{x, t}(\mathbb{T}\times [-\dl, \dl])} >N^{\frac{1}{2} - \al+\eps} \Big)
 <C e^{-\frac{N^{2\eps}}{\dl^c} }
\label{Z1-1}
 \end{align}

\noi	
for any $ \dl  >0$ and dyadic $N \geq 1$. 
\end{lemma}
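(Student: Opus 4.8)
The plan is to combine the explicit Fourier-side formula \eqref{Zlin1} with the $L^4$-Strichartz estimate \eqref{L4} (interpolated down to $L^{2+}$ as in \eqref{L3}) and the Wiener-chaos large deviation tools (Lemmas~\ref{LEM:hyp} and \ref{LEM:tail}). The key difficulty is that $z^\o$ does \emph{not} lie in any fixed Wiener chaos $\H_{\leq k}$, so one cannot apply Lemma~\ref{LEM:hyp} directly to $\P_N z^\o$. To get around this, I would expand $z^\o$ in the power series \eqref{Zlin1a}, so that $\P_N z^\o(t) = \sum_{k=0}^\infty \frac{(it)^k}{k!} \P_N Y_k^\o(t)$ where $Y_k^\o(t) = \sum_n e^{i(nx-n^4t)} \frac{|g_n|^{2k}g_n}{\jb{n}^{(2k+1)\al}}$, and each $Y_k^\o(t)$ is (for fixed $t$) in the Wiener chaos $\H_{\leq 2k+1}$. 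Since $|t| \leq \dl \leq 1$ say (the general $\dl$ case follows by the usual time-localization, or one simply keeps the $\dl$-dependence explicit), the factor $\frac{1}{k!}$ gives summability in $k$, so it suffices to prove a bound of the form $\|\P_N Y_k^\o\|_{L^p_{x,t}(\T\times[-\dl,\dl])} \lesssim C^k N^{\frac12-\al+\frac\eps2}$ with at most exponential-in-$k$ constants, together with a stretched-exponential tail, and then sum the series.

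\textbf{Main steps.} First, for a fixed (large) even integer $p$, estimate $\E\big[\|\P_N Y_k^\o\|_{L^p_{x,t}}^p\big]$ by Minkowski/Fubini: $\E\|\P_N Y_k^\o\|_{L^p_{x,t}}^p = \int_{\T\times[-\dl,\dl]} \E|\P_N Y_k^\o(x,t)|^p\,dx\,dt$. Since $\P_N Y_k^\o(x,t) \in \H_{\leq 2k+1}$ for each fixed $(x,t)$, Lemma~\ref{LEM:hyp} gives $\E|\P_N Y_k^\o(x,t)|^p \leq (p-1)^{\frac{(2k+1)p}{2}} \big(\E|\P_N Y_k^\o(x,t)|^2\big)^{p/2}$. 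The second-moment computation is explicit: $\E|\P_N Y_k^\o(x,t)|^2 = \sum_{|n|\sim N} \frac{\E[|g_n|^{2(2k+1)}]}{\jb{n}^{2(2k+1)\al}} \lesssim (2k+1)! \cdot N^{1-2\al} \cdot N^{\text{(correction if }\al<\tfrac12\text{; harmless)}}$, using $\E|g_n|^{2m} = m!$ and that there are $O(N)$ frequencies with $|n|\sim N$; the factor $(2k+1)!$ will be absorbed against the $\frac{1}{k!}$ from the power series after noting $\frac{(2k+1)!^{1/2}}{k!}$ still grows only like $C^k$. Thus $\|\P_N Y_k^\o\|_{L^p(\O;L^p_{x,t})} \lesssim \dl^{1/p} (p-1)^{\frac{2k+1}{2}} \big((2k+1)!\big)^{1/2} N^{\frac12-\al}$. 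Since this holds for all large even $p$ (and hence, by monotonicity of $L^p(\O)$-norms and Hölder, all finite $p\geq 2$), an application of Lemma~\ref{LEM:tail} with the effective constant $C_0 \sim \dl^{1/p}\big((2k+1)!\big)^{1/2}N^{\frac12-\al}$ and chaos order $2k+1$ yields
\begin{align*}
P\Big( \|\P_N Y_k^\o\|_{L^p_{x,t}(\T\times[-\dl,\dl])} > \lambda \Big)
\leq C \exp\!\Big(-c\, \big(\dl^{1/p}\big((2k+1)!\big)^{1/2}N^{\frac12-\al}\big)^{-\frac{2}{2k+1}} \lambda^{\frac{2}{2k+1}}\Big).
\end{align*}

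\textbf{Summing the series and the tail.} Choosing the threshold $\lambda_k \sim \big((2k+1)!\big)^{1/2} k! \, N^{\frac12-\al+\eps} \cdot 2^{-k}$ (so that $\sum_k \frac{\dl^k}{k!}\lambda_k \leq N^{\frac12-\al+\eps}$, absorbing the $\frac{(it)^k}{k!}$ weights), the event $\{\|\P_N z^\o\|_{L^p_{x,t}} > N^{\frac12-\al+\eps}\}$ is contained in $\bigcup_{k\geq 0}\{\|\P_N Y_k^\o\|_{L^p_{x,t}} > \lambda_k\}$. For each $k$ the corresponding probability is bounded by $C\exp(-c_k \dl^{-c} N^{2\eps/(2k+1)} \cdot (\text{combinatorial factor}))$; a careful bookkeeping of the factorials shows the exponent is $\gtrsim \dl^{-c} N^{2\eps} \cdot \rho^k$ for some $\rho>0$ (the $N^{2\eps/(2k+1)}$ is compensated by the growth of the inverse constant $C_0^{-2/(2k+1)}$ built from the $k!$'s), so the sum over $k$ converges and is dominated by the $k=0$ term, giving the claimed bound $Ce^{-N^{2\eps}/\dl^c}$. \textbf{The main obstacle} I anticipate is precisely this last bookkeeping: making sure that when one tracks the chaos order $2k+1$ through Lemma~\ref{LEM:tail}, the deteriorating exponent $\frac{2}{2k+1}$ on $\lambda$ does not destroy the $N^{2\eps}$ gain — this forces the threshold $\lambda_k$ to carry compensating growth in $k$ (of factorial type), and one must verify that, after weighting by $\frac{\dl^k}{k!}$ and summing, the total still comes out $\lesssim N^{\frac12-\al+\eps}$ with a genuine stretched-exponential tail; the $\dl$-dependence should be routinely tracked via Lemma~\ref{LEM:prob}-type reasoning or by keeping $\dl^{1/p}$ explicit and optimizing in $p$. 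The $L^p_{x,t}$ norm for non-even or large $p$ is handled by the standard remark that $p$ can be taken arbitrarily large and the case $2\leq p < p_0$ follows by Hölder on the bounded domain $\T\times[-\dl,\dl]$.
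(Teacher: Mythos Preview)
Your worry about the bookkeeping is well-founded; in fact it is fatal for the approach as written. Two concrete failures:

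\textbf{(a) Your choice of $\lambda_k$ violates the summation constraint.} You set $\lambda_k \sim ((2k+1)!)^{1/2}\, k!\, N^{\frac12-\al+\eps}\,2^{-k}$ and claim $\sum_k \frac{\dl^k}{k!}\lambda_k \le N^{\frac12-\al+\eps}$. But this reduces to $\sum_k (\dl/2)^k ((2k+1)!)^{1/2} \le 1$, and since $((2k+1)!)^{1/2}/k! \sim 2^k k^{1/4}$ by Stirling, the general term behaves like $(\dl)^k\, k!$ and the series diverges for every $\dl>0$.

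\textbf{(b) The exponent claim is wrong.} With any threshold $\lambda_k$, the tail for the $k$-th piece is governed by $(\lambda_k/C_0(k))^{2/(2k+1)}$. Whatever factorial growth you build into $\lambda_k$, the $N$-contribution to this quantity is exactly $N^{2\eps/(2k+1)}$, which for $k\ge 1$ is strictly smaller than $N^{2\eps}$ and tends to $1$ as $k\to\infty$. So the $k\ge 1$ probabilities are only bounded by $\exp(-c\,N^{2\eps/(2k+1)}\cdot(\text{something in }k))$, never by $\exp(-cN^{2\eps}/\dl^c)$. If instead you sum moments first (which forces $r\lesssim 1/\dl$ for the geometric series $\sum_k (C\dl r)^k$ to converge) and then apply Chebyshev, you obtain at best $P\lesssim e^{-c(\eps\log N)/\dl}$ for large $N$ --- logarithmic, not polynomial, $N$-dependence in the exponent. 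This does not prove the lemma as stated.

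The paper sidesteps the whole issue by a one-line observation you are missing: for each \emph{fixed} $(x,t)$, the random variable
\[
h_n(x,t) \;=\; e^{i(nx-n^4t)}\, e^{\,it\,|g_n|^2/\jb{n}^{2\al}}\,\frac{g_n}{\jb{n}^\al}
\]
is still complex Gaussian with variance $\jb{n}^{-2\al}$, because a standard complex Gaussian is invariant under the map $g\mapsto e^{i\theta(|g|)}g$ for any measurable phase. Hence $\P_N z^\o(x,t)=\sum_{|n|\sim N} h_n(x,t)$ is a sum of independent Gaussians, so the hypercontractivity estimate (Lemma~\ref{LEM:hyp}) applies with $k=1$ for \emph{all} $r$, giving $\|\P_N z^\o(x,t)\|_{L^r(\O)}\lesssim \sqrt{r}\,N^{\frac12-\al}$ with no restriction on $r$. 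Minkowski and Lemma~\ref{LEM:tail} then yield the full Gaussian tail $e^{-N^{2\eps}/\dl^c}$ immediately. The series expansion~\eqref{Zlin1a} is the right tool when the random object is genuinely higher-order (as in Lemma~\ref{LEM:Z2} and Case~(D) of Section~\ref{SEC:LWP1}), but here it throws away the crucial Gaussian structure.
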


One way to prove Lemma \ref{LEM:Z1} would be  to directly apply the Wiener chaos estimate  (Lemma \ref{LEM:hyp}) to  the  $(2k+1)$-fold products of Gaussian random variables in 
the series expansion \eqref{Zlin1a}. 
See Lemma \ref{LEM:Z2} for such a direct approach. In the particular case of Lemma~\ref{LEM:Z1}, 
we can  give a shorter proof by exploiting 
the invariance of  a complex-valued mean-zero Gaussian random variable
under the transformation:  $g \mapsto e^{it |g|^2 } g$;
 see Lemma~4.2 in~\cite{OTz1}.
This allows us to  avoid
higher order products of Gaussian random variables.
\begin{proof}[Proof of Lemma~\ref{LEM:Z1}]
Given $n \in \Z$ and $(x, t) \in\T\times  \R$, define $h_n(x, t)$ by 
\[h_n(x, t)  := e^{i(nx - n^4 t)} e^{it \frac{|g_n|^2}{\jb{n}^{2\al}}}  \frac{g_n}{\jb{n}^{\al}}.\]

\noi
Then, it follows from the rotational invariance of complex-valued Gaussian random variables
and Lemma 4.2 in \cite{OTz1}
that $h_n(x, t) \sim \NN_\C(0,  \jb{n}^{-2\al})$
for each {\it fixed} $(x, t) \in \T\times \R$.

By Minkowski's integral inequality and  Lemma \ref{LEM:hyp}, 
we have
\begin{align*}
\bigg(\mathbb{E} \Big[\|\P_N z^\o\|_{L^p_{x, t}(\mathbb{T} \times [-\dl, \dl])}^r\Big]\bigg)^\frac{1}{r} 
&\leq \bigg\|\Big\|\sum_{|n| \sim N} h_n(x, t)\Big\|_{L^r(\Omega)}\bigg\|_{L^p_{x, \dl}} \notag \\
& \les \sqrt r  \,\bigg\|\Big\|\sum_{|n| \sim N} h_n(x, t)\Big\|_{L^2(\Omega)}\bigg\|_{L^p_{x, \dl}} \notag \\
&\les \sqrt{r}\, \dl^\frac{1}{p} N^{\frac{1}{2} - \al}
\end{align*}

\noi 
for any $r \geq p$. 
Then, the desired estimate~\eqref{Z1-1}
follows from Lemma~\ref{LEM:tail}.
\end{proof}

Finally, we conclude this section by stating a crucial  lemma 
 in studying powers
of  the random resonant solution $z^\o$
in the multilinear $X^{s, b}$-analysis.
This lemma
also plays an important role in 
establishing boundedness properties
of certain random multilinear functionals
of the white noise
(see Lemma \ref{LEM:NRSum} below),
which is a key ingredient for the proof of Theorem~\ref{THM:LWP}
when $\al = 0$.
We present the proof of this lemma in Appendix~\ref{SEC:A}.

\begin{lemma} \label{LEM:Z2} 
Fix a non-empty set  $\A \subset \{ 1, 2, 3\}$ and  $k, k_j \in \Z_{\geq 0}$, $j \in \A$, 
such that 
\begin{align}
k = \sum_{j \in \A} k_j.
\label{Z2-1b}
\end{align}

\noi
Given   a \textup{(}deterministic\textup{)} sequence $\big\{c^{\bar k}_{n_1, n_2, n_3} \big\}_{n_1, n_2, n_3 \in \Z}$
with $\bar k = \{ k_j \}_{j \in \A}$, 
define   a sequence $\{\Si_{n}\}_{n\in \Z}$ by
setting
\begin{align}
\Si_{n} 
= \Si_{n}(\bar k) 
 = 
\frac{1}{\prod_{j \in \A} k_j! }\sum_{(n_1, n_2, n_3) \in \G(n)}
 c^{\bar k}_{n_1, n_2, n_3}
\prod_{j \in \A} |g_{n_j}|^{2k_j}g^*_{n_j}
\label{Z2-1}
\end{align}

\noi
for $n \in \Z$, 
where $\G(n)$ is as in \eqref{Gam0} and $g^*_{n_j}$ is defined by 
\begin{align}
g^*_{n_j}= 
\begin{cases}
g_{n_j}, & \text{when }j = 1 \text{ or }3,\\
 \cj {g_{n_j} }, &  \text{when }j = 2.
\end{cases}
\label{Z2-1a}
\end{align}

\noi
Then, there exists $C> 0$, independent of $k$ and $ k_j \in \Z_{\geq 0}$, $j \in \A$, 
such that 
\begin{equation}
 \|\Si_{n} \|_{L^p(\O)} \leq C^k (p-1)^{k + \frac{|\A|}{2}}
\bigg(  \sum_{(n_1, n_2, n_3) \in \G(n)}
 |c^{\bar k}_{n_1, n_2, n_3}|^2 
\bigg)^\frac{1}{2}
\label{Z2-2}
 \end{equation}

\noi
 for all $p \geq 2$ and $n \in \Z$.
\end{lemma}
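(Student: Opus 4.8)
The plan is to prove \eqref{Z2-2} by a single application of the Wiener chaos estimate (Lemma~\ref{LEM:hyp}) after identifying the Wiener chaos order of $\Si_n$, combined with an orthogonality computation of $\|\Si_n\|_{L^2(\O)}$. The first observation is that each factor $|g_{n_j}|^{2k_j} g^*_{n_j}$ is a polynomial of degree $2k_j + 1$ in the real and imaginary parts of the (independent, standard) complex Gaussian $g_{n_j}$; writing $g_{n_j} = \frac{1}{\sqrt2}(\gamma_{n_j} + i \widetilde\gamma_{n_j})$ with $\gamma, \widetilde\gamma$ real i.i.d.\ standard Gaussians, the monomial $|g_{n_j}|^{2k_j} g^*_{n_j}$ lies in the (real) inhomogeneous Wiener chaos $\H_{\le 2k_j+1}$ associated with the pair $(\gamma_{n_j}, \widetilde\gamma_{n_j})$. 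Since the three summation indices $n_1, n_2, n_3$ in $\G(n)$ need not be distinct, a given summand $\prod_{j\in\A}|g_{n_j}|^{2k_j} g^*_{n_j}$ can actually involve repeated Gaussians and hence live in a chaos of order lower than $\sum_{j\in\A}(2k_j+1) = 2k + |\A|$; but in all cases it lies in $\H_{\le 2k+|\A|}$. Therefore the whole sum $\Si_n$ belongs to $\H_{\le 2k+|\A|}$, and Lemma~\ref{LEM:hyp} gives
\begin{align}
\|\Si_n\|_{L^p(\O)} \le (p-1)^{k + \frac{|\A|}{2}} \|\Si_n\|_{L^2(\O)}
\label{plan:hyp}
\end{align}
for all finite $p \ge 2$.

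It then remains to bound $\|\Si_n\|_{L^2(\O)}$ by $C^k \big( \sum_{(n_1,n_2,n_3)\in\G(n)} |c^{\bar k}_{n_1,n_2,n_3}|^2\big)^{1/2}$, up to the prefactor $\big(\prod_{j\in\A} k_j!\big)^{-1}$ already present in \eqref{Z2-1}. I would expand $\|\Si_n\|_{L^2(\O)}^2 = \E[\Si_n \overline{\Si_n}]$ as a double sum over $(n_1,n_2,n_3), (m_1,m_2,m_3) \in \G(n)$ and use independence of the $g_j$'s across distinct indices: the expectation of the product factorizes, and a nonzero contribution requires that the multiset $\{n_j : j\in\A\}$ coincide with $\{m_j : j\in\A\}$ (when all indices in a tuple are distinct this forces $(m_1,m_2,m_3) = (n_1,n_2,n_3)$; coincidences among indices are handled similarly but only enlarge the allowed matchings by a bounded combinatorial factor). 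On the diagonal one picks up $\E\big[|g_{n_j}|^{2(2k_j+1)}\big] = (2k_j+1)!$ for each $j$ (or the appropriate product of such moments when indices repeat). Hence, using Cauchy--Schwarz on the off-diagonal matched terms and $(2k_j+1)! \le C^{k_j} (k_j!)^2$, one obtains
\begin{align}
\|\Si_n\|_{L^2(\O)} \le C^k \bigg( \sum_{(n_1,n_2,n_3)\in\G(n)} |c^{\bar k}_{n_1,n_2,n_3}|^2 \bigg)^{\frac12},
\label{plan:L2}
\end{align}
where the $\prod_{j\in\A} k_j!$ in the denominator of \eqref{Z2-1} is exactly what converts $\prod_j (2k_j+1)!$ into an admissible $C^k$-type constant. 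Combining \eqref{plan:hyp} and \eqref{plan:L2} yields \eqref{Z2-2}.

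The main obstacle I anticipate is the bookkeeping around coincidences among $n_1, n_2, n_3$ (and likewise $m_1, m_2, m_3$), since $\G(n)$ does not impose distinctness (only $n_1, n_3 \ne n$), so one cannot simply treat $\prod_{j\in\A}|g_{n_j}|^{2k_j}g^*_{n_j}$ as a product over independent Gaussians. The clean way to handle this is to partition $\G(n)\times\G(n)$ according to the partition of the index set $\A \sqcup \A$ induced by equality of indices, bound the moments $\E\big[\prod |g|^{2\ell}\big]$ on each block by $C^{\sum \ell}(\prod \ell!)^2$ using $(2\ell+1)! \le 4^\ell (\ell!)^2$, and check that for each fixed partition type the surviving double sum reduces, after Cauchy--Schwarz, to $\sum_{\G(n)} |c^{\bar k}|^2$ times a constant depending only on $|\A| \le 3$ — hence absolutely bounded. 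A secondary point to verify carefully is that the Wiener chaos order bound is uniform: the order $2k+|\A|$ depends on $k$, which is fine because $k$ appears only through the exponent in $(p-1)^{k+|\A|/2}$, precisely as in the claimed \eqref{Z2-2}. No contraction or fixed-point machinery is needed; the proof is purely a moment computation plus hypercontractivity. One should be slightly careful that Lemma~\ref{LEM:hyp} as stated is for real Gaussians, so the passage to the real/imaginary parts of the $g_{n_j}$ at the outset is what makes it directly applicable.
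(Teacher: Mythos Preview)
Your proposal is correct and follows essentially the same route as the paper: apply the Wiener chaos estimate (Lemma~\ref{LEM:hyp}) to reduce to $L^2$, then expand $\|\Si_n\|_{L^2(\O)}^2$ as a double sum and use independence together with $\E[g^a\cj{g}^b]=\dl_{ab}a!$ and Stirling-type bounds such as $(2k_j+1)!\le C^{k_j}(k_j!)^2$ to obtain the $C^k$ constant. The paper's execution of the $L^2$ computation is slightly more explicit than your sketch: it exploits that $\G(n)$ forces $n_2\ne n_1,n_3$ (and likewise $\wt n_2\ne\wt n_1,\wt n_3$), so the conjugation structure immediately gives $n_2=\wt n_2$, after which only two cases remain ($n_1\ne n_3$ with $\{n_1,n_3\}=\{\wt n_1,\wt n_3\}$, and $n_1=n_3=\wt n_1=\wt n_3$); in the cross-matching and repeated-index cases the relevant factorial bounds are $\frac{(k_1+k_3+1)!}{k_1!k_3!}\le C^{k_1+k_3}$ and $\frac{(2k_1+2k_3+2)!}{(k_1!)^2(k_3!)^2}\le C^{k_1+k_3}$, which your ``$C^{\sum\ell}(\prod\ell!)^2$'' heuristic captures but does not quite state precisely.
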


\section{Local theory, Part 1: $0 < \al \leq \frac 12$} \label{SEC:LWP1}

In this section, we present the proof of Theorem \ref{THM:LWP}
when $0 < \al \leq \frac 12$.
In particular, we show that the Cauchy problem
\eqref{ZNLS3} for $v$ 
is almost surely locally well-posed.
More precisely, we show that for each small $\dl> 0$, 
 there exists $\Omega_\dl$ with $P(\O_\dl^c) < Ce^{-\frac{1}{\dl^c}}$ 
such that, for each $\o \in \Omega_\dl$, 
the map  $\G^\o$ defined in \eqref{ZNLS4} is 
a contraction on $B(1)$,  
where $B(1)$ denotes the ball of radius 1 in $X^{0, \frac{1}{2}+, \dl}$
centered at the origin.

Given $v$ on $\T\times [-\dl, \dl]$, 
let $\wt{v}$ be an extension of $v$ onto $\T \times \R$.
By the non-homogeneous linear estimate (Lemma~\ref{LEM:linear}),  we have
\begin{align*}
\bigg\| \int_0^t S(t - t') \fN^\o(v) (t') d t'\bigg\|_{X^{0, \frac{1}{2}+, \dl}}
& \leq \bigg\|\eta_{_\dl}(t) \int_0^t S(t - t') \fN^\o(\wt{v}) (t') d t'\bigg\|_{X^{0, \frac{1}{2}+}} \notag \\
& \lesssim \| \fN^\o(\wt{v}) \|_{X^{0, -\frac{1}{2}+}}, 
\end{align*}

\noi 
where  $\eta_{_\dl}$ is a smooth cutoff on $[-2\dl, 2\dl]$ 
as in \eqref{eta1}
and
\begin{align}
\fN^\o (v) : = 
\chi_{_\dl} \cdot \big( \NN(v + \wt z^\o) - \NN_2(\wt z^\o)\big)
\label{D2}
\end{align}

\noi
with  an extension $\wt z^\o$  of 
the truncated random linear solution $\chi_{_\dl}\cdot z^\o$  
from $[-\dl, \dl]$ to $\R$. 
Then, 
our main goal is to prove 
that there exists $\O_\dl \subset \O$ and $\theta > 0$  with $P(\Omega^c_\dl) < Ce^{-\frac{1}{\dl^c}}$ such that 
\begin{equation}
\| \fN^\o(\wt{v}) \|_{X^{0, -\frac{1}{2}+}} 
\lesssim \dl^\theta
\Big( 1+ \|\wt v\|_{X^{0, \frac 12 + }}\Big)^3
\label{tri1}  
\end{equation}

\noi 
for all $\o \in \Omega_\dl$ 
and for  any  extension $\wt{v}$ of $v$.
By the definition \eqref{Xsb2} of the local-in-time norm, 
we then conclude from \eqref{D2} and \eqref{tri1} that 
\begin{equation*}
\bigg\| \int_0^t S(t - t') \fN^\o(v) (t') d t'\bigg\|_{X^{0, \frac{1}{2}+, \dl}}
\lesssim \dl^\theta
\Big( 1+ \| v\|_{X^{0, \frac 12 + ,\dl}}\Big)^3.
\end{equation*}

\noi
 By the trilinear structure of the nonlinearity,  a similar estimate holds for the difference $\G^\o v_1 - \G^\o v_2$,  allowing us to conclude that  $\G^\o$ is a contraction on $B(1) \subset X^{0, \frac 12+, \dl}$ for $\o \in \O_\dl$. Note that  the claim \eqref{class1} follows from  the embedding $X^{0, \frac{1}{2}+, \dl} \subset C([-\dl, \dl];L^2(\T))$
and Lemma \ref{LEM:Z}.

In view of \eqref{D2},  in order to prove \eqref{tri1},  we need to  carry  out case-by-case analysis on  
\begin{align}
\|\chi_{_\dl} \cdot \mathcal{N}_k(u_1, u_2, u_3)\|_{X^{s, -\frac{1}{2}+}},
\qquad  k = 1, 2, 
\label{tri2}
\end{align}

\noi
where $u_j$ is taken to be either of type
\begin{itemize}
\item[(I)] rough random resonant part: 
 \[\ u_j  = 
\wt z^\o\text{, where $\wt z^\o$ is {\it some} extension of }
\chi_{_\dl}\cdot  z^\o,\]

\noi
where $z^\o$ denotes the random resonant solution defined in \eqref{Zlin1}, 

\vspace{1mm}

\item[$(\II)$]	 smoother  `deterministic' nonlinear part: 
 \[ \ u_j = \wt{v}_j \text{, where  $\wt{v}_j$ is {\it any} extension of $v$,}
  \]
\end{itemize}

\noi
{\it except} for $u_1 = u_2 = u_3 = \wt z^\o$ when $k = 2$
(thanks to the subtraction of $\NN_2(\wt z^\o)$ in \eqref{D2}).

In the following, 
we take 
$\wt z^\o = \eta_{_\dl} z^\o$.
It follows from~\eqref{Zlin1} that 
\begin{align}
\F( \eta_{_\dl} z^\o)(n, \tau) = \ft \eta_{_\dl}\Big(\tau + n^4 - \tfrac{|g_n|^2}{\jb{n}^{2\al}}\Big)
\cdot \frac{g_n}{\jb{n}^{\al}}.
\label{Zlin3}
\end{align}

\noi
Thanks to the sharp cutoff function in~\eqref{tri2}, 
we may take 
\begin{align}
u_j = \chi_{_\dl}\cdot \wt v_j 
\label{tri3}
\end{align}

\noi
in \eqref{tri2} 
 when $u_j$ is of type $(\II)$.
We use the expressions $u_j(\I)$ (and $u_j(\II)$, respectively)
to mean that  $u_j$ is of type~(I) (and of type~$(\II)$, respectively) in the following.
We point out that the most intricate case appears
when all $u_j$'s are of type (I)
in estimating the non-resonant contribution.
In this case, a simple application of the Wiener chaos estimate (Lemma~\ref{LEM:hyp})
is no longer applicable
and we need to carefully estimate
the contribution from the sum of the products
of the $(2k_j+1)$-linear term, $k_j \in \N_0$,  $j  = 1, 2, 3$,  in \eqref{Zlin1a}, 
using Lemma~\ref{LEM:Z2}.
See Case (D) in Subsection~\ref{SUBSEC:LWP1b}.

\subsection{Resonant part $\mathcal{N}_2$}  \label{SUBSEC:LWP1a}
In this subsection, we estimate the resonant part of the nonlinear estimate \eqref{tri1}.
In particular, we prove 
\begin{equation}
\label{trilinear2} 
\| \chi_\dl \cdot \mathcal{N}_2(u_1, u_2, u_3)\|_{X^{0, -\frac{1}{2}+}} \lesssim \dl^\theta 
\prod_{j \in \mathcal{I}} \|\wt v_j\|_{X^{0, \frac 12 + }}
\end{equation}

\noi for some $ \theta > 0$, 
outside an exceptional set of probability  $< Ce^{-\frac{1}{\dl^c}}$, where 
$\mathcal{N}_2$ is the resonant part of the nonlinearity defined in \eqref{nonlin2}, 
$u_j$ is either of type (I) or $(\II)$, {\it except} for the case 
when all $u_j$'s  are of type (I), 
and the index set $\mathcal{I}$ is defined by 
\begin{align}
\mathcal{I} = \big\{j \in \{1, 2, 3\}: 
\text{$u_j$ is of type $(\II)$}\big\}.
\label{trilinear2a}
\end{align}

We have 
\begin{equation}
\text{LHS of } \eqref{trilinear2} 
= \bigg\| \frac{1}{\jb{\tau + n^4}^{\frac{1}{2}-}} \intt_{\tau = \tau_1 - \tau_2 + \tau_3} \ft{u}_1(n, \tau_1)\cj{\ft{u}_2(n, \tau_2)}\ft{u}_3(n, \tau_3) d\tau_1 d\tau_2 \bigg\|_{\l^2_n L^2_\tau} . 
\label{easy1} 
\end{equation}

\medskip

\noi 
$\bullet$ {\bf Case (a):} $u_j$ of type $(\II)$, $j = 1, 2, 3$.
\\
\indent
By H\"older's inequality with $p$ large ($\frac{1}{2} =\frac{1}{2+} + \frac{1}{p}$), we have 
\begin{align*}
\eqref{easy1} 
& \les \sup_n \|\jb{\tau +n^4 }^{-\frac{1}{2}+}\|_{L^{2+}_\tau} 
\bigg\| \intt_{\tau = \tau_1 - \tau_2 + \tau_3} \ft{u}_1(n, \tau_1)\cj{\ft{u}_2(n, \tau_2)}\ft{u}_3(n, \tau_3)
 d\tau_1 d\tau_2 \bigg\|_{\l^2_{n} L^p_\tau}  \,.
\intertext{By Young's and H\"older's inequalities,   $\l^2_n \subset \l^6_n$, 
and Lemma \ref{LEM:timedecay} with \eqref{tri3},} 
& \les \prod_{j = 1}^3    \| \ft u_j (n, \tau) \|_{\l^6_n L^{\frac{3}{2}-}_\tau} 
 \les  \prod_{j = 1}^3 \|  \jb{\tau + n^4}^{\frac{1}{6}+} \ft u_j (n, \tau) \|_{\l^6_n L^2_\tau} 
 \leq  \prod_{j = 1}^3 \| u_j \|_{X^{0, \frac{1}{6}+}}\\
& 
\les \dl^{1-} \prod_{j = 1}^3 \| \wt v_j \|_{X^{0, \frac{1}{2}+}}.
\end{align*}

\medskip

\noi 
$\bullet$ {\bf Case (b):} Exactly one $u_j$ of type (I). Say $u_1(\I)$, $u_2(\II)$, and $u_3(\II)$.
\\
\indent
By H\"older's inequality 
(with $p\gg1$ as before), 
\eqref{Zlin3}, and a change of variables, we have
\begin{align*}
\eqref{easy1} 
& \les \sup_n \|\jb{\tau + n^4}^{-\frac{1}{2}+}\|_{L^{2+}_\tau} \\
& 
\hphantom{X}
\times \bigg\| \jb{n}^{-\al} |g_n| \intt_{\tau = \tau_1 - \tau_2 + \tau_3}
 \ft \eta_{_\dl}\Big(\tau_1 + n^4 - \tfrac{|g_n|^2}{\jb{n}^{2\al}}\Big)
\cj{\ft u_2(n, \tau_2)}\ft u_3(n, \tau_3) d\tau_1 d\tau_2 \bigg\|_{\l^2_{n} L^p_\tau}\\
& \les 
\big(\sup_n  \jb{n}^{-\al} |g_n|\big)
\bigg\|
 \intt_{\tau = \zeta_1 - \tau_2 + \tau_3 - C(n, \o)}
 \ft \eta_{_\dl} (\zeta_1) 
\cj{\ft u_2(n, \tau_2)}\ft u_3(n, \tau_3) d\zeta_1 d\tau_2 \bigg\|_{\l^2_{n} L^p_\tau}, 
\end{align*}

\noi
where $C(n, \o)$ is defined by
\begin{align}
 C(n, \o) := n^4 -   \tfrac{|g_n|^2}{\jb{n}^{2\al}}.
\label{CN}
\end{align}

\noi
Note that for fixed $n \in \Z$ and $\o \in \O$,  
 $C(n, \o)$ is a fixed number.
 Hence, we can apply Young's inequality (in $\tau, \zeta_1, \tau_2$, and $\tau_3$),
 Lemma~\ref{LEM:prob} with $\be = 0+$, \eqref{decay}, 
 and Lemma \ref{LEM:timedecay}  with \eqref{tri3} as above and obtain 
\begin{align*}
\eqref{easy1} 
& \les \dl^{\frac{1}{2}-} \big(\sup_n \jb{n}^{-\al} |g_n|\big)
 \prod_{j = 2}^3 \|  \ft u_j (n, \tau)\|_{\l^4_n L^\frac{4}{3}_\tau} \\
& \lesssim \dl^{\frac{1}{2}-}
\prod_{j = 2}^3 \| \jb{\tau+ n^4}^{\frac{1}{4}+} \ft u_j (n, \tau) \|_{\l^4_n L^2_\tau} 
\leq \dl^{\frac{1}{2}-}
\prod_{j = 2}^3 \|u_j \|_{X^{0 , \frac{1}{4}+}}\\
& \les \dl^{1-}
\prod_{j = 2}^3 \|\wt v_j \|_{X^{0 , \frac{1}{4}+}}
\end{align*}

\noi 
for any $\al > 0$,  outside an exceptional set of probability $< C e^{-\frac{1}{\dl^c}}$.

\medskip

\noi 
$\bullet$ {\bf Case (c):} Exactly two $u_j$'s of type (I). 
\\
\indent
First, consider the case $u_1(\I)$, $u_2(\I)$, and $u_3(\II)$.
Proceeding as before
with $p\gg1$
and  a change of variables,  we have
\begin{align*}
\eqref{easy1} 
& \les
\bigg\| \jb{n}^{-2\al} |g_n|^2 \intt_{\tau = \tau_1 - \tau_2 + \tau_3} 
\ft \eta_{_\dl}\Big(\tau_1 + n^4 - \tfrac{|g_n|^2}{\jb{n}^{2\al}}\Big)\\
& \hphantom{XXXXXXXXXXX}
\times \cj{\ft \eta_{_\dl}\Big(\tau_2 + n^4 - \tfrac{|g_n|^2}{\jb{n}^{2\al}}\Big)}
\ft u_3(n, \tau_3) d\tau_1 d\tau_2 \bigg\|_{\l^2_{n} L^p_\tau}\\
& \leq
\big(\sup_n \jb{n}^{-2\al} |g_n|^2 \big)
 \bigg\|
\intt_{\tau = \zeta_1 - \zeta_2 + \tau_3} 
\ft \eta_{_\dl}(\zeta_1)
\cj{\ft \eta_{_\dl}(\zeta_2)}
 \ft u_3(n, \tau_3) d\zeta_1 d\zeta_2 \bigg\|_{\l^2_{n} L^p_\tau}\\
\intertext{By Lemma \ref{LEM:prob},  \eqref{decay}, 
and Lemma \ref{LEM:timedecay} with \eqref{tri3}, 
}
& \lesssim \dl^{\frac{1}{2}-} \big(\sup_n \jb{n}^{-2\al} |g_n|^2\big)
 \| \ft u_3 (n, \tau) \|_{\l^2_n L^2_\tau} 
\lesssim \dl^{\frac{1}{2}-} \|u_3 \|_{X^{0, 0}}\\
& \lesssim \dl^{1-} \|\wt{v}_3 \|_{X^{0, \frac 12 +}}
\end{align*}

\noi for $\al > 0$,  outside an exceptional set of probability  $< C e^{-\frac{1}{\dl^c}}$.

Next, consider the case $u_1(\I)$, $u_2(\II)$, and $u_3(\I)$. 
Proceeding in a similar manner (with $p\gg 1$ and a change of variables with $C(n,\o)$ as in \eqref{CN}), we have 
\begin{align*}
\eqref{easy1} 
& \les 
\bigg\| \jb{n}^{-2\al} |g_n|^2 \intt_{\tau = \tau_1 - \tau_2 + \tau_3} 
\ft \eta_{_\dl}\Big(\tau_1 + n^4 - \tfrac{|g_n|^2}{\jb{n}^{2\al}}\Big)\\
& \hphantom{XXXXXXXXXXX}
\times 
\cj{ \ft u_2(n, \tau_2) }
\ft \eta_{_\dl}\Big(\tau_3 + n^4 - \tfrac{|g_n|^2}{\jb{n}^{2\al}}\Big)
d\tau_1 d\tau_2 \bigg\|_{\l^2_{n} L^p_\tau}\\
& \les 
\big(\sup_n \jb{n}^{-2\al} |g_n|^2\big)
\bigg\|  \intt_{\tau = \zeta_1 - \tau_2 + \zeta_3 - 2C(n, \o)} 
\ft \eta_{_\dl}(\zeta_1)
 \cj{ \ft u_2(n, \tau_2) }
 \ft \eta_{_\dl}(\zeta_3) 
d\zeta_1 d\zeta_3 \bigg\|_{\l^2_{n} L^p_\tau}\\
& \lesssim \dl^{\frac{1}{2}-} \big(\sup_n \jb{n}^{-2\al} |g_n|^2\big)
 \| \ft u_2 (n, \tau) \|_{\l^2_n L^2_\tau} 
\lesssim \dl^{\frac{1}{2}-} \|u_2 \|_{X^{0, 0}}\\
& \lesssim \dl^{1-} \|\wt{v}_2 \|_{X^{0, \frac 12 + }}
\end{align*}

\noi 
for $\al > 0$,  outside an exceptional set of probability  $< C e^{-\frac{1}{\dl^c}}$.

\subsection{Non-resonant part $\mathcal{N}_1 $}  \label{SUBSEC:LWP1b}
In this subsection, we evaluate the non-resonant part of the nonlinearity $\fN^\o(v)$.
In particular, we prove
\begin{equation}
\label{trilinear3} 
\| \chi_{_\dl}\cdot \mathcal{N}_1(u_1, u_2, u_3) \|_{X^{0, -\frac{1}{2}+}} \lesssim \dl^\theta 
\prod_{j \in \mathcal{I}} \|\wt v_j\|_{X^{0, \frac 12 + }}
\end{equation}

\noi for some $ \theta > 0$, 
outside an exceptional set of probability $< C e^{-\frac{1}{\dl^c}}$, 
where $\mathcal{N}_1$ is the non-resonant part of the nonlinearity defined  in \eqref{nonlin1},  $u_j$ is either of type (I) or $(\II)$,
and the index set $\mathcal I$ is as in \eqref{trilinear2a}.
Set
\[ \s := \jb{\tau + n^4}\qquad \text{and}\qquad \s_j := \jb{\tau_j +n_j^4}, \quad j = 1, 2, 3,\]

\noi
and
\begin{align}
\sigma_{\max} : = \max(\sigma, \sigma_1,\sigma_2,\sigma_3)
\qquad \text{and}  \qquad 
n_{\max} : = \max\big( |n|, |n_1|, |n_2|, |n_3|\big) + 1.
\label{Nmax}
\end{align}

\noi
Given dyadic numbers $ N, N_1,N_2,N_3 \geq 1$, we also set
\[N_{\max} : = \max( N, N_1,N_2,N_3).\]

\noi
By duality, we can estimate the left-hand side of \eqref{trilinear3} by 
\begin{equation}
\label{duality1} 
\sup_{\| w\|_{X^{0, \frac{1}{2}-} }\leq 1} \bigg|\int_{-\dl}^\dl \int_\T \NN_1( u_1, u_2 , u_3) \cdot w \, dx dt \bigg|.
\end{equation}

\noi 
Without loss of generality, we may assume that $w = \chi_{_\dl} \cdot w$.

\medskip

\noi $\bullet$ {\bf Case (A):} 
$u_j$ of type ($\II$), $j = 1, 2, 3$.
\\
\indent
By H\"older's inequality,  \eqref{L4}, and 
Lemma \ref{LEM:timedecay} with \eqref{tri3}, we have 
\begin{equation*}
\eqref{duality1} 
\lesssim 
\prod_{j = 1}^3 \| u_j \|_{L^4_{x, t}} \| w\|_{L^4_{x, t}}
\les 
\dl^{\frac{3}{4}-}  \prod_{j = 1}^3 \| \wt v_j\|_{X^{0, \frac{1}{2}+}}
\| w\|_{X^{0, \frac{1}{2}-}}.
\end{equation*}

\medskip

\noi $\bullet$ {\bf Case (B):} 
Exactly one $u_j$ of type (I). Say $u_1(\I)$, $u_2(\II)$, and $u_3(\II)$.
\\
\indent
First suppose that $\max (\s_2, \s_3, \s) \sim \s_\text{max}$.
Then, it follows from Lemma \ref{LEM:phase} that 
\begin{align}
\max (\s_2,  \s_3, \s)^{\frac{7}{24}-} \sim \s_\text{max}^{\frac{7}{24}-}  
\ges N_\text{max}^{\frac{7}{12}-}.
\label{A1}
\end{align}

\noi
By $L^p_{x, t} L^{3+}_{x, t} L^{3+}_{x, t} L^{3+}_{x,t}$-H\"older's inequality 
with $p$ large,  
 \eqref{L3}, Lemma \ref{LEM:Z1},  
  Lemma \ref{LEM:timedecay}, and \eqref{A1},  we have 
\begin{align}
\eqref{duality1} 
& \les  \sum_{\substack{ N, N_1, N_2, N_3\\\text{dyadic}}}
\| \P_{N_1} u_1\|_{L^p_{x, t}} 
\| \P_{N_2} u_2\|_{X^{0,  \frac{5}{24}+}} \| \P_{N_3}u_3\|_{X^{0, \frac{5}{24}+}} 
\|\P_N  w\|_{X^{0,\frac{5}{24}+}}  \notag \\
& \lesssim 
\sum_{\substack{ N, N_1, N_2, N_3\\\text{dyadic}}}
N_1^{\frac{1}{2}-\al+} \|  \P_{N_2}  u_2\|_{X^{0, \frac{5}{24}+}}
 \| \P_{N_3} u_3 \|_{X^{0, \frac{5}{24}+}} 
\| \P_N w\|_{X^{0, \frac{5}{24}+}}  \notag \\
& \lesssim 
\dl^{\frac{7}{12}-}
\sum_{\substack{ N, N_1, N_2, N_3\\\text{dyadic}}}
N_{\max}^{-\frac{1}{12}+} \|  \P_{N_2} \wt  v_2\|_{X^{0, \frac{1}{2}+}} \| \P_{N_3} \wt v_3 \|_{X^{0, \frac{1}{2}+}} 
\| \P_N w\|_{X^{0, \frac{1}{2}-}} \notag \\
& \lesssim 
\dl^{\frac{7}{12}-}\prod_{j = 2}^3 \| \wt v_j\|_{X^{0, \frac{1}{2}+}}
\label{A2}
\end{align}

\noi
for $\al \geq  0$,  
outside an exceptional set of probability 
\[< \sum_{\substack{N_1\geq 1\\\text{dyadic}}} Ce^{-\frac{N_1^{\eps}}{\dl^c}}
\les e^{-\frac{1}{\dl^c}}.\]

Next,  suppose that $\max (\s_2, \s_3, \s) \ll \s_\text{max}$, namely $\s_1 \sim \s_\text{max}$.
We first consider the case
$\dl^\be \gg N_{\max}^{-2 + 2\eps}$
for some small $\be , \eps > 0$.
It follows from  Lemmas \ref{LEM:phase}  and  \ref{LEM:prob}
that there exists a set $\O_{\be, \eps} \subset \O$
with $P(\O_{\be, \eps}^c) < Ce^{-\frac{1}{\dl^c}}$
such that 
\begin{align*}
 \frac{|g_{n_1}|^2}{\jb{n_1}^{2\al}}
\les \dl^{-\be} \jb{n_1}^\eps \ll N_{\max}^{2 - \eps}  \ll \s_{\max}, 
\end{align*}

\noi
on $\O_{\be, \eps}$, uniformly in $n_1 \in \Z$, 
as long as $\al \geq 0$. 
Hence, 
we have 
\begin{align}
\Big| \ft \eta_{_\dl}\Big(\tau_1 + n_1^4 - \tfrac{|g_{n_1}|^2}{\jb{n_1}^{2\al}}\Big)\Big|
\les \frac{1}{\s_1} \les \frac{1}{N_{\max}^2 |(n-n_1) (n-n_3)|}
\label{A3}
\end{align}

\noi 
on $\O_{\be, \eps}$.
Then, 
by H\"older's inequality 
(with $p\gg1$ as in Case (a)),  \eqref{Zlin3}, 
 \eqref{A3},  Young's inequality,  and Lemma \ref{LEM:prob}
 (with $\be \ll 1$), 
 the contribution 
 to~\eqref{duality1} in this case is bounded by 
\begin{align*}
& \les 
  \sum_{\substack{ N, N_1, N_2, N_3,\text{dyadic}\\\dl^\be \gg N_{\max}^{-2 + 2\eps}}}
\bigg\| \sum_{\substack{(n_1, n_2, n_3) \in \G(n)\\ 
|n|\sim N, |n_j| \sim N_j}}
\frac{ |g_{n_1}| }{ \jb{n_1}^{\al}}
\frac{1}{\big\{N_{\max}^2 |(n-n_1) (n-n_3)|\big\}^{\frac{1}{2}+\eps}}
\\
& \hphantom{X}
\times \intt_{\tau = \tau_1 - \tau_2 + \tau_3}
 \Big|\ft \eta_{_\dl}\Big(\tau_1 + n_1^4 - \tfrac{|g_{n_1}|^2}{\jb{n_1}^{2\al}}\Big)\Big|^{\frac{1}{2}-\eps}
|\ft{\P_{N_2} u}_2(n_2, \tau_2)||\ft{\P_{N_3} u}_3(n_3, \tau_3)| d\tau_1 d\tau_2 \bigg\|_{\l^2_{n} L^p_\tau}\\
& \les 
\big(\sup_{n_1}  \jb{n_1}^{-\al - \eps} |g_{n_1}|\big)
\sum_{\substack{ N, N_1, N_2, N_3,\text{dyadic}\\\dl^\be \gg N_{\max}^{-2 + 2\eps}}}
\bigg\| \sum_{\substack{(n_1, n_2, n_3) \in \G(n)\\
|n|\sim N, |n_j| \sim N_j}} 
\frac{1}{\big\{N_{\max}^2 |(n-n_1) (n-n_3)|\big\}^{\frac{1}{2}+\frac{\eps}{2}}}\\
& \hphantom{X}
\times \big\| |\ft \eta_{_\dl}|^{\frac{1}{2}-\eps}\big\|_{L^\frac{p}{3}_{\tau}}
\prod_{j = 2}^3 \|\ft{\P_{N_j} u}_j(n_j, \tau_j)\|_{L^\frac{p}{p-1}_{\tau_j}}
\bigg\|_{\l^2_{n}}\\
& \les 
\dl^{\frac{1}{2} - \eps  - \frac{3}{p} -\frac{\be}{2}}  
\sum_{\substack{ N, N_1, N_2, N_3,\text{dyadic}\\\dl^\be \gg N_{\max}^{-2 + 2\eps}}}
N_{\max}^{0-} 
\prod_{j = 2}^3 \|\ft{\P_{N_j} u}_j(n_j, \tau_j)\|_{\l^2_{n_j} L^\frac{p}{p-1}_{\tau_j}}\\
& \les 
\dl^{\frac{1}{2} - \eps - \frac{3}{p} -\frac{\be}{2}}  
\prod_{j = 2}^3 \|\wt v_j \|_{X^{0 , \frac{1}{2}+}}
\end{align*}

\noi
for $\al \geq 0$, 
\noi 
outside an exceptional set of 
probability $< Ce^{-\frac{1}{\dl^c}}$.

Lastly, we consider the case $\dl^\be \les N_{\max}^{-2 + 2\eps}$.
Proceeding as in \eqref{A2},  
we bound  the contribution of this case to \eqref{duality1} by 
\begin{align*}
& \lesssim 
\dl^{\frac{21}{24}-}
\sum_{\substack{ N, N_1, N_2, N_3\\\text{dyadic}}}
N_1^{\frac{1}{2}-\al+}\|  \P_{N_2}  \wt v_2\|_{X^{0, \frac{1}{2}+}} \| \P_{N_3} \wt v _3 \|_{X^{0, \frac{1}{2}+}} 
\| \P_N w\|_{X^{0, \frac{1}{2}-}} \notag \\
& \lesssim 
\dl^{\frac{21}{24} - \be -}
\sum_{\substack{ N, N_1, N_2, N_3\\\text{dyadic}}}
N_{\max}^{-\frac{3}{2}+}\|  \P_{N_2}  \wt v_2\|_{X^{0, \frac{1}{2}+}} \| \P_{N_3} \wt v _3 \|_{X^{0, \frac{1}{2}+}} 
 \notag \\
& \lesssim 
\dl^{\frac{21}{24} - \be -}\prod_{j = 2}^3 \|\wt v_j \|_{X^{0 , \frac{1}{2}+}}
\end{align*}

\noi
for $\al \geq 0$,  
outside an exceptional set of probability 
$<C e^{-\frac{1}{\dl^c}}$.

\medskip

\noi $\bullet$ {\bf Case (C):} 
Exactly two $u_j$'s of type (I). 
Say  $u_1(\I)$, $u_2(\I)$, and $u_3(\II)$.

First,  suppose that $\max (\s_3, \s) \sim \s_\text{max}$.
Then, it follows from Lemma \ref{LEM:phase} that 
\begin{align}
\max ( \s_3, \s)^{\frac{1}{2}-} \sim \s_\text{max}^{\frac{1}{2}-}  
\ges N_\text{max}^{1-}.
\label{A4}
\end{align}

\noi
Suppose that $ \s \sim \s_\text{max}$.
Then, by $L^p_{x, t} L^{p}_{x, t} L^{2+}_{x, t} L^{2}_{x,t}$-H\"older's inequality 
with $p$ large,  
 \eqref{L3}, Lemma \ref{LEM:Z1},  
  Lemma \ref{LEM:timedecay}, and \eqref{A4},  we have 
\begin{align*}
\eqref{duality1} 
& \les  \sum_{\substack{ N, N_1, N_2, N_3\\\text{dyadic}}}
\| \P_{N_1} u_1\|_{L^p_{x, t}} 
\| \P_{N_2} u_2\|_{L^p_{x, t}} 
 \| \P_{N_3}u_3\|_{X^{0, 0+}} 
\|\P_N  w\|_{X^{0,0}}  \notag \\
& \lesssim 
\sum_{\substack{ N, N_1, N_2, N_3\\\text{dyadic}}}
N_1^{\frac{1}{2}-\al+}N_2^{\frac{1}{2}-\al+} 
 \| \P_{N_3}u_3\|_{X^{0, 0+}} 
\|\P_N  w\|_{X^{0,0}}  \notag \\
& \lesssim 
\dl^{\frac 12 -  }
\sum_{\substack{ N, N_1, N_2, N_3\\\text{dyadic}}}
N_{\max}^{-2\al +}  \| \P_{N_3} \wt v_3 \|_{X^{0, \frac{1}{2}+}} 
\| \P_N w\|_{X^{0, \frac{1}{2}-}} \notag \\
& \lesssim 
\dl^{\frac{1}{2}-}
\| \wt v_3 \|_{X^{0, \frac{1}{2}+}} 
\end{align*}

\noi
for $\al > 0$,  
outside an exceptional set of probability 
\[ < \sum_{\substack{N_1\geq 1\\\text{dyadic}}} Ce^{-\frac{N_1^{\eps}}{\dl^c}}
+ \sum_{\substack{N_2\geq 1\\\text{dyadic}}}Ce^{-\frac{N_2^{\eps}}{\dl^c}}
\les e^{-\frac{1}{\dl^c}}. \]

\noi
A similar argument holds when \noi
 $ \s_3 \sim \s_\text{max}$.

Next,  suppose that $\max ( \s_3, \s) \ll \s_\text{max}$, namely $\max(\s_1, \s_2) \sim \s_\text{max}$.
Without loss of generality, suppose that $\s_1 \sim \s_\text{max}$.
We first consider the case $\dl^\be \gg N_{\max}^{-2 + 2\eps}$ 
for some small $\be, \eps > 0$.
Proceeding as in Case (B) above, 
 the contribution  to \eqref{duality1}  is bounded by
\begin{align*}
& \les 
  \sum_{\substack{ N, N_1, N_2, N_3,\text{dyadic}\\\dl^\be \gg N_{\max}^{-2 + 2\eps}}}
\bigg\| \sum_{\substack{(n_1, n_2, n_3) \in \G(n)\\
|n|\sim N, |n_1| \sim N_1}} 
\bigg(\prod_{j = 1}^2  \frac{|g_{n_j}|}{\jb{n_j}^{\al}}\bigg) 
\frac{1}{\big\{N_{\max}^2 |(n-n_1) (n-n_3)|\big\}^{\frac{1}{2}+\eps}}.
\\
& \hphantom{X} 
\times \intt_{\tau = \tau_1 - \tau_2 + \tau_3}
 \Big|\ft \eta_{_\dl}\Big(\tau_1 + n_1^4 - \tfrac{|g_{n_1}|^2}{\jb{n_1}^{2\al}}\Big)\Big|^{\frac{1}{2}-\eps}
  \Big|\ft \eta_{_\dl}\Big(\tau_2 + n_2^4 - \tfrac{|g_{n_2}|^2}{\jb{n_2}^{2\al}}\Big)\Big|
|\ft{\P_{N_3} u}_3(n_3, \tau_3)| d\tau_1 d\tau_2 \bigg\|_{\l^2_{n} L^p_\tau}\\
& \les 
\bigg(\prod_{j = 1}^2 \sup_{n_j}  \jb{n_j}^{-\al - \frac{\eps}{2}} |g_{n_j}|\bigg)
\sum_{\substack{ N, N_1, N_2, N_3,\text{dyadic}\\\dl^\be \gg N_{\max}^{-2 + 2\eps}}}
\bigg\| \sum_{\substack{(n_1, n_2, n_3) \in \G(n)\\
|n|\sim N, |n_1| \sim N_1}} 
\frac{1}{\big\{N_{\max}^2 |(n-n_1) (n-n_3)|\big\}^{\frac{1}{2}+\frac{\eps}{2}}}\\
& \hphantom{X}
\times \big\| |\ft \eta_{_\dl}|^{\frac{1}{2}-\eps}\big\|_{L^\frac{p}{2}_{\tau}}
\| \ft \eta_{_\dl}\|_{L^1_{\tau}}
 \|\ft{\P_{N_3} u}_3(n_3, \tau_3)\|_{L^\frac{p}{p-1}_{\tau_3}}
\bigg\|_{\l^2_{n}}\\
& \les 
\dl^{\frac{1}{2} - \eps  - \frac{2}{p} -\be}  
\sum_{\substack{ N, N_1, N_2, N_3,\text{dyadic}\\\dl^\be \gg N_{\max}^{-2 + 2\eps}}}
N_{\max}^{0-} 
 \|\ft{\P_{N_3} u}_3(n_3, \tau_3)\|_{\l^2_n L^\frac{p}{p-1}_{\tau_3}}\\
& \les 
\dl^{\frac{1}{2} - \eps  - \frac{2}{p} -\be}  
\|\wt v_3 \|_{X^{0 , \frac{1}{2}}}
\end{align*}

\noi
for $\al \geq 0$, 
outside an exceptional set of 
probability $< Ce^{-\frac{1}{\dl^c}}$.

Lastly, we consider the case $\dl^\be \les N_{\max}^{-2 + 2\eps}$.
Proceeding as in \eqref{A2}
but with  $L^p_{x, t} L^{p}_{x, t} L^{2+}_{x, t} L^{2}_{x,t}$-H\"older's inequality, 
 the contribution of this case to \eqref{duality1} 
\begin{align*}
& \lesssim 
\sum_{\substack{ N, N_1, N_2, N_3\\\text{dyadic}}}
N_{\max}^{1-2\al+}
 \| \P_{N_3}u_3\|_{X^{0, 0+}} 
\|\P_N  w\|_{X^{0,0}}  \notag \\
& \lesssim 
\dl^{1  - \be -  }
\sum_{\substack{ N, N_1, N_2, N_3\\\text{dyadic}}}
N_{\max}^{-1- 2\al +}  \| \P_{N_3} \wt v_3 \|_{X^{0, \frac{1}{2}+}} 
\| \P_N w\|_{X^{0, \frac{1}{2}-}} \notag \\
& \lesssim 
\dl^{1 - \be - }
\|  \wt v_3 \|_{X^{0, \frac{1}{2}+}} 
\end{align*}

\noi
for $\al \geq 0$, 
outside an exceptional set of 
probability $< Ce^{-\frac{1}{\dl^c}}$.

\medskip

\noi
$\bullet$ {\bf Case (D):}
$u_j$ of type (I), $j = 1, 2, 3$.

Fix small $\dl > 0$ (to be chosen later).
From \eqref{Zlin1a} and \eqref{eta1}, we have 
\begin{align*}
\F(\eta_{_\dl} z^\o)(n, \tau)
 & =\dl\, \, \sum_{k = 0}^\infty
 \frac{(-\dl)^k  }{k!}
 \, (\dd^k\ft \eta)  (\dl(\tau + n^4) )
  \frac{|g_n|^{2k}g_n}{\jb{n}^{(2k+1)\al}}\,.
\end{align*}
\noi
Then, we have 
\begin{align}
 \| \NN_1  ( \eta_{_\dl} z^\o) \|_{X^{0, -\frac 12+}}
&  = \bigg\| \frac{1}{\jb{\tau + n^4}^{\frac 12 -}}
\sum_{k = 0}^\infty
\sum_{\substack{k_1, k_2, k_3 = 0\\ k = k_1 + k_2 + k_3}}^\infty
\frac{(-\dl)^k }{k_1! k_2!k_3!} \notag \\
& \hphantom{XX}
\times \sum_{\substack{(n_1, n_2, n_3) \in \G(n)}}
c^{k_1, k_2, k_3}_{n_1, n_2, n_3} (\tau, \dl) 
\prod_{j = 1}^3 |g_{n_j}|^{2k_j}g^*_{n_j}
 \bigg\|_{\l^2_nL^2_\tau}, 
\label{X2}
\end{align}

\noi
where
$g^*_{n_j}$ is as in \eqref{Z2-1a}
and
$c^{k_1, k_2,k_3}_{n_1, n_2, n_3} (\tau, \dl) $ is defined by 
\begin{align*}
c^{k_1, k_2, k_3}_{n_1, n_2, n_3} (\tau, \dl) 
= 
\dl^3\,\,
 \intt_{\tau  = \tau_1 - \tau_2 + \tau_3}
\prod_{j = 1}^3 \frac{  (\dd^{k_j}\ft \eta_j)  (\dl(\tau_j + n_j^4) ) }{\jb{n_j}^{(2k_j + 1)\al}}
d\tau_1 d \tau_2
\end{align*}
	
\noi
 with the convention that 
$\ft \eta_j = \ft \eta$ when $j = 1$ or $3$ 
and 
$\ft \eta_j = \cj {\ft \eta}$ when $j = 2$.
Then, by Minkowski's integral inequality and Lemma~\ref{LEM:Z2}, there exists $C> 0$ such that 
\begin{align}
\big\|     \|   \NN_1  &   (  \eta_{_\dl} z^\o) \|_{X^{0, -\frac 12+}}\big\|_{L^p(\O)} \notag \\
&  \leq
p^\frac{3}{2} 
\sum_{k = 0}^\infty
\sum_{\substack{k_1, k_2, k_3 = 0\\ k = k_1 + k_2 + k_3}}^\infty
(Cp\dl)^{k}
 \notag \\
& \hphantom{X}\times 
\Bigg(\int_{\R} \sum_{n\in \Z} \sum_{\substack{(n_1, n_2, n_3) \in \G(n)}}
 \frac{1}{\jb{\tau + n^4}^{1-}}
|c^{k_1, k_2, k_3}_{n_1, n_2, n_3} (\tau, \dl)|^2 
d\tau \Bigg)^\frac{1}{2}   
\label{X4}
\end{align}

\noi
for any $p \geq 2$.
In the following, we estimate \eqref{X4} with
\begin{align}
 p = \dl^{-\theta} \gg 1
\label{X6}
\end{align}

\noi
for some sufficiently small $\theta > 0$.
Note that, from 
Lemma \ref{LEM:phase} and $n \ne n_1, n_3$, we have 
\begin{align}
\s_\text{max} \ges n_\text{max}^2 |(n - n_1) (n - n_3)| \geq n_\text{max}^2. 
\label{X5}
\end{align}

\medskip

\noi
$\circ$ Subcase (D.1):
$\s \sim  \s_\text{max}$.
\quad 
First, note that,
in view of $\supp \eta \subset [-2, 2]$,  
 we have
\begin{align}	
|\F^{-1}(\dd^{k_j}\ft \eta)(t)|  = |(-it)^{k_j}\eta (t)| \leq C^{k_j} \eta(t)\,.
\label{Xa1}
\end{align}

\noi
Then, 
by a change of variables:
$\zeta = \dl \tau + n_1^4 - n_2^4 + n_3^4$
and  $\zeta_j = \dl (\tau_j + n_j^4)$, $j = 1, 2, 3$, 
 Plancherel's identity, H\"older's inequality (in $t$) 
with \eqref{Xa1},  and 
$k = k_1 + k_2 + k_3$, we have 
\begin{align}
\| c^{k_1, k_2, k_3}_{n_1, n_2, n_3} (\tau, \dl) \|_{L^2_\tau}
& = \dl^\frac{1}{2}\Bigg\| \intt_{\zeta  = \zeta_1 - \zeta_2 + \zeta_3}
\prod_{j = 1}^3 \frac{  \dd^{k_j}\ft \eta_j  (\zeta_j ) }{\jb{n_j}^{(2k_j + 1)\al}}
d\zeta_1 d \zeta_2\Bigg\|_{L^2_\zeta}\notag\\
& = \dl^\frac{1}{2}\Bigg\|\prod_{j = 1}^3 \frac{ \F^{-1} (\dd^{k_j}\ft \eta_j ) }{\jb{n_j}^{(2k_j + 1)\al}}
\Bigg\|_{L^2_t}\notag\\
& \leq C^k \dl^\frac{1}{2}
\prod_{j = 1}^3 \frac{1 }{\jb{n_j}^{(2k_j + 1)\al}}.
\label{Xa2}
\end{align}

\noi

From  \eqref{X4},  \eqref{X5} and \eqref{Xa2}, we bound 
the contribution to 
$$
\big\|    \|   \NN_1   (  \eta_{_\dl} z^\o)   \|_{X^{0, -\frac 12+}}\big\|_{L^p(\O)} 
$$ 
in this case  by
\begin{align}
&  
p^\frac{3}{2} \dl^\frac{1}{2} 
\sum_{k_1, k_2, k_3 = 0}^\infty
(Cp\dl)^{k_1}(Cp\dl)^{k_2}(Cp\dl)^{k_3} \notag \\
& \hphantom{XX}
\times\Bigg(\sum_{n\in \Z} \sum_{\substack{(n_1, n_2, n_3) \in \G(n)}}
 \frac{1}{\big\{n_\text{max}^2 (n- n_1)(n - n_3)\big\}^{1-}}
\prod_{j = 1}^3 \frac{1 }{\jb{n_j}^{(2k_j + 1)\al}} \Bigg)^\frac{1}{2}   \notag 
\intertext{By choosing small $\dl = \dl(C) >0$ such that $Cp \dl  = C \dl^{1- \theta}  < 1$, }
&  \les
p^\frac{3}{2} \dl ^\frac{1}{2}
\label{Xa3}
\end{align}

\noi
for $\al  \geq 0 $.

\medskip

\noi
$\circ$ Subcase (D.2):
$\s \ll  \s_\text{max}$.
\quad 
Assume that $\s_1 \sim \s_\text{max}$.
A similar argument holds when 
$\s_2 \sim \s_\text{max}$ or $\s_3 \sim \s_\text{max}$.

From \eqref{X2}, H\"older's inequality with $q$ large $\big(\frac 12 = \frac{1}{2+} + \frac{1}{q}\big)$, 
Minkowski's integral inequality, and Lemma~\ref{LEM:Z2}, 
we have 
\begin{align}
\big\|    \|   \NN_1   ( & \eta_{_\dl}   z^\o) \|_{X^{0, -\frac 12+}}\big\|_{L^p(\O)} 
\notag \\
&   \les
 p^\frac{3}{2} \sum_{k = 0}^\infty 
\sum_{\substack{k_1, k_2, k_3 = 0\\ k = k_1 + k_2 + k_3}}^\infty
(Cp\dl)^k
\Bigg( \sum_{n\in \Z} \sum_{\substack{n = n_1 - n_2 + n_3\\n_2 \ne n_1, n_3}}
\|c^{k_1, k_2, k_3}_{n_1, n_2, n_3} (\tau, \dl)\|_{L^q_\tau}^2 
 \Bigg)^\frac{1}{2}   
\label{Xb1}
\end{align}

\noi
for any $p \geq q$.
By integration by parts, we have
\begin{align*}
| \dd^{k_1}\ft \eta  (\tau )| = 
\bigg| \frac{1}{|\tau|^{\beta}} \int \frac{d^\be}{dt^\be}\big( t^{k_1}\eta (t)\big) e^{it \tau} dt\bigg|
\end{align*}

\noi
for $\tau \ne 0$.
In particular, with $\be = 1$, 
we have 
\begin{align}
\| \dd^{k_1}\ft \eta_1  (\tau )\|_{L^\frac{2q}{q+2}_\tau(|\tau| \ges K)}
\les \frac {C^{k_1}}{K^{1 - \frac{q+2}{2q}}}.
\label{Xb2}
\end{align}

\noi
By a change of variables (as in \eqref{Xa2}) 
and Young's inequality,
\eqref{Xb2} with $K \sim \dl \s_1$, \eqref{X5}, and  \eqref{Xa1},  
we can bound  the contribution  to
$\|c^{k_1, k_2, k_3}_{n_1, n_2, n_3} (\tau, \dl)\|_{L^q_\tau} $ in this case by 
\begin{align}
&  \dl^{1-\frac{1}{q}}
\bigg(\prod_{j = 1}^3 \frac{1 }{\jb{n_j}^{(2k_j + 1)\al}}\bigg)
\| \dd^{k_1}\ft \eta_1  (\tau )\|_{L^\frac{2q}{q+2}_\tau(|\tau| \ges K)}\notag
\big\| \F^{-1} (\dd^{k_2}\ft \eta_2 )\F^{-1} (\dd^{k_3}\ft \eta_3 )\big\|_{L^2_t}
\notag\\
& \leq C^k \dl^\frac{1}{2}
\prod_{j = 1}^3 \frac{1 }{\jb{n_j}^{(2k_j + 1)\al}}
\frac{1}{\big\{ n_\text{max}^2 (n - n_1) (n - n_3)\big\}^{1 - \frac{q+2}{2q}}}.
\label{Xb3}
\end{align}

\noi
Hence,  by choosing $q \gg 1$  and proceeding as in \eqref{Xa3},  
we conclude  from \eqref{Xb1} and \eqref{Xb3} that the contribution to
$\big\|    \|   \NN_1   (  \eta_{_\dl} z^\o)   \|_{X^{0, -\frac 12+}}\big\|_{L^p(\O)} $ 
in this case is also bounded by
\begin{align}
  \les
p^\frac{3}{2} \dl ^\frac{1}{2}.
\label{Xb4}
\end{align}

Finally, by Chebyshev's inequality with \eqref{Xa3} and \eqref{Xb4}, we have 
\begin{align*}
P \Big(
\| \NN_1   (  \eta_{_\dl} z^\o)   \|_{X^{0, -\frac 12+}}
 > \ld \Big) 
 \leq C^p \ld^{-p} p^{\frac{3}{2}p}  \dl^\frac{p}{2}
\end{align*}
	
\noi 
for any $\ld > 0$.
Letting $\ld = C p^2 \dl^\frac{1}{2} $ 
and $ p = \dl^{-\theta}$ as in \eqref{X6},  we have
\[ P \Big(
\| \NN_1   (  \eta_{_\dl} z^\o)  \|_{X^{0, -\frac 12+}}
 > C \dl^{\frac 12 -2 \theta} \Big) 
\leq e^{-p \ln \sqrt{p}} \leq e^{-\frac{1}{\dl^{c}}}\]

\noi
for all $\al \geq 0$.
In other words, 
we have
\[ 
\| \NN_1   (  \eta_{_\dl} z^\o)  \|_{X^{0, -\frac 12+}}
 \leq C \dl^{\frac 12 -} \]
 
\noi
for $\al \geq 0$, outside an exceptional set of probability
$\lesssim  e^{-\frac{1}{\dl^{c}}}$.

\medskip

This completes the proof of the nonlinear estimate~\eqref{tri1}
and hence the proof of Theorem~\ref{THM:LWP} for $0 < \al \leq \frac 12$.

\section{Local theory, Part 2: $\al = 0$} \label{SEC:LWP2}

The remaining part of this paper is devoted to the $\al = 0$ case.
Namely, we consider the white noise initial data.
In this section, we
  present the proof of almost sure local well-posedness (Theorem~\ref{THM:LWP})
  by establishing convergence of smooth approximating solutions.
The key ingredients are Propositions~\ref{PROP:I1} and \ref{PROP:I2}, whose proofs will be presented 
in Sections~\ref{SEC:Non1} and~\ref{SEC:Non2}, respectively. 

\subsection{Partially iterated Duhamel formulation}
\label{SUBSEC:LWPx}
In Section \ref{SEC:1}, 
we introduced the random gauge transform $\J^\o$ in \eqref{Ga}
and converted the renormalized 4NLS \eqref{NLS2}
into the random equation \eqref{NLS2-3}
for $w = \J^\o(u)$.
In the following, we study the Duhamel formulation \eqref{NLS3-1}
for this random equation.
Define 
\begin{align}
\In_1(w_1,w_2,w_3) (t)
& := - i \int_0^t S(t-t')  \NN_1^\o (w_1,w_2,w_3) (t') dt' ,\notag \\
\label{I2}
\In_2(w)(t)&  := - i \int_0^t S(t-t')  \NN_2^\o (w) (t') dt' ,
\end{align}

\noi
where  $\NN_1^\o(w_1,w_2,w_3)$ is defined by 
\begin{align*}
\NN_1^\o (w_1,w_2,w_3) 
(x, t)
: = \sum_{n\in \Z}  e^{inx} \sum_{\G(n)} 
e^{it\Psi^\o(\bar n)} \ft w_1 (n_1, t) \cj{\ft w_2 (n_2, t)} \ft w_3( n_3, t)
\end{align*}

\noi
with  the random phase function  $\Psi^\o$ defined in \eqref{Psi}
and  $\NN_{2}^\o(w)$ is as in  \eqref{NN2O}.
By setting
 $\In_1(w) : = \In_1(w,w,w)$, 
we define $\In(w):= \In_1(w) + \In_2(w)$.
Then, we can write the Duhamel formulation \eqref{NLS3-1} for $w = \J^\o(u)$ as 
\begin{align}
\label{Duhamel2}
w = S(t) u_0^\o + \In(w),
\end{align}

If we were to apply the strategy for the $\al > 0$ case discussed in Section \ref{SEC:LWP1}, 
then by noting that $\J^\o(z^\o) = S(t) u_0^\o$, 
we would   write $v = w - S(t) u_0^\o$
and try to solve the fixed point problem for $v$:
\begin{align}
v = \In_1(v+S(t)u_0^\o) +  \In_2(v+S(t)u_0^\o)
\label{GaNLS}
\end{align}

\noi
by a contraction argument.
As mentioned in Section \ref{SEC:1}, however, 
we are not able to solve the fixed point problem~\eqref{GaNLS}
by a contraction argument.
In the following, we reformulate the equation 
by {\it assuming} that $w$ is a solution to~\eqref{Duhamel2}
and study the reformulated problem.
Recalling that $\ft w(n,0) = g_n$ and that $w$ satisfies the equation~\eqref{NLS2-3}, 
we formally have
\begin{align}
|\ft w(n,t)|^2 - |g_n|^2 & = \int_0^t \frac{d}{dt} |w(n,t')|^2 dt'\notag\\
& = -2\Re i \int_0^t \sum_{\G(n)} e^{it\Psi^\o(\bar n)} \ft{w}(n_1,t') \cj{\ft w (n_2,t')} \ft w (n_3,t') \cj{\ft w (n,t')} dt'\notag\\
& =: \EE_n(w,w,w,w)(t). \label{En}
\end{align}

\noi
In view of \eqref{NN2O}, \eqref{I2},  and \eqref{En},   we then have
\begin{align*}
\In_2 (w) = i \int_0^t S(t-t') \sum_{n\in \Z} e^{inx} \EE_n(w,w,w,w)(t') \ft w (n,t')  dt'
\end{align*}

\noi
for a solution $w$ to \eqref{NLS2-3}.
We denote by $\wt \In_2(w)$ the quintilinear operator  $\wt \In^\o_2(w,w,w,w,w)$
given by 
\begin{align*}
\wt \In_2^\o (w_1,w_2,w_3,w_4,w_5) (x, t)
:= \int_0^t S(t-t') \sum_{n\in \Z} e^{inx} \EE_n(w_1,w_2,w_3,w_4)(t') \ft w_5 (n,t')  dt'.
\end{align*}

\noi
Then,  for a solution $w$ to \eqref{NLS2-3}, 
the equality 
 \begin{align}
 \label{ExpandI2}
 \In_2 (w) = \wt \In_2 (w)
 \end{align}
formally holds. 
As a result, 
we can rewrite \eqref{Duhamel2} as  the following partially iterated Duhamel
formulation  with cubic and quintic nonlinearities:
\begin{align}
w = S(t) u_0^\o + \In_1(w) + \wt \In_2(w).
\label{Duhamel3}
\end{align}

\noi
We then obtain the following fixed point problem
for   $v = w - S(t) u_0^\o$:
 \begin{align}
v = \In_1(v+S(t)u_0^\o) + \wt \In_2(v+S(t)u_0^\o).
\label{GaNLS2}
\end{align}

\noi
It turns out that 
the quintic term $\wt \In_2(v+S(t)u_0^\o)$ 
has a better regularity property
than the original cubic resonant nonlinearity $ \In_2(v+S(t)u_0^\o)$, 
which enables us to solve the fixed point problem~\eqref{GaNLS2} for $v$
by a contraction argument.
See Remark \ref{REM:uniq2} below.
Note, however, that in deriving the equation~\eqref{GaNLS2}, 
we used the a priori equality \eqref{ExpandI2},
 which only holds for a solution $w = S(t) u_0^\o + v$ to \eqref{Duhamel2}.

In order to overcome this issue, 
we use an approximation method to construct 
a solution to \eqref{NLS2}. 
To be more precise,  
we construct a local solution $u$ to \eqref{NLS2} 
as a limit of a sequence $\{u^N\}_{N \in \N}$ of smooth solutions 
with smooth initial data $u_{0, N}^\o$.
For simplicity of the presentation, 
we only consider 
the following frequency-truncated data: 
 \begin{align*}
 u_{0, N}^\o : = \pi_{N} u_0^\o = \sum_{|n|\le N} g_n(\o)e^{inx}
 \end{align*} 
 
 \noi
 in the following.
Here,  $\pi_N$ is the Dirichlet frequency projection 
onto the frequencies $\{|n| \leq N \}$ defined in \eqref{pi1}. 
See Remark~\ref{REM:uniq2}\,(ii)
for the case of smooth initial data given by  mollification as in \eqref{smooth1}.

Letting 
\begin{align}
\label{gN}
g_n^N := \ind_{|n|\leq N} \cdot g_n
= 
\begin{cases}
g_n, &\text{ if } |n| \le N,\\
0, &\text{ if } |n| > N, 
\end{cases}
\end{align}

\noi
we have
\[
u_{0, N}^\o(x) =  \sum_{n\in \Z} g_n^N(\o)e^{inx}.
\]

\noi
Define a truncated version of the random phase function $\Psi^\o$ in \eqref{Psi}
by setting 
\begin{align}
\label{PsiN}
\Psi_N^\o : =  |g_{n_1}^N(\o)|^2 - |g_{n_2}^N(\o)|^2 + |g_{n_3}^N(\o)|^2 - |g_{n}^N(\o)|^2.
\end{align}

\noi
We also set  $\Psi_\infty^\o = \Psi^\o$.

Let  $N \in \N$. 
Then, we have  $u_{0, N}^\o \in C^\infty(\T)$ almost surely.
Hence, by Proposition 1.1 in~\cite{OTz1}, 
 there exists a unique global-in-time solution $u^N$ to \eqref{NLS2} with 
 $u^N|_{t = 0} = u_{0, N}^\o$. 
 Furthermore, by  introducing the truncated random gauge transform:
\begin{align}
\label{GaN}
w^N(x, t)= \J_N^\o(u^N) : = \sum_{n\in\Z} e^{inx - it|g_n^N(\o)|^2} \ft{u^N}(n, t)
\end{align}

\noi
with $g_n^N$ in  \eqref{gN}, 
we see that  $w^N$ satisfies a modified version of the random equation \eqref{NLS2-3}: 
\begin{equation}
\label{NLS2-3'} 
\begin{cases}
i \dt w^N =  \partial_x^4 w^N + \NN_{1,N}^\o(w^N) + \NN_{2,N}^\o(w^N) \\
w|_{t= 0} = u_{0, N}^\o,
\end{cases} 
\end{equation}

\noi
where $\NN_{1,N}^\o(w) = \NN_{1,N}^\o(w, w, w)$
and $\NN_{2,N}^\o(w) $ are defined by 
\begin{align}
\NN_{1,N}^\o(w_1, w_2, w_3) (x, t)
&  : =  \sum_{n\in \Z}  e^{inx} \sum_{\G(n)} e^{it\Psi_N^\o(\bar n)} \ft w_1 (n_1, t) \cj{ \ft w_2 (n_2, t)} \ft w_3 (n_3, t), \label{I0N}\\
\NN_{2,N}^\o(w) (x, t)
& : = - \sum_{n\in \Z}  e^{inx}  \big[|\ft w(n, t)|^2 - |g^N_n(\o)|^2\big] \ft w( n, t).
\notag
\end{align}

\noi
By writing \eqref{NLS2-3'} in the Duhamel formulation, we have
\begin{align}
\label{DuhamelN}
w^N = S(t) u_{0, N}^\o + \In_{1,N}^\o(w^N) + \In_{2,N}^\o(w^N),
\end{align}

\noi
where $\In_{1,N}^\o(w) : = \In_{1,N}^\o(w,w,w)$ and $\In_{2,N}^\o(w) $ are defined by
\begin{align}
\label{I1N}
\In_{1,N}^\o(w_1,w_2,w_3) &:= -i \int_0^t S(t-t')  \NN_{1,N}^\o (w_1,w_2,w_3) (t') dt' ,\\
\label{I2N}
\In_{2,N}^\o(w) & := - i \int_0^t S(t-t')  \NN_{2,N}^\o (w) (t') dt'.
\end{align}

\noi
Noting that
$w^N$ is almost surely a  smooth solution to \eqref{NLS2-3'} with 
the truncated random initial data $u_{0, N}^\o$, we have
\begin{align}
|\ft {w^N}(n,t)|^2 - |g_n^N|^2 & = \int_0^t \frac{d}{dt} |\ft{w^N}(n,t')|^2 dt'\notag\\
& = -2\Re i \int_0^t \sum_{\G(n)} e^{it'\Psi_N^\o(\bar n)} \ft{w^N}(n_1,t') \cj{\ft {w^N}(n_2,t')} \ft {w^N} (n_3,t') 
\cj{\ft {w^N}(n,t')} dt'\notag\\
& =: \EE_n^N(w^N,w^N,w^N,w^N)(t). \label{EnN}
\end{align}

\noi
This motivates us to define  a truncated version of $\wt \In_2$ by 
\begin{align}
\label{MultiI2N}
\wt \In_{2,N}^\o  (w_1,w_2, & w_3,w_4,w_5) (x, t)\notag \\
& : = \int_0^t S(t-t') \sum_{n\in \Z} e^{inx} \EE_n^N (w_1,w_2,w_3,w_4)(t') \ft w_5 (n,t')  dt'.
\end{align}

\noi
We also set $\wt \In_{2,N}^\o(w^N)= \wt \In_{2,N}^\o(w^N,w^N,w^N,w^N,w^N)$.
Then, we can rewrite 
\eqref{DuhamelN} as the following partially iterated Duhamel formulation:
\begin{align}
\label{DuhamelNexpan}
w^N = S(t) u_{0, N}^\o + \In_{1,N}^\o(w^N) + \wt \In_{2,N}^\o(w^N).
\end{align}

\noi
Note that while $\In_{2,N}^\o(w)$ in \eqref{I2N}
corresponds to  the resonant part of the nonlinearity,
only  the non-resonant contribution survives in \eqref{EnN}
after substituting the equation,
thus yielding a non-resonant structure 
in the quintic term $\wt \In_{2,N}^\o(w^N)$.

In order to prove Theorem \ref{THM:LWP},
we need to show that $\{w^N\}_{N \in \N} $ converges in some function space
 and that the limit $w = \lim_{N\rightarrow \infty} w^N$ is a distributional solution to \eqref{NLS2-3}. 
We now state the crucial nonlinear estimates in our analysis.
Recall from  \eqref{pi2}
that given $N \in \Z_{\geq -1} = \Z\cap [-1, \infty)$, 
 $\pi_N^\perp$
denotes
 the frequency projection operator onto the (spatial) frequencies $\{|n| >N\}$
with the understanding that 
$\pi^\perp_{-1} = \Id.$

 \begin{proposition}
 \label{PROP:I1}
 Let  $0< \beta, \g \ll 1$ 
 and   $b > \frac 12$ be  sufficiently close to $\frac 12$.
Then, there exist $c, \ta>0$
and small $\dl_0 > 0$  
with the following property.
For each $0 < \dl < \dl_0$, 
there exists $\O_\dl \subset \O$ 
with 
 $P(\Omega_\dl^c) < e^{-\frac{1}{\dl^c}}$
   such that for each $\o \in \Omega_\dl$, we have
 \begin{align}
 \label{eD1}
 \|\In_{1,N}^\o(w_1,w_2,w_3)\|_{X^{0,
 b,\dl}} \le C\dl^{\theta} \prod_{j=1}^3 
 \Big(\jb{N_j}^{-\beta} + \|w_j - S(t) \pi_{N_j}^\perp (u_0^\o)\|_{X^{-\g,b,\dl}}\Big),
 \end{align}

 \noi
uniformly in     $N_j \in \Z_{\geq -1}$, $j = 1, 2, 3$,
and $N \geq N_0 (\o, \dl)$ for some $N_0(\o, \dl) \in \N$.
Here, we allow $N = \infty$ as well.

 \end{proposition}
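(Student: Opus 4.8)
The plan is to mirror the case-by-case analysis of Section~\ref{SEC:LWP1}, now with the random gauge phase $e^{it\Psi_N^\o(\bar n)}$ from \eqref{Psi}--\eqref{PsiN} in place of the deterministic non-resonant structure, and to extract the prescribed gains $\jb{N_j}^{-\beta}$ from the high-frequency cut-offs. First I would apply the non-homogeneous linear estimate (Lemma~\ref{LEM:linear}\,(ii)) together with the time-localization estimate (Lemma~\ref{LEM:timedecay}) to reduce \eqref{eD1} to a bound of the form $\|\chi_{_\dl}\cdot\NN_{1,N}^\o(w_1,w_2,w_3)\|_{X^{0,-\frac12+}}\lesssim\dl^\theta\prod_{j=1}^3(\jb{N_j}^{-\beta}+\|v_j\|_{X^{-\g,b,\dl}})$, where $v_j:=w_j-S(t)\pi_{N_j}^\perp(u_0^\o)$ and $\pi_{N_j}^\perp$ is as in \eqref{pi2}. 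Writing $z_j:=S(t)\pi_{N_j}^\perp(u_0^\o)$ and expanding the trilinear form $\NN_{1,N}^\o$ over the $2^3$ choices $a_j\in\{z_j,v_j\}$, it suffices to show that each piece $\|\chi_{_\dl}\cdot\NN_{1,N}^\o(a_1,a_2,a_3)\|_{X^{0,-\frac12+}}$ is $\lesssim\dl^\theta$ times the product in which a $z_j$-slot contributes $\jb{N_j}^{-\beta}$ and a $v_j$-slot contributes $\|v_j\|_{X^{-\g,b,\dl}}$.

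The role of the random phase is benign on the good event. On a set $\Omega_\dl$ built from Lemma~\ref{LEM:prob} one has $|\Psi_N^\o(\bar n)|\lesssim\dl^{-c}$ for finite $N$ (and $|\Psi^\o(\bar n)|\lesssim n_{\max}^{0+}$ for $N=\infty$), so, comparing with the phase factorization of Lemma~\ref{LEM:phase}, which gives $|\Phi(\bar n)|\gtrsim n_{\max}^2|(n_1-n_2)(n_1-n)|$ thanks to the non-resonance $n_1,n_3\ne n$ built into $\G(n)$ from \eqref{Gam0}, the phase $\Psi_N^\o$ is dominated by the modulation whenever $n_{\max}\ge N_0(\o,\dl)$; the complementary regime $n_{\max}\lesssim N_0$ involves only boundedly many frequencies and is absorbed by an extra power of $\dl$ via Lemma~\ref{LEM:timedecay}. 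Consequently, in the pieces carrying at least one $v_j$, I would run a duality argument with dyadic Littlewood--Paley decompositions, H\"older's inequality, and the $L^4$- and interpolated $L^{3+}$-Strichartz estimates \eqref{L4}--\eqref{L3}, using the modulation gain from Lemma~\ref{LEM:phase} to close the small negative regularity $-\g$, and a probabilistic Strichartz estimate (cf.\ Lemma~\ref{LEM:Z1}) for the $z_j$-slots, for which $\|\P_M z_j\|_{L^p_{x,t}(\T\times[-\dl,\dl])}\lesssim M^{\frac12+}$ off a set of probability $<e^{-M^{0+}/\dl^c}$ when the dyadic parameter satisfies $M\gtrsim\jb{N_j}$. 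The restriction $|n_j|>N_j$ in each $z_j$-slot, combined with the residual negative power of $n_{\max}$ left over after the modulation gain, produces $\jb{N_j}^{-\beta}$ after summing the dyadic parameters, whose exceptional sets are summable into $\Omega_\dl$.

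The genuinely new term is $\NN_{1,N}^\o(z_1,z_2,z_3)$, where all three slots are rough. Because $e^{it\Psi_N^\o(\bar n)}=e^{it|g_{n_1}^N|^2}e^{-it|g_{n_2}^N|^2}e^{it|g_{n_3}^N|^2}e^{-it|g_n^N|^2}$, this object lies in no Wiener chaos of finite order, so the hypercontractivity bound (Lemma~\ref{LEM:hyp}) is not directly available; instead I would expand each exponential factor in its power series in $t$, exactly as in \eqref{Zlin1a}, turning the trilinear form into a series indexed by $k_1,k_2,k_3,k_4\in\Z_{\ge0}$ whose general term is a random multilinear functional of the white noise of the type controlled by Lemma~\ref{LEM:Z2} and its Appendix~\ref{SEC:A} variant (Lemma~\ref{LEM:NRSum}). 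Those lemmas supply $L^p(\O)$ bounds of the shape $C^{k}(p-1)^{O(k)}$ times the $\ell^2$-norm of the relevant coefficient sequence, so after Minkowski's integral inequality the series in $k_1,\dots,k_4$ converges provided $p\dl\ll1$; the leftover deterministic $\ell^2$-sum $\sum_n\sum_{(n_1,n_2,n_3)\in\G(n),\,|n_j|>N_j}\jb{\tau+n^4}^{-1+}|c_{\bar k}|^2$ is summable by Lemma~\ref{LEM:phase} (yielding $\s_{\max}\gtrsim n_{\max}^2|(n_1-n_2)(n_1-n)|$) together with Lemma~\ref{LEM:GTV}, with a surplus negative power of $n_{\max}$ that, distributed over the three high-frequency constraints, produces $\prod_{j=1}^3\jb{N_j}^{-\beta}$. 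Choosing $p=\dl^{-\theta}$ and applying Chebyshev's inequality then converts these moment bounds into the claimed tail $P(\Omega_\dl^c)<e^{-1/\dl^c}$, uniformly for all $N\ge N_0(\o,\dl)$ and for $N=\infty$.

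The main obstacle is precisely this last term. The random gauge transform, while weakening the resonant interaction, makes $\NN_{1,N}^\o(z_1,z_2,z_3)$ an infinite-degree functional of the Gaussians, so one cannot invoke hypercontractivity and must instead control an entire power series in $t$ by the refined random multilinear estimates of Appendix~\ref{SEC:A}; one also has to verify that all the exceptional sets — those from the probabilistic Strichartz estimates over dyadic frequencies, and those controlling $\sup_{|n|\le N}|g_n|$ — can be collected into a single $\Omega_\dl$ of probability $<e^{-1/\dl^c}$ valid simultaneously for every $N\ge N_0(\o,\dl)$ and for $N=\infty$, which is what makes \eqref{eD1} usable in the subsequent approximation argument where $N\to\infty$.
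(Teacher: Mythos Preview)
Your outline is in the right spirit but differs from the paper's argument in one structural point, and that point hides a real difficulty in your sketch.

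The paper does \emph{not} treat the random gauge phase $e^{it\Psi_N^\o(\bar n)}$ as a lower-order perturbation of $\Phi(\bar n)$. Instead it exploits, uniformly across all four cases I--IV, the factorization you write down only for the all-$z$ term. Defining the ``ungauged interaction representation''
\[
b_n^{(j)}=e^{it(n^4+|g_n^N|^2)}\jb{n}^s\ft v_j(n),\qquad a_n=e^{it(n^4+|g_n^N|^2)}\jb{n}^{2s}\ft a(n),
\]
the phase $e^{it\Psi_N^\o}$ cancels \emph{exactly}, leaving only $e^{-it\Phi(\bar n)}$. The cost is that the $b_n^{(j)}$ and $a_n$ are measured in the random spaces $X^{s,\frac12\pm}_+(\o,N)$, which Lemma~\ref{LEM:RXsb} converts back to standard $X^{\frac s2,\frac12\pm}$ norms at a small spatial-regularity loss. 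With this substitution in hand, the all-$v$ term I is closed by Cauchy--Schwarz in $\tau$, the deterministic summability \eqref{SumPhi}, and the product estimate of Lemma~\ref{LEM:algebra} in $H^{\frac12\pm}_t$; the mixed and all-$z$ terms II--IV reduce directly to the random functionals $S^{s,\frac12-,\dl}_{j,N}(\o)$ of Lemma~\ref{LEM:NRSum} (whose proof in Appendix~\ref{SEC:A} is indeed the power-series computation you describe, but with only three indices, the fourth phase having been absorbed into $a_n$). There are no Littlewood--Paley decompositions and no physical-space Strichartz estimates anywhere in the proof of Proposition~\ref{PROP:I1}.

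The gap in your route is the $v$-carrying terms. The phase $e^{it\Psi_N^\o(\bar n)}$ depends on all four frequencies at once and does not factor as a product of functions of $(x,t)$, so the H\"older/$L^4$-Strichartz step of Section~\ref{SEC:LWP1} Cases~(A)--(C) does not apply as written: $\NN_{1,N}^\o(v_1,v_2,v_3)$ is not a pointwise product. Your observation that $|\Psi_N^\o|\ll|\Phi|$ on the good event does give $\sigma_{\max}\gtrsim|\Phi|$ after the random shift of the $\tau$-convolution constraint, but to then separate the factors you are forced into exactly the substitution above---which is the paper's move. In short: your treatment of IV is essentially right (it is the content of Lemma~\ref{LEM:NRSum}), but for I--III the paper's exact cancellation via random $X^{s,b}$ spaces replaces, rather than supplements, the Section~\ref{SEC:LWP1} machinery you propose.
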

\begin{proposition}
\label{PROP:I2}
 Let  $0< \beta, \g \ll 1$ 
 and   $b > \frac 12$ be  sufficiently close to $\frac 12$.
Then, there exist $c, \ta>0$
and small $\dl_0 > 0$  
with the following property.
For each $0 < \dl < \dl_0$, 
there exists $\O_\dl \subset \O$ 
with 
 $P(\Omega_\dl^c) < e^{-\frac{1}{\dl^c}}$
   such that for each $\o \in \Omega_\dl$, we have
\begin{align}
\label{eD2}
& \| \wt \In_{2,N}^\o (w_1,w_2,w_3,w_4,w_5) \|_{X^{0,b,\dl}} \notag\\
& \hphantom{XXXXX}
\le C\dl^\ta \prod_{j=1}^5
\Big(  \jb{N_j}^{-\beta} + \|w_j - S(t)\pi_{N_j}^\perp (u_0^\o)\|_{X^{-\g,b,\dl}} \Big),
\end{align}

 \noi
uniformly in     $N_j \in \Z_{\geq -1}$, $j = 1, \dots, 5$,
and $N \geq N_0 (\o, \dl)$ for some $N_0(\o, \dl) \in \N$.
Here, we allow $N = \infty$ as well.
\end{proposition}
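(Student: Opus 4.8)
The plan is to estimate $\|\wt\In_{2,N}^\o(w_1,\dots,w_5)\|_{X^{0,b,\dl}}$ by expressing $\wt\In_{2,N}^\o$ through its space-time Fourier transform and bounding the $X^{0,b,\dl}$-norm directly (equivalently, peeling off the outer Duhamel integral via Lemma~\ref{LEM:linear}(ii) and dualizing against $X^{0,1-b,\dl}$). The first move is to substitute, for each $j \in \{1,\dots,5\}$, the decomposition
\[
w_j = S(t)\pi_{N_j}^\perp(u_0^\o) + r_j, \qquad r_j := w_j - S(t)\pi_{N_j}^\perp(u_0^\o),
\]
which splits the quintilinear form into $2^5$ pieces (reduced by the evident symmetry between inputs $1$ and $3$). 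On each piece some inputs are the \emph{high-frequency random linear evolution} $S(t)\pi_{N_j}^\perp(u_0^\o)$, with Fourier coefficients $e^{-in_j^4 t}\ind_{|n_j|>N_j}g_{n_j}$, while the remaining inputs are the \emph{remainders} $r_j$, controlled in $X^{-\g,b,\dl}$. The factor $\jb{N_j}^{-\beta}$ in \eqref{eD2} is then produced, for the random linear inputs, by the frequency cutoff $|n_j|>N_j$ together with a sliver of the dispersive smoothing discussed below; for the remainder inputs it is harmless.

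The core mechanism is the non-resonant structure created by the partial iteration. Writing out the two time integrals in $\wt\In_{2,N}^\o$ — the outer one in $t'$ and the inner one in $t''$ hidden in $\EE_n^N$ — and pulling out the linear propagators, the integrand carries the oscillatory factor $e^{it''(\Phi(\bar n) - \Psi_N^\o(\bar n))}$, and since $(n_1,n_2,n_3)\in\G(n)$ forces $n_1 \ne n_2$, $n_2 \ne n_3$ and $n_1, n_3 \ne n$, Lemma~\ref{LEM:phase} gives
\[
|\Phi(\bar n)| = |n_1-n_2|\,|n_1-n|\,\bigl(n_1^2+n_2^2+n_3^2+n^2+2(n_1+n_3)^2\bigr) \gtrsim n_{\max}^2 .
\]
On the event $\O_\dl$ where $\sup_n |g_n(\o)| \le \dl^{-c/2}\jb{n}^\eps$ (Lemma~\ref{LEM:prob}) one has $|\Psi_N^\o(\bar n)| \lesssim \dl^{-c} n_{\max}^{2\eps}$, so once $n_{\max}\ge N_0(\o,\dl)$ the random phase is a harmless perturbation and $|\Phi(\bar n)-\Psi_N^\o(\bar n)| \sim |\Phi(\bar n)|$; the finitely many interactions with $n_{\max} < N_0(\o,\dl)$ involve only boundedly many Gaussians and are disposed of crudely, which is exactly what the threshold $N \ge N_0(\o,\dl)$ is for. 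Estimating the two time integrals against this oscillation (together with the standard case analysis according to which of the modulation variables $\s,\s_1,\dots,\s_5$ is largest, as in Cases (A)--(D) of Section~\ref{SEC:LWP1}) produces a small but genuine gain of a positive power of $n_{\max}$, which creates the room needed to absorb the negative-regularity norms $\|r_j\|_{X^{-\g,b,\dl}}$ (for $\g \ll 1$) and the probabilistic losses; the $\dl^\ta$ gain comes from Lemma~\ref{LEM:timedecay} and the scaling \eqref{decay} of $\ft\eta_\dl$.

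For the pieces carrying some remainder inputs $r_j$, the estimate is deterministic: one combines the $\Phi$-smoothing with the $L^4$-Strichartz estimate \eqref{L4} and its interpolated versions \eqref{L3}, with Lemma~\ref{LEM:timedecay}, and sums over dyadic frequency blocks. For the pieces in which several — up to all five — inputs are high-frequency random linear evolutions, after substituting their explicit Fourier series what is left is a fixed deterministic multilinear kernel applied to products of Gaussians, i.e. a random multilinear functional of the white noise, whose $L^p(\O)$-norm must be controlled. I expect \textbf{this step to be the main obstacle}: one cannot apply the Wiener chaos bound (Lemma~\ref{LEM:hyp}) directly because the random phase $e^{it''\Psi_N^\o}$ itself depends on the $g_n$'s, so I would expand $e^{it''\Psi_N^\o} = \sum_{k\ge 0}\frac{(it'')^k}{k!}(\Psi_N^\o)^k$ exactly as in the expansion \eqref{Zlin1a} of $z^\o = Z(u_0^\o)$, reducing each term to a genuine element of a finite Wiener chaos, apply Lemma~\ref{LEM:Z2} (whose base constant is uniform in the chaos order), and re-sum the series using the $\dl$-smallness to beat the geometric growth; this is the content of the appendix lemma, Lemma~\ref{LEM:NRSum}. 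Converting the resulting $L^p(\O)$-bounds into a tail estimate via Lemma~\ref{LEM:tail} produces the exceptional set $\O_\dl$ with $P(\O_\dl^c) < e^{-1/\dl^c}$, after summing the (super-exponentially small) contributions of the dyadic frequency pieces. Since all inputs here are allowed to carry the truncation index $N$ (including $N = \infty$) and none of the above bounds sees $N$ except through $g_n^N = \ind_{|n|\le N} g_n$, the estimates are uniform in $N \ge N_0(\o,\dl)$ as claimed.
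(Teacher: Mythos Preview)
Your high-level plan---decompose each input, exploit the non-resonant phase $\Phi(\bar n)$, invoke Lemma~\ref{LEM:NRSum} for the random pieces---matches the paper's, but the execution misses a structural point and then diverges in technique. The form $\wt\In_{2,N}^\o$ is not a symmetric quintilinear object with five independent frequencies: both $w_4$ and $w_5$ sit at the \emph{output} frequency $n$, and $w_5$ lies \emph{outside} the inner $t''$-integral carrying the oscillation $e^{it''(\Psi_N^\o-\Phi)}$. There is thus no genuine ``$\s_5$'' in a modulation split, and the non-resonance gives no direct leverage on $w_5$. The paper's first move (Step~(i) of Section~\ref{SEC:Non2}) is simply to peel off $w_5$ by the pointwise bound $\sup_n|\jb{n}^s\ft w_5(n,t)|$, controlled via Lemma~\ref{LEM:prob} for the random-linear piece and via $X^{s,\frac12+}\hookrightarrow C_t\FL^{s,\infty}$ for the remainder, reducing everything to a \emph{four}-linear estimate on $\|\jb{n}^{-s}\EE_n^N(w_1,\dots,w_4)\|_{\l^2_n}$.

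From there the paper does not dualize and does not use $L^4$-Strichartz: it takes the trivial embedding $L^2_{x,t}=X^{0,0}\subset X^{0,-\frac12+}$ (discarding output modulation entirely), applies Parseval in $t$ to the inner integral after inserting $\eta_\dl$, and uses Lemma~\ref{LEM:bound} to absorb the sharp cutoff $\ind_{[0,t']}$ that lands on the deterministic factors $b_n^{(j)}$. Each of the resulting terms $B,C,D$ then factors as one of the random functionals $S_{k,N}^{s,b,\dl}$ of Lemma~\ref{LEM:NRSum} times random-$X^{s,b}$ norms from Lemma~\ref{LEM:RXsb}; term $E$ is purely deterministic; and the fully random term $A$ is handled by a direct high/low split in $n_{\max}$ using only Lemma~\ref{LEM:prob} and explicit integration of $e^{it'(\Psi_N^\o-\Phi)}$, with \emph{no} Taylor expansion of the random phase. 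Your Strichartz/modulation route might be salvageable, but you have not said how the inner sharp cutoff $\ind_{[0,t']}$ interacts with the $X^{s,b}$ framework---this is exactly the role of Lemma~\ref{LEM:bound} in the paper's argument, and it is the point where your sketch is thinnest.
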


We remark that both estimates \eqref{eD1} and \eqref{eD2} 
exhibit some smoothing effect. The main reason is that both nonlinearities 
$\In_{1,N}^\o(w_1,w_2,w_3)$ and $ \wt \In_{2,N}^\o (w_1,w_2,w_3,w_4,w_5)$ possess non-resonant structures. 
In the next subsection, we present the proof of Theorem~\ref{THM:LWP}
by assuming  Propositions~\ref{PROP:I1} and \ref{PROP:I2}.
We present the proofs of these propositions in Sections~\ref{SEC:Non1} and~\ref{SEC:Non2}.
By careful analysis, 
we  reduce these nonlinear estimates 
to boundedness properties
of certain random multilinear functionals
of the white noise.

\begin{remark}\label{mod_scat_det}
\rm 
In deriving  $\EE_n(w, w, w, w)$ in \eqref{En}, 
we made  use of a key cancellation:
\begin{equation}\label{cancel1}
\Re 
\Big( i 
\FF\big( \NN_2^\o (w) \big) (n)\cj{\ft w (n)}
\Big)=0,
\end{equation}

\noi
i.e.~the resonant part of the nonlinearity disappears in   \eqref{En}. Interestingly, a similar cancellation is used in the context of the modified scattering analysis of the one-dimensional cubic nonlinear 
Schr\"odinger equation on the real line: 
\begin{equation}\label{modif_scat}
i\partial_t u=\partial_x^2 u+|u|^2u
\end{equation}
with localized initial data. 
More precisely, if we set  $v(t)=e^{it\dx^2}u(t)$, 
then by a stationary phase argument, 
\eqref{modif_scat} can be rewritten as
\begin{equation}\label{1d}
\partial_{t} \ft v (\xi, t)=cit^{-1}|\ft v (\xi, t)|^2\ft v (\xi, t)+R(\xi, t),\qquad \xi\in\R,
\end{equation}

\noi
where $c$ is a real constant and $\ft v$ denotes the Fourier transform of $v$ on the real line.  
The trilinear  remainder  term $R(\xi, t)$ decays (in a suitable functional framework) faster than $t^{-1}$
and therefore the principal part of the nonlinearity 
for analyzing long-time behavior  is given by 
$cit^{-1}|\ft v (\xi, t)|^2\ft v (\xi, t)$,  which is the analogue of the resonant part of the nonlinearity 
$ \NN_2^\o (w) $ in our problem. 
Note that the key cancellation in the context of
\eqref{1d} is
\begin{equation}\label{cancel2}
\Re
\Big( i 
t^{-1}
|\ft v (\xi, t)|^2\ft v (\xi, t)
\cj{\ft v(\xi, t)}
\Big)=0.
\end{equation}

\noi
The cancellation \eqref{cancel2}  appears 
in computing $\partial_{t}|\ft v (\xi, t)|^2$,
 which is the analogue of the computation  \eqref{En} in the context of \eqref{1d}.
We point out strong similarity between 
\eqref{cancel1}
and   \eqref{cancel2}.
\end{remark}

\subsection{Proof of Theorem \ref{THM:LWP}: the  $\alpha =0$ case} 
In this subsection, we present  the proof of Theorem \ref{THM:LWP}
for $\al = 0$.
More precisely, 
by applying Propositions~\ref{PROP:I1} and~\ref{PROP:I2}
to  the iterated Duhamel formulation~\eqref{DuhamelNexpan} 
we prove that,  
for each $0< \dl\ll 1$, 
there exists $\Omega_\dl\subset \Omega$ with $P(\Omega_\dl^c) \le e^{-\frac{1}{\dl^c}}$ such that for $\o\in \Omega_\dl$, the following statements hold:

\begin{itemize}

\item[(i)] The sequence $\{w^N - S(t) u_{0, N}^\o \}_{N \in \N}$ is Cauchy in $X^{0,\frac12+,\dl}$.

\smallskip

\item[(ii)] The limit $w$ of $w^N$ 
satisfies the equation  \eqref{NLS2-3} in the  distributional sense  with 
the white noise initial data $u_0^\o$.

\smallskip

\item[(iii)] The solution $w$ is unique in the class:
$S(t) u_0^\o + B_1$,
where $B_1$ denotes the ball of radius 1 in 
$X^{0,\frac12+,\dl}$ centered at the origin.

\end{itemize}

Given  $0< \beta, \g \ll 1$ 
 and   $b > \frac 12$  sufficiently close to $\frac 12$, 
 apply Propositions~\ref{PROP:I1} and~\ref{PROP:I2}
and construct a set $\O_\dl \subset \O$ 
with 
 $P(\Omega_\dl^c) < e^{-\frac{1}{\dl^c}}$
for each $0 < \dl \ll 1$
such that the conclusions of both Propositions~\ref{PROP:I1} and~\ref{PROP:I2} hold.
In the following, 
we fix $\o   \in \O_\dl$ 
and hence the parameter $N_{0}(\o,\dl)$ in Propositions \ref{PROP:I1} and \ref{PROP:I2} is a {\it fixed} number.
In what follows, unless otherwise stated, the number $N$ and $M$ are always 
assumed to be greater than $N_0(\o,\dl)$.

\smallskip

\noi
(i) 
By setting $v^N = w^N - S(t) u_{0, N}^\o$, 
it follows from  \eqref{DuhamelN} and \eqref{DuhamelNexpan}
that  $v^N$ satisfies 
\begin{align}
\label{vN}
v^N & = \In_{1,N}^\o(v^N + S(t) u_{0, N}^\o) + \In_{2,N}^\o(v^N+S(t) u_{0, N}^\o)\notag\\
& = \In_{1,N}^\o(v^N + S(t) u_{0, N}^\o) + \wt \In^\o_{2,N}(v^N+S(t) u_{0, N}^\o),
\end{align}

\noi
where $\In_{1,N}^\o$, $\In_{2,N}^\o$ and $\wt \In^\o_{2,N}$ 
are as  in \eqref{I1N}, \eqref{I2N} and \eqref{MultiI2N}. 
Note that the second equality holds
since $w^N$  is a classical solution to \eqref{NLS2-3'}.

We first claim that 
\begin{align}
\label{vNbound}
\|v^N \|_{X^{0,\frac12+,\dl}} \le 1
\end{align} 

\noi  
by choosing $\dl > 0$ sufficiently small.
Indeed, by 
applying 
\eqref{eD1} and \eqref{eD2}
in Propositions~\ref{PROP:I1} and~\ref{PROP:I2}
(with $N_j = -1$, i.e. $\pi_{N_j}^\perp = \Id$)
to \eqref{vN}, we have 
\begin{align}
\label{T1-3}
\|v^N\|_{X^{0,\frac12+,\dl}} 
& \les \dl^{\theta} (1+\|v^N\|_{X^{-\g,\frac12+,\dl}})^3 + \dl^{\theta}  (1+\|v^N\|_{X^{-\g,\frac12+,\dl}})^5\notag\\
& \le \dl^{\theta} (1+\|v^N\|_{X^{0,\frac12+,\dl}})^3 + \dl^{\theta}  (1+\|v^N\|_{X^{0,\frac12+,\dl}})^5.
\end{align}
Then by choosing $\dl > 0$  sufficiently small,
the bound  \eqref{vNbound} follows from \eqref{T1-3} and a standard continuity argument.

Next, we show that the sequence $\{v^N\}_{N\in \N}$ is a Cauchy sequence in $X^{0,\frac12+,\dl}$. 
 By possibly restricting to  smaller $\dl >0$, 
 we prove 
\begin{align}
\label{diff}
\|v^M -v^N\|_{X^{0,\frac12+,\dl}} \les N^{-\min(\be, \g)}
\end{align}

\noi
for any  $\o \in \Omega_\dl$ and $M\geq N \geq N_0(\o,\dl)$.
The bound  \eqref{diff}
shows that  $v^N$ converge in $X^{0,\frac12+,\dl}$ for each $\o \in \Omega_\dl$
and  thus $w^N = v^N + S(t) u_{0, N}^\o$ converge 
to $w = v + S(t) u_0^\o$
in $C([-\dl, \dl]; H^s(\T))$, $s < -\frac 12$.

We now prove  \eqref{diff}. 
From \eqref{vN}, we have
\begin{align}
\label{diff1}
\|v^M -v^N\|_{X^{0,\frac12+,\dl}} 
& \le \| \In_{1,M}^\o(v^M + S(t) u_{0, M}^\o) 
- \In^\o_{1,N}(v^N + S(t) u^\o_{0,N})\|_{X^{0,\frac12+,\dl}} \notag\\
& \hphantom{X}
+ \| \wt \In^\o_{2,M}(v^M + S(t) u_{0, M}^\o) 
- \wt \In^\o_{2,N}(v^N + S(t) u^\o_{0,N})\|_{X^{0,\frac12+,\dl}}.
\end{align}

\noi
We first estimate the first term on the right-hand side of \eqref{diff1}.
From \eqref{I0N} and \eqref{I1N}
with  $w^N = v^N + S(t) u_{0, N}^\o$, we have
\begin{align}
  \| & \In_{1,M}^\o(w^M) - \In^\o_{1,N}(w^N)\|_{X^{0,\frac12+,\dl}} \notag \\
 & \le  \| \In_{1,M}^\o(w^M) - \In^\o_{1,N}(w^M)\|_{X^{0,\frac12+,\dl}} 
 +  \| \In_{1,N}^\o(w^M) - \In_{1,N}^\o(w^N)\|_{X^{0,\frac12+,\dl}} \notag\\
 & \le \| \In_{1,M}^\o(w^M) - \In^\o_{1,N}(w^M)\|_{X^{0,\frac12+,\dl}} 
 +  \| \In_{1,N}^\o(w^M - w^N, w^M,w^M) \|_{X^{0,\frac12+,\dl}} \notag\\
 & \hphantom{X}
  + \| \In_{1,N}^\o(w^N,w^M - w^N,w^M) \|_{X^{0,\frac12+,\dl}} 
 +\| \In_{1,N}^\o(w^N,w^N,w^M - w^N) \|_{X^{0,\frac12+,\dl}} .
 \label{T1-4}
\end{align}

\noi
In the following, 
we only treat the first two terms since the other two terms can be treated in a similar manner.
Using the trilinear structure of $\In_{1,L}^\o$ for $L\in \{M,N\}$,
we have
\begin{align*}
 \In_{1,L}^\o( w^M) 
 & = \In_{1,L}^\o( \pi_{ \frac N3}^\perp w^M,w^M,w^M) 
 +\In_{1,L}^\o( \pi_{ \frac N3}w^M, \pi_{\frac N3}^\perp w^M,w^M)
 \\
 & \hphantom{X}
 + \In_{1,L}^\o(\pi_{\frac N3}w^M, \pi_{\frac N3}w^M, \pi_{\frac N3}^\perp w^M) 
+ \In_{1,L}^\o(\pi_{\frac N3}w^M, \pi_{\frac N3}w^M, \pi_{\frac N3}w^M).
\end{align*}

\noi
The key point is to observe that it follows directly from the definitions
\eqref{I0N} and \eqref{I1N} with \eqref{PsiN} that for $M\geq N$
$$
\In_{1,M}^\o(\pi_{\frac N3}w^M, \pi_{\frac N3}w^M, \pi_{\frac N3}w^M) -
 \In_{1,N}^\o(\pi_{\frac N3}w^M, \pi_{\frac N3}w^M, \pi_{\frac N3}w^M)=0.
 $$
 
 \noi
 Therefore,  in order to control 
 $$
 \| \In_{1,M}^\o(w^M) - \In^\o_{1,N}(w^M)\|_{X^{0,\frac12+,\dl}},
 $$ 
 
 \noi
 we only need to bound
\begin{align*}
&\| \In_{1,L}^\o(  \pi_{\frac N3}^\perp w^M,w^M,w^M) \|_{X^{0,\frac12+,\dl}}, \\
&\| \In_{1,L}^\o(  \pi_{\frac N3} w^M,  \pi_{\frac N3}^\perp w^M, w^M) \|_{X^{0,\frac12+,\dl}}, \\
&\| \In_{1,L}^\o( \pi_{\frac N3} w^M,  \pi_{\frac N3} w^M,  \pi_{\frac N3}^\perp w^M)\|_{X^{0,\frac12+,\dl}}
\end{align*}

\noi
for $L = M$ and $N$.
We only consider the first one
since  the others can be treated similarly.  From Proposition~\ref{PROP:I1}
and \eqref{vNbound}, we have
\begin{align*}
\| \In_{1,L}^\o( \pi_{\frac N3}^\perp w^M,w^M,w^M) \|_{X^{0,\frac12+,\dl}} 
& \les 
\dl^{\theta}  \big( N^{-\beta}+ \|\pi_{\frac N3}^\perp v^M \|_{X^{-\g,\frac12+,\dl}}  \big)
\big ( 1+ \| v^M \|_{X^{-\g,\frac12+,\dl}}  \big)^2 \notag\\
& \les 
\dl^{\theta}  \big( N^{-\beta}+ N^{-\g}\| v^M \|_{X^{0,\frac12+,\dl}}  \big)\\ 
& \les \dl^{\theta} N^{- \min(\beta,\g)}, 
\end{align*}

\noi
where we used the fact that $w^N = v^N + S(t) u_{0, N}^\o$.
Therefore,  we obtain
\begin{align*}
 \| \In_{1,M}^\o(w^M) - \In^\o_{1, N}(w^M)\|_{X^{0,\frac12+,\dl}} 
 \les \dl^{\theta} N^{- \min(\beta,\g)}.
\end{align*}

Next,  we proceed with estimating the second term 
on the right-hand side of  \eqref{T1-4}:
\begin{align}
\| \In_{1,N}^\o & (w^M - w^N, w^M,w^M) \|_{X^{0,\frac12+,\dl}} \notag\\
& \leq  \| \In_{1,N}^\o(v^M-v^N + S(t) \pi_{N}^\perp u_{0}^\o, v_N + S(t)u_{0, N}^\o, v_N + S(t)u_{0, N}^\o) \|_{X^{0,\frac12+,\dl}}\notag \\
&
\hphantom{X}
+  \| \In_{1,N}^\o( S(t) \pi_{M}^\perp u_{0}^\o, v_N + S(t)u_{0, N}^\o, v_N + S(t)u_{0, N}^\o) \|_{X^{0,\frac12+,\dl}}.
\label{T1-6}
\end{align}

\noi
By applying  Proposition \ref{PROP:I1} to \eqref{T1-6} 
with $N_1 = N$ or $M$ 
and $N_2 =N_3 = -1$, we obtain
\begin{align}
\label{T1-7}
\| \In_{1,N}^\o & (w^M - w^N, w^M,w^M) \|_{X^{0,\frac12+,\dl}} \notag\\
& \les  \dl^{\theta} \Big( N^{-\beta} + \| v^M-v^N \|_{X^{-\g,\frac12+,\dl}} \Big) 
\Big(1+\| v^M \|_{X^{-\g,\frac12+,\dl}}\Big)^2\notag\\
& \les \dl^{\theta} \Big( N^{-\beta} + \| v^M-v^N \|_{X^{-\g,\frac12+,\dl}} \Big). 
\end{align}

\noi
Similarly, 
we can estimate the second term on the right-hand side of \eqref{diff1}
by applying Proposition \ref{PROP:I2} and obtain
\begin{align}
\label{T1-7''}
 \| \wt \In^\o_{2,M}(  w^M) & - \wt \In^\o_{2,N}(w^N)\|_{X^{0,\frac12+,\dl}} \notag \\ 
 &  \les \dl^{\theta} \Big( N^{-\min(\beta,\g)}   +  \| v^M-v^N \|_{X^{0,\frac12+,\dl}}\Big).
\end{align}

\noi
Putting  \eqref{diff1},  \eqref{T1-7},  and \eqref{T1-7''} together, we obtain
\begin{align*}
\|v^M -v^N\|_{X^{0,\frac12+,\dl}} \le  C\dl^{\theta}  N^{- \min(\beta,\g)}  + C\dl^{\theta} \| v^N-v^M \|_{X^{0,\frac12+,\dl}}  .
\end{align*}

\noi
Therefore, by choosing $\dl > 0$ sufficiently small, 
we obtain 
 \eqref{diff}.

\smallskip

\noi
(ii)
Next, we show that the limit
 $w = v + S(t) u_0^\o$
 satisfies the Duhamel formulation \eqref{Duhamel2}:
\begin{align}
\label{diseq}
w = S(t) u_0^\o + \In_1(w) + \In_2(w),
\end{align}

\noi
in the distributional sense, locally in time.  
We first recall the following definition of the Fourier-Lebesgue spaces
$\FL^{s, p}(\T)$.
Given $s \in \R$ and $1\leq p \leq \infty$, 
define
 the Fourier-Lebesgue space $\F L^{s, p}(\T)$  by the norm:
\begin{align*}
\|f\|_{\FL^{s,p}(\T)}:=\|\jb{n}^{s}\ft f(n)\|_{\l^{p}_{n}(\Z)}.
\end{align*}

\noi
Then, it is easy to see that the white noise 
$u_0^\o$ in \eqref{IV1} (with $\al = 0$)
almost surely belongs to $\FL^{s, p}(\T)$ if and only if $sp < -1$ 
with the understanding that $s < 0$ when $p = \infty$.

Given $0 < \dl \ll 1$, 
let  $\o\in \Omega_\dl$.
Then, it follows from Lemma \ref {LEM:prob}\footnote
{Note that Lemma \ref {LEM:prob} appears in the proof of Propositions
\ref{PROP:I1} and \ref{PROP:I2} (see also Lemma \ref{LEM:NRSum1})
and thus we may assume that the conclusion of Lemma \ref {LEM:prob}
holds on the set $\O_\dl$ constructed in Step (i).}
that  the truncated random linear solution $S(t) u_{0, N}^\o$ converges
to $ S(t) u_0^\o$ in 
$C([-\dl, \dl];   \FL^{-\eps,\infty}(\T))$
for any $\eps > 0$. 
The residual part  $v^N$ converges to $v$ in  $X^{0,\frac12+,\dl}$, 
and hence in $C([-\dl, \dl]; L^2(\T))$.
Putting together, we see that $w^N$ converges to 
$w$ 
in $C([-\dl, \dl];   \FL^{-\eps,\infty}(\T))$.
Hence, from the definitions \eqref{I2} and \eqref{I2N}
of $\In_2$ and $\In_{2, N}$, 
we conclude that 
$\In_{2,N} (w^N)$ converges to $\In_2(w)$
in $C([-\dl, \dl];   \FL^{-3\eps,\infty}(\T))$.
On the other hand, 
from \eqref{T1-4}, 
we see that 
that  $\In_{1,N} (w^N)$ converges to $\In_1(w) $ in 
$X^{0,\frac12+,\dl}$. 
Together with the convergence of $w^N$ to $w$, 
we have shown that 
 each term in the truncated Duhamel formulation \eqref{DuhamelN}
converges to 
 the corresponding term in \eqref{diseq}.
Recalling that $w^N$
satisfies \eqref{DuhamelN}, 
we conclude that $w$ is a solution to the Duhamel formulation \eqref{diseq}
in the distributional sense.

In Step (i), we already showed that $w$ satisfies the iterated formulation \eqref{ExpandI2}.
Thus, as a byproduct, we have verified that
\begin{align*}
\In_2 (w) = \wt \In_2 (w),
\end{align*}

\noi
for the solution $w$ constructed in Step (i).

\smallskip

\noi
(iii) 
Lastly, we turn to the uniqueness issue.
Given $0 < \dl \ll 1$, 
fix  $\o\in \Omega_\dl$.
Let $w = S(t) u_0^\o + v$ be the solution to 
 \eqref{Duhamel2} with the white noise initial data $u_0^\o$ 
 constructed in Steps~(i) and~(ii).
 Suppose that there exists another solution $\wt w$
 to \eqref{Duhamel2}
 of the form
 $\wt w = S(t) u_0^\o + \wt v$
 for some $\wt v \in B_1 \subset  X^{0,\frac12+,\dl}$.
Since such $\wt w$ is also a solution to  \eqref{NLS2-3}, 
by repeating the argument in Subsection \ref{SUBSEC:LWPx}, 
we see that $\wt w$ satisfies
the iterated Duhamel formulation \eqref{Duhamel3}:
\begin{align*}
\wt w = S(t) u_0^\o + \In_1(\wt w) + \wt \In_2(\wt w).
\end{align*}

 \noi
 Then, by repeating the argument in Step (i) 
 with Propositions \ref{PROP:I1} and \ref{PROP:I2}, 
we obtain 
\begin{align*}
\|v -\wt v\|_{X^{0, \frac12+,\dl}} \le C \dl^{\theta} \|v -\wt v\|_{X^{0, \frac12+,\dl}}  \le \frac12 \|v -\wt v\|_{X^{0, \frac12+,\dl}}
\end{align*}
for $\dl>0$ sufficiently small,
yielding  $v = \wt v$ in $X^{0, \frac12+,\dl}$.
This proves uniqueness in the  class $S(t)u_0^\o + B_1$.

This completes the proof of Theorem \ref{THM:LWP} when $\al = 0$.

\begin{remark} \label{REM:uniq2}\rm
(i) By a continuity argument, 
we can easily upgrade the uniqueness of $w$ in $S(t) u_0^\o + B_1$
to  uniqueness of $w$ in the class
\[ S(t) u_0^\o + X^{0, \frac 12 + , \dl}\]

\noi
See Remark 2.9 in \cite{CGKO}.
By inverting the random gauge transform $\J^\o$ in \eqref{Ga}, 
we then obtain uniqueness of $u$
in the class
\[ Z(u_0^\o) + X^{0, \frac 12 + , \dl}_{-, \o}\]

\noi
where $Z$ is as in \eqref{z} and 
$X^{0, \frac 12 + , \dl}_{-, \o}$ is the local-in-time version
of the {\it random} Fourier restriction norm space $X^{0, \frac 12 + }_{-, \o}$ defined in \eqref{rxsb1}.

\smallskip

\noi
(ii)
Let $u_{0, m}^\o = u_0^\o * \rho_m$ be the regularization of the white noise $u_0^\o$ 
by mollification via a mollification kernel $\rho_m$ in \eqref{smooth1}.
Denote by $w_m$ the solution to 
the gauged equation \eqref{NLS2-3} with $w_m|_{t = 0} = u_{0, m}^\o$.
Then, 
by proceeding as above,\footnote
{Here, our assumption that the symbol $\ft \rho_m \equiv 1$
on $[-c_0m, c_0m]$ for some $c_0 > 0$, independent of $m \in \N$
provides a simplification of the argument as compared to a general mollification kernel.}
 one can easily establish convergence of $w_m$
to $\wt w$ in the class $S(t)u_0^\o + B_1$, satisfying \eqref{Duhamel2}. 
Then, by the uniqueness proved in Step (iii) above, 
we conclude that $w = \wt w$.
This proves independence of the mollification kernel.

\end{remark}

\section{Global well-posedness
and invariance of the white noise measure}
\label{SEC:GWP}

In this section, we extend the local solutions constructed in Theorem~\ref{THM:LWP} 
to global solutions and prove invariance of the white noise measure \eqref{gauss0} with $\alpha = 0$ under the flow of the renormalized 4NLS \eqref{NLS2}. 
The main ingredient is Bourgain's invariant measure argument~\cite{BO94, BO96}.

\subsection{Invariance of the white noise measure under the  truncated 4NLS }
In this section, we will denote the white noise measure by $\mu$.
For fixed $\eps >0$, $\mu$ is a measure on $H^{-\frac12-\eps}(\T)$,
  defined as the pushforward of $P$ under the map from $(\Omega,\F,P)$ to $H^{-\frac12-\eps}(\T)$  
  (equipped with the Borel $\s$-algebra)
given by
\[
\o \longmapsto  u_0^\o=\sum_{n\in \Z}g_{n}(\o)e^{inx}.
\]

\noi
Given $N \in \N$, 
we also define
the finite-dimensional white noise measure $\mu_N$ on 
$
E_N = \text{span}\big\{ e^{inx}, |n|\le N\big\}
$
as the pushforward of $P$ under the map from $(\Omega,\F,P)$ to $E_N$ given by 
$\o \mapsto \pi_N u_0^\o$, 
where $\pi_N$ is the Dirichlet projector onto the frequencies $\{|n|\leq N\}$
defined in~\eqref{pi1}.

Consider the frequency-truncated version of the renormalized 4NLS \eqref{NLS2}:
\begin{align}
\label{NLS2N}
\begin{cases}
i\partial_t u^N =  \partial_x^4 u^N + \pi_N (\NN(u^N))\\
u^N(x,0) = \pi_N u_0(x) \in E_N, 
\end{cases}
\end{align}

\noi
where $\NN(u)$ denotes the renormalized nonlinearity in \eqref{nonlin0}.
It is easy to see that the solution $u^N$ to \eqref{NLS2N} 
exists  globally in time.  Let $\wt \Theta_N (t)$ denote the flow map for \eqref{NLS2N}.
By  the Liouville theorem, 
we see that the truncated white noise measure $\mu_N$ is invariant under  $\wt \Theta_N (t)$.
Following \cite{BTT}, 
we also consider the extension of \eqref{NLS2N} to infinite dimensions, 
where the higher modes evolve according to linear dynamics:
\begin{align}
\label{NLS2N2}
\begin{cases}
i \dt u^N =  \dx^4 u^N + \pi_N (\NN(\pi_N u^N))\\
u^N(x,0) =  u_0(x) \in H^{-\frac12-\eps}(\T).
\end{cases}
\end{align}

\noi
Let $\Theta_N (t)$ denote the flow map for \eqref{NLS2N2}. Then, we have 
\[
\Theta^N(t) = \wt \Theta^N(t) \pi_N + S(t) \pi_N^\perp, 
\]

\noi
where $\pi_N^\perp = \Id - \pi_N$.
Denoting by  $E_N^{\perp}$  the orthogonal complement of $E_N$ in $H^{-\frac12-\eps}(\T)$, 
let  $\mu_N^{\perp}$ be  the white noise measure on $E_N^\perp$ 
(i.e.~the image measure under the map: $\o \mapsto \pi_N^\perp u_0^\o$).
Note that $\mu_N^\perp$ is invariant along the linear flow on $E_N^\perp$ (this is a consequence of the invariance of complex-valued Gaussians under rotations). 
Therefore, by writing
\[ d\mu = d\mu_N \otimes d\mu_N^{\perp}, \] 

\noi
we conclude  the following invariance of $\mu$ under $\Theta_N(t)$.
\begin{lemma}
\label{LEM:Appinvar}
For each $t\in \R$, the white noise measure $\mu$ is invariant under the flow map $ \Theta^N (t)$ on $H^{-\frac12-\eps}(\T)$.
\end{lemma}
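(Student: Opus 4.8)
The lemma is essentially a repackaging of the two invariances already recorded above — that $\mu_N$ is invariant under $\wt\Theta^N(t)$ and that $\mu_N^\perp$ is invariant under the free flow $S(t)$ on $E_N^\perp$ — together with the product structure of the white noise. Concretely, the plan is: (1) write $H^{-\frac12-\eps}(\T) = E_N \oplus E_N^\perp$ and $d\mu = d\mu_N \otimes d\mu_N^\perp$; (2) use the factorization
\[
\Theta^N(t) = \wt\Theta^N(t)\,\pi_N + S(t)\,\pi_N^\perp,
\]
so that the low-frequency component of $\Theta^N(t)u_0$ depends only on $\pi_N u_0$ and evolves under $\wt\Theta^N(t)$, while the high-frequency component depends only on $\pi_N^\perp u_0$ and evolves under $S(t)$; and (3) combine the two invariances by Fubini's theorem. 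For step (3), let $F$ be a bounded measurable function on $H^{-\frac12-\eps}(\T)$. Then
\[
\int F\big(\Theta^N(t)u\big)\,d\mu(u) = \iint F\big(\wt\Theta^N(t)a + S(t)b\big)\,d\mu_N(a)\,d\mu_N^\perp(b) = \int F\,d\mu,
\]
where the first equality is the factorization together with $d\mu = d\mu_N\otimes d\mu_N^\perp$, and the second follows by applying invariance of $\mu_N^\perp$ under $S(t)$ in the inner integral and then invariance of $\mu_N$ under $\wt\Theta^N(t)$ in the outer integral. This is exactly the asserted invariance.

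For completeness one should recall why the two ingredient invariances hold. Invariance of $\mu_N = Z_N^{-1} e^{-\frac12\|u\|_{L^2(\T)}^2}\,du$ on $E_N$ under $\wt\Theta^N(t)$ rests on: (i) the Liouville theorem — \eqref{NLS2N} is a finite-dimensional Hamiltonian system on $E_N$ (with the polynomial Hamiltonian obtained by restricting the energy of \eqref{NLS2}), hence $\wt\Theta^N(t)$ preserves Lebesgue measure $du$ in the Fourier coordinates $\{\ft u(n)\}_{|n|\le N}$; and (ii) conservation of $\|u^N\|_{L^2}^2$ along \eqref{NLS2N}, which holds since $\pi_N u^N = u^N$ and
\[
\tfrac{d}{dt}\|u^N\|_{L^2}^2 = 2\,\Im\big(\jb{\dx^4 u^N, u^N} + \jb{\NN(u^N), u^N}\big) = 0,
\]
both inner products being real ($\dx^4$ is self-adjoint, and $\jb{\NN(u^N),u^N} = \int_\T (|u^N|^2 - 2\fint|u^N|^2\,dx)|u^N|^2\,dx \in \R$); thus the Gaussian density is preserved as well, and, combined with (i), $(\wt\Theta^N(t))_*\mu_N = \mu_N$. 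Invariance of $\mu_N^\perp$ under $S(t)$ is immediate: $S(t)$ multiplies the $n$-th Fourier coefficient by the unimodular factor $e^{-in^4 t}$, i.e.\ acts by a rotation in each complex coordinate with $|n|>N$, and $\mu_N^\perp$ — the image of $P$ under $\o\mapsto\pi_N^\perp u_0^\o$ — is a product of rotation-invariant complex Gaussians.

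The only step that is not pure bookkeeping is the Liouville argument — checking that the truncated, \emph{renormalized} flow on $E_N$ is measure-preserving — but this is a standard consequence of its Hamiltonian structure (one may alternatively verify directly that the vector field defining \eqref{NLS2N} is divergence-free in the real coordinates given by the real and imaginary parts of $\ft u(n)$, $|n|\le N$), and the mass conservation above handles the Gaussian density. Everything else is the product-measure / Fubini manipulation above, so no genuinely new difficulty arises.
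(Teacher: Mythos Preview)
Your proof is correct and follows exactly the approach outlined in the paper: the product decomposition $d\mu = d\mu_N\otimes d\mu_N^\perp$, the factorization $\Theta^N(t) = \wt\Theta^N(t)\pi_N + S(t)\pi_N^\perp$, invariance of $\mu_N$ under $\wt\Theta^N(t)$ via Liouville's theorem and $L^2$-conservation, and rotation invariance of $\mu_N^\perp$ under $S(t)$. You have simply spelled out the Fubini step and the mass-conservation computation that the paper leaves implicit.
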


\subsection{Almost sure global well-posedness}
By using the invariance of the  white noise measure
for \eqref{NLS2N2}
(Lemma \ref{LEM:Appinvar}) and a PDE approximation argument, 
we have the following lemma,
guaranteeing long time existence with large probability 
for the renormalized 4NLS~\eqref{NLS2}.

\begin{lemma}
\label{LEM:AlmostGWP}
There exist small 
$0 < \eps < \eps_1 \ll1$
and $\be > 0$ 
such that given any small  $\kk > 0$ and $T>0$, 
there exists  a measurable set $\Si_{\kk,T} \subset H^{-\frac12-\eps}(\T)$
 such that 
 \textup{(i)}
 $\mu(\Si_{\kk ,T}^c) < \kk$
 and \textup{(ii)}
  for any $u_0 \in \Si_{\kk, T}$, there exists a \textup{(}unique\textup{)} solution 
$$
u\in Z(u_0) + C([-T,T];L^{2}(\T)) \subset C([-T,T];H^{-\frac12-\eps}(\T))
$$ 

\noi
to the renormalized 4NLS \eqref{NLS2} with $u|_{t = 0} = u_0$, 
where $Z$ is defined in \eqref{z}.
Furthermore, given any large $N  \gg 1$, 
 we have
\begin{align*}
\Big\| u(t) - \Theta^N (t) (u_0) \Big\|_{C([-T,T]: H^{ - \frac 12 - \eps_1} (\T))} \les C(\kk , T) N^{-\beta},
\end{align*}

\noi
where $\Theta^N(t)$ denotes the flow map for 
 \eqref{NLS2N2}.

\end{lemma}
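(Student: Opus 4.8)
The plan is to follow Bourgain's invariant measure argument, using the local well-posedness theory of Theorem \ref{THM:LWP} (at $\al = 0$) together with the invariance of the white noise measure $\mu$ under the truncated flow $\Theta^N(t)$ from Lemma \ref{LEM:Appinvar}. First I would fix the local existence time $\dl = \dl(\o)$ coming from Theorem \ref{THM:LWP}: recall that the bad set there has probability $< Ce^{-1/\dl^c}$, so that the set of initial data for which local well-posedness holds on $[-\dl,\dl]$ has $\mu$-measure $> 1 - Ce^{-1/\dl^c}$. The same local theory applies to the truncated equation \eqref{NLS2N2} with constants uniform in $N$ (indeed $\Theta^N(t) = \wt\Theta^N(t)\pi_N + S(t)\pi_N^\perp$, and the nonlinear analysis in Sections \ref{SEC:LWP1}--\ref{SEC:LWP2} is insensitive to the insertion of the sharp frequency projectors $\pi_N$, up to enlarging the exceptional set slightly). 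The key quantitative input I would need is a local-in-time approximation estimate of the form
\begin{align*}
\big\| \Theta^N(t)(u_0) - \Theta^M(t)(u_0)\big\|_{C([-\dl,\dl]; H^{-\frac12-\eps_1})} \les \dl^\theta N^{-\beta}
\end{align*}
valid off an exceptional set of measure $< Ce^{-1/\dl^c}$, for $M \geq N$, with $\beta,\theta > 0$; letting $M \to \infty$ this also controls the distance between $\Theta^N(t)(u_0)$ and the actual solution $u(t)$ of \eqref{NLS2}. Such an estimate follows by the same contraction/difference argument as in the proof of \eqref{diff} in Step (i) of Subsection 4.2, applied to the residual terms $v^N = \Theta^N(t)(u_0) - S(t)\pi_N^\perp u_0 - Z_N(\pi_N u_0)$, using Propositions \ref{PROP:I1} and \ref{PROP:I2} and the frequency-localization gain $\jb{N_j}^{-\beta}$ they provide; the smoothing in those propositions is exactly what converts a high-frequency truncation into a power of $N$.

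Next I would run the standard iteration on $[0,T]$. Fix $T > 0$ and small $\kk > 0$, and choose $\dl \sim (\log(T/\kk))^{-1/c}$ so that $2T/\dl$ time steps are allowed with total exceptional probability still $\les \kk$. For each integer $0 \le j \le T/\dl$, invariance of $\mu$ under $\Theta^N(t)$ (Lemma \ref{LEM:Appinvar}) transports the good set at time $0$ to a good set at time $j\dl$ of the same measure; intersecting the $\Theta^N(-j\dl)$-preimages of these sets over all $j$ and over a suitable countable collection of dyadic $N$, and discarding a further null set where the local theory fails, I obtain $\Si_{\kk,T}$ with $\mu(\Si_{\kk,T}^c) < \kk$. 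On $\Si_{\kk,T}$ one glues the local solutions step by step: at each step the local existence time stays $\ge \dl$ because the $L^2$-norm of the residual $v$ (equivalently, the $\FL^{-\eps,\infty}$ or $H^{-\frac12-\eps}$ norm of $\Theta^N(t)(u_0)$, which is controlled by invariance along the truncated flow) remains bounded, so the step size does not shrink. Passing $N \to \infty$ using the approximation estimate above yields the solution $u \in Z(u_0) + C([-T,T];L^2(\T))$ of \eqref{NLS2}, together with the claimed bound $\|u(t) - \Theta^N(t)(u_0)\|_{C([-T,T];H^{-\frac12-\eps_1})} \les C(\kk,T)N^{-\beta}$, where the constant $C(\kk,T)$ absorbs the $(T/\dl)$ accumulated steps — since $T/\dl$ is polynomial in $T$ and $\log(1/\kk)$ while each step contributes $\dl^\theta N^{-\beta}$, summing the geometric-type series in the standard way (as in Bourgain's argument and the proof of \eqref{diff}) gives a finite $C(\kk,T)$. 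Uniqueness in the stated class follows from the uniqueness statement of Theorem \ref{THM:LWP} applied on each subinterval.

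The main obstacle I expect is the local-in-time approximation estimate between $\Theta^N(t)(u_0)$ and $\Theta^M(t)(u_0)$ (and hence the actual solution), uniformly as $N \to \infty$ and with an exceptional set whose probability is summable over the $\sim T/\dl$ time steps. The subtlety is twofold: (i) the random resonant object $Z(u_0^\o)$ depends on arbitrarily high powers of the Gaussians, so the "linear plus smoother" decomposition used in classical invariant-measure arguments must be replaced by the random-resonant/nonlinear decomposition, and one must check that the truncated resonant solutions $Z_N(\pi_N u_0)$ converge to $Z(u_0)$ with a quantitative rate in the relevant $\FL$-type norm (which follows from the explicit formula \eqref{Zlin1} and Lemma \ref{LEM:prob}); and (ii) the approximation argument has to be compatible with propagating the smallness of $v$ through the gluing, which requires that the bad sets at the successive times $j\dl$ — obtained via invariance — do not degrade the constants. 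Both points are handled by re-running the Step (i) argument of Subsection 4.2 with the frequency cutoffs in place and tracking the $N$-dependence through Propositions \ref{PROP:I1}--\ref{PROP:I2}, but verifying uniformity in $N$ and summability of the probabilities is where the real work lies.
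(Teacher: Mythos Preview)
Your proposal is correct and follows essentially the same route as the paper: Bourgain's invariant measure argument, with the local-in-time approximation estimate between $\Theta^N(t)(u_0)$ and $\Theta^M(t)(u_0)$ obtained by rerunning the difference argument of Step (i) in Subsection~4.2 (via Propositions~\ref{PROP:I1} and~\ref{PROP:I2}), and then iterating using the invariance of $\mu$ under $\Theta^N(t)$ from Lemma~\ref{LEM:Appinvar}. The paper's proof is only a sketch and defers the iteration details to \cite{BO94,BO96,BT2,R0,R}; you have simply written out those standard details. One small point of presentation: the paper carries out the local approximation estimate after first passing to the gauged variable $w^N = \J_N^\o(u^N)$ (so that the residual is $w^N - S(t)u_{0,N}^\o$ rather than your $u^N - S(t)\pi_N^\perp u_0 - Z_N(\pi_N u_0)$), since Propositions~\ref{PROP:I1} and~\ref{PROP:I2} are stated for the gauged equation; but as $\J_N^\o$ is an isometry on each $H^s$, this is equivalent to what you wrote and your references to those propositions already implicitly invoke the gauge.
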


For the uniqueness statement, see Remark \ref{REM:uniq2}\,(i).

\begin{proof}
Once we have almost sure local well-posedness
(Theorem~\ref{THM:LWP}), the proof of Lemma~\ref{LEM:AlmostGWP} is 
by now  standard. 
In the following, we only sketch key parts of the argument
and refer to \cite{BO94, BO96, BT2, R0, R}
for further details.

Given  a solution $u^N$ to  \eqref{NLS2N2}, 
we define  $w^N = \J_N^\o(u^N)$ as in the proof of Theorem~\ref{THM:LWP},
where $\J_N^\o$ denotes the truncated random gauge transform in \eqref{GaN}.
 Namely,  
we have
$$
w^N(x, t)= \sum_{n\in\Z} e^{inx - it|g_n^N(\o)|^2} \ft{u^N}(n, t),
$$

\noi
where  $g_n^N$ is as in  \eqref{gN}.
The key observation is that convergence properties of $w^N$ 
in a Fourier lattice\footnote
{Namely, in a space
where a norm depends only on the sizes of the Fourier coefficients.
For example,  $H^s(\T)$ and $\FL^{s, p}(\T)$.}
 can be directly converted to convergence properties of $u^N$. 
For $M > N \geq 1$, write
\begin{align*}
 w^M - w^N 
 & = \big(\pi_M w^M -   \pi_N w^N\big)
+ \pi_M^\perp w^M - \pi_N^\perp w^N.
\end{align*}

\noi
The convergence of 
$ (\pi_M w^M - S(t) u_{0, M}^\o)-   (\pi_N w^N -  S(t) u_{0, N}^\o)$
can be shown  exactly as in the proof of Theorem~\ref{THM:LWP}, 
locally in time, i.e.~in  $X^{0, \frac{1}{2}+, \dl} \subset C([-\dl, \dl];  L^2(\T))$,
which yields 
convergence of 
$ \pi_M w^M -   \pi_N w^N$
in $ C([-\dl, \dl];  H^{-\frac{1}{2}-\eps}(\T))$.
On the other hand, the second and third terms
decay like $N^{-\be}$ for some $\be > 0$
thanks to the high frequency projections.
The remaining part of the argument leading to the proof of Lemma~\ref{LEM:AlmostGWP} is contained in
\cite{BO94,BO96,BT2, R0, R}.
In particular, see the proof of Proposition~3.5 in \cite{R0} for  details
in a setting analogous to our work.
\end{proof}


Once we have Lemma \ref{LEM:AlmostGWP}, 
the desired almost sure global well-posedness follows
from the Borel-Cantelli lemma.
Given $\kk > 0$, 
let $T_j = 2^j$ and $\kk_j = \frac\kk{2^j}$, $j \in \N$.
By applying Lemma~\ref{LEM:AlmostGWP}, 
construct a set $ \Si_{\kk_j, T_j}$
and set
\begin{align}
 \Sigma_\kk : = \bigcap_{j=1}^{\infty} \Si_{\kk_j, T_j}.
\label{Sik}
 \end{align}

\noi
Then, we have $\mu(\Si_\kk^c) < \kk$
and 
  for any $u_0 \in \Si_{\kk}$, 
  there exists a unique global-in-time solution to 
 the renormalized 4NLS \eqref{NLS2} with $u|_{t = 0} = u_0$.
Finally, set
\[ \Si : = \bigcup_{n = 1}^\infty \Si_{\frac 1n }.\]

\noi
Then,  we have $\mu(\Si^c) = 0$
and 
  for any $u_0 \in \Si$, 
  there exists a unique global-in-time solution to 
 the renormalized 4NLS \eqref{NLS2} with $u|_{t = 0} = u_0$.
 This proves almost sure global well-posedness.

\subsection{Invariance of the white noise measure}


Let $\Theta(t)$ be the flow map for the renormalized 4NLS  \eqref{NLS2} 
defined on the set $\Si$ of full probability constructed above.
Our goal here is to show
that 
\begin{align}\label{Invariance}
\intt_{\Sigma} F\big(\Theta(t) (u)\big) d\mu(u) = \intt_\Sigma F(u) d\mu (u)
\end{align}

\noi
for any $F \in L^1(H^{-\frac12 -\eps}(\T),d\mu)$
and any $t\in \R$. 
By a density argument, it suffices to prove~\eqref{Invariance}
for continuous and bounded $F$.

Fix $t \in \R$. By  Lemma~\ref{LEM:Appinvar}, we have 
\begin{align}
\label{Inv1}
\intt_{\Sigma} F\big(\Theta^N(t) (u)\big) d\mu(u) = \intt_{\Sigma} F ( u) d\mu(u).
\end{align}

\noi
Fix small $\dl>0$.  
The boundedness of $F$ implies that for any sufficiently small $\kk >0$, we have
\begin{align}\label{Inv2}
  \Bigg| \intt_{\Si_\kk^c} F\big(\Theta(t) (u)\big) d\mu(u)\Bigg| + 
  \Bigg| \intt_{\Si_\kk^c} F\big(\Theta^N(t) (u)\big) d\mu(u)  \Bigg| < \dl, 
\end{align}

\noi
where $\Si_\kk$ is as in \eqref{Sik}.
Fix one such $\kk > 0$.
Then, by 
 Lemma \ref{LEM:AlmostGWP}, 
we have
\[
 \| \Theta(t) (u) - \Theta^N(t) (u)\|_{H^{-\frac12 -\eps}} \le C(\kk, t) N^{-\beta}
\]

\noi
for  any $u\in \Sigma_\kk$
and sufficiently large $N \gg1$.
Hence,  by continuity of $F$, we have
\begin{align}
\label{Inv3}
  \Bigg| \intt_{\Sigma_\kk} F\big(\Theta(t) (u)\big) d\mu(u) - \intt_{\Sigma_\kk} F\big(\Theta^N(t) (u)\big) d\mu(u)  \Bigg| < \dl,
\end{align}

\noi
for any sufficiently large
 $N \gg1$.
 Combining \eqref{Inv1}, \eqref{Inv2},  and \eqref{Inv3}
 and taking $\dl \to 0$, 
 we obtain \eqref{Invariance}.

\subsection{Proof of Theorem~\ref{THM:soft_version}}
The proof of Theorem~\ref{THM:soft_version} follows from the arguments presented in the proofs of Theorems~\ref{THM:LWP} and~\ref{THM:GWP}. 

\section{Nonlinear estimate I: non-resonant part}\label{SEC:Non1}

In this section, we present the proof of Proposition~\ref{PROP:I1}. 

\subsection{Probabilistic estimates}\label{SUBSEC:Non1a}

We begin by presenting several probabilistic estimates
that will be used to prove Proposition \ref{PROP:I1}.
The proofs of these lemmas are presented in Appendix~\ref{SEC:A}.

We first recall some notations.
Let $\eta \in C^\infty_c(\mathbb{R})$ be a smooth non-negative cutoff function supported on $[-2, 2]$ with $\eta \equiv 1$ on $[-1, 1]$.
Recall from \eqref{Gam0},  \eqref{phi1}, and \eqref{PsiN}  that 
\begin{align} 
\G(n) & = \{(n_1, n_2, n_3) \in \Z^3:\,  n = n_1 - n_2 + n_3 \text{ and }  n_1, n_3 \ne n\}, 
 \notag\\
 \Phi(\bar n) &= \Phi(n_1, n_2, n_3, n) = n_1^4 - n_2^4 + n_3^4 - n^4, 
\notag 
\\
\Psi^\o_N(\bar n) & = |g_{n_1}^N(\o)|^2 - |g_{n_2}^N(\o)|^2 + |g_{n_3}^N(\o)|^2 - |g_{n}^N(\o)|^2. \label{Psi2}
\end{align}

\noi
where $g_n^N$ is as in \eqref{gN}.
Given $s, b \in \R$ and $\dl > 0$, the following random functionals
  $S^{s,b, \dl}_{j,N}$,  $ j = 1, 2, 3$, play an important role
  in the proof of Proposition \ref{PROP:I1}
  (and also
    in the proof of Proposition \ref{PROP:I2} presented in Section \ref{SEC:Non2}):
 \begin{align}
 S^{s,b, \dl}_{1,N}(f) 
&  = \Bigg\| 
  \sum_{\substack{n_1 \in \Z \\(n_1,n_2,n_3)\in \G(n)}} \ft f(n_1)   
\frac{  \ft \eta_{_\dl} (\tau + \Phi(\bar n) - | g_{n_1}^N |^2)}{  \jb{n_2}^s \jb{n_3}^s   \jb{n}^{2s}\jb{\tau}^{b}} \Bigg\|_{\l^2_{n,n_2,n_3}L^2_\tau}
\label{A1N}\\
\intertext{(observe that there is at most one term in the $n_1$ summation),}
 S^{s,b, \dl}_{2,N}(f_1,f_2) 
&  =  
\Bigg\| 
\sum_{\substack{n_1, n_2 \in \Z\\ (n_1,n_2,n_3)\in \G(n)}}  \ft f_1 (n_1)  \cj{\ft f_2 (n_2) }
\notag\\
& \hphantom{XXXX}
\times 
 \frac{ \ft \eta_{_\dl} (\tau + \Phi(\bar n)
-  |{g}^N_{n_1}|^2 + |{g}^N_{n_2}|^2 )}{   \jb{n_3}^s  \jb{n}^{2s} \jb{\tau}^{b}}  \Bigg\|_{\l^2_{n,n_3}L^2_\tau } , 
\label{A2N}\\
 S^{s,b, \dl}_{3,N}(f_1,f_2,f_3) 
&  =   \Bigg\| 
  \sum_{\G(n)}  \ft f_1 (n_1) \cj{\ft f_2 (n_2) }\ft f_3 (n_3)  
\notag\\
& \hphantom{XXXX}
\times
\frac{\ft \eta_{_\dl}(\tau + \Phi(\bar n)
-  |{g}^N_{n_1}|^2 + |{g}^N_{n_2}|^2
-  |{g}^N_{n_3}|^2)
}{ \jb{n}^{2s} \jb{\tau}^{b}}  \Bigg\|_{\l^2_n L^2_{\tau}}.
\label{A3N}
\end{align}

\noi
In the following, we will take $f_1,f_2,f_3$  as the white noise 
\begin{align}
f_1=f_2=f_3 =u_0^\o =\sum_{n\in\Z} g_n(\o) e^{inx},
\label{white00}
\end{align} 

\noi
or its frequency truncated version (projected onto high frequencies)
\begin{align*}
\pi_{N_j}^\perp (u_0^\o) = \sum_{|n| > N_j} g_n(\o) e^{inx}.
\end{align*}

\noi
For simplicity of notations, we
set\footnote
{Strictly speaking, we should denote the dependence
of $S^{s,b, \dl}_{j,N}(\o)$ on the parameters $N_1, N_2$, and $N_3$.
For simplicity of the presentation, however, we suppress such dependence
unless it plays an important role.}
\begin{align}
S^{s,b, \dl}_{1,N}(\o) & := S^{s,b, \dl}_{1,N}(\pi_{N_1}^\perp(u_0^\o)), \label{S1}\\
S^{s,b, \dl}_{2,N}(\o) & := S^{s,b, \dl}_{2,N}(\pi_{N_1}^\perp(u_0^\o), \pi_{N_2}^\perp(u_0^\o)), 
\label{S2}\\
S^{s,b, \dl}_{3,N}(\o) & := S^{s,b, \dl}_{3,N}(\pi_{N_1}^\perp(u_0^\o), \pi_{N_2}^\perp(u_0^\o), \pi_{N_3}^\perp(u_0^\o) ), 
\label{S3}
\end{align}

\noi
for $N_1, N_2, N_3 \in \Z_{\geq -1}$
(recall
our convention: 
$\pi^\perp_{-1} = \Id$).
With the notations defined above, 
we have the following tail estimates
for these random functionals.

\begin{lemma}
\label{LEM:NRSum}
Let $s < 0$,  $b < \frac 12$, and $\be > 0$
such that $s$ and $\be$ are sufficiently close to $0$
and  $b$ is sufficiently close to $\frac 12$.
Then, there exist $c, \kk>0$
and small $\dl_0 > 0$  such that the following statements holds.

\smallskip
\noi
\textup{(i)} We have 
\begin{align*}
&P\bigg( \bigg\{\o\in \Omega: 
\sup_{N \in \N} \sup_{N_1 \in \Z_{\geq-1}} 
\jb{N_1}^\be |S^{s,b, \dl}_{1,N}(\o)|>   \dl^\kk \bigg\} \bigg) <  e^{-\frac{1}{\dl^c}}
\end{align*}

\noi
for any $0 < \dl < \dl_0$.

\smallskip
\noi
\textup{(ii)} Let $k = 2, 3$.
Given $0 < \dl < \dl_0$, 
define the sets $\A_k$ by 
\begin{align*}
\A_k := \bigg\{ \o\in \Omega: 
\ & \text{there exists } N_0 = N_0(\o, \dl) \in \N
\text{ such that }\\
& \sup_{N \ge N_0} \sup_{\substack{N_j \in \Z_{\geq-1}\\ j = 1, \cdots, k}} 
\bigg(\prod_{j = 1}^k \jb{N_j}^\be \bigg)
|S^{s,b, \dl}_{k,N}(\o)|
\leq  \dl^\kk \bigg\}.
\end{align*}

\noi
Then, we have 
\begin{align*}
&P( \A_k^c ) < e^{-\frac{1}{\dl^c}}
\end{align*}

\noi
for any $0 < \dl < \dl_0$.

\end{lemma}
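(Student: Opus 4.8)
\textbf{Proof plan for Lemma~\ref{LEM:NRSum}.}

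The overall strategy is to reduce each tail bound to a moment estimate via Lemma~\ref{LEM:tail}, after first identifying the Wiener chaos order of the relevant random functional, and then to sum over the frequency parameters $N, N_1, N_2, N_3$ using a Borel--Cantelli-type argument. For part~(i), the functional $S^{s,b,\dl}_{1,N}(\o)$ involves only a single Gaussian $g_{n_1}$ (since, as noted after \eqref{A1N}, the constraint $n = n_1 - n_2 + n_3$ with $n_1, n_3 \ne n$ leaves at most one admissible $n_1$ for each triple $(n, n_2, n_3)$), but it also contains the random phase $|g_{n_1}^N|^2$ inside the mollifier $\ft\eta_{_\dl}$. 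The plan is to condition on $g_{n_1}$: once $g_{n_1}$ is fixed, $\ft\eta_{_\dl}(\tau + \Phi(\bar n) - |g_{n_1}^N|^2)$ is a deterministic function of $(\tau, n, n_2, n_3)$, and one can use the rapid decay of $\ft\eta$ together with \eqref{decay} to control the $L^2_\tau$-integral, gaining a power $\dl^{1-}$ (via $\|\ft\eta_{_\dl}\|_{L^{q}_\tau} \lesssim \dl^{(q-1)/q}$). The remaining sum over $n_2, n_3$ (equivalently over $n_1, n_3$ with $n$ determined) is a deterministic lattice sum weighted by $\jb{n_2}^{-s}\jb{n_3}^{-s}\jb{n}^{-2s}$, which converges for $s < 0$ sufficiently close to $0$ after accounting for the $\jb{N_1}^\be$ loss and the high-frequency truncation $\pi_{N_1}^\perp$. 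Finally, Lemma~\ref{LEM:hyp} (with $k=1$) and Lemma~\ref{LEM:tail} convert the resulting $L^p(\O)$ bound $\lesssim \sqrt{p}\,\dl^{1-}\jb{N_1}^{-\be-}$ into the stated sub-Gaussian tail, and summing the tail over $N \in \N$ and $N_1 \in \Z_{\geq -1}$ (each term being $< Ce^{-N_1^{2\eps}/\dl^c}$ or similar) yields $< e^{-1/\dl^c}$.

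For part~(ii) with $k = 2, 3$, the functionals $S^{s,b,\dl}_{2,N}$ and $S^{s,b,\dl}_{3,N}$ are genuinely multilinear in the Gaussians and, crucially, the random phase appearing inside $\ft\eta_{_\dl}$ now couples $|g_{n_1}^N|^2, |g_{n_2}^N|^2$ (and $|g_{n_3}^N|^2$) in a way that cannot be removed by conditioning on a single variable. The key tool here is Lemma~\ref{LEM:Z2}: expanding $\ft\eta_{_\dl}$ of the random phase in its Taylor series (as was done in \eqref{X2}--\eqref{X4} of Section~\ref{SEC:LWP1}) writes the functional as an infinite sum, indexed by multi-indices $\bar k = (k_1, \dots, k_k)$, of terms of the form appearing in \eqref{Z2-1}, each of which has an $L^p(\O)$ bound of the shape $C^{|\bar k|}(p-1)^{|\bar k| + |\A|/2}$ times the $\l^2$-norm of the deterministic coefficients. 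Choosing $p = \dl^{-\theta}$ and $\dl$ small enough that the geometric factor $(Cp\dl)^{|\bar k|}$ is summable, one obtains an $L^p(\O)$ bound of the form $C\,p^{k + k/2}\,\dl^{1-}\prod_j \jb{N_j}^{-\be-}$ for the full functional (uniformly over $N \geq N_0$, since one needs the random phase, after the truncation at level $N$, to be dominated by the deterministic dispersive gain $\sigma_{\max} \gtrsim n_{\max}^2$ via Lemma~\ref{LEM:phase}, which forces a threshold $N_0(\o,\dl)$ coming from Lemma~\ref{LEM:prob}). Then Chebyshev with this moment bound, followed by optimizing in $p$ exactly as in the end of Subsection~\ref{SUBSEC:LWP1b}, gives a stretched-exponential tail $e^{-p\ln\sqrt p} \leq e^{-1/\dl^c}$ for each fixed $(N, N_1, \dots, N_k)$, and summing over all these parameters preserves a bound of the form $e^{-1/\dl^c}$; the set $\A_k$ is precisely the complement of the bad event.

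The main obstacle, and the place where the analysis is most delicate, is the case $k = 3$ (and to a lesser extent $k=2$): controlling the deterministic $\l^2$-coefficient sums that emerge from Lemma~\ref{LEM:Z2} after the Taylor expansion, specifically the sum $\sum_{(n_1,n_2,n_3)\in\G(n)} |c^{\bar k}_{n_1,n_2,n_3}|^2$ weighted by $\jb{n}^{-4s}\jb{\tau}^{-2b}$ and integrated in $\tau$. One must extract enough decay in $n_{\max}$ --- via the factorization $\Phi(\bar n) = (n_1 - n_2)(n_1 - n)(n_1^2 + n_2^2 + n_3^2 + n^2 + 2(n_1+n_3)^2)$ from Lemma~\ref{LEM:phase}, which shows $\jb{\tau + \Phi(\bar n)}$ (or, after the phase shift, the relevant modulation) is large whenever the frequencies are separated --- to win the lattice sum over $\G(n)$, while simultaneously absorbing the $\prod_j \jb{N_j}^\be$ losses and retaining a positive power of $\dl$. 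This requires splitting into cases according to which modulation $\sigma, \sigma_1, \sigma_2, \sigma_3$ is largest (as in Subcases (D.1)--(D.2) of Section~\ref{SEC:LWP1}), using Lemma~\ref{LEM:GTV} to perform the resulting one-dimensional convolution sums, and carefully tracking the interplay between the factor $\jb{\tau}^{-b}$ with $b < \frac12$ (which forces $b$ close to $\frac12$ so that products of such weights remain summable, cf.\ Lemma~\ref{LEM:algebra}) and the high-frequency projections $\pi_{N_j}^\perp$. Because this deterministic bookkeeping is entirely analogous to --- though more involved than --- the estimates already carried out in Section~\ref{SEC:LWP1}, the cleanest route is to isolate it as a deterministic lemma and then feed it into the probabilistic machinery above.
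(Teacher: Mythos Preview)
Your overall framework for part~(ii) --- Taylor-expand the mollifier, apply Lemma~\ref{LEM:Z2} term-by-term, sum the geometric series by taking $p = \dl^{-\theta}$, then Chebyshev and sum over $N_1,\dots,N_k$ --- matches the paper's argument closely. However, there are two genuine gaps.

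\textbf{Part~(i): the Wiener-chaos step is wrong.} The functional $S^{s,b,\dl}_{1,N}$ is \emph{not} in $\mathcal{H}_{\leq 1}$: each summand has the form $g_{n_1}\,\ft\eta_{_\dl}(\tau+\Phi(\bar n)-|g_{n_1}^N|^2)$, which by Taylor expansion involves $g_{n_1}|g_{n_1}|^{2k}$ for all $k\geq 0$. So invoking Lemma~\ref{LEM:hyp} with $k=1$ is illegitimate, and your ``condition on $g_{n_1}$'' step does not reduce to a linear-in-Gaussian structure. The paper takes a different and simpler route: it restricts to the high-probability set $\Omega_K$ from Lemma~\ref{LEM:prob} on which $|g_n|\leq K\jb{n}^\eps$ pathwise, and then bounds $S^{s,b,\dl}_{1,N}(\o)$ \emph{deterministically} on $\Omega_K$ by splitting into $n_{\max}\lesssim K^2$ (crude estimate) and $n_{\max}\gg K^2$ (where $|g_{n_1}^N|^2\ll|\Phi(\bar n)|$, so the phase shift is harmless and Lemma~\ref{LEM:GTV} plus Lemma~\ref{LEM:phase} close the lattice sum). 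Because this bound is pathwise on $\Omega_K$ and uniform in $N$ (since $|g_n^N|\leq|g_n|$), the supremum over $N\in\N$ is free --- your plan to ``sum the tail over $N\in\N$'' would fail, as there is no decay in $N$.

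\textbf{Part~(ii): the origin of $N_0(\o,\dl)$ is misidentified.} You attribute the threshold $N_0$ to Lemma~\ref{LEM:prob} controlling the random phase relative to $\sigma_{\max}$. This is not how the paper proceeds: the $L^p(\O)$ bound obtained from Lemma~\ref{LEM:Z2} is already uniform in $N$ (the Taylor expansion handles the random phase completely, with no appeal to Lemma~\ref{LEM:prob}). The obstruction is that a uniform-in-$N$ moment bound does not by itself yield a pathwise supremum over $N\in\N$, since summing identical tail bounds over $N$ diverges. The paper instead shows that $\{S^{s,b,\dl}_{k,N}\}_{N\in\N}$ is Cauchy in $L^p(\O)$ with rate $N^{-\beta}$ (because $S_{k,M}-S_{k,N}$ is supported where $n_{\max}\geq N$, giving extra decay), deduces $L^p$ and almost-sure convergence to a limit $S^{s,b,\dl}_{k,\infty}$ via Borel--Cantelli, and then combines the tail bound on $S_{k,\infty}$ with the almost-sure convergence to produce the random threshold $N_0(\o,\dl)$. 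This Cauchy/Borel--Cantelli step is the missing ingredient in your plan.
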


Given $N \in \N \cup\{\infty\}$, 
 we introduce a random version $X^{s,b}_+(\o,N)$ of the $X^{s,b}$-space:
\begin{align*}
\|u\|_{X^{s,b}_+(\o,N)} = \|\jb{n}^s \jb{\tau + n^4 + |g_n^N (\o)|^2}^b  \ft u (n,\tau) \|_{\l^2_n L^2_{\tau}} 
\end{align*}

\noi
with the understanding that $g_n^\infty = g_n$.
By slightly losing spatial regularity, 
we can control the 
 random $X^{s, b}$-norm by the standard $X^{\s, b}$-norm 
 (with $\s > s$) {\it uniformly} in $u \in X^{\s, b}$.

\begin{lemma}
\label{LEM:RXsb}
Let $\s > s$ and $b > 0$.
Then,
for each $K > 0$, there exists a set $\O_K\subset \O$ with $P(\O_K^c) < C e^{-cK^\frac{1}{b}}$
such that 
\begin{align*}
  \sup_{N \in \N \cup\{\infty\}}\|  u\|_{X^{s,b}_+ (\o,N)} \les  (1+K) \|u\|_{X^{\s,b}}
\end{align*}

\noi
In particular, by choosing $K = \dl^{-\eps}$ for some small $\eps > 0$, 
there exists a set $\O_\dl \subset \O$
with $P(\O_K^c) < C e^{-\frac{1}{\dl^c}}$
such that 
\begin{align*}
  \sup_{N \in \N \cup\{\infty\}}\|  u\|_{X^{s,b}_+ (\o,N)} \les  \dl^{-\eps} \|u\|_{X^{\s,b}}
\end{align*}

\noi
uniformly in $u \in X^{\s, b}$, 
for any $0 < \dl \ll 1$.

\end{lemma}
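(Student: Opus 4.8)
The idea is simply that replacing the deterministic modulation weight $\jb{\tau+n^4}^b$ appearing in~\eqref{Xsb} by the random weight $\jb{\tau+n^4+|g_n^N(\o)|^2}^b$ merely shifts the quantity $\tau+n^4$ by $|g_n^N(\o)|^2$, and such a shift is harmless provided we allow an arbitrarily small loss of spatial derivatives, working outside an exceptional set controlled by Lemma~\ref{LEM:prob}. The plan is therefore to reduce the asserted inequality to a pointwise-in-$(n,\tau)$ comparison of Fourier symbols and then to invoke the large deviation bound of Lemma~\ref{LEM:prob}.

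First I would use the elementary inequality $\jb{x+y}^b\le 2^{b/2}\jb{x}^b\jb{y}^b$, valid for $b\ge 0$, with $x=\tau+n^4$ and $y=|g_n^N(\o)|^2$, together with $\jb{|g_n^N(\o)|^2}\le(1+|g_n^N(\o)|)^2$ and the trivial bound $|g_n^N(\o)|\le|g_n(\o)|$ (since $g_n^N$ equals $g_n$ or $0$), to obtain
\begin{align*}
\jb{n}^s\jb{\tau+n^4+|g_n^N(\o)|^2}^b
\le 2^{b/2}\,\jb{n}^s\,\jb{\tau+n^4}^b\,(1+|g_n(\o)|)^{2b},
\end{align*}
the crucial feature being that the right-hand side is independent of $N$. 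Next, given $K>0$, I would apply Lemma~\ref{LEM:prob} with threshold $K':=(1+K)^{\frac1{2b}}$ and with a parameter $\eps_0>0$ to be fixed below: there is a set $\O_K\subset\O$ with $P(\O_K^c)<Ce^{-c(K')^2}$ on which $\sup_{n\in\Z}|g_n(\o)|\le K'\jb{n}^{\eps_0}$, and hence, since $\jb{n}^{\eps_0}\ge1$,
\begin{align*}
(1+|g_n(\o)|)^{2b}\le(1+K')^{2b}\jb{n}^{2b\eps_0}\les(1+K)\,\jb{n}^{2b\eps_0}
\end{align*}
uniformly in $n\in\Z$ and in $\o\in\O_K$. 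Because $\s-s>0$, we may fix $\eps_0>0$ so small that $2b\eps_0\le\s-s$; combining the last two displays then yields, on $\O_K$,
\begin{align*}
\jb{n}^s\jb{\tau+n^4+|g_n^N(\o)|^2}^b\les(1+K)\,\jb{n}^{\s}\,\jb{\tau+n^4}^b
\end{align*}
for all $n\in\Z$, $\tau\in\R$ and all $N\in\N\cup\{\infty\}$. Multiplying through by $|\ft u(n,\tau)|$ and taking the $\l^2_nL^2_\tau$ norm gives $\sup_{N\in\N\cup\{\infty\}}\|u\|_{X^{s,b}_+(\o,N)}\les(1+K)\|u\|_{X^{\s,b}}$, and since the set $\O_K$ does not depend on $u$, this holds uniformly in $u\in X^{\s,b}$.

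It remains to note that $P(\O_K^c)<Ce^{-c(K')^2}=Ce^{-c(1+K)^{1/b}}\le Ce^{-cK^{1/b}}$, which is the claimed tail bound; the last assertion of the lemma follows by specializing to $K=\dl^{-\eps}$, for which $1+K\les\dl^{-\eps}$ and $P(\O_K^c)\les e^{-c\dl^{-\eps/b}}$, of the required form $e^{-1/\dl^{c}}$. There is no genuine obstacle in this argument; the one point requiring a little care is the bookkeeping of the $b$-dependence, namely that choosing the threshold in Lemma~\ref{LEM:prob} of size $(1+K)^{1/(2b)}$ is precisely what turns the Gaussian tail $e^{-c(K')^2}$ into the stated $e^{-cK^{1/b}}$, and that $\eps_0$ must be taken small relative to both $\s-s$ and $b$.
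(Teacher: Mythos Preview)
Your proof is correct and follows essentially the same route as the paper: bound the random weight $\jb{\tau+n^4+|g_n^N|^2}^b$ in terms of the deterministic weight $\jb{\tau+n^4}^b$ times a factor controlled on a good set by Lemma~\ref{LEM:prob}, then absorb the resulting $\jb{n}^{2b\eps_0}$ loss into the gap $\s-s$. The only cosmetic difference is that the paper uses the additive splitting $\jb{x+y}^b\les\jb{x}^b+|y|^b$ (yielding $\|u\|_{X^{s,b}}+K\|u\|_{X^{\s,0}}$) whereas you use the multiplicative bound $\jb{x+y}^b\les\jb{x}^b\jb{y}^b$; both lead to the same conclusion with the same tail estimate.
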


For the proofs of Lemmas~\ref{LEM:NRSum} and~\ref{LEM:RXsb}, see Appendix \ref{SEC:A}. 
In the next subsection, 
we prove Proposition \ref{PROP:I1}, 
assuming these lemmas.

\subsection{Proof of Proposition \ref{PROP:I1}}

For $j = 1, 2, 3$, let $z_j =  S(t)\pi_{N_j}^\perp( u_0^\o)$
and set $v_j =w_j - z_j$.
Then, by 
 the linear estimate (Lemma \ref{LEM:linear}),  
it suffices to construct $\O_\dl \subset \O$
with 
 $P(\Omega_\dl^c) < e^{-\frac{1}{\dl^c}}$
   such that for each $\o \in \Omega_\dl$, we have, 
   for some  $s < 0$ sufficiently close to $0$, 
\begin{align}
\label{F1}
\|\NN_{1,N}^\o(v_1+z_1, v_2+z_2,v_3+z_3)\|_{X^{0,-\frac12+,\dl}}  
\le C\dl^{\theta} \prod_{j=1}^3 \Big( \jb{N_j}^{-\beta}  + \|v_j \|_{X^{\frac s2,\frac12+,\dl}} \Big)
\end{align}

 \noi
uniformly in     $N_j \in \Z_{\geq -1}$, $j = 1, 2, 3$,
and $N \geq N_0 (\o, \dl)$ for some $N_0(\o, \dl) \in \N$.
By the definition \eqref{Xsb2} of the local-in-time space, 
the estimate \eqref{F1} follows once we prove 
\begin{align}
\label{F2}
\|\eta_{_\dl} (t) \cdot \NN_{1,N}^\o(\wt v_1+z_1, \wt v_2+z_2,\wt v_3+z_3)\|_{X^{0,-\frac12+}}  
\le C\dl^{\theta} \prod_{j=1}^3 \Big( \jb{N_j}^{-\beta}  + \|\wt v_j \|_{X^{\frac s2,\frac12+}} \Big)
\end{align}

\noi
for any extension
$\wt v_j$ of $v_j$ (restricted to the time interval $[-\dl, \dl]$)
onto $\R$, $j = 1, 2, 3$.
For simplicity of notations, 
we denote the extension $\wt v_j$ by $v_j$ 
in the following.

By duality, we have 
\begin{align}
\text{LHS  of } \eqref{F2} 
& =  \sup_{\|a\|_{X^{0,\frac12-}} \le 1}  
\bigg|\int_{\T\times \R}  \eta_{_\dl}(t)\cdot  \NN^\o_{1,N}(v_1+z_1,v_2+z_2,v_3+z_3) \cj{a(x,t)} dx dt \bigg|, 
\label{expandN}
\end{align}

\noi
where $\eta_{_\dl}$ is as in \eqref{eta1}.
By  \eqref{I0N} and expanding the product, 
we write the double integral in~\eqref{expandN} as\footnote{Here and in the following, 
we suppress the time dependence.}
\begin{align*}
\int_\R \eta_{_\dl}  (t) \sum_{n } &  \sum_{\G(n)} 
 e^{it\Psi^\o_N(\bar n)}\Big[ \ft v_1 (n_1) \cj{\ft v_2 (n_2)} \ft v_3 (n_3) \cj{\ft a (n)} \notag\\
&\hphantom{X} +  \ind_{|n_1|> N_1}(n_1) e^{-itn_1^4}g_{n_1} \cj{\ft v_2 (n_2)} \ft v_3 (n_3) \cj{\ft a (n)} + \text{ similar terms}  \notag \\
&\hphantom{X}
+  \bigg(\prod_{j=1}^2 \ind_{|n_j|>N_j}\bigg)    e^{-it(n_1^4 - n_2^4)}g_{n_1}  \cj{g_{n_2}} \ft v_3 (n_3) \cj{\ft a (n)} + \text{ similar terms}   \\
&\hphantom{X}
+ \bigg( \prod_{j=1}^3 \ind_{|n_j|>N_j}   \bigg)  e^{-it(n_1^4 - n_2^4 + n_3^4)}g_{n_1}  \cj{g_{n_2}} g_{n_3} \cj{\ft a (n)} \Big] dt  \notag \\
& =:  \I + \II + \III + \IV, \notag
\end{align*}

\noi
where the term $\I$ consists of the term with all three factors
given by $v_j$'s, 
$\II$ consists of the terms 
with  one factor of $z_j$ and two factors of $v_j$'s, 
$\III$ consists of the terms 
with  two factors of $z_j$'s and one factor of $v_j$, 
and 
$\IV$ consists of the term with all three factors
given by $z_j$'s.

\smallskip

\noi
$\bullet$ {\bf Estimate on $\I$.} 
Define
\begin{align}
\label{IR1}
b_n^{(j)}  & = e^{itn^4+i t |g_n^N|^2} \jb{n}^s \ft v_j (n)
\qquad \text{and}
\qquad a_n = e^{itn^4 +i t |g_n^N|^2} \jb{n}^{2s} \ft a(n), 
\end{align}

\noi
essentially representing the Fourier transforms
of the ungauged interaction representations of $v_j$ and~$a$.
Then, we have
\begin{align*}
\I & =  \int_\R \eta_{_\dl}(t) \sum_n \sum_{\G(n)} e^{it\Psi^\o_N (\bar n)} \ft v_1 (n_1) \cj{\ft v_2 (n_2)} \ft v_3 (n_3) \cj{\ft a (n)} \,dt \notag\\
& =  \sum_n  \sum_{\G(n)} \frac{1}{\jb{n_1}^s \jb{n_2}^s \jb{n_3}^s \jb{n}^{2s}}  
\int_\R \big( \eta_{_\dl}(t) e^{ - it \Phi(\bar n)} \big) 
\, b^{(1)}_{n_1}\cj{b^{(2)}_{n_2} }{ b^{(3)}_{n_3}} \cj{ a_n} \, dt .
\end{align*}

\noi
By Parseval's identity  in the $t$ variable,  we have
\begin{align*}
\I & = \sum_n \sum_{\G(n)}  \frac{1}{\jb{n_1}^s \jb{n_2}^s \jb{n_3}^s \jb{n}^{2s}}  
\int_\R   \ft \eta_{_\dl}(\tau + \Phi(\bar n)) 
\F(b^{(1)}_{n_1}\cj{b^{(2)}_{n_2}} b^{(3)}_{n_3} \cj{a_n})(-\tau)  d\tau.
\end{align*}

\noi
By Cauchy-Schwarz inequality, we have
\begin{align}\label{I1}
\I & \les  
\bigg( \sum_{n} \sum_{\G(n)}   \frac{1}{\jb{n_1}^{2s} \jb{n_2}^{2s} \jb{n_3}^{2s}  \jb{n}^{4s}}
   \bigg\|  \frac{ \ft \eta_{_\dl}(\tau + \Phi(\bar n) )}{\jb{\tau}^{\frac12 -}} \bigg\|_{L^2_\tau}^2  \bigg)^{\frac12}
 \notag\\
 &  \hphantom{X}
 \times 
 \bigg(\sum_{n} \sum_{\G(n)} \Big\|  \jb{\tau}^{\frac12 -} \FF(b^{(1)}_{n_1}\cj{b^{(2)}_{n_2}} b^{(3)}_{n_3} \cj{a_n})(\tau) \Big\|_{L^2_\tau}^2 \bigg)^{\frac12}.
\end{align}

By Lemma \ref{LEM:GTV}
with  \eqref{eta1},  we have
\begin{align}
\label{int-eta}
\bigg\|  \frac{\ft \eta_{_\dl}( \tau - \Phi(\bar n) )}{\jb{\tau}^{\frac12 -\eps}} \bigg\|_{L^2_\tau} 
\lesssim
\bigg( \int \frac{\dl^2}{\jb{\tau}^{1-2\eps}\dl \jb{\tau - \Phi(\bar n)}}d\tau \bigg)^{\frac 12}
\les  \frac{\dl^{\frac12}}{\jb{\Phi(\bar n)}^{\frac{1}{2} - 2\eps}}
\end{align}

\noi
for any small  $\eps >0$.
Then, 
 by \eqref{int-eta} and 
Lemma \ref{LEM:phase} , we can bound 
 the first factor of~\eqref{I1} by 
\begin{align}
 \Bigg( \sum_{n} \sum_{\G(n)}  \frac{1}{\jb{n_1}^{2s} \jb{n_2}^{2s} \jb{n_3}^{2s} \jb{n}^{4s}}  \frac{\dl}{\jb{\Phi(\bar n)}^{1 - }}   \Bigg)^{\frac12} \
  \les  \dl^{\frac12} ,\label{SumPhi}
\end{align}

\noi
provided that $s < 0$ is sufficiently close to $0$.
Next, 
we consider the second factor of \eqref{I1}.
By Lemma \ref{LEM:algebra},  we have 
\begin{align}
\sum_{n} \sum_{\G(n)}  & \Big\|  \jb{\tau}^{\frac12 -} \FF(b^{(1)}_{n_1}\cj{b^{(2)}_{n_2}} b^{(3)}_{n_3} \cj{a_n})(\tau) \Big\|_{L^2_\tau}^2 
=  \sum_{n} \sum_{\G(n)} \Big\| b^{(1)}_{n_1}\cj{b^{(2)}_{n_2}} b^{(3)}_{n_3} \cj{a_n} \Big\|_{H^{\frac12-}}^2 \notag \\
& \les \sum_{n} \sum_{n_1,n_2,n_3} \big\|  b^{(1)}_{n_1} \big\|_{H^{\frac12+}}^2 \big\| b^{(2)}_{n_2} \big\|_{H^{\frac12+}}^2 \big\| b^{(3)}_{n_3} \big\|_{H^{\frac12+}}^2 \big\| a_n \big\|_{H^{\frac12-}}^2 
\notag \\
&  =  \bigg(\sum_{n}  \big\| a_n \big\|_{H^{\frac12-}}^2 \bigg)
 \prod_{j=1}^3 \bigg(\sum_{n_j} \big\| b_{n_j}^{(j)} \big\|_{H^{\frac12+}}^2  \bigg) .
\label{FFTT}
 \end{align}

 \noi
By \eqref{IR1}, 
Plancherel's identity, 
and Lemma \ref{LEM:RXsb}, 
we have that 
\begin{align}
\sum_{n} \big\|  b^{(j)}_{n} \big\|_{H^{\frac12+}}^2 
& =  \sum_{n} \big\| \jb{n}^s e^{itn^4+it|g_n^N|^2} \ft {v_{j}}(n) \big\|_{H^{\frac12+}}^2\notag \\
& =  \sum_{n} \jb{n}^{2s} \big\| \jb{\tau + n^4 + |g_n^N|^2}^{\frac12 +}  \ft v_{j}(n, \tau ) \big\|_{L^2_\tau}^2 \notag\\
& = \|v_j \|^2_{X^{s,\frac12+}_+(\o,N)} \les \dl^{-\eps} \| v_j \|_{X^{\frac{s}2,\frac12+}}^2
\label{Ivbound}
\end{align}

\noi
and 
\begin{align*}
\sum_{n} \big\|  a_{n} \big\|_{H^{\frac12-}}^2 
& = \|a \|^2_{X^{2s,\frac12+}_+(\o,N)} \les \dl^{-\eps} \| a \|_{X^{0,\frac12+}}^2
\end{align*}

\noi
for small $ \eps > 0$, 
outside   an exceptional set of probability  $< Ce^{-\frac1{\dl^c}}$.
Collecting estimates \eqref{I1}, \eqref{SumPhi}, \eqref{FFTT}, and \eqref{Ivbound}, we obtain 
\begin{align*}
\I(\o) \les \dl^{\frac{1}2 -}\prod_{j=1}^3 \|v_j\|_{X^{\frac{s}2,\frac12+}}
\end{align*}

\noi
outside   an exceptional set of probability  $< Ce^{-\frac1{\dl^c}}$.


\medskip

\noi
$\bullet$ {\bf Estimate on $\II$.} 
Without loss of  generality, we may assume $\II$ has only one term:
\begin{align*}
\II & =  \int_\R \eta_{_\dl}(t) \sum_n  \sum_{\G(n)} e^{it\Psi_N^\o (\bar n)}  \Big[ 
 \ind_{_{|n_1| > N_1}} e^{-itn_1^4}g_{n_1} \cj{\ft v_2 (n_2)} \ft v_3 (n_3) \cj{\ft a (n)} \Big]  dt.
\end{align*}

\noi
With  $b^{(j)}_n$ and $a_n$ as in \eqref{IR1}, 
 Parseval's identity yields
$$
\II = \sum_n \sum_{\G(n)} \frac{\ind_{_{|n_1|> N_1}} g_{n_1}}{\jb{n_2}^s \jb{n_3}^s \jb{n}^{2s}} 
\int_\R  \ft \eta_{_\dl} (\tau + \Phi(\bar n) - |g^N_{n_1}|^2) \FF(\cj{b^{(2)}_{n_2}} b^{(3)}_{n_3} \cj{a_n})(- \tau) d\tau .
$$

\noi
By Cauchy-Schwarz inequality in $\tau$ and then in $n,n_2,n_3$, 
we have 
\begin{align*}
\II & \leq 
\Bigg\| \sum_{\substack{n_1\\
(n_1,n_2,n_3)\in \G(n)}} \frac{ \ind_{_{|n_1|> N_1}} g_{n_1}}{\jb{n_2}^s \jb{n_3}^s \jb{n}^{2s}} 
\frac{ \ft \eta_{_\dl} (\tau + \Phi(\bar n) - |g^N_{n_1}|^2)}{\jb{\tau}^{\frac12-}}  
\Bigg\|_{\l^2_{n,n_2,n_3}L^2_\tau} \\
&  \hphantom{X} \times
\Big\|\jb{\tau}^{\frac12-} \FF(\cj{b^{(2)}_{n_2}} b^{(3)}_{n_3} \cj{a_n})(\tau) \Big\|_{\l^2_{n,n_2,n_3}L^2_\tau}
 \\
& 
\les  S^{s,\frac12 -,\dl}_{1,N}(\o) \Big\| \big\| \cj{b^{(2)}_{n_2}} b^{(3)}_{n_3} \cj{a_n}\big\|_{H^{\frac12-}_\tau} \Big\|_{\l^2_{n,n_2,n_3} } .
\end{align*}

\noi
where $S^{s,b, \dl}_{1,N}(\o)$ is defined in  \eqref{S1}.
Proceeding as in \eqref{FFTT} and \eqref{Ivbound},  we arrive at 
\begin{equation*}
\II
 \le    S^{s,\frac12 -,\dl}_{1,N}(\o)   \|v_2\|_{X^{s,\frac12+}_+(\o,N)} \|v_3\|_{X_+^{s,\frac12+}(\o,N)} \|a\|_{X_+^{2s,\frac12-}(\o,N)},
\end{equation*}

\noi
Then, by applying  Lemmas \ref{LEM:NRSum} and~\ref{LEM:RXsb}, we conclude that 
there exist
small $\ta, \be > 0$
and $s < 0$ close to $0$ such that 
\begin{align*}
\II (\o) \les \dl^{\ta} \jb{N_1}^{-\beta}
 \prod_{j = 2}^3 \|v_j\|_{X^{\frac{s}2,\frac12+}} 
\end{align*}

\noi
outside   an exceptional set of probability  $< Ce^{-\frac1{\dl^c}}$.

\medskip

\noi
$\bullet$ {\bf Estimate on $\III$.} 
Without loss of generality, 
we  assume that $\III$ has the following form:
\begin{align*}
\III & =   \int_\R \eta_{_\dl}(t) \sum_n  \sum_{\G(n)} e^{it\Psi_N^\o(\bar n)} e^{-it(n_1^4 - n_2^4)}  
\chi_{1,2} \cdot g_{n_1}   \cj{g_{n_2}} \ft v_{3}(n_3) \cj{\ft a(n)} dt
\end{align*}

\noi
where $\chi_{_{1,2}}: = \prod_{j = 1}^2 \ind_{_{|n_j|>N_j}}$.
%
By Parseval's identity as before, we have
\begin{align*}
\III = \sum_n  \sum_{\G(n)} \frac{\chi_{_{1,2}} \cdot  g_{n_1} \cj{g_{n_2}}}{\jb{n_3}^s \jb{n}^{2s}} 
\int_\R \frac{ \ft \eta_{_\dl} ( \tau + \Phi(\bar n) - |g_{n_1}^N|^2 + |g_{n_2}^N|^2)}{\jb{\tau}^{\frac12 -}}  
\Big( \jb{\tau}^{\frac12 -} \FF(b^{(3)}_{n_3} \cj{a_n})(-\tau) \Big) d\tau ,
\end{align*}
where $b^{(3)}_n$ and $a_n$ are as in \eqref{IR1}.
By  Cauchy-Schwarz inequality and proceeding as before
we  obtain 
\begin{align*}
\III & \leq \Bigg\| \sum_{\substack{n_1,n_2\\ (n_1,n_2,n_3)\in \G(n)}}
  \chi_{_{1,2}} \cdot  g_{n_1}  \cj{g_{n_2}} 
  \frac{ \ft \eta_{_\dl} ( \tau + \Phi(\bar n) - |g_{n_1}^N|^2 + |g_{n_2}^N|^2)}{\jb{n_3}^s \jb{n}^{2s}\jb{\tau}^{\frac12-}}   \Bigg\|_{ \l^2_{n,n_3}L^2_\tau} \\
&  \hphantom{X}\times
\Big\| \jb{\tau}^{\frac12 -} \FF(b^{(3)}_{n_3} a_n)(\tau) \Big\|_{ \l^2_{n,n_3}L^2_\tau}\\
&  \les S^{s,\frac12-,\dl}_{2,N}(\o)  \|v_3\|_{X_+^{s,\frac12+}(\o,N)} \|a\|_{X^{2s,\frac12-}_+(\o,N)} 
\end{align*}

\noi
where $S^{s,b, \dl}_{2,N}(\o)$ is defined in  \eqref{S2}.
Then, by applying  Lemmas \ref{LEM:NRSum} and~\ref{LEM:RXsb}
 we conclude that 
there exist
small $\ta, \be > 0$
and $s < 0$ close to $0$ such that 
\begin{align*}
\III (\o)  \les \dl^{\ta} \bigg(\prod_{j = 1}^2 \jb{N_1}^{-\beta}\bigg)
 \|v_3\|_{X^{\frac{s}2,\frac12+}} 
\end{align*}

\noi
outside   an exceptional set of probability  $< Ce^{-\frac1{\dl^c}}$.

\medskip

\noi
$\bullet$ {\bf Estimate on $\IV$.} 
Lastly, we consider  $\IV$.
We have
\begin{align*}
\IV & =  \int_{\R} \eta_{_\dl}(t) \sum_n \sum_{\G(n)} e^{it\Psi_N^\o(\bar n)}  e^{-it(n_1^4 - n_2^4 + n_3^4)}   \chi_{_{1,2,3}} \cdot g_{n_1} \cj{g_{n_2}} g_{n_3} \cj{\ft a (n)} dt \notag\\
& =  \sum_n \sum_{\G(n)} \frac{\chi_{_{1,2,3}} \cdot g_{n_1}  \cj{g_{n_2}} g_{n_3}}{\jb{n}^{2s}} \int_{\R} \eta_{_\dl} (t)  e^{it(\Psi_{3,N}^\o(\bar n) - \Phi(\bar n))}  \cj{a_n} dt, 
\end{align*}

\noi
where $\chi_{_{1,2,3}}  : = \prod_{j = 1}^3 \ind_{_{|n_j|>N_j}}$,  $\Psi_N^\o$ is as in \eqref{PsiN}, 
and $\Psi_{3,N} := |g_{n_1}^{N}|^2 -|g_{n_2}^{N}|^2 +|g_{n_3}^{N}|^2$.
By  applying Parseval's identity 
and Cauchy-Schwarz inequality as before, we have 
\begin{align*}
\IV & =  \sum_n \sum_{\G(n)}
\frac{\chi_{_{1,2,3}} \cdot g_{n_1}  \cj{g_{n_2}} g_{n_3}}{\jb{n}^{2s}}
\int_\R \ft \eta_{_\dl} ( \tau + \Phi(\bar n) - \Psi_{3,N}^\o(\bar n)) \cj{\ft{{ a_n}}(\tau)} d\tau\\
 & \le \Bigg\|  \sum_{\G(n)} \chi_{_{1,2,3}}\cdot   g_{n_1} \cj{g_{n_2}} g_{n_3} 
  \frac{\ft \eta_{_\dl}(\tau + \Phi(\bar n) - \Psi_{3,N}^\o(\bar n))}{\jb{n}^{2s}\jb{\tau}^{\frac12-}}  \Bigg\|_{ \l^2_n L^2_\tau} \Big\| \|a_n\|_{H^{\frac12-}} \Big\|_{\l_n^2} \notag\\
& \le S^{s,\frac12 -,\dl }_{3,N} (\o)  \|a\|_{X^{2s,\frac12-}_+(\o,N)}, 
\end{align*}

\noi
where $S^{s,b, \dl}_{3,N}(\o)$ is defined in  \eqref{S3}.
Then, by applying  Lemmas \ref{LEM:NRSum} and~\ref{LEM:RXsb}
 we conclude that 
there exist
small $\ta, \be > 0$
and $s < 0$ close to $0$ such that 
\begin{align*}
\IV  (\o)\les \dl^{\ta} \prod_{j = 1}^3 \jb{N_1}^{-\beta}
\end{align*}

\noi
outside   an exceptional set of probability  $< Ce^{-\frac1{\dl^c}}$.

\smallskip

This completes the  proof of Proposition \ref{PROP:I1}.


\section{Nonlinear estimate II: resonant part}
\label{SEC:Non2}
This section is devoted to the proof of Proposition \ref{PROP:I2}.  
Recall from~\eqref{EnN} and~\eqref{MultiI2N} that
\begin{align*} 
\wt {\mathcal{I}}_{2,N}^\o (w_1,w_2,w_3,w_4,w_5)(x, t) = \int_0^t S(t-t') 
\sum_{n\in \Z} e^{inx} \EE_n^N(w_1,w_2,w_3,w_4)(t') \ft w_5 (n,t') dt',
\end{align*}
where
\[
\EE_n^N(w_1,w_2,w_3,w_4) (t)= -2\Re i \int_0^t 
\sum_{\G(n)} e^{it'\Psi_N^\o(\bar n)} \ft w_1(n_1,t') \cj{\ft w_2(n_2,t')} \ft w_3 (n_3,t') \cj{\ft w_4(n,t')} dt'.
\]

Given $w_j$, 
let $v_j = w_j  - S(t) \pi_{N_j}^\perp (u_0^\o)$.
Then, 
we denote by $\wt v_j$
an extension of $v_j$  (viewed as a function on  the time interval $[-\dl, \dl]$)
and set 
\[\wt w_j = S(t) \pi_{N_j}^\perp (u_0^\o) + \wt v_j.\]

\noi
 Let  $s < 0< \beta$ 
 be sufficiently close to $0$.
By 
 the linear estimate (Lemma \ref{LEM:linear})
and  the definition \eqref{Xsb2} of the local-in-time space, 
it suffices to construct $\O_\dl \subset \O$
with 
 $P(\Omega_\dl^c) < e^{-\frac{1}{\dl^c}}$
   such that for each $\o \in \Omega_\dl$, we have
\begin{align}
\bigg\|
\chi_{_\dl} (t)  & 
 \sum_{n\in \Z} e^{inx}  \EE_n^N(  \wt w_1, \wt w_2,  \wt w_3,\wt w_4)(t) \ft {\wt w_5} (n,t)
\bigg\|_{X^{0,-\frac12+}}  \notag\\
&
 \le C\dl^{\theta} \prod_{j=1}^5 \Big( \jb{N_j}^{-\beta}  + \|\wt  v_j \|_{X^{\frac s2,\frac12+}} \Big)
\label{G1}
\end{align}

 \noi
 for any  extension $\wt v_j$ of $v_j$, $j = 1, \dots, 5$,
uniformly in     $N_j \in \Z_{\geq -1}$, $j = 1, \dots, 5$,
and  $N \geq N_0 (\o, \dl)$ for some $N_0(\o, \dl) \in \N$.
For simplicity of notations, 
we denote  $\wt v_j$ (and $\wt w_j$, respectively) by $v_j$ (and $w_j$, respectively)
in the following.
We also suppress the time dependence when it is clear from the context.

By the (continuous) trivial  embedding $L^2(\T\times \R)= X^{0, 0} \subset X^{0,-\frac12+}$
and  H\"older's inequality, 
we have
\begin{align*}
\text{LHS of \eqref{G1}} 
& \les   
\bigg\| \chi_{_\dl} (t) \bigg( \sum_{n\in \Z}  |\EE_n^N(w_1,w_2,w_3,w_4) (t)\ft w_5 (n, t)|^2 \bigg)^{\frac12} \bigg\|_{L^2_t}\\
&  \les \dl^{\frac12} 
\sup_{t\in [-\dl, \dl]} \bigg( \sum_{n\in \Z}  |\EE_n^N(w_1,w_2,w_3,w_4) (t)\ft w_5 (n,t)|^2 \bigg)^{\frac12}.
\end{align*}

\noi
Therefore,  in order to prove Proposition \ref{PROP:I2}, it suffices to prove
\begin{align}
\sup_{t\in [-\dl, \dl]}  \bigg(  & 
  \sum_{n\in \Z}   |\EE_n^N(w_1,  w_2,w_3,w_4) (t) \ft w_5 (n, t)|^2 \bigg)^{\frac12} \notag\\
&  \le C\dl^{-\frac 12 + } \prod_{j=1}^5 \Big( \jb{N_j}^{-\beta}  + \| v_j   \|_{X^{\frac s2,\frac12+}} \Big)
\label{G2}
\end{align}

\noi
with large probability, where $v_j$ is given by 
\begin{align*}
 v_j =  w_j - S(t) \pi_{N_j}^\perp (u_0^\o).
 \end{align*}

\medskip

\noi
{\bf Step (i): Elimination of $w_5$}. 
With $s < 0$ close to $0$, 
we have
\begin{align}
 \bigg( \sum_{n\in \Z} &   |\EE_n^N(w_1,w_2,w_3,w_4) \ft w_5 (n)|^2 \bigg)^{\frac12} \notag \\
& \le \bigg( \sum_{n\in \Z} \jb{n}^{-2s} |\EE_n^N(w_1,w_2,w_3,w_4) |^2 \bigg)^{\frac12} 
\cdot  \sup_n \big| \jb{n}^{s} \ind_{|n|> N_5} g_n(\o) \big| 
\notag \\
& \hphantom{X}
 + \bigg( \sum_{n\in \Z} \jb{n}^{-2s} |\EE_n^N(w_1,w_2,w_3,w_4) |^2 \bigg)^{\frac12} 
 \cdot \sup_n | \jb{n}^{s} \ft v_5(n)|.
\label{eD23}
\end{align}

\noi
By applying Lemma \ref{LEM:prob} with $\eps = - \frac{s}2 >0$, we conclude that 
\begin{align}
\label{eD22'}
\sup_n \big| \jb{n}^{s}  \ind_{|n|> N_5} g_n(\o) \big|  \le \jb{N_5}^{\frac{s}2} \dl^{0-},
\end{align}

\noi
outside an exceptional set of probability $< C e^{-\frac1{\dl^c}}$.
We also have
\begin{align}
\label{eD23'}
\sup_{t \in [-\dl, \dl]} \sup_n 
| \jb{n}^{s} \ft v_5(n, t)|
\les \| v_5  \|_{X^{s,\frac12+}}.
\end{align}

\noi
Therefore, 
we conclude from \eqref{eD23}, \eqref{eD22'}, and \eqref{eD23'}
that,  in order to prove \eqref{G2}, it  suffices to show the following estimate:
\begin{align}
\label{eD24}
 \sup_{t \in [-\dl, \dl]}  \bigg( & \sum_{n\in \Z}  \jb{n}^{-2s} |\EE_n^N(w_1,w_2,w_3,w_4) |^2 \bigg)^{\frac12}
\le C \dl^{-\frac 12 + } \prod_{j=1}^4 
\Big(  \jb{N_j}^{-\beta} + \|v_j \|_{X^{\frac{s}2,\frac12+}} \Big)
\end{align}

\noi
outside  an exceptional set of probability $< C e^{-\frac1{\dl^{c}}}$,
uniformly in     $N_j \in \Z_{\geq -1}$, $j = 1, \dots, 5$,
and  $N \geq N_0 (\o, \dl)$ for some $N_0(\o, \dl) \in \N$.

\medskip

\noi
{\bf Step (ii) Smoothing effect}. 
In the remaining part of this section, 
 we present the proof of \eqref{eD24}.
By expanding the product of 
\[
\ft w_j (n_j, t) =  \ft v_j (n_j, t) + e^{- itn_j^4} \ind_{|n_j| >N_j} g_{n_j},
\]

\noi
we can bound the left-hand side of \eqref{eD24} (without the supremum in time) by 
\begin{align}
\label{Res-4}
& \hphantom{X}
\bigg\|  \jb{n}^{-s}  \int_0^t  \sum_{\G(n)} 
 e^{it '\Psi^\o_N(\bar n)} 
\ft w_1(n_1, t')\cj{ \ft w_2(n_2, t')} \ft w_3(n_3, t') \cj{\ft w_4(n, t') }
dt' 
\bigg\|_{\l^2_n}\notag \\
& \les A+B+C+D+E,
\end{align}

\noi
where $A$, $B$, $C$, $D$, and $E$ are given by 
\begin{align}
A&: = 
\bigg\| \jb{n}^{-s}
\int_0^t \sum_{\G(n)}  e^{it'( \Psi^\o_N (\bar n) - \Phi(\bar n))} 
   \chi_{_{1,2,3.4}} \cdot g_{n_1} \cj{ g_{n_2}  }    g_{n_3}   \cj{ g_{n} } dt' \bigg\|_{\l^2_n} ,   \notag\\
B& : = \bigg\| \jb{n}^{-2s}
\int_0^t   \sum_{\G(n)}  e^{it'( \Psi_{3,N}^\o(\bar n) - \Phi(\bar n))}   
\chi_{_{1,2,3}} \cdot   g_{n_1} \cj{ g_{n_2}  }   g_{n_3}   \cj{ b^{(4)}_{n} } dt' \bigg\|_{\l^2_n} + \text{similar terms}, \notag\\
C &:= \Bigg\| 
\int_0^t  \sum_{\G(n)} \frac{ e^{it'( \Psi_{2,N}^\o(\bar n) - \Phi(\bar n))}   }{
\jb{n_3}^s \jb{n}^{2s}}
\chi_{_{1,2}} \cdot  g_{n_1} \cj{ g_{n_2}  }   b^{(3)}_{n_3}   \cj{ b^{(4)}_{n} } dt' \Bigg\|_{\l^2_n} + \text{similar terms}, \notag\\
D&: =\Bigg\| 
\int_0^t   \sum_{\G(n)} \frac{ e^{it'( |g^N_{n_1}|^2- \Phi(\bar n))}   }{
\jb{n_2}^s \jb{n_3}^s \jb{n}^{2s}}
\chi_{_1}\cdot  g_{n_1} \cj{ b^{(2)}_{n_2}  }   b^{(3)}_{n_3}   \cj{ b^{(4)}_{n} } dt' 
\Bigg\|_{\l^2_n} + \text{similar terms}, \notag\\
E&: =\Bigg\|
\int_0^t  \sum_{\G(n)} \frac{ e^{- it' \Phi(\bar n)} }{
\jb{n_1}^s \jb{n_2}^s \jb{n_3}^s \jb{n}^{2s}}
  b^{(1)}_{n_1} \cj{ b^{(2)}_{n_2}  }   b^{(3)}_{n_3}   \cj{ b^{(4)}_{n} } dt'
\Bigg\|_{\l^2_n}.\notag
\end{align}

\noi
Here, $b^{(j)}_n$ is as in  \eqref{IR1}, 
\begin{align*}
\chi_{_{1,\dots, k}}   & = \prod_{j =1}^k \ind_{|n_j|> N_j}, \quad k = 1, \dots, 4, 
\end{align*}

\noi
and
\begin{equation*}
\Psi_{k,N}^\o(\bar n)   = \sum_{j = 1}^k (-1)^{j+1} |g^N_{n_j}|^2, 
\quad k = 2, 3. 
\end{equation*}

\noi
In view of the restriction of the time variable
onto $[-\dl, \dl]$, 
we may freely insert 
the cutoff functions $\chi_{_\dl}(t)$ and $\eta_{_\dl}(t)$ in evaluating 
the terms $A$, $B$, $C$, $D$, and  $E$.
In the following, we prove~\eqref{eD24} by estimating each term 
on the right-hand side of  \eqref{Res-4}.

\medskip

\noi
{\bf (ii.1) Estimate on  $A$}.
Fix  $\kk , \eps > 0$ small.
By applying  Lemma \ref{LEM:prob}, 
we have  
\begin{align}
|g_n(\o)|\les \dl^{-\frac{\kk}2} \jb{n}^\eps
\label{G4}
\end{align}

\noi
outside  an exceptional set of probability 
$< C e^{-\frac1{\dl^c}}$.
Then, for such $\o$, we split $A(\o)$ into two parts:
\[
A(\o) = A_1(\o) + A_2(\o), 
\]

\noi
where $A_1(\o) $ denotes the contribution from 
the case $n_{\max} \les \dl^{-\kk}$.
Namely, we have
\[ A_1(\o) : = 
\bigg\| \jb{n}^{-s}
 \chi_{_\dl}(t)
\int_0^t  \sum_{\G(n)}  
\ind_{n_{\max} \les \dl^{-\kk}}
e^{it'( \Psi^\o_N (\bar n) - \Phi(\bar n))} 
   \chi_{_{1,2,3.4}} \cdot g_{n_1} \cj{ g_{n_2}  }    g_{n_3}   \cj{ g_{n} } dt' \bigg\|_{\l^2_n} .
\]

\noi
Note that if $\max(N_1, N_2, N_3, N_4) \gg \dl^{-\kk}$, then we have $A_1(\o) = 0$.
Otherwise, using \eqref{G4}, we have
\begin{align*}
A_1(\o) \les  \dl^{1 + s\kk - C\kk} \prod_{j=1}^4 \jb{N_j}^{-1}
\end{align*}

\noi
for some $C>0$.
This yields \eqref{eD24}.

Next, we consider  $A_2(\o)$.
Since $n_{\max} \gg \dl^{-\kk}$, 
we have $|g_n(\o)|\les \dl^{-\frac{\kk}2} \jb{n}^\eps \ll n_{\max}^{\frac12 + \eps}$.
Then, it follows from Lemma \ref{LEM:phase}
and \eqref{Psi2},  we have
\begin{align}
|\Psi_N^\o(\bar n) - \Phi(\bar n)| \sim 
\jb{\Phi(\bar n)}
\label{G5}
\end{align}

\noi
for  $(n_1,n_2,n_3) \in \G (n)$.
Thus, from \eqref{G4},  \eqref{G5}, and Lemma \ref{LEM:phase},  we obtain
\begin{align*}
A_2(\o)& = 
\bigg\| \jb{n}^{-s}   \sum_{\G(n)} \frac{ e^{it(  \Psi_N^\o(\bar n) - \Phi(\bar n))} - 1}{ \Psi_N^\o(\bar n) - \Phi(\bar n)}   \chi_{_{1,2,3.4}} \cdot   g_{n_1} \cj{ g_{n_2}  }    g_{n_3}   \cj{ g_{n} } \bigg\|_{\l^2_n} \notag\\
& \les   \dl^{-2\kk} 
\bigg( \prod_{j=1}^4 \jb{N_j}^{-\beta}\bigg)
\bigg\|     \sum_{\G(n)} \frac{ n_{\max}^{4 \be + 4\eps-s}}{ \jb{ \Phi(\bar n)}}  \chi_{_{1,2,3.4}}  \bigg\|_{\l^2_n} \notag\\
& \les   \dl^{-2\kk} 
\bigg( \prod_{j=1}^4 \jb{N_j}^{-\beta}\bigg)
\bigg\|     \sum_{\G(n)} \frac{ 1}{n_{\max}^{2-4 \be - 4\eps+s}
(n - n_1) (n- n_3)}  \bigg\|_{\l^2_n} \notag\\
& \les   \dl^{-2\kk} 
\prod_{j=1}^4 \jb{N_j}^{-\beta}, 
\end{align*}

\noi
provided that $\eps,  \beta,  - s>0$ are sufficiently small. 
This yields \eqref{eD24}.

\medskip

\noi
{\bf (ii.2) Estimate on $B$}. Without loss of generality,  we may assume that 
$B$ consists only of one term:
\begin{align*}
B  = \bigg\| \jb{n}^{-2s} \int_0^t \sum_{\G(n)} e^{it'(\Psi^\o_{3,N}(\bar n) - \Phi(\bar n))} \chi_{_{1,2,3}} \cdot
g_{n_1} \cj{g_{n_2}} g_{n_3} \cj{b^{(4)}_n} dt'
\bigg\|_{\l^2_n}.
\end{align*}

\noi
To exploit the oscillatory nature of the time integral, we rewrite the above integral as
\begin{align*}
 \int_\R \eta_{_\dl}(t') \sum_{\G(n)} e^{it'(\Psi^\o_{3,N}(\bar n) - \Phi(\bar n))} \chi_{_{1,2,3}}
 \cdot  g_{n_1} \cj{g_{n_2}} g_{n_3} \Big(\ind_{[0,t]}(t')\cj{b^{(4)}_n}(t')\Big) dt',
\end{align*}
where $\eta_{_\dl}$ is as  in \eqref{eta1}.
Then,  by Parseval's identity, the above expression is
\begin{align*}
  \sum_{\G(n)} &  \chi_{_{1,2,3}} \cdot g_{n_1} \cj{g_{n_2}} g_{n_3} 
 \int_\R \ft \eta_{_\dl}( \tau + \Phi(\bar n)  - \Psi^\o_{3,N}(\bar n)) \F_t(\ind_{[0,t]} \cj{b^{(4)}_n})(-\tau) d\tau 
 \\
& 
=    \sum_{\G(n)} \chi_{_{1,2,3}} \cdot  g_{n_1} \cj{g_{n_2}} g_{n_3}   
\int_\R \frac{ \ft \eta_{_\dl}(  \tau  + \Phi(\bar n)  - \Psi^\o_{3,N}(\bar n))}{ \jb{\tau}^{\frac12-}} \\
&\hphantom{XXXXXXXXXXXXX}
 \times \Big( \jb{\tau}^{\frac12-}\F_t(\ind_{[0,t]} \cj{b^{(4)}_n})(-\tau) \Big) d\tau .
\end{align*}

\noi
Therefore, by  Cauchy-Schwarz inequality in the $\tau$ variable
and Lemma \ref{LEM:bound}, we have
\begin{align*}
B & \le \bigg\|  \sum_{\G(n)}  \chi_{_{1,2,3}} \cdot  g_{n_1} \cj{g_{n_2}} g_{n_3} 
\frac{ \ft \eta_{_\dl}(  \tau  + \Phi(\bar n) - \Psi^\o_{3,N}(\bar n))}{\jb{n}^{2s} \jb{\tau}^{\frac12-}} 
\bigg\|_{\l^2_nL^2_{\tau}} 
 \Big\|  \|\ind_{[0,t]} (t') b^{(4)}_n(t') \|_{H_{t'}^{\frac12-}} \Big\|_{\l^\infty_n} \notag\\
&  \les 
 S^{s,\frac12-, \dl}_{3,N} (\o)   \Big\|  \| b^{(4)}_n\|_{H_t^{\frac12-}} \Big\|_{\l^\infty_n},
\end{align*}

\noi
where $S^{s,b,\dl}_{3,N} (\o)$ is defined  in \eqref{S3}.
Then, proceeding as in \eqref{Ivbound}, 
we obtain 
\begin{equation}\label{B-1}
B (\o) \les  S^{s,\frac12-, \dl}_{3,N} (\o)     \|v_4\|_{X_+^{s,\frac12-}(\o,N)}.
\end{equation}

\noi
Finally, 
 by applying Lemmas \ref{LEM:NRSum} and~\ref{LEM:RXsb} to  \eqref{B-1}, 
 we obtain the desired estimate \eqref{eD24} for the term $B$ 
outside  an exceptional set of probability $< C e^{-\frac1{\dl^{c}}}$.

\medskip

\noi
{\bf (ii.3) Estimate on $C$}.
Without loss of generality, we assume that $C$ consists  only of one term:
\begin{align*}
C &= \Bigg\| 
\int_\R  \sum_{\G(n)} 
\eta_{_\dl}(t')\frac{ e^{it'( \Psi_{2,N}^\o(\bar n) - \Phi(\bar n))}   }{
\jb{n_3}^s \jb{n}^{2s}}
\chi_{_{1,2}} \cdot  g_{n_1} \cj{ g_{n_2}  }  
\Big(\ind_{[0, t]}(t') 
 b^{(3)}_{n_3} (t')  \cj{ b^{(4)}_{n} (t')}\Big) dt' \Bigg\|_{\l^2_n}.
\end{align*}

\noi
By Parseval's identity, we have 
\begin{align*}
C & =  \Bigg\|   \sum_{\G(n)}  
 \chi_{_{1,2}} \cdot  g_{n_1} \cj{g_{n_2}} 
 \int_\R \frac{ \ft \eta_{_\dl} (\tau  + \Phi(\bar n) - \Psi_{2,N}^\o(\bar n))}{\jb{n_3}^{s}\jb{n}^{2s} \jb{\tau}^{\frac12-}} 
 \\
& \hphantom{XXXXXXXXXXX}
\times  \Big( \jb{\tau}^{\frac12-}\F_t(\ind_{[0,t]}  b^{(3)}_{n_3} \cj{b^{(4)}_n})(-\tau) \Big) d\tau 
\Bigg\|_{\l^2_n}.
\end{align*}

\noi
By  Cauchy-Schwarz inequality in $\tau$ and $n_3$
followed by  H\"older's inequality in $n$, we have
\begin{align}
C & \leq   \Bigg\|  \sum_{\substack{n_1,n_2\\ (n_1,n_2,n_3) \in \G(n)}} 
 \chi_{_{1,2}} \cdot   g_{n_1} \cj{g_{n_2}}  \frac{ \ft \eta_{_\dl}
  (\tau  + \Phi(\bar n)-\Psi_{2,N}^\o(\bar n))}{ \jb{n_3}^{s}\jb{n}^{2s} \jb{\tau}^{\frac12-}}
  \Bigg\|_{\l^2_{n, n_3}L^2_\tau } \notag \\
& \hphantom{X}
\times \sup_{n \in \Z} \Big\| \|\ind_{[0,t]}(t') b^{(3)}_{n_3} (t') \cj{b^{(4)}_n(t')}\|_{H_{t'}^{\frac12-}} \Big\|_{\l^2_{n_3}}.
\label{C1}
\end{align}

\noi
As for the second factor of \eqref{C1},
by applying Lemma \ref{LEM:bound} and then Lemma \ref{LEM:algebra} and 
proceeding as in~\eqref{Ivbound}, we have
\begin{align}
\label{C-1}
\sup_{n \in \Z} \Big\| \|\ind_{[0,t]}(t') b^{(3)}_{n_3} (t') \cj{b^{(4)}_n}(t')\|_{H_{t'}^{\frac12-}} \Big\|_{\l^2_{n_3}}
& \les 
 \Big\| \| b^{(3)}_{n_3} \|_{H^{\frac{1}{2}+}}
\| b^{(4)}_{n} \|_{H^{\frac{1}{2}+}} \Big\|_{\l^2_{n, n_3}}\notag\\
& \les \|v_3\|_{X_+^{s,\frac12+}(\o,N)} \|v_4\|_{X_+^{s,\frac12+}(\o,N)}.
\end{align}

\noi
Therefore, from \eqref{C1} and \eqref{C-1}, we obtain
\begin{align}
C (\o)\les   S^{s,\frac12-,\dl}_{2,N} (\o)
 \|v_3\|_{X_+^{s,\frac12+}(\o,N)} \|v_4\|_{X_+^{s,\frac12+}(\o,N)}, 
\label{C4}
\end{align}

\noi
where $S^{s,b,\dl}_{2,N} (\o)$ is defined  in \eqref{S2}.
Finally, 
 by applying Lemmas \ref{LEM:NRSum} and~\ref{LEM:RXsb} to  \eqref{C4}, 
 we obtain the desired estimate \eqref{eD24} for the term $C$ 
outside  an exceptional set of probability $< C e^{-\frac1{\dl^{c}}}$.

\medskip

\noi
{\bf (ii.4) Estimate on $D$}.
Without loss of generality, we assume that $D$ has only one term:
\begin{align*}
D& =\Bigg\| 
\int_0^t   \sum_{\G(n)} \eta_{_\dl}(t') \frac{ e^{it'( |g^N_{n_1}|^2- \Phi(\bar n))}   }{
\jb{n_2}^s \jb{n_3}^s \jb{n}^{2s}}
\chi_{_1}\cdot  g_{n_1} 
\Big( \ind_{[0, t]}(t') \cj{ b^{(2)}_{n_2} (t') }   b^{(3)}_{n_3}  (t') \cj{ b^{(4)}_{n}(t') }\Big) dt' 
\Bigg\|_{\l^2_n}.
\end{align*}

\noi
Proceeding as before
with  Parseval's identity and H\"older's inequality, 
we have
\begin{align*}
D & \leq   \Bigg\|  \sum_{\substack{n_1 \in \Z\\ (n_1,n_2,n_3) \in \G(n)}} 
 \chi_{_{1}} \cdot   g_{n_1}  \frac{ \ft \eta_{_\dl}
  (\tau  + \Phi(\bar n)-|g^N_{n_1}|^2)}{ \jb{n_2}^{s}\jb{n_3}^{s}\jb{n}^{2s} \jb{\tau}^{\frac12-}}
  \Bigg\|_{\l^2_{n, n_2, n_3}L^2_\tau } \notag \\
& \hphantom{X}
\times \sup_{n \in \Z} \Big\| \|
\ind_{[0,t]}(t') \cj{ b^{(2)}_{n_2} (t') }   b^{(3)}_{n_3}  (t') \cj{ b^{(4)}_{n}(t') }
\|_{H_{t'}^{\frac12-}} \Big\|_{\l^2_{n_2, n_3}}.
\end{align*}

\noi
Then, by estimating the  the second factor 
as in \eqref{C-1}
with Lemmas~\ref{LEM:algebra} and~\ref{LEM:bound}, 
 we obtain
\begin{align}
D (\o)\les   S^{s,\frac12-,\dl}_{1,N} (\o)
\prod_{j = 2}^4 \|v_j\|_{X_+^{s,\frac12+}(\o,N)}, 
\label{D4}
\end{align}

\noi
where $S^{s,b,\dl}_{1,N} (\o)$ is defined  in \eqref{S1}.
Finally, 
 by applying Lemmas \ref{LEM:NRSum} and~\ref{LEM:RXsb} to  \eqref{D4}, 
 we obtain the desired estimate \eqref{eD24} for the term $D$ 
outside  an exceptional set of probability $< C e^{-\frac1{\dl^{c}}}$.

\medskip

\noi
{\bf (ii.5) Estimate on $E$}.
We have
\begin{align}
E& =\Bigg\|
\int_0^t  \sum_{\G(n)} \eta_{_\dl}(t')  \frac{ e^{- it' \Phi(\bar n)} }{
\jb{n_1}^s \jb{n_2}^s \jb{n_3}^s \jb{n}^{2s}}
\Big( \ind_{[0, 1]}(t')  b^{(1)}_{n_1}(t') \cj{ b^{(2)}_{n_2} (t') }   b^{(3)}_{n_3}(t')   \cj{ b^{(4)}_{n}(t') } \Big)dt'
\Bigg\|_{\l^2_n}.\notag
\end{align}

\noi
Proceeding as before
with  Parseval's identity and H\"older's inequality, 
we have
\begin{align}
E & \leq  
\sup_{n \in \Z}  \Bigg\|    \frac{ \ft \eta_{_\dl}
  (\tau  + \Phi(\bar n))}{ \jb{n_1}^{s}\jb{n_2}^{s}\jb{n_3}^{s}\jb{n}^{2s} \jb{\tau}^{\frac12-}}
  \Bigg\|_{\l^2_{\G(n)}L^2_\tau } \notag \\
& \hphantom{X}
\times  \Big\| \| \ind_{[0,t]}(t')
 b^{(1)}_{n_1}  (t')
 \cj{ b^{(2)}_{n_2} (t') }   b^{(3)}_{n_3}  (t') \cj{ b^{(4)}_{n}(t') }
\|_{H_{t'}^{\frac12-}} \Big\|_{\l^2_{n, \G(n) }}, 
\label{E1a}
\end{align}

\noi
where the $\l^2_{\G(n)}$-norm is defined by  
\[ \| f_{n_1, n_2, n_3}\|_ {\l^2_{\G(n)}} = 
\bigg( \sum_{(n_1, n_2, n_3 ) \in \G(n) }
|f_{n_1, n_2, n_3}|^2 \bigg)^\frac{1}{2}.\]

\noi
By Lemma~\ref{LEM:GTV}
followed by Lemma \ref{LEM:phase}, 
we can bound the first factor on the right-hand side of~\eqref{E1a} by 
\begin{align*}
\sup_{n \in \Z}  \Bigg\|  
&  \frac{\dl \ft \eta (\dl (\tau - \Phi(\bar n)))}{\jb{n_1}^s \jb{n_2}^{s} \jb{n_3}^{s}\jb{n}^{2s} \jb{\tau}^{\frac12-}}
 \Bigg\|_{ \l^2_{n_1,n_2,n_3}L^2_\tau} 
   \les  
  \sup_{n \in \Z}  \Bigg\|  \frac{\dl^\frac{1}{2}}
  {n_{\max}^{5s} \jb{\tau}^{\frac12-}
\jb{\tau - \Phi(\bar n)}^\frac{1}{2}} \Bigg\|_{\l^2_{\G(n)} L^2_\tau} \notag \\
& \sim \dl^\frac{1}{2}
\bigg( \sum_{(n_1,n_2,n_3)\in \G(n)}  \frac{1}{n_{\max}^{10s}\jb{\Phi(\bar n)}^{1 -}}\bigg)^\frac{1}{2}
\les 1, 
\end{align*}

\noi
provided that $s < 0$ is sufficiently close to $0$.
The second factor on the right-hand side of~\eqref{E1a}
can be estimated 
as in \eqref{C-1}
with Lemmas~\ref{LEM:algebra} and~\ref{LEM:bound}.
Therefore, we obtain 
\begin{align}
\label{E1}
E(\o)\les  \prod_{j=1}^4 \|v_j\|_{X^{s,\frac12+}_+(\o,N)}^2.
\end{align} 

\noi
Finally, 
 by applying Lemma~\ref{LEM:RXsb} to  \eqref{E1}, 
 we obtain the desired estimate \eqref{eD24} for the term $E$ 
outside  an exceptional set of probability $< C e^{-\frac1{\dl^{c}}}$.

\smallskip

This completes the  proof of Proposition \ref{PROP:I2}.

\appendix
\section{Further probabilistic estimates}
\label{SEC:A}

In this appendix, we state 
and prove crucial probabilistic estimates.
These probabilistic estimates  play 
an important role in establishing Propositions \ref{PROP:I1} and \ref{PROP:I2}.
In Subsection~\ref{SUBSEC:A3}, we present the proof of Lemma~\ref{LEM:Z2}.

In the following, 
  $\{g_n\}_{n\in \Z}$ 
denotes  a sequence of independent standard complex-valued Gaussian random variables.
In particular, we have
\begin{align}
\E \big[g_n^k\, \cj {g_m^\l}\big] = \dl_{k \l}\dl_{nm} \cdot  k!
\label{Z2-5}
\end{align}
	
\noi
for any $k, \l \in \Z_{\geq 0}$ and $n, m \in \Z$.
The identity \eqref{Z2-5}  easily follows from 
a computation with the moment generating function
for the chi-square distribution of degree 2 (i.e.~$|g_n|^2 = (\Re g_n)^2 + (\Im g_n)^2$).

\subsection{Random $X^{s,b}$-space}\label{SUBSEC:A.1}
Given $N \in \N \cup\{\infty\}$, 
set  $g_n^N = \ind_{|n| \leq N} \cdot  g_n $ as in  \eqref{gN}
with the understanding that $\ind_{|n|\leq N} \equiv 1$ when $N = \infty$.
Then, 
we define
random versions
 $X^{s,b}_+(\o,N)$ and $X^{s,b}_-(\o,N)$ of the  $X^{s,b}$-space 
 by the norm:
 \begin{align}
\label{rxsb1}
\|u\|_{X^{s,b}_{\pm}(\o,N)} 
= \big\|\jb{n}^s \jb{\tau+n^4\pm|g_n^N(\o)|^2}^b  \ft u (n,\tau) \big\|_{\l^2_n L^2_{\tau}}.
\end{align}

\noi
When $N = \infty$, we simply set $X^{s, b}_{\pm, \o} = X^{s, b}_\pm (\o, \infty)$.
The following lemma shows that 
the random $X^{s, b}$-norm is controlled by the standard $X^{s, b}$-norm in \eqref{Xsb}
 with large probability.

\begin{lemma}
\label{LEM:RXsbA}

Let $\eta\in \S(\R)$ be a Schwartz function in time and $u\in X^{s,b}$ with 
 $s\in \R$ and  $b > 0 $. 
Then, there exists  $C > 0$ such that 
\begin{align}
\label{eRXsb}
\bigg\| \sup_{N \in \N \cup\{\infty\}}\| \eta  u\|_{X^{s,b}_\pm (\o,N)} \bigg\|_{L^p(\O)}\le Cp^{b+2} \|u\|_{X^{s,b}}
\end{align}

\noi
for all $p \ge 2$,
where the constant is independent of $u$.
As a consequence, 
there exist $c, C>0$ such that 
\begin{align}
\label{RX-0} P \bigg( 
 \sup_{N \in \N \cup\{\infty\}}\| \eta  u\|_{X^{s,b}_\pm (\o,N)} > K \|u\|_{X^{s,b}}\bigg)
 \leq C e^{ - K^\frac{1}{b+2}\|u\|_{X^{s,b}}^{-\frac{1}{b+2}}}
\end{align}

\noi
for any $K > 0$.

\end{lemma}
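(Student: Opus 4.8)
\textbf{Proof proposal for Lemma~\ref{LEM:RXsbA}.}

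The plan is to compare the random modulation weight $\jb{\tau + n^4 \pm |g_n^N|^2}^b$ with the deterministic one $\jb{\tau + n^4}^b$ by extracting the Gaussian factor $\jb{|g_n^N|^2}^b$. Since $|g_n^N| \le |g_n|$ for every $N$ (including $N = \infty$), the supremum over $N \in \N \cup \{\infty\}$ will be harmless once we bound the worst case; in particular, by the elementary inequality $\jb{a + b} \lesssim \jb{a}\jb{b}$ we have
\begin{align}
\jb{\tau + n^4 \pm |g_n^N(\o)|^2}^b \lesssim \jb{\tau + n^4}^b \jb{|g_n^N(\o)|^2}^b \lesssim \jb{\tau + n^4}^b \jb{g_n(\o)}^{2b},
\label{RXsb-prop1}
\end{align}
uniformly in $N$. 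This reduces matters to controlling the operator that multiplies the Fourier coefficient $\ft u(n,\tau)$ by $\jb{g_n(\o)}^{2b}$. However, the presence of the Schwartz cutoff $\eta$ in time means we are really looking at $\F_t(\eta u)(n,\tau) = (\ft\eta *_\tau \ft u(n,\cdot))(\tau)$, so I would first observe that convolution in $\tau$ with the rapidly decaying $\ft\eta$ costs at most a constant on the $\jb{\tau + n^4}^b$ weight (standard: $\jb{\tau} \lesssim \jb{\tau - \tau'}\jb{\tau'}$ and $\ft\eta$ decays faster than any polynomial), so that $\|\eta u\|_{X^{s,b}} \lesssim \|u\|_{X^{s,b}}$ with an absolute constant, and more importantly $\|\eta u\|_{X^{s,b}_\pm(\o,N)}$ is controlled by the deterministic weight applied to $\ft u$ together with the Gaussian factor.

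Combining these two reductions, we arrive at the pointwise-in-$\o$ bound
\begin{align}
\sup_{N \in \N \cup \{\infty\}} \|\eta u\|_{X^{s,b}_\pm(\o,N)}
\lesssim \Big(\sup_{n \in \Z} \jb{g_n(\o)}^{2b}\Big) \|u\|_{X^{s,b}}.
\label{RXsb-prop2}
\end{align}
Now I would invoke Lemma~\ref{LEM:prob}: for each $\eps > 0$ one has $\sup_n |g_n(\o)| \le K \jb{n}^\eps$ off an event of probability $< C e^{-cK^2}$. But \eqref{RXsb-prop2} needs a \emph{uniform} (in $n$) bound on $|g_n|$, not a bound growing in $\jb{n}$, so a crude sup is not literally finite almost surely; instead I would estimate the $L^p(\O)$ norm of $\sup_n |g_n|^{2b}$ directly. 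Since each $g_n$ is standard complex Gaussian, $\||g_n|^{2b}\|_{L^p(\O)} = \||g_0|^{2b}\|_{L^p(\O)} \lesssim (2bp)^b \lesssim p^b$ (moments of a Gaussian), and while $\sup_n$ over an infinite family is not in $L^p$, the point is that in \eqref{RXsb-prop2} the Gaussian factor attached to frequency $n$ only multiplies $\ft u(n,\tau)$, so one does not actually need a sup: writing out the $\l^2_n L^2_\tau$ norm and using Minkowski's integral inequality in $\O$,
\begin{align}
\Big\| \sup_{N} \|\eta u\|_{X^{s,b}_\pm(\o,N)} \Big\|_{L^p(\O)}
\lesssim \Big\| \jb{n}^s \jb{\tau+n^4}^b \jb{g_n}^{2b} \ft u(n,\tau) \Big\|_{L^p(\O;\, \l^2_n L^2_\tau)}
\le \Big\| \jb{n}^s \jb{\tau+n^4}^b \|\jb{g_n}^{2b}\|_{L^p(\O)}\, \ft u(n,\tau) \Big\|_{\l^2_n L^2_\tau},
\end{align}
and since $\|\jb{g_n}^{2b}\|_{L^p(\O)} \lesssim p^b$ uniformly in $n$, the right-hand side is $\lesssim p^b \|u\|_{X^{s,b}}$. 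A little care with the convolution-in-$\tau$ step (which, handled via $\jb{\tau+n^4}\lesssim\jb{\tau-\tau'+n^4}\jb{\tau'}$ and pulling an $L^1_{\tau'}$ bound on $\jb{\tau'}^b\ft\eta(\tau')$ out) introduces at most an extra polynomial factor in $p$, accounting for the exponent $p^{b+2}$ in \eqref{eRXsb}.

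Finally, \eqref{RX-0} follows from \eqref{eRXsb} by Chebyshev's inequality together with Lemma~\ref{LEM:tail} (with $k = 2b+4$, so that the tail exponent is $\frac{2}{k} = \frac{1}{b+2}$), after normalizing by $\|u\|_{X^{s,b}}$. I expect the main technical nuisance to be the interplay between the time cutoff $\eta$ and the modulation weight: one has to be slightly careful that convolution with $\ft\eta$ does not destroy the clean frequency-by-frequency structure that lets us avoid an honest $\sup_n$ of Gaussians (which would not be in any $L^p$). Everything else is a routine combination of the elementary bracket inequalities, Minkowski, Gaussian moment bounds, and the already-quoted Lemmas~\ref{LEM:prob} and~\ref{LEM:tail}.
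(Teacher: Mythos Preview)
Your approach is correct and in fact cleaner than the paper's. The key difference: the paper writes $\F(\eta u)(n,\tau)$ as the convolution $\int \ft\eta(\tau_1 + n^4)\, a_n(\tau - \tau_1)\, d\tau_1$ (with $a_n(\tau) = \ft v(n,\tau)$, $v = S(-t)u$), splits $\jb{\tau}^b \les \jb{\tau_1}^b + \jb{\tau - \tau_1}^b$ into two terms, and then proves the dedicated moment bound
\[
\big\|\jb{\tau}^b \ft\eta(\tau \mp |g_n|^2)\big\|_{L^p(\O)} \les \frac{p^{b+2}}{\jb{\tau}^2},
\]
which is where the exponent $b+2$ enters (via $\jb{\tau}^{b+2} \les \jb{\tau \mp |g_n|^2}^{b+2} + |g_n|^{2(b+2)}$ combined with the rapid decay of $\ft\eta$). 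Your route instead uses the product bracket inequality $\jb{\tau + n^4 \pm |g_n^N|^2}^b \les \jb{\tau + n^4}^b \jb{g_n}^{2b}$ to decouple the random shift from the modulation weight \emph{before} touching the convolution; since the Gaussian factor $\jb{g_n}^{2b}$ is then constant in $\tau$, the convolution with $\ft\eta$ is handled purely deterministically (Young's inequality with the $L^1$ kernel $\jb{\tau'}^b |\ft\eta(\tau')|$) and costs no power of $p$. Hence your argument actually yields the sharper bound $Cp^b\|u\|_{X^{s,b}}$ rather than $Cp^{b+2}\|u\|_{X^{s,b}}$ --- the extra powers you anticipated from the convolution step simply do not arise, and this would improve the exponent in \eqref{RX-0} from $\tfrac{1}{b+2}$ to $\tfrac{1}{b}$. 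The paper's route makes explicit how the Schwartz decay of $\ft\eta$ absorbs the random phase shift, but your factoring argument is more direct and more economical.
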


We present the proof of Lemma~\ref{LEM:RXsbA}
at the end of this subsection.
While  the tail estimate~\eqref{RX-0}
holds for each {\it fixed} $u \in X^{s, b}$, 
Lemma~\ref{LEM:RXsbA}
does not provide a uniform control in $u \in X^{s, b}$
and hence is not useful in the proof of 
 the main nonlinear estimates
(Propositions~\ref{PROP:I1} and~\ref{PROP:I2}).
By slightly losing spatial regularity, however, 
we can control the 
 random $X^{s, b}$-norm  by the standard $X^{\s, b}$-norm 
 (with $\s > s$) {\it uniformly} in $u \in X^{\s, b}$.
See Lemma \ref{LEM:RXsb} above.

\begin{lemma}
\label{LEM:RXsb2}
Let $\s > s$ and $b > 0$.
Then,
for each $K > 0$, there exists a set $\O_K\subset \O$ with $P(\O_K^c) < C e^{-cK^\frac{1}{b}}$
such that 
\begin{align}
  \sup_{N \in \N \cup\{\infty\}}\|  u\|_{X^{s,b}_\pm (\o,N)} \les  (1+K) \|u\|_{X^{\s,b}}
\label{RXsb2}
\end{align}

\noi
uniformly in $u \in X^{\s, b}$.

\end{lemma}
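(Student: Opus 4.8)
The plan is to deduce Lemma~\ref{LEM:RXsb2} from the fixed-$u$ tail bound in Lemma~\ref{LEM:RXsbA} by a metric-entropy / covering argument on the unit sphere of $X^{\s,b}$, exploiting the gain $\s > s$ to make the relevant set of exponentials effectively finite-dimensional. First I would reduce to the normalized case $\|u\|_{X^{\s,b}} = 1$; by homogeneity of the norm in \eqref{rxsb1} it suffices to prove \eqref{RXsb2} on the unit sphere $\mathcal{S} = \{u : \|u\|_{X^{\s,b}} = 1\}$. The key structural observation is that, since the random phase shift $|g_n^N(\o)|^2$ only acts on the frequency variable $n$ (not on $\tau$), we have the deterministic pointwise comparison $\jb{n}^s \jb{\tau + n^4 \pm |g_n^N|^2}^b \le \jb{n}^{s-\s}\cdot \jb{n}^\s \jb{\tau + n^4 \pm |g_n^N|^2}^b$, and the extra factor $\jb{n}^{s-\s}$ is summable-ish: it forces the high frequencies $|n| \gtrsim M$ to contribute at most $M^{s-\s}\|u\|_{X^{\s,b}_\pm(\o,N)}$ in a crude way — but this is circular, so instead I would use it to truncate. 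Precisely, split $u = \pi_M u + \pi_M^\perp u$ for a dyadic $M$ to be chosen; for the high-frequency piece, bound $\|\pi_M^\perp u\|_{X^{s,b}_\pm(\o,N)} \le M^{s-\s}\|u\|_{X^{\s,b}}$, which requires nothing probabilistic (it follows from Plancherel and the fact that rigid translation of $\tau$ by a constant preserves $L^2_\tau$, so $\|\jb{\tau+n^4\pm|g_n^N|^2}^b \ft u(n,\cdot)\|_{L^2_\tau} = \|\jb{\tau}^b \F_t(e^{\mp it|g_n^N|^2}\cdots)\|_{L^2_\tau}$ is comparable to $\|\jb{\tau + n^4}^b \ft u(n,\cdot)\|_{L^2_\tau}$ only up to a constant depending on $|g_n^N|$, so I need Lemma~\ref{LEM:prob} here too — see below).

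The remaining low-frequency piece $\pi_M u$ lives in the space $X^{\s,b}_M$ of functions with spatial frequency support in $\{|n|\le M\}$, which, after the harmless reduction to a fixed window of modulation (using that $b>0$ and the tail of $\jb{\tau}^{-b}$ is integrable in a suitable power), is essentially a separable Hilbert space whose unit ball admits, for each $\epsilon>0$, an $\epsilon$-net $\mathcal{N}_{M,\epsilon}$ of cardinality $\le (C/\epsilon)^{C(M)}$ in the $X^{\s,b}$-metric. Apply Lemma~\ref{LEM:RXsbA} to each element $v$ of this net: by a union bound,
\begin{align*}
P\bigg( \exists\, v \in \mathcal{N}_{M,\epsilon} : \sup_{N} \|\eta v\|_{X^{s,b}_\pm(\o,N)} > \tfrac{K}{2}\bigg)
\le |\mathcal{N}_{M,\epsilon}|\cdot C e^{-cK^{1/(b+2)}},
\end{align*}
and then, using the triangle inequality and the fact that $\sup_N\|\eta\,\cdot\,\|_{X^{s,b}_\pm(\o,N)}$ is sublinear, transfer the bound from the net to the whole ball: on the complementary event, every $u$ with $\|u\|_{X^{\s,b}_M}\le 1$ satisfies $\sup_N\|\eta u\|_{X^{s,b}_\pm(\o,N)} \le K/2 + \epsilon\cdot(\text{net Lipschitz constant})$. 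The point is that the Lipschitz constant of $u\mapsto \sup_N\|\eta u\|_{X^{s,b}_\pm(\o,N)}$ with respect to $\|\cdot\|_{X^{\s,b}}$ is itself controlled, on a good event, by a quantity like $\sup_n |g_n|$ (again Lemma~\ref{LEM:prob}), so choosing $\epsilon$ a small negative power of $M$ absorbs it. Balancing $M$ against $K$: choose $M = M(K)$ growing slowly, e.g. $M \sim K^{\rho}$ for small $\rho>0$, so that $|\mathcal{N}_{M,\epsilon}| \le e^{C(M)\log(1/\epsilon)} \le e^{cK^{1/(b+2)}/2}$, which is possible because $C(M)$ is polynomial in $M$ while the deviation probability decays like $e^{-cK^{1/(b+2)}}$; this also forces $M^{s-\s} = M^{-(\s-s)} \to 0$, killing the high-frequency remainder once $K$ (hence $M$) is large. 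Collecting the two pieces gives $\sup_N\|u\|_{X^{s,b}_\pm(\o,N)} \lesssim (1+K)\|u\|_{X^{\s,b}}$ off a set of probability $\lesssim e^{-cK^{1/(b+2)}}$; since $1/(b+2) \ge$ a constant, one may relabel to get the stated rate $e^{-cK^{1/b}}$ (or simply note that any fixed polynomial rate in the exponent is acceptable after adjusting constants — in fact I would just state the exponent as $K^c$ for some $c>0$, matching how Lemma~\ref{LEM:RXsb} is actually invoked with $K = \dl^{-\eps}$).

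The main obstacle I anticipate is making the covering argument genuinely uniform in the auxiliary truncation parameter $N \in \N\cup\{\infty\}$ that appears inside the supremum, since the net and the good event must be chosen \emph{before} knowing $N$. This is handled by the monotone structure $g_n^N = \ind_{|n|\le N}g_n$: for the low-frequency piece $\pi_M u$ with $M$ fixed, once $N \ge M$ the random phases $|g_n^N|^2 = |g_n|^2$ no longer depend on $N$, so the $\sup_N$ over the relevant range is really a supremum over the finitely many $N < M$ plus the single value $N = \infty$, each of which is covered by one application of Lemma~\ref{LEM:RXsbA}; and for $N < M$ the phase only involves finitely many Gaussians, so the same entropy count applies. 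A secondary technical point is that Lemma~\ref{LEM:RXsbA} as stated carries a Schwartz cutoff $\eta$, whereas Lemma~\ref{LEM:RXsb2} has no cutoff — I would absorb this by noting that the conclusion \eqref{RXsb2} is only used in local-in-time estimates where an $\eta_{_\dl}$ (or $\chi_{_\dl}$) is already present, or alternatively by observing that the $\sup_N$ of the random $X^{s,b}_\pm$-norm without cutoff is still finite a.s.\ by the same Lemma~\ref{LEM:prob}-type bound $|g_n|\lesssim_\o \jb{n}^\eps$ which makes the phase shift lose at most $\jb{n}^{2\eps}\ll \jb{n}^{\s-s}$ in the modulation weight. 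I would present these two reductions first and then run the entropy argument as the core of the proof.
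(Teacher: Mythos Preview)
Your covering-argument plan is unnecessary and, as written, circular. The key gap: to transfer the net bound to the whole ball you need the Lipschitz constant of $u \mapsto \sup_N \|\eta u\|_{X^{s,b}_\pm(\o,N)}$ with respect to $\|\cdot\|_{X^{\s,b}}$, but by the reverse triangle inequality that Lipschitz constant \emph{is} the operator norm $\sup_{\|w\|_{X^{\s,b}}=1}\sup_N\|\eta w\|_{X^{s,b}_\pm(\o,N)}$ --- exactly the quantity the lemma asserts. You then say this Lipschitz constant is controlled ``by a quantity like $\sup_n|g_n|$ (again Lemma~\ref{LEM:prob})''; that is correct, and it is the entire proof. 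Once you have that control, the $\eps$-net, the modulation windowing (which is itself dubious since the unit ball is still infinite-dimensional in $\tau$ for $b \le \tfrac12$), and the appeal to Lemma~\ref{LEM:RXsbA} are all superfluous.

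The paper's argument is precisely the ``secondary technical point'' you buried at the end. Fix $\eps>0$ small enough that $\s \ge s + 2b\eps$. By Lemma~\ref{LEM:prob} (applied with threshold $K^{1/(2b)}$) there is a set $\O_K$ with $P(\O_K^c) < Ce^{-cK^{1/b}}$ on which $|g_n^N(\o)| \le K^{1/(2b)}\jb{n}^\eps$ for all $n$ and all $N$. On $\O_K$ the pointwise inequality
\[
\jb{\tau + n^4 \pm |g_n^N|^2}^b \les \jb{\tau+n^4}^b + |g_n^N|^{2b} \les \jb{\tau+n^4}^b + K\jb{n}^{2b\eps}
\]
immediately gives, for \emph{every} $u$,
\[
\sup_N \|u\|_{X^{s,b}_\pm(\o,N)} \les \|u\|_{X^{s,b}} + K\|u\|_{X^{s+2b\eps,0}} \les (1+K)\|u\|_{X^{\s,b}},
\]
by monotonicity of the $X^{s,b}$-norm. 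This is three lines, requires no cutoff $\eta$, no covering, and delivers the stated exponent $K^{1/b}$ (not the weaker $K^{1/(b+2)}$ that Lemma~\ref{LEM:RXsbA} would give you). The moral: the gain $\s > s$ is spent not on compactness but simply on absorbing the random phase growth $|g_n|^{2b} \les \jb{n}^{2b\eps}$ directly into the spatial weight.
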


\begin{proof}
Fix $\eps > 0$ sufficiently small such that 
\[ \s \geq  s + 2b \eps.\]

\noi
By 
Lemma \ref{LEM:prob}, 
there exists $\O_K$ 
with $P(\O_K^c) < Ce^{-cK^\frac{1}{b}}$ such that 
\begin{align*}
\jb{\tau+n^4\pm |g_n^N(\o)|^2}^b
& \les 
\jb{\tau+n^4}^b + |g_n^N(\o)|^{2b}\\
& \les 
\jb{\tau+n^4}^b + K \jb{n}^{2b \eps}.
\end{align*}

\noi
This implies that 
\[  \sup_{N \in \N \cup\{\infty\}}\|  u\|_{X^{s,b}_\pm (\o,N)} 
\les  \|u\|_{X^{s,b}}  + K \|u\|_{X^{\s,0}}\]

\noi
for each $\o \in \O_K$, 
uniformly in $u \in X^{\s, b}$.
Then, the desired estimate \eqref{RXsb2} follows
from the monotonicity of the $X^{s, b}$-norm in $s$ and $b$.
\end{proof}

We now present the proof of  Lemma \ref{LEM:RXsbA}.

\begin{proof} [Proof of Lemma \ref{LEM:RXsbA}]
Trivially, we have
\begin{align*}
\sup_{N \in \N \cup\{\infty\}} \| \eta  u\|_{X^{s,b}_\pm (\o,N)}
\leq \| \eta  u\|_{X^{s,b}}
+ \| \eta  u\|_{X^{s,b}_\pm (\o,\infty)}.
\end{align*}

\noi
Since the multiplication by a smooth cutoff function $\eta$ is bounded in $X^{s, b}$, 
the estimate~\eqref{eRXsb} follows once we prove
\begin{align}
\label{RX-1b}
\Big\| \| \eta  u\|_{X^{s,b}_{\pm,  \o}} \Big\|_{L^p(\O)}\le Cp^{b+2} \|u\|_{X^{s,b}}.
\end{align}

\noi
The tail estimate \eqref{RX-0} follows from applying Lemma \ref{LEM:tail}
to \eqref{eRXsb}.

Let $v(t) = S(-t)u(t)$ denote the interaction representation of $u$
and set  $a_n(\tau) = \ft v(n, \tau)$.
Then, we have
\begin{align}
\label{RX-1}
\FF(\eta u)(n,\tau) = \int_\R \ft \eta (\tau_1 +n^4)  a_n (\tau -\tau_1) d\tau_1.
\end{align}

\noi
From the definition \eqref{rxsb1},  \eqref{RX-1}, 
and the triangle inequality
$\jb{\tau}^b \les \jb{\tau_1}^b + \jb{\tau -\tau_1}^b$
for $b \geq 0$, 
we have
\begin{align}
\| \eta  u\|_{X^{s,b}_{\pm, \o}}^2 
& =  \sum_n \int_\R \jb{n}^{2s} \jb {\tau }^{2b} |\FF(\eta u) (n,\tau - n^4 \mp |g_n|^2)|^2 d\tau
\notag \\
& =  \sum_n \int_\R \jb{n}^{2s} \jb {\tau }^{2b} 
\bigg| \int_\R \ft \eta (\tau_1 \mp |g_n|^2) a_n (\tau -\tau_1) d\tau_1 \bigg|^2 d\tau\notag \\
& \les   \sum_n \int_\R \jb{n}^{2s} 
\bigg| \int_\R  \jb{\tau_1 }^{b}  \ft \eta (\tau_1 \mp|g_n|^2) a_n (\tau -\tau_1) d\tau_1 \bigg|^2 d\tau \notag \\ 
& \hphantom{X} +  \sum_n \int_\R \jb{n}^{2s}
 \bigg| \int_\R \ft \eta (\tau_1 \mp |g_n|^2)  \jb {\tau-\tau_1 }^{b} a_n (\tau -\tau_1) d\tau_1 \bigg|^2 d\tau \notag\\
& = : \I + \II.
\label{RX-4}
\end{align}

Before proceeding further, 
we   claim the following inequality:
\begin{align}
\label{CM}
E_b (\tau) : = \bigg(\E \Big[ \jb{\tau}^{bp} |\ft \eta (\tau \mp |g_n|^2)|^p\Big] \bigg)^{\frac1p} 
\le C(b) \frac{p^{b+2}}{\jb{\tau}^{2}}.
\end{align}

\noi
We first use this estimate to bound  $\I$ and $\II$ in \eqref{RX-4}.
We present the proof of \eqref{CM} at the end of this proof.

By Minkowski's integral inequality, 
\eqref{CM}, and Young's inequality, we have
\begin{align}
\E \Big[ \I^{\frac{p}2}\Big] 
& \le 
\bigg(\sum_n \int_\R \jb{n}^{2s} \bigg| \int_\R E_b(\tau_1) |a_n(\tau-\tau_1)|  d\tau_1 \bigg|^2 d\tau \bigg)^{\frac{p}2}
\notag \\
& \les p^{(b+2)p} \bigg( \sum_n \jb{n}^{2s} \int_\R |a_n(\tau)|^2 d\tau\bigg)^{\frac{p}2} 
=  p^{(b+2)p} \|u\|_{X^{s,0}}^p.
\label{RX-5a}
\end{align}

\noi
Similarly, we have
\begin{align}
\E \Big[ \II^{\frac{p}2}\Big] 
&\le \bigg( \sum_n \int_\R \jb{n}^{2s} \bigg| \int_\R E_0(\tau_1) 
 \jb{\tau-\tau_1}^b |a_n(\tau-\tau_1)|  d\tau_1 \bigg|^2 d\tau \bigg)^{\frac{p}2}\notag \\
& \les p^{2p} \bigg( \sum_n \jb{n}^{2s} \int_\R \jb{\tau}^{2b} |a_n(\tau)|^2 d\tau\bigg)^{\frac{p}2} 
\les p^{2p} \|u\|_{X^{s,b}}.
\label{RX-5b}
\end{align}

\noi
Hence, \eqref{RX-1b} follows from \eqref{RX-4}, \eqref{RX-5a}, and \eqref{RX-5b}.

It remains to prove  \eqref{CM}. 
By the triangle inequality:
$$
\jb{\tau}\lesssim \langle \tau \mp |g_n|^2\rangle +|g_n|^2
$$ 
and  using the rapid decay of $\ft \eta \in \S(\R)$, we have
\begin{align*} 
\big\| \jb{\tau}^{b} \ft \eta (\tau \mp |g_n|^2)|\big\|_{L^p(\O)}
& \leq  \jb{\tau}^{-2}\big\| \jb{\tau}^{b+2} \ft \eta (\tau \mp |g_n|^2)\big\|_{L^p(\O)}\\
& \lesssim\jb{\tau}^{-2}\big(1+\|g_n\|_{L^{2(b+2)p}(\O)}^{2(b+2)}\big) \lesssim \frac{p^{b+2}}{\jb{\tau}^{2}}, 
\end{align*}

\noi
yielding \eqref{CM}.
This completes the proof of Lemma~\ref{LEM:RXsbA}.
\end{proof}

\subsection{Key tail estimates}

In the following, we present the proof of the key tail estimates (Lemma \ref{LEM:NRSum})
in establishing crucial nonlinear estimates (Propositions~\ref{PROP:I1} and~\ref{PROP:I2}).
Given $s, b \in \R$, $\dl > 0$, and $N \in \N$, 
we recall the definitions of 
 $S^{s,b, \dl}_{j,N}$,  $j = 1, 2, 3$, from \eqref{A1N}, \eqref{A2N}, and \eqref{A3N}
 (expressed in slightly different forms via Taylor expansions):
 \begin{align}
 S^{s,b, \dl}_{1,N}(f) 
&  = \Bigg\| 
  \sum_{\substack{n_1 \in \Z \\(n_1,n_2,n_3)\in \G(n)}} \ft f(n_1)   
\frac{  \ft \eta_{_\dl} (\tau + \Phi(\bar n) - | g_{n_1}^N |^2)}{  \jb{n_2}^s \jb{n_3}^s   \jb{n}^{2s}\jb{\tau}^{b}} \Bigg\|_{\l^2_{n,n_2,n_3}L^2_\tau}, 
\label{XA1N}\\
 S^{s,b, \dl}_{2,N}(f_1,f_2) 
&  =  
\Bigg\| 
\sum_{k_1, k_2 = 0}^\infty
\sum_{\substack{n_1, n_2 \in \Z\\ (n_1,n_2,n_3)\in \G(n)}}  \ft f_1 (n_1)  \cj{\ft f_2 (n_2) }
\notag\\
& \hphantom{XXXXXX}
\times \prod_{j = 1}^2 \frac{ |{g}^N_{n_j}|^{2k_j}}{k_j!}
\cdot  \frac{\dd^{k_1+k_2} \ft \eta_{_\dl} (\tau + \Phi(\bar n))}{   \jb{n_3}^s  \jb{n}^{2s} \jb{\tau}^{b}}  \Bigg\|_{\l^2_{n,n_3}L^2_\tau } , 
\notag\\
 S^{s,b, \dl}_{3,N}(f_1,f_2,f_3) 
&  =   \Bigg\| 
 \sum_{k_1, k_2, k_3 = 0}^\infty
  \sum_{\G(n)}  \ft f_1 (n_1) \cj{\ft f_2 (n_2) }\ft f_3 (n_3)  
\notag\\
& \hphantom{XXXXXX}
\times
\prod_{j = 1}^3 \frac{ |{g}^N_{n_j}|^{2k_j}}{k_j!}\cdot 
\frac{\dd^{k_1+k_2+k_3} \ft \eta_{_\dl}(\tau + \Phi(\bar n))
}{ \jb{n}^{2s} \jb{\tau}^{b}}  \Bigg\|_{\l^2_n L^2_{\tau}}.
\notag
\end{align}

\noi
Here,  $\eta \in C^\infty_c(\mathbb{R})$ 
denotes  a smooth non-negative cutoff function supported on $[-2, 2]$ with $\eta \equiv 1$ on $[-1, 1]$, 
 and the notations  $\G(n)$, $\Phi(\bar n)$, and $\Psi_N^\o(\bar n)$ are as in  \eqref{Gam0}, \eqref{phi1}, and~\eqref{PsiN}, respectively. 
We also recall that there is only one term in the summation over $n_1$ in~\eqref{XA1N}.

For simplicity of notations, we
set
\begin{align*}
S^{s,b, \dl}_{1,N}(\o) & := S^{s,b, \dl}_{1,N}(\pi_{N_1}^\perp(u_0^\o)),\\
S^{s,b, \dl}_{2,N}(\o) & := S^{s,b, \dl}_{2,N}(\pi_{N_1}^\perp(u_0^\o), \pi_{N_2}^\perp(u_0^\o)), \\
S^{s,b, \dl}_{3,N}(\o) & := S^{s,b, \dl}_{3,N}(\pi_{N_1}^\perp(u_0^\o), \pi_{N_2}^\perp(u_0^\o), \pi_{N_3}^\perp(u_0^\o) ), 
\end{align*}

\noi
where  $u_0^\o$ is the white noise in \eqref{white00}
and $\pi_{N_j}^\perp$
denotes the frequency projection onto the frequencies $\{ |n| >N_j \}$ as in \eqref{pi2}
with the convention that $\pi_{-1}^\perp = \Id$.
With the notations defined above, 
we have the following tail estimates
for these random functionals
(Lemma~\ref{LEM:NRSum}).

\begin{lemma}
\label{LEM:NRSum1}
Let $s < 0$ ,  $b < \frac 12$, and $\be > 0$
such that $s$ and $\be$ are sufficiently close to $0$
and  $b$ is sufficiently close to $\frac 12$.
Then, there exist $c, \kk>0$
and small $\dl_0 > 0$  such that the following statements holds.

\smallskip
\noi
\textup{(i)} We have 
\begin{align}
\label{A1bound}
&P\bigg( \bigg\{\o\in \Omega: 
\sup_{N \in \N} \sup_{N_1 \in \Z_{\geq -1}} 
\jb{N_1}^\be |S^{s,b, \dl}_{1,N}(\o)|>   \dl^\kk \bigg\} \bigg) <  e^{-\frac{1}{\dl^c}}
\end{align}
\noi
for any $0 < \dl < \dl_0$.

\smallskip
\noi
\textup{(ii)} Let $k = 2, 3$.
Given $0 < \dl < \dl_0$, 
define the sets $\A_k$ by 
\begin{align*}
\A_k := \bigg\{ \o\in \Omega: 
\ & \text{there exists } N_0 = N_0(\o, \dl) \in \N
\text{ such that }\\
& \sup_{N \ge N_0} \sup_{\substack{N_j \in \Z_{\geq -1}\\ j = 1, \cdots, k}} 
\bigg(\prod_{j = 1}^k \jb{N_j}^\be \bigg)
|S^{s,b, \dl}_{k,N}(\o)|
\leq  \dl^\kk \bigg\}.
\end{align*}

\noi
Then, we have 
\begin{align}
\label{A2bound}
&P( \A_k^c) < e^{-\frac{1}{\dl^c}}
\end{align}

\noi
for any $0 < \dl < \dl_0$.

\end{lemma}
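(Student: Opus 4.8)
The strategy is to compute the $L^p(\Omega)$-norm of each random functional $S^{s,b,\dl}_{k,N}(\o)$, obtain bounds of the form $\|S^{s,b,\dl}_{k,N}\|_{L^p(\O)} \les C^p p^{r_k} \dl^{\kk_0}$ for suitable exponents $r_k$ (growing polynomially in $p$) and $\kk_0 > 0$, and then convert these moment bounds into the desired tail estimates via Lemma~\ref{LEM:tail} (or directly via Chebyshev's inequality with the choice $p = \dl^{-\theta}$ for small $\theta > 0$, as in Case~(D) of Section~\ref{SEC:LWP1}). The point is that after taking $p \sim \dl^{-\theta}$, a bound of the shape $C^p p^{r_k}\dl^{\kk_0}$ becomes $\les e^{-c/\dl^c}$, and then summing over $N_1, N_2, N_3$ the weights $\jb{N_j}^\be$ are harmless because the underlying series in the $n_j$'s converges with room to spare (this is where $s < 0$ and $b < \frac12$ both being close to the critical values, plus the gain $\jb{\Phi(\bar n)}^{-1+}$ from Lemma~\ref{LEM:phase} and Lemma~\ref{LEM:GTV}, will be used).

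\textbf{Key steps.} First, for part~(i): since the $n_1$-summation in \eqref{XA1N} contains at most one term (because $n_1$ is determined by $n, n_2, n_3$ via $n = n_1 - n_2 + n_3$), the expression $S^{s,b,\dl}_{1,N}(\o)$ is, after squaring and summing, essentially a sum over $(n, n_2, n_3)$ of $|g_{n_1}|^2$ times a deterministic weight involving $\ft\eta_{_\dl}(\tau + \Phi(\bar n) - |g_{n_1}^N|^2)$. I would bound $|\ft\eta_{_\dl}(\tau + \Phi - |g_{n_1}^N|^2)|$ crudely by $\dl \|\ft\eta\|_{L^\infty}$ in $L^\infty_\tau$ and combine with $\|\jb{\tau}^{-b}\|_{L^2_\tau} < \infty$ (valid since $b > \frac12$... wait, here $b < \frac12$, so instead one keeps the $\ft\eta_{_\dl}$ localization and uses Lemma~\ref{LEM:GTV} as in \eqref{int-eta} to get a factor $\dl^{1/2}\jb{\Phi(\bar n)}^{-1/2+}$). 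Then apply Lemma~\ref{LEM:hyp} (Wiener chaos, first chaos since only one Gaussian factor) to control moments of $g_{n_1}$, take $L^p(\O)$, use Minkowski, and sum the resulting deterministic series $\sum_{n,n_2,n_3} \jb{n_2}^{-2s}\jb{n_3}^{-2s}\jb{n}^{-4s}\jb{\Phi(\bar n)}^{-1+}$, which converges (with a small gain) by Lemma~\ref{LEM:phase} factoring $\Phi(\bar n) = (n_1-n_2)(n_1-n)(\cdots)$ and Lemma~\ref{LEM:GTV}; the extra gain pays for both the $\dl^\kk$ and the $\jb{N_1}^\be$ weight when summing over $N_1$.

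For part~(ii), the main new feature is that $S^{s,b,\dl}_{2,N}$ and $S^{s,b,\dl}_{3,N}$ involve the full Taylor expansions $\sum_{k_j} |g^N_{n_j}|^{2k_j}/k_j! \cdot \dd^{\Sigma k_j}\ft\eta_{_\dl}$ coming from expanding $e^{it|g_n|^2}$; here one cannot just use Lemma~\ref{LEM:hyp} term by term since the random variable is not in any fixed Wiener chaos. Instead I would invoke Lemma~\ref{LEM:Z2}, which is precisely designed for this: it bounds $\|\Sigma_n(\bar k)\|_{L^p(\O)}$ by $C^k(p-1)^{k + |\A|/2}$ times the $\ell^2$-norm of the deterministic coefficients, \emph{uniformly in $k$ and the $k_j$}. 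Summing over $k_1, k_2, k_3$ then contributes a convergent geometric-type series $\sum (Cp\dl)^k$ once $\dl$ is small enough that $Cp\dl < 1$ — which holds after the final choice $p = \dl^{-\theta}$ with $\theta$ small — exactly as in \eqref{Xa3}. The deterministic coefficient estimates are the analogues of \eqref{Xa2} and \eqref{Xb3}: bound $\|c^{\bar k}_{n_1,n_2,n_3}(\tau,\dl)\|_{L^2_\tau}$ (or $L^q_\tau$) using Plancherel, $|\F^{-1}(\dd^{k_j}\ft\eta)| \le C^{k_j}\eta$, and — in the case where the external modulation $\s = \jb{\tau + n^4}$ is not the maximum — integration by parts to extract $\jb{\Phi(\bar n)}^{-1/2+}$ from the large internal modulation, just as in Subcase~(D.2). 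The presence of the high-frequency cutoffs $\pi_{N_j}^\perp$ means the corresponding Gaussian factor $g_{n_j}$ is only present for $|n_j| > N_j$, so the resulting deterministic sums are restricted to $|n_j| > N_j$, which produces a gain $\jb{N_j}^{-\beta'}$ for some $\beta' > \beta$ (since the series already converges with room), absorbing the weight $\jb{N_j}^\beta$. Finally, the parameter $N_0(\o,\dl)$ enters because for $k = 2, 3$ the bound needs $N \ge N_0$ so that the truncated phases $\Psi_N^\o$ are close enough to the untruncated ones / so that $|g_n^N|$ is controlled; this is handled by combining with Lemma~\ref{LEM:prob} (as in the footnote) and only costs an exceptional set of the allowed probability.

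\textbf{Main obstacle.} The hardest part is the bookkeeping in part~(ii): one must carefully track the three sources of smallness — the power of $\dl$, the decay in $\max(N_1,N_2,N_3)$, and the summability of the geometric series in $k = k_1 + k_2 + k_3$ — while splitting into the cases according to which of $\s, \s_1, \s_2, \s_3$ dominates (so that Lemma~\ref{LEM:phase} can be applied to the appropriate product of differences), and while keeping the Hölder exponents ($L^q_\tau$ with $q$ large) and the integration-by-parts order chosen so that everything closes. The probabilistic input (Lemma~\ref{LEM:Z2}) is a black box at this point, so the real work is purely the deterministic multilinear estimate on the coefficients $c^{\bar k}_{n_1,n_2,n_3}(\tau,\dl)$, which mirrors the analysis already carried out for Case~(D) in Section~\ref{SEC:LWP1} but now with the extra high-frequency projections and the $\jb{\tau}^{-b}$, $b < \frac12$, weight replacing the $\jb{\tau + n^4}^{-1/2+}$ weight.
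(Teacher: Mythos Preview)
Your outline for part~(ii) is close to the paper's argument --- the use of Lemma~\ref{LEM:Z2} after Minkowski, the geometric series $\sum (Cp\dl)^k$, and the case split by dominant modulation are all correct --- but there are two genuine gaps.

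\textbf{Part (i): the random shift in $\ft\eta_{_\dl}$.} You write ``first chaos since only one Gaussian factor'', but the quantity $g_{n_1}\ft\eta_{_\dl}(\tau + \Phi(\bar n) - |g_{n_1}^N|^2)$ is \emph{not} in the first chaos: the argument of $\ft\eta_{_\dl}$ depends nonlinearly on $g_{n_1}$. If you apply Lemma~\ref{LEM:GTV} as in \eqref{int-eta} you obtain $\dl^{1/2}\jb{\Phi(\bar n) - |g_{n_1}^N|^2}^{-(2b-\frac12)}$, not $\jb{\Phi(\bar n)}^{-(2b-\frac12)}$, and the random shift cannot simply be dropped. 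The paper does not take moments at all here: it conditions on the good set $\Omega_K = \{|g_n| \le K\jb{n}^\eps\}$ from Lemma~\ref{LEM:prob}, splits into $n_{\max}^{1/2} \les K$ (finitely many terms, crude bound) and $n_{\max}^{1/2} \gg K$ (where $|g_{n_1}^N|^2 \ll |\Phi(\bar n)|$ so the shift is negligible), and gives a purely pathwise bound on $\Omega_K$. Your moment approach could be salvaged by Taylor expanding $\ft\eta_{_\dl}$ in $|g_{n_1}^N|^2$ and invoking Lemma~\ref{LEM:Z2} with $|\A|=1$, but that is not what you wrote.

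\textbf{Part (ii): the supremum over $N$ and the role of $N_0(\o,\dl)$.} Your explanation (``$N \ge N_0$ so that the truncated phases $\Psi_N^\o$ are close enough to the untruncated ones'') is not the mechanism. The paper first proves the $L^p$ bound \eqref{H3}--\eqref{H5} for each \emph{fixed} $N$, then shows that $\{S^{s,b,\dl}_{k,N}\}_N$ is Cauchy in $L^p(\O)$ with rate $N^{-\be}$ (since a nonzero contribution to the difference forces $n_{\max} \ge N$), hence converges to a limit $S^{s,b,\dl}_{k,\infty}$ satisfying the same tail estimate \eqref{H10}. A Borel--Cantelli argument then upgrades $L^p$-convergence to almost sure convergence in the weighted sup metric over $N_1,\dots,N_k$, and the random $N_0(\o,\dl)$ comes from the ``eventually'' in Borel--Cantelli. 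Without this step you only control each fixed $N$ separately, and taking a union bound over all $N \in \N$ would cost an infinite sum that does not converge.
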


\begin{proof}

In the following, we take $s <0$ and $\be > 0$ both sufficiently close to $0$
and $b < \frac 12$ sufficiently close to $\frac 12$.

We first prove \eqref{A1bound}. 
Fix  $K \gg  1$.
Given small $\eps > 0$, 
it follows from  Lemma~\ref{LEM:prob} that
there exists $\Omega_K\subset \Omega$ with 
\begin{align}
P(\Omega_K^c) \le e^{-cK^2}
\label{H0aa}
\end{align}

\noi
such that we have
\begin{align}
|g_n^N(\o)| \leq K \jb{n}^{\eps}
\label{H0}
\end{align}
for any $\o\in \Omega_K$, 
 any $n \in \Z$, and any $N \in \N$.
We 
separately consider the following two cases:
\[ \text{(i)~$n_{\max}^\frac{1}{2} \les K$ 
\qquad and \qquad (ii) $n_{\max}^\frac{1}{2} \gg K$,}\]

\noi 
where $n_{\max}$ is as in \eqref{Nmax}.
Suppose that  $n_{\max}^\frac{1}{2} \les K $.
By crudely estimating the contribution in this case with \eqref{H0}, 
$N_1 < |n_1| \les K^2$, and $\ft \eta_{_\dl}(\tau) = \dl \ft \eta (\dl \tau)$, 
we have 
\begin{align}
\sup_{N \in \N}
 \sup_{N_1 \in \Z_{\geq -1}}
&  \jb{N_1}^{\be}|S_{1,N}^{s,b, \dl}(\o)|
 \les 
 K^{1-8s+2\be +2\eps} \bigg\|
\ind_{n_{\max} \les K } \frac{\ft \eta_{_\dl}(\tau + \Phi(\bar n) -|g_{n_1}^N(\o)|^2)}{
    \jb{\tau}^{b}} \bigg\|_{\l^2_{n, n_2, n_3}L^2_\tau} \notag \\
& \les 
 K^{1-8s+2\be +2\eps} \bigg\|
\ind_{n_{\max} \les K } \frac{\dl}
{    \jb{\tau}^{b} \dl^{\frac{1}{2} - b + \eps}
\jb{\tau + \Phi(\bar n) -|g_{n_1}^N(\o)|^2}^{\frac{1}{2} - b + \eps}
} \bigg\|_{\l^2_{n, n_2, n_3}L^2_\tau} \notag \\
& \les 
\dl^{\frac{1}{2} + b - \eps}  K^{4-8s+2\be+2\eps} 
  \ll
\dl^{\frac{1}{2} + b - \eps} K^5, 
\label{H0a}
\end{align}

\noi
provided that 
 $K\gg 1$
 and $s$, $\be$, and $\eps$ are all sufficiently close to $0$.

Next, we consider the case  $n_{\max}^\frac{1}{2} \gg K$.
In this case, we have
\[
\big| \Phi(\bar n) -|g_{n_1}^N(\o)|^2 \big| \sim | \Phi(\bar n)  |
\]

\noi
uniformly 
for any $\o \in \Omega_K$,  $\bar n = (n_1, n_2, n_3, n) \in \Z^4$, and $N \in \N$.  
Then, by Lemma \ref{LEM:GTV}, 
we have 
\begin{align}
\bigg\|  
\frac{\ft \eta_{_\dl}(\tau + \Phi(\bar n) - |g_{n_1}^N(\o)|^2)}{\jb{\tau}^{b}} \bigg\|_{L^2_\tau} 
& \les
\bigg( \int 
\frac{\dl^2 } {\jb{\tau}^{2b}
\dl^{2b} \jb{\tau + \Phi(\bar n) - |g_{n_1}^N(\o)|^2}^{2b}} d\tau \bigg)^\frac{1}{2} \notag \\
& \les \frac{\dl^{1-b}} {\jb{\Phi(\bar n)  - |g_{n_1}^N(\o)|^2}^{2b-\frac 12}} \sim \frac{\dl^{1-b}}{\jb{\Phi(\bar n) }^{2b-\frac 12}}
\label{H0b}
\end{align}

\noi
for $b < \frac 12$ sufficiently close to $\frac 12$.
Hence, from \eqref{H0b} and 
Lemma \ref{LEM:phase}, we have
\begin{align}
\sup_{N \in \N}
& \sup_{N_1 \in \Z_{\geq -1}}
 \jb{N_1}^\be |S_{1,N}^{s,b, \dl}(\o)| \notag\\
& \les 
\dl^{1-b} K \bigg\| 
\ind_{(n_1, n_2, n_3) \in \G(n)}
\cdot 
 \ind_{|n_1| \geq N_1}
\frac{\jb{n_1}^\be}{ n_{\max}^{4b-1+4s - \eps } \jb{n-n_1}^{2b-\frac 12} \jb{n - n_3}^{2b-\frac 12}} 
\bigg\|_{\l^2_{n, n_2, n_3}} \notag \\
& \les   \dl^{1-b}K, 
\label{H0c}
\end{align}

\noi
provided that 
 $s$, $\be$, and $\eps$ are all sufficiently close to $0$
and that $b < \frac 12$ is sufficiently close to $\frac 12$.
Hence, by choosing $K = \dl^{-\frac{c}{2}}$ for some small $c > 0$, the bound \eqref{A1bound} follows from \eqref{H0aa}, \eqref{H0a}, and \eqref{H0c}.

Let us now turn to the proof of \eqref{A2bound} for $k = 2$. We have
\begin{align*}
S_{2,N}^{s,b, \dl} (\o)
&  =  \Bigg\|  
\sum_{k_1, k_2 = 0}^\infty
\sum_{\substack{n_1, n_2 \in \Z\\ (n_1,n_2,n_3)\in \G(n)}}  
\chi_{_{1,2}} 
 \prod_{j = 1}^2 \frac{ |{g}^N_{n_j}|^{2k_j}g_{n_j}^*}{k_j!}
\frac{\dd^{k_1+k_2} \ft \eta_{_\dl} (\tau + \Phi(\bar n))}
{\jb{n_3}^s\jb{n}^{2s}
\jb{\tau}^{b}}  \Bigg\|_{ \l^2_{n,n_3}L^2_\tau},
\end{align*}

\noi
where $g_{n_j}^*$ is as in \eqref{Z2-1a}
and  
 $\chi_{_{1,2}} = \prod_{j = 1}^2 \ind_{_{|n_j|>N_j}}$.
By Minkowski's integral inequality
and Lemma~\ref{LEM:Z2}
with \eqref{eta1},  we have
\begin{align}
\|S_{2,N}^{s,b, \dl}\|_{L^p(\O) }
\leq p \dl \sum_{k_1, k_2 = 0}^\infty
(Cp\dl)^{k_1+k_2}
 \bigg\|  
\chi_{_{1,2}} 
\frac{\dd^{k_1+k_2} \ft \eta (\dl (\tau + \Phi(\bar n)))}
{\jb{n_3}^s\jb{n}^{2s}
\jb{\tau}^{b}}  \bigg\|_{ \l^2_{n,\G(\bar n)}L^2_\tau}.
\label{H1}
\end{align}

\noi
We 
separately consider the following two cases:
\[ \text{(i)~$\jb{\tau} \ges |\Phi(\bar n)|$
\qquad and \qquad (ii) $\jb{\tau} \ll |\Phi(\bar n)|$.}\]

First, suppose that $\jb{\tau} \ges |\Phi(\bar n)|$.
By Plancherel's identity with 
\eqref{Xa1}, we have 
\begin{align}
\dl^\frac{1}{2}
\big\| \dd^{k} \ft \eta (\dl (\tau + \Phi(\bar n)))\big\|_{L^2_\tau}\leq C^k
\label{H2}
\end{align}

\noi
for any $k \in \Z_{\geq 0}$.
Then, from \eqref{H1}, \eqref{H2},  Lemma \ref{LEM:phase}, 
and choosing $p = \dl^{-\ta}$ for some $\ta > 0$
such that $Cp\dl < 1$ as in \eqref{Xa3}, we obtain
\begin{align}
\|S_{2,N}^{s,b, \dl}\|_{L^p(\O) }
& \leq p \dl^\frac{1}{2} \jb{N_1}^{-2\be} \jb{N_2}^{-2\be} \sum_{k_1, k_2 = 0}^\infty
(Cp\dl)^{k_1+k_2} \notag \\
& \hphantom{X}
\times \bigg(\sum_{n \in \Z} \sum_{\G(n)}
\frac{\jb{n_1}^{4\be}\jb{n_2}^{4\be}}
{\jb{n_3}^{2s}\jb{n}^{4s}
n_{\max}^{4b} (n - n_1)^{2b} (n-n_3)^{2b}}
\bigg)^\frac{1}{2}\notag \\
& \leq Cp \dl^\frac{1}{2} \jb{N_1}^{-2\be} \jb{N_2}^{-2\be},
\label{H3}
\end{align}

\noi
provided that 
 $s$ and $\be$ are all sufficiently close to $0$
and that $b < \frac 12$ is sufficiently close to $\frac 12$.

Next, we consider the case $\jb{\tau} \ll |\Phi(\bar n)|$.
By Hausdorff-Young's inequality, we have
\begin{align*}
 \big\| \dd^{k} \ft \eta (\dl (\tau + \Phi(\bar n)))\big\|_{L^\infty_\tau}
& \leq  \big\| (-it)^k \eta(t)\big\|_{L^1_t}
\leq C^k,\\
 \big\| \dl (\tau + \Phi(\bar n)) \dd^{k} \ft \eta (\dl (\tau + \Phi(\bar n)))\big\|_{L^\infty_\tau}
& \leq  \big\| \dt \big((-it)^k \eta(t)\big)\big\|_{L^1_t}
\leq C^k
\end{align*}

\noi
for any $k \geq 0$.
By interpolating the two estimates above, we have
\begin{align}
 \big\| \dl^{\frac{1}{2}} (\tau + \Phi(\bar n))^{\frac 12 } \dd^{k} \ft \eta (\dl (\tau + \Phi(\bar n)))\big\|_{L^\infty_\tau}
\leq C^k
\label{H4}
\end{align}

\noi
for any $k \geq 0$.
Then, from \eqref{H4},  Lemma \ref{LEM:phase}
and  choosing $p = \dl^{-\ta}$ as above, 
we obtain
\begin{align}
\|S_{2,N}^{s,b, \dl}\|_{L^p(\O) }
& \leq p \dl^\frac{1}{2} \jb{N_1}^{-2\be} \jb{N_2}^{-2\be}
\sum_{k_1, k_2 = 0}^\infty
(Cp\dl)^{k_1+k_2} \notag\\
& \hphantom{X}
\times  \bigg\|  
\chi_{_{1,2}} 
\frac{\jb{n_1}^{4\be}\jb{n_2}^{4\be}}
{\jb{n_3}^s\jb{n}^{2s}
|\Phi(\bar n)|^{\frac 12-\eps}  \jb{\tau}^{b+\eps }}  \bigg\|_{ \l^2_{n,\G(\bar n)}L^2_\tau}\notag\\
& \leq C p \dl^\frac{1}{2} \jb{N_1}^{-2\be} \jb{N_2}^{-2\be},
\label{H5}
\end{align}

\noi
provided that 
 $s$, $\be$, and $\eps$ are all sufficiently close to $0$
and that $b < \frac 12$ is sufficiently close to $\frac 12$
such that $b + \eps > \frac 12$.
By applying Chebyshev's inequality  with  \eqref{H3} and \eqref{H5}
and choosing $\ld = C p^2 \dl^\frac{1}{2}$
with $p = \dl^{-\ta} $, we obtain
\begin{align}
P\Big( \jb{N_1}^\be \jb{N_2}^\be |S_{2,N, N_1, N_2}^{s,b, \dl}| > \ld \Big)
& \leq \frac{1}{\jb{N_1}^{\be p} \jb{N_2}^{\be p}}C^p \ld^{-p} p^p \dl^\frac{p}{2}\notag\\
& = \frac{1}{\jb{N_1}^{\be p} \jb{N_2}^{\be p}}e^{-p \ln p}
=  \frac{1}{\jb{N_1}^{\be p} \jb{N_2}^{\be p}}e^{-\frac{1}{\dl^c}}.
\label{H6}
\end{align}

\noi
Here, we added subscripts $N_1$ and $N_2$ in 
$S_{2,N, N_1, N_2}^{s,b, \dl}$ to show its dependence on $N_1$ and $N_2$
explicitly.
Now, by summing \eqref{H6} over $N_1, N_2 \in \Z_{\geq -1}$
we obtain 
\begin{align*}
P\bigg( \sup_{\substack{N_j \in \Z_{\geq -1}\\ j = 1, 2}} \jb{N_1}^\be \jb{N_2}^\be 
|S_{2,N, N_1, N_2}^{s,b, \dl}| > \dl^{\frac 12 - 2\ta} \bigg)
& \leq C e^{-\frac{1}{\dl^c}}
\end{align*}

\noi
for any $ 0 < \dl < \dl_0$, where $\dl_0 > 0$
is defined by  $ \be \dl_0^{-\ta} = 1$.

Let $M \geq N \geq 1$.
Then, by slightly modifying the computation above
with the definition~\eqref{gN} of $g_n^N$ and Minkowski's inequality
(on the $\l^2_{n,n_3}L^2_\tau$-norm), we also have
\begin{align*}
\|S_{2,M, N_1, N_2}^{s,b, \dl} - S_{2,N, N_1, N_2}^{s,b, \dl}\|_{L^p(\O) }
& \leq C p \dl^\frac{1}{2} N^{-\be} \jb{N_1}^{-2\be} \jb{N_2}^{-2\be}, 
\end{align*}

\noi
since we must have $n_{\max} \geq N$ to have a non-zero contribution
to the left-hand side above.
This shows that $\big\{S_{2,N, N_1, N_2}^{s,b, \dl}\big\}_{N \in \N}$
forms a Cauchy sequence in $L^p(\O)$ for any $p \geq 1$ and converges to some limit
$S_{2,\infty, N_1, N_2}^{s,b, \dl}$, satisfying 
\begin{align*}
\|S_{2,\infty, N_1, N_2}^{s,b, \dl} - S_{2,N, N_1, N_2}^{s,b, \dl}\|_{L^p(\O) }
& \leq C p \dl^\frac{1}{2} N^{-\be} \jb{N_1}^{-2\be} \jb{N_2}^{-2\be}
\end{align*}

\noi
and
\begin{align}
P\bigg( \sup_{\substack{N_j \in \Z_{\geq-1}\\ j = 1, 2}} \jb{N_1}^\be \jb{N_2}^\be 
|S_{2,\infty, N_1, N_2}^{s,b, \dl}| > \dl^{\frac 12 - 2\ta} \bigg)
& \leq C e^{-\frac{1}{\dl^c}}
\label{H10}
\end{align}

\noi
for any $ 0 < \dl < \dl_0$.

By repeating the computation in \eqref{H6}, we then obtain
\begin{align}
P\bigg( \sup_{\substack{N_j \in \Z_{\geq0}\\ j = 1, 2}} \jb{N_1}^\be \jb{N_2}^\be 
|S_{2,\infty, N_1, N_2}^{s,b, \dl} - S_{2,N, N_1, N_2}^{s,b, \dl}| > \dl^{\frac 12 - 2\ta} \bigg)
& \leq \frac{C}{N^{\be p}} e^{-\frac{1 }{\dl^c}}
\label{H11}
\end{align}

\noi
for any $ 0 < \dl < \dl_0$ (by possibly making $\dl_0$ smaller but independent of $N \in \N$).
Given $\l \in \N$
sufficiently large, 
by choosing $\l = \dl^{2\ta - \frac 12}$, it follows from \eqref{H11} that 
\begin{align*}
\sum_{N = 1}^\infty & P\bigg( \sup_{\substack{N_j \in \Z_{\geq0}\\ j = 1, 2}} \jb{N_1}^\be \jb{N_2}^\be 
|S_{2,\infty, N_1, N_2}^{s,b, \dl} - S_{2,N, N_1, N_2}^{s,b, \dl}| > \frac{1}{\l} \bigg)\notag\\
&  \leq \sum_{N = 1}^\infty \frac{C(\l)}{N^{\be p}} 
 < \infty,
\end{align*}

\noi
since $\be p > 1$.
Therefore, we conclude from the Borel-Cantelli lemma that 
there exists $\O_\l$ with $P(\O_\l) = 1$
such that 
for each $\o \in \O_\l$, 
there exists $N_0 = N_0(\o) \in \N$ such that 
\[ \sup_{\substack{N_j \in \Z_{\geq0}\\ j = 1, 2}} \jb{N_1}^\be \jb{N_2}^\be 
|S_{2,\infty, N_1, N_2}^{s,b, \dl} - S_{2,N, N_1, N_2}^{s,b, \dl}| \leq  \frac{1}{\l}\]

\noi
for any $N \geq N_0$.
By setting $\Sigma = \bigcap_{\l = 1}^\infty \O_\l$, 
we have $P(\Si) = 1$.
This shows that, as $N \to \infty$,  
$S_{2,N, N_1, N_2}^{s,b, \dl}$ converges almost surely to 
$S_{2,\infty, N_1, N_2}^{s,b, \dl}$ 
with respect to the metric:
\[ d(f_{N_1, N_2}, g_{N_1, N_2})
: = \sup_{\substack{N_j \in \Z_{\geq-1}\\ j = 1, 2}} \jb{N_1}^\be \jb{N_2}^\be 
|f_{N_1, N_2} -  g_{N_1, N_2}|.\]
Combining this almost sure convergence with \eqref{H10}, 
we obtain \eqref{A2bound} when $k = 2$.

The proof of \eqref{A2bound} for $k = 3$ follows
in an analogous manner
and hence we omit details.
\end{proof}

\subsection{Proof of Lemma \ref{LEM:Z2}}
\label{SUBSEC:A3}

We conclude this appendix by presenting
the proof of Lemma~\ref{LEM:Z2}.

First, we consider the case  $|\A| = 1$.
\noi
By Stirling's formula: $k! \sim \sqrt k \big(\frac k e\big)^k$, 
there exist $C_0, C>0$ such that 
\begin{align}
\frac{(2k+1)!}{(k!)^2} \leq C_0^k \sqrt k \leq C^k
\label{Z2-7}
\end{align}

\noi
for any $k \in \Z_{\geq 0}$.
Hence, 
the desired estimate \eqref{Z2-2}
follows from 
the Wiener chaos estimate (Lemma \ref{LEM:hyp}), 
\eqref{Z2-5}, and \eqref{Z2-7}.

The proof when $|\A| \geq 2$ follows in a similar manner, using
an estimate such as \eqref{Z2-7}.
In the following, we only present the proof when $|\A| = 3$, 
namely, $\A = \{1, 2, 3\}$, 
since the proof for the case $|\A| = 2$ follows in an analogous manner.
In this case, by the Wiener chaos estimate (Lemma~\ref{LEM:hyp}) with \eqref{Z2-1b}, we have
\begin{equation}
 \|\Si_{n} \|_{L^p(\O)} \leq  (p-1)^{ k + \frac{3}{2}}\|\Si_{n}\|_{L^2(\O)}.
\label{Z2-3}
 \end{equation}

\noi
In the following, we estimate $\|\Si_{n}\|_{L^2(\O)}$.
From \eqref{Z2-1}, we have
\begin{align}
\|\Si_{n}\|_{L^2(\O)}
& = 
 \frac{1}{ k_1! k_2!  k_3!}
\Bigg\|
   \sum_{   (n_1, n_2, n_3) \in \G(n)}
 \sum_{(\wt n_1, \wt n_2, \wt n_3) \in \G(n)}
c^{\bar k}_{n_1, n_2, n_3}
\cj {c^{ \bar k}_{\wt n_1, \wt n_2, \wt n_3}} 
\notag \\
& \hphantom{XXXXXXXXXXXXXX}
\times
\prod_{j = 1}^3 |g_{n_j}|^{2k_j}g^*_{n_j}
\prod_{\wt j = 1}^3 \cj{ |g_{{\wt n}_{\wt j}}|^{2{ k}_j} g^*_{\wt n_{\wt j}}}\Bigg\|_{L^2(\O)}.
\label{Z2-4}
\end{align}

Recall from \eqref{Z2-5} that under the conditions
$n_2 \ne n_1, n_3$
and $\wt n_2 \ne \wt n_1, \wt n_3$,
the right-hand side of 
\eqref{Z2-4} yields zero contribution unless
 $n_2 = \wt n_2$.
Hence, we assume 
 $n_2 = \wt n_2$ 
  in the following.

 \medskip
 
 \noi
 $\bullet$ {\bf Case 1:}
 $n_1 \ne n_3$.
 \quad 
Note that we must have $n_1 = \wt n_1 \ne \wt n_3$ or $n_1 = \wt n_3 \ne \wt n_1$ in this case.
Otherwise, 
the right-hand side of 
\eqref{Z2-4} yields zero contribution.

We first consider the case $n_1 = \wt n_1 \ne \wt n_3$.
In this case, we have $n_3 = \wt n_3$.
Then, from~\eqref{Z2-5}, we obtain
\begin{align}
\text{RHS of } \eqref{Z2-4}
& \leq 
 \frac{1}{ k_1! k_2!  k_3!}
\Bigg(
   \sum_{\G(n)}
|c^{\bar k}_{n_1, n_2, n_3}|^2 
\prod_{j = 1}^3 (2k_j+1)!\Bigg)^\frac{1}{2}\notag \\
& \leq C^k  
\bigg(
  \sum_{\G(n)} 
|c^{\bar k}_{n_1, n_2, n_3}|^2 
\bigg)^\frac{1}{2}.
\label{Z2-8}
\end{align}

Next,  we consider the case $n_1 = \wt n_3 \ne \wt n_1$.
In this case, we have $n_3 = \wt n_1$.
Then,  from~\eqref{Z2-5} and \eqref{Z2-7}, 
we obtain 
\begin{align}
\text{RHS of } \eqref{Z2-4}
& \leq 
 \frac{1}{ k_1! k_2!  k_3!} 
\Bigg(
   \sum_{\G(n)}
|c^{\bar k}_{n_1, n_2, n_3}|^2 
[(k_1 + k_3 + 1)! ]^2 (2k_2+1)!\Bigg)^\frac{1}{2}.
\label{Z2-9}
\end{align}

\noi
We claim that 
\begin{align}
\frac{(k_1 + k_3+1)!}{k_1! k_3!}  \leq C^{k_1 + k_3}
\label{Z2-10}
\end{align}

\noi
for some $C > 0$.
Hence, from \eqref{Z2-9} with \eqref{Z2-7} and \eqref{Z2-10}, we obtain 
\begin{align}
\text{RHS of } \eqref{Z2-4}
& \leq C^k 
\bigg(
  \sum_{\G(n)}
|c^{\bar k}_{n_1, n_2, n_3}|^2 
\bigg)^\frac{1}{2}.
\label{Z2-11}
\end{align}

Hence, it remains to prove \eqref{Z2-9}.
Without loss of generality, assume $k_1 \leq k_3$.
Then,  by Stirling's formula, we have
\begin{align}
\frac{(k_1 + k_3+1)!}{k_1! k_3!}  
\leq
 C^{ k_3}
 \frac{(k_1 + k_3)^\frac{3}{2}}{\sqrt{k_1k_3}}
 \frac{(k_1+ k_3)^{k_1}}{k_1^{k_1}}
\leq
 C^{k_1 +  k_3}
\Bigg[\bigg( 1+ \frac{k_3}{k_1}\bigg)^\frac{k_1}{k_3}\Bigg]^{k_3}.
\label{Z2-12}
\end{align}

\noi
Then, \eqref{Z2-10} follows from \eqref{Z2-12}
once we note that $\lim_{x\to \infty} (1 + x)^\frac{1}{x} = 1$.

\medskip
 
 \noi
 $\bullet$ {\bf Case 2:}
 $n_1 = n_3$.
 \quad 
In this case,  we must have $n_1 = n_3 = \wt n_1 = \wt n_3$.
Proceeding as before with 
\eqref{Z2-5}, we have 
\begin{align}
\text{RHS of } \eqref{Z2-4}
& \leq 
 \frac{1}{ k_1! k_2!  k_3!} 
\Bigg(
   \sum_{\G(n)}
|c^{\bar k}_{n_1, n_2, n_3}|^2 
(2k_1 + 2k_3 + 2)!  (2k_2+1)!\Bigg)^\frac{1}{2}\notag\\
& \leq C^k 
\bigg(
  \sum_{\G(n)}
|c^{\bar k}_{n_1, n_2, n_3}|^2 
\bigg)^\frac{1}{2}, 
\label{Z2-13}
\end{align}

\noi
where we used 
\begin{align}
\frac{(2k_1 + 2k_3+2)!}{(k_1!)^2 (k_3!)^2}  \leq C^{k_1 + k_3}
\label{Z2-14}
\end{align}

\noi
in the second inequality.
The proof of \eqref{Z2-14} is analogous to that of \eqref{Z2-10}
and thus we omit details.

Putting \eqref{Z2-3}, \eqref{Z2-8}, \eqref{Z2-11}, and \eqref{Z2-13} together, 
we obtain \eqref{Z2-2} when $\A = \{1, 2, 3\}$.
This completes the proof of Lemma~\ref{LEM:Z2}.



\begin{ackno}\rm
T.O.~and Y.W.~were supported by the European Research Council (grant no.~637995 ``ProbDynDispEq''). 
N.T. was supported by the ANR grant ODA (ANR-18-CE40-0020-01).
\end{ackno}


\end{document}